\newcommand{\eq}[2]{\begin{equation}\label{#1}#2 \end{equation}}
\newcommand{\ml}[2]{\begin{multline}\label{#1}#2 \end{multline}}
\newcommand{\mlnl}[1]{\begin{multline*}#1 \end{multline*}}
\newcommand{\xr}[1] {\xrightarrow{#1}}
\newcommand{\lra}{\longrightarrow}
\newcommand{\inj}{\hookrightarrow}
\newcommand{\surj}{\twoheadrightarrow}
\newcommand{\A}{\mathbb{A}}
\newcommand{\G}{\mathbb{G}}
\renewcommand{\P}{\mathbb{P}}
\newcommand{\Z}{\mathbb{Z}}
\newcommand{\C}{\mathbb{C}}
\newcommand{\cA}{\mathcal{A}}
\newcommand{\cH}{\mathcal{H}}
\newcommand{\cK}{\mathcal{K}}
\newcommand{\sO}{\mathcal{O}}
\newcommand{\cO}{\mathcal{O}}
\newcommand{\cP}{\mathcal{P}}
\newcommand{\cU}{\mathcal{U}}
\newcommand{\cV}{\mathcal{V}}
\newcommand{\cX}{\mathcal{X}}
\newcommand{\cY}{\mathcal{Y}}
\newcommand{\sA}{\mathscr{A}}
\newcommand{\tF}{\widetilde{F}}
\newcommand{\tG}{\widetilde{G}}
\newcommand{\tH}{\widetilde{H}}
\newcommand{\fm}{\mathfrak{m}}
\newcommand{\Ab}{\operatorname{\mathbf{Ab}}}
\newcommand{\Sch}{\operatorname{\mathbf{Sch}}}
\newcommand{\Hom}{\operatorname{Hom}}
\newcommand{\Ker}{\operatorname{Ker}}
\newcommand{\Coker}{\operatorname{Coker}}
\renewcommand{\Im}{\operatorname{Im}}
\newcommand{\id}{\operatorname{id}}
\newcommand{\uHom}{\operatorname{\underline{Hom}}}
\newcommand{\Mod}{\operatorname{Mod}}
\newcommand{\Frac}{\operatorname{Frac}}
\newcommand{\eff}{{\operatorname{eff}}}
\newcommand{\op}{{\operatorname{op}}}
\newcommand{\codim}{{\operatorname{codim}}}
\newcommand{\ch}{{\operatorname{ch}}}
\newcommand{\CH}{{\operatorname{CH}}}
\newcommand{\pro}{{\operatorname{pro}}}
\renewcommand{\lim}{\operatornamewithlimits{\varprojlim}}
\newcommand{\ol}{\overline}
\newcommand{\wt}{\widetilde}
\newcommand{\Tr}{\operatorname{Tr}}
\newcommand{\tr}{{\operatorname{tr}}}
\newcommand{\Nm}{\operatorname{Nm}}
\newcommand{\Spec}{\operatorname{Spec}}
\newcommand{\Div}{\operatorname{Div}}
\renewcommand{\div}{\operatorname{div}}
\newcommand{\Res}{\operatorname{Res}}
\newcommand{\Lin}{\operatorname{Lin}}
\newcommand{\dlog}{\operatorname{dlog}}
\newcommand{\Zar}{{\operatorname{Zar}}}
\newcommand{\Nis}{{\operatorname{Nis}}}
\newcommand{\e}{{\epsilon}}
\renewcommand{\o}{{\operatorname{o}}}
\newcommand{\ul}{\underline}
\newcommand{\ot}{\widetilde}
\newcommand{\Sm}{\operatorname{\mathbf{Sm}}}
\newcommand{\Cor}{\operatorname{\mathbf{Cor}}}
\newcommand{\PST}{{\operatorname{\mathbf{PST}}}}
\newcommand{\NST}{\operatorname{\mathbf{NST}}}
\newcommand{\DM}{\operatorname{\mathbf{DM}}}
\newcommand{\MSm}{\operatorname{\mathbf{MSm}}}
\newcommand{\uMSm}{\operatorname{\mathbf{{\underline{M}}Sm}}}
\newcommand{\MCor}{\operatorname{\mathbf{MCor}}}
\newcommand{\uMCor}{\operatorname{\mathbf{{\underline{M}}Cor}}}
\newcommand{\ulMCor}{\operatorname{\mathbf{{\underline{M}}Cor}}}
\newcommand{\bcube}{{\ol{\square}}}
\newcommand{\uomega}{{\ul{\omega}}}
\newcommand{\Pro}{\operatorname{\mathbf{Pro}}}
\newcommand{\MPST}{\operatorname{\mathbf{MPST}}}
\newcommand{\uMPST}{\operatorname{\mathbf{{\underline{M}}PST}}}
\newcommand{\ulMPST}{\operatorname{\mathbf{{\underline{M}}PST}}}
\newcommand{\CI}{{\operatorname{\mathbf{CI}}}}
\newcommand{\RSC}{{\operatorname{\mathbf{RSC}}}}
\newcommand{\HI}{{\operatorname{\mathbf{HI}}}}
\newcommand{\MDM}{\operatorname{\mathbf{MDM}}}
\newcommand{\te}{{\otimes}}
\newcommand{\tte}{{\underline{\otimes}}}
\newcommand{\ten}[1]{{\overset{#1}{\otimes}}}
\newcommand{\tten}[1]{{\overset{#1}{\underline{\otimes}}}}
\newcommand{\bu}[1]{{\overset{#1}{\bullet}}}
\newcommand{\tw}[1]{{\langle{#1}\rangle}}
\theoremstyle{plain}
\newtheorem{prop}{Proposition}[section]
\newtheorem{lem}[prop]{Lemma}
\newtheorem{cor}[prop]{Corollary}
\newtheorem{thm}[prop]{Theorem}
\theoremstyle{definition}
\newtheorem{defn}[prop]{Definition}
\newtheorem{problem}[prop]{Problem}
\newtheorem{para}[prop]{}
\theoremstyle{remark}
\newtheorem{rmk}[prop]{Remark}
\newtheorem{remark}[prop]{Remark}
\numberwithin{equation}{prop}
\begin{document}

\title[Tensor structures]{Tensor structures in the theory of \\ 
modulus presheaves with transfers}
\author{Kay R\"ulling, Rin Sugiyama \and Takao Yamazaki}
\date{\today}
\address{Bergische Universit\"at Wuppertal, Gau\ss{}str 20, 42119 Wuppertal, and}
\address{Technische Universit\"at M\"unchen,Boltzmannstr. 3, 85748 Garching}
\email{ruelling@uni-wuppertal.de}
\address{Department of Mathematical and Physical sciences, Japan Women's University, 2-8-1 Mejirodai, Bunkyo-ku Tokyo, 112-8681 Japan}
\email{sugiyamar@fc.jwu.ac.jp}
\address{Graduate School of Science, Mathematical Institute, Tohoku University,
  Aoba, Sendai 980-8578, Japan}
\email{ytakao@math.tohoku.ac.jp}

%\author{Kay R{\"ulling}}
%\author{Rin Sugiyama}
%\author{Takao Yamazaki}

\begin{abstract}
The tensor product of $\A^1$-invariant sheaves with
transfers introduced by Voevodsky is generalized to reciprocity sheaves
via the theory of modulus presheaves with transfers.
We prove several general properties of this construction and compute it in
some cases. In particular we obtain new (motivic) presentations
of the absolute K\"ahler differentials and the first infinitesimal neighborhood of the
diagonal.
\end{abstract}

\keywords{Suslin homology with modulus, non-homotopy invariant motive, Somekawa $K$-group}
\subjclass[2010]{14F42 (19E15, 19D45)}
\thanks{The first author was supported by the DFG Heisenberg Grant RU 1412/2-2.
The second author is supported by JSPS KAKENHI Grant (JP16K17579).
The third author is supported by JSPS KAKENHI Grant (JP18K03232).}
\maketitle

\tableofcontents

\section{Introduction}

The tensor structure on
Voevodsky's category $\HI$ of homotopy invariant presheaves with transfers can be generalized 
%to reciprocity sheaves.  We study some properties of this construction.
to the category $\RSC$ of reciprocity sheaves.  We study some properties of this construction.

Our motivation partly comes from 
the category $\MDM^\eff(k)$ of \emph{motives with modulus} \cite{KMSY3}.
This is a non-$\A^1$-invariant generalization of 
Voevodsky's triangulated category of effective motives $\DM^\eff(k)$.
The latter category admits a $t$-structure whose center is precisely $\HI$.
For this reason,
the computation of the tensor product in $\HI$ 
embodies some important cases of motivic cohomology
(see \eqref{eq:tenHI-intro1}, \eqref{eq:tenHI-intro2} below).
We will carry out a similar computation in $\RSC$
(see Theorems \ref{intro-thm1}, \ref{intro-thm2}).
As with $\HI$, $\RSC$ is expected to be (a good approximation of)
the heart of the homotopy $t$-structure of $\MDM^\eff(k)$.
Similarly, $\RSC$ also has close relationship with 
\emph{logarithmic motives} studied in \cite{BPO}.
We expect that, in due course, our results will be interpreted 
in terms of motivic cohomology with modulus.

We now give a more precise description of our work.

\begin{para}\label{para1a0}
Let $\sA$ be an additive category with a symmetric monoidal structure $\te$.
It is well-known, that there exists a monoidal structure on the category of presheaves
(of abelian groups) on $\sA$ which is uniquely characterized by the fact that it is right exact and 
that the Yoneda functor is monoidal. 
\end{para}
\begin{para}\label{para1a}
We recall Voevodsky's tensor structure on $\HI$.
Denote by $\Cor$  the category of finite correspondences
whose objects are smooth varieties over a perfect base field $k$
and by $\PST$ the category of presheaves on it.
The  product $X\times Y$ defines a monoidal structure on $\Cor$.
The tensor structure induced via \ref{para1a0} on $\PST$ is denoted by $\ten{\PST}$.
Denote by  $\HI$ the full subcategory of $\A^1$-invariant presheaves.  
It becomes a symmetric monoidal category via $F \ten{\HI} G:=h_0^{\A^1}(F \ten{\PST} G)$,
where $h_0^{\A^1}(F)$ denotes the maximal $\A^1$-invariant quotient of $F\in \PST$.
It is a basic property, that for proper smooth varieties $X$ and $Y$ this construction gives
\begin{equation}\label{eq:tenHI-intro1}
(h_0^{\A^1}(X)\ten{\HI} h^{\A^1}_0(Y))(k)= \CH_0(X\times Y),
\end{equation}
where $h_0^{\A^1}(X)=h_0^{\A^1}(\Z_\tr(X))$, with $\Z_\tr: \Cor\to \PST$ the Yoneda functor, and where
$\CH_0$ denotes the Chow group of zero dimensional cycles.
(We abbreviate $F(R)=F(\Spec R)$ 
for $F \in \PST$ and a $k$-algebra $R$.) 
Furthermore, by the work of many authors we have for every regular local $k$-algebra $R$
\begin{equation}\label{eq:tenHI-intro2}
\G_m^{\ten{\HI} n}(R)\cong \cH^n(\Z(n))(R)=\cK^M_n(R),
\end{equation}
where $\G_m$ is the multiplicative group, $\Z(n)$ is Voevodsky's motivic complex, and $\cK^M_n$ is 
the improved Milnor $K$-theory \cite[1.]{Ke10}.
Note also that $F\ten{\HI} G(k)$ can be  explicitly  described in terms of generators and relations
in the style of Somekawa's $K$-groups, see  \cite{KY}.
\end{para}

\begin{para}\label{para1b}
The generalization of $\ten{\HI}$
is based on the theory of {\em modulus pairs},
developed in \cite{KMSY1, KMSY2, KSY3}.
Recall that a modulus pair $M=(\ol{M}, M^\infty)$
is a pair of a separated $k$-scheme of finite type
and an effective Cartier divisor $M^\infty$ on $\ol{M}$
such that $M^\o := \ol{M} \setminus M^\infty$ is smooth over $k$
(see \S \ref{sec:umcor} for details).
We say $M$ is {\em proper} if $\ol{M}$ is.
We have the category $\MCor$ at our disposal
whose objects are proper modulus pairs 
(see \S \ref{sec:mcor}). Denote by $\MPST$ the category of presheaves on it.
For $M, N \in \MCor$ we put
\begin{equation}\label{eq:oldmonoidstr}
M \otimes N 
:=(\ol{M} \times \ol{N}, \ol{M} \times N^\infty + M^\infty \times \ol{N}).
\end{equation}
The tensor structure induced via \ref{para1a0} on $\MPST$ is denoted by $\te$ in the following.
Denote by $\CI$ the full subcategory of $\bcube$-invariant presheaves, where $\bcube=(\P^1, (\infty))$.
Any $F \in \MPST$ admits 
a maximal quotient $h_0^\bcube(F)$ that belongs to $\CI$.
By \cite[Proposition 2.1.9]{KSY3}, 
$F\ten{\CI} G:= h_0^{\bcube}(F\te G)$ defines a symmetric monoidal
structure on $\CI$.  Consider the functor
\[ \omega : \MCor \to \Cor, 
\qquad \omega(\ol{M}, M^\infty) =\ol{M} \setminus M^\infty.
\]
It induces 
$\omega^* : \PST \to \MPST$  by pre-composition with $\omega$,
and $\omega_! : \MPST \to \PST$ by the left Kan extension.
The essential image of $\CI$ via $\omega_!$ agrees with 
the full subcategory $\RSC$ of $\PST$
consisting of $F$ having {\em SC-reciprocity},
introduced in \cite{KSY3} (see \S \ref{subsec:RSC}).

In this article we study the local sections of 
\[h_0(F\te G):= \omega_!h_0^{\bcube}(F\te G),\]
for certain $F,G\in \MPST$.  
A first technical  result, which is used in all the applications below,
is a presentation in terms of generators and relations of
$h_0(F\te G)(K)$, for any $k$-field $K$, see Theorem \ref{thm:CI-te-fields}.
This presentation is a modulus-version of the one found in the homotopy invariant case 
in \cite[Theorem 6.2]{KY} (for $K=k$) and in \cite[Theorem 5.1.8]{IR} (for general $K$).
For example, using that for $F\in \RSC$ the Nisnevich - and the Zariski sheafification agree, see 
\cite[Corollary 3.2.2]{KSY3},
Theorem \ref{thm:Ga-te-Ga-arbCh} and Theorem \ref{thm:DR-arbCh}, give the following:

\begin{thm}\label{intro-thm1}
Set $\G_a^\#:=\ker(h_0^\bcube(\P^1, 2(\infty)) \to h_0^\bcube(\Spec k, \emptyset)=\Z)$
and 
$\G_m^\#:=\ker(h_0^\bcube(\P^1, (0)+(\infty)) \to h_0^\bcube(\Spec k, \emptyset))$.
Let $R$ be a regular local $k$-algebra. 
\begin{enumerate}[label= (\arabic*)]
\item\label{intro-thm1.1} Assume $\ch(k)\neq 2$. Then
\[h_0(\G_a^\# \te \G_a^\#)(R)\cong (R\otimes_\Z R)/I_{\Delta_R}^2\ (\cong R\oplus \Omega^1_{R/\Z}),\]
where $I_{\Delta_R}$ is the kernel of $R\otimes_\Z R \to R$, $a\otimes b\to ab$.
\item\label{intro-thm1.2} Assume $\ch(k)\neq 2,3,5$. Then 
\[h_0(\G_a^\#\te (\G_m^\#)^{\te n})(R)\cong \Omega^n_{R/\Z},\]
where $\Omega^n_{-/\Z}$ denotes the absolute K\"ahler differentials of degree $n$.
\end{enumerate}
\end{thm}
As a corollary of \ref{intro-thm1.1} we obtain for any field $K$ of $\ch(K)\neq 2$
\[ \CH_0(\P^1_K \times \P^1_K, 2(\P^1_K \times (\infty) + (\infty) \times \P^1_K))
\cong \Z \oplus K^{\oplus 3} \oplus  \Omega_{K/\Z}^1,
\]
where the left hand side denotes the Chow groups of zero-cycles of Kerz-Saito.
Moreover,  \ref{intro-thm1.2} and \cite{Bloch-Esnault} yield (see Corollary \ref{Cor:hCH-GaM})
\[h_0((\P^1, 2(\infty))\otimes (\G_m^\#)^{\otimes n})(K)\cong \CH_0((\P^1_K, 2(\infty)), n),\]
for any field $K$ of $\ch(K)\neq 2,3,5$,
where the right hand side denotes the higher Chow-groups with modulus introduced by Binda-Saito in 
\cite{Binda-Saito} (generalizing the additive higher Chow groups of Bloch-Esnault).
Note that the left hand side is more in the spirit of Voevodsky's definition of motivic cohomology as defined in  \cite{MVW}.
\end{para}

\begin{para}\label{para1c}
The sheaves $\G_a^\#$ and $\G_m^\#$ from Theorem \ref{intro-thm1} 
are lifts to $\CI$ of $\G_a$ and $\G_m$, respectively, in the sense that 
\[\omega_!\G_a^\#=\G_a, \qquad \omega_!\G_m^\#=\G_m.\]
On the other hand, there is also a functorial way to lift any $F\in \RSC$ to $\CI$. Indeed,
the restriction of $\omega_!$ to $\CI$  
has a right adjoint $\omega^\CI: \RSC\to \CI$ and we have $\omega_!\omega^{\CI} F=F$.
We obtain a functor
\eq{para1c1}{\RSC^{\times n}\to \RSC, \quad 
 (F_1,\ldots, F_n)\mapsto h_{0}(\omega^{\CI}F_1\te\ldots\te \omega^{\CI}F_n).}
We do not know whether this defines a symmetric monoidal structure on $\RSC$ since
associativity is not clear;  see Corollaries \ref{cor:assoc} and \ref{cor:assocII} for something in this direction.
Note that even if $F$ is representable, $\omega^{\CI}F$ in general will not be and there is also no
simple way to find a resolution in terms of representable presheaves. This makes it potentially more difficult
to compute $\omega^{\CI}F\otimes \omega^{\CI} G$, compared to the cases considered 
in Theorem \ref{intro-thm1}.
However, the presheaf on the right in \eqref{para1c1} has a universal property in the sense that it represents
certain multi-linear maps, see Theorem \ref{thm:ten-rep}. A key point in the proof of this theorem, is 
the fact stemming from \cite{RS}, that a section of $F(\ol{M}\setminus M_\infty)$ lies in 
$\omega^{\CI}F(M)$ if and only if
it lies in $F(S, M_{\infty|S})$, for all morphisms $S\to \ol{M}$,
with $S$ the spectrum of a henselian discrete valuation field over $k$.
Hence we only have to understand $\omega^{\CI}F\otimes \omega^{\CI}G$ on pro-smooth modulus pairs of
dimension 1, which is accomplished by Proposition \ref{prop:ten-rep-1dim}.

Using  Theorem \ref{thm:ten-rep} we get the natural maps  in the following theorem.
\begin{thm}[cf Theorems \ref{thm:tHI-tRSC}, \ref{thm:Ga-te-Ga}, \ref{thm:DR}]\label{intro-thm2}
Let $R$ be a regular local $k$-algebra.
\begin{enumerate}[label = (\arabic*)]
\item\label{intro-thm2.1} Assume $F_i \in \HI$, then 
      \[h_0(\omega^{\CI}F_1\te\ldots\te \omega^{\CI}F_n)(R)\xr{\simeq} (F_1\ten{\HI}\ldots\ten{\HI} F_n)(R).\]
\end{enumerate}

In the following we assume $\ch(k)=0$.
\begin{enumerate}[label=(\arabic*), start=2]
%\item\label{intro-thm2.2} Let $A$ be an abelian variety and $F\in \RSC$, then 
%\[h_0(\omega^{\CI}A\te \omega^{\CI}\G_a\te \omega^{\CI}F)(R)=0.\]
\item\label{intro-thm2.3} 
\[h_0(\omega^{\CI}\G_a\te \omega^{\CI}\G_a)(R)\xr{\simeq} (R\otimes_\Z R)/I_{\Delta_R}^2.\]
\item\label{intro-thm2.4}
 \[h_0(\omega^{\CI}\G_a\te (\omega^{\CI}\G_m)^{\te n})(R)   \xr{\simeq} \Omega^n_{R/\Z}.\]
\end{enumerate}
\end{thm}
All the isomorphisms above can be enhanced to isomorphisms in the category of
Nisnevich sheaves in $\RSC$. In particular, \ref{intro-thm2.1} says that \eqref{para1c1} is the extension to $\RSC$
of the tensor structure on $\HI$ mentioned at the beginning. An important ingredient in the proofs
is Saito's result that  Nisnevich sheafification preserves $\RSC$, which generalizes the analogue statement for $\HI$
by Voevodsky, see \cite{S}.  Another key point is the  injectivity property of reciprocity sheaves proved in
\cite{KSY}, which implies that a morphism between reciprocity sheaves is an isomorphism if it is so on all function fields.
Computations from \cite{RS} are used to define the natural maps in the above theorem.

We want to stress, that \ref{intro-thm2.3} and \ref{intro-thm2.4} are not true in positive characteristic.
For example, if $\ch(k)=p>0$ the left hand side in \ref{intro-thm2.3} vanishes for $R=k$
(see Corollary \ref{ga-ga-p}); also in this case 
${\rm Frob}_{\G_a}\otimes \id$ acts on the left hand side in \ref{intro-thm2.4},
but it does not on the right hand side.

As an application  of these methods, we obtain that certain higher Chow groups  of zero-cycles
with modulus condition defined via either the {\em sum} or the {\em ssup} convention do agree,
see Corollary \ref{Cor:hCH-GaM}.
\end{para}

\begin{para}
We are going to study another tensor structure 
$\tte$ on $\MPST$.
This arises as follows.
Recall that we employed the monoidal structure on $\MCor$
defined by \eqref{eq:oldmonoidstr}.
This is {\em not} a (categorical) product on the category of proper modulus pairs $\MSm$.
As we will see in Proposition \ref{prop:prod-msm},
there does exist a product, which we denote by $\tte$.
This in turn 
induces another tensor structure $\tte$ on $\MPST$.
Although it does not behave very well 
with $\CI$ (see Remark \ref{rmk:interval}),
we will also explore this second tensor product.
For instance, we will prove an analogue of the
Theorems \ref{intro-thm1} and \ref{intro-thm2} for $\tte$.
We also show, that on function fields we get in this way the $K$-groups of reciprocity functors from \cite{IR} 
back, see Theorem \ref{cor:tenT}.
In fact, the isomorphism from Theorem \ref{intro-thm2}\ref{intro-thm2.4} in the case $\tte$ was proven for 
function fields in \cite{IR}. Note however, that the left hand side of  Theorem \ref{intro-thm2}\ref{intro-thm2.3}
vanishes in the case $\tte$, as a consequence of \cite[Thm 5.5.1]{IR}.
Finally we want to mention, that the comparison with the $K$-groups of  reciprocity functors,
together with \cite{RY14} imply the following (see Corollary \ref{cor:prop-tten}):
Assume $\ch(k)=0$. Let $A$ be an abelian variety over $k$ and $F\in \RSC$. Then for every regular local $k$-algebra
\[h_0(\omega^{\CI}A\te \omega^{\CI}\G_a\te \omega^{\CI}F)(R)=0.\]
\end{para}

\noindent
{\bf Acknowledgment.}
Part of this work was done while 
the first author was a visiting professor at the TU M{\"u}nchen.
He thanks Eva Viehmann for the invitation and the support.
The project started when the third author was visiting 
Freie Universit\"at Berlin.
He thanks their hospitality and support.

Many of the results in 
\S \ref{sec:prod-modpair} and \S \ref{sec:mono-str}
are obtained in the collaboration of the third author with
Bruno Kahn and Shuji Saito.
He would like to acknowledge special thanks to them.
The authors  thank the referee for his/her careful reading and valuable comments.

\section{Products of modulus pairs}\label{sec:prod-modpair}
We fix once and for all a perfect field $k$.

\begin{para}\label{sect:cor}
Let $\Sch$ be the category of 
separated schemes of finite type over $k$.
Denote by $\Sm$ the full subcategory of $\Sch$
consisting of smooth schemes over $k$.
Both $\Sch$ and $\Sm$ have finite products
given by the (scheme-theoretic) fiber product over $k$.
We write them simply by $\times$ (instead of $\times_k$).
For $X \in \Sm$ and $Y \in \Sch$,
an integral closed subscheme of $X \times Y$
that is finite and surjective over a component of $X$
is called a prime correspondence from $X$ to $Y$.
The category $\Cor$ of finite correspondences
has the same objects as $\Sm$,
and for $X, Y \in \Sm$ the 
group of morphisms $\Cor(X, Y)$ is
the free abelian group on the set of
all prime correspondences from $X$ to $Y$.
We consider $\Sm$ as a subcategory of $\Cor$
by regarding a morphism in $\Sm$ as its graph in $\Cor$.
The product $\times$ on $\Sm$ 
yields a symmetric monoidal structure on $\Cor$.
\end{para}

\begin{para}\label{sec:umcor}
We recall the definition of the category 
$\uMCor$ from \cite[Definition 1.3.1]{KMSY1}.
A pair $M=(\ol{M}, M^\infty)$
of $\ol{M} \in \Sch$ and 
an effective Cartier divisor $M^\infty$ on $\ol{M}$ 
such that $M^\o:=\ol{M} \setminus |M^\infty| \in \Sm$
is called a modulus pair.
Let $M, N$ be modulus pairs
and $\Gamma \in \Cor(M^\o, N^\o)$ be a prime correspondence.
Let $\ol{\Gamma} \subset \ol{M} \times \ol{N}$
be the closure of $\Gamma \subset M^\o \times N^\o$,
and let
$\ol{\Gamma}^N \to \ol{\Gamma}$ be the normalization.
We say $\Gamma$ is admissible 
if $M^\infty|_{\ol{\Gamma}^N} \ge N^\infty|_{\ol{\Gamma}^N}$,
where $(-)|_{\Gamma^N}$ denotes the pull-back
of a Cartier divisor to $\ol{\Gamma}^N$.
We say $\Gamma$ is left proper
if $\ol{\Gamma}$ is proper over $\ol{M}$.
Let $\uMCor(M, N)$ be the subgroup of $\Cor(M^\o, N^\o)$
generated by all admissible left proper prime correspondences.
The category $\uMCor$ has modulus pairs as objects
and $\uMCor(M, N)$ as the group of morphisms from $M$ to $N$.
\end{para}

\begin{para}
We recall the category $\uMSm$ from \cite[Definition 1.3.2]{KMSY1}.
It has the same objects as $\uMCor$,
and the morphisms are intersection of those of $\uMCor$
and $\Sm$.
This means that for modulus pairs $M, N$,
a morphism $f: M \to N$ in $\uMSm$ 
is given by a morphism $f^\o : M^\o \to N^\o$ in $\Sm$
such that its graph $\Gamma_{f^\o} \in \Cor(M^\o, N^\o)$
is admissible and left proper.
We regard $\uMSm$ as a subcategory of $\uMCor$.
\end{para}

\begin{para}\label{sect:omega-lambda}
We have functors
\begin{equation}\label{eq:omega-lambda}
 \lambda : \Cor \to \uMCor, \,\, \lambda(X)=(X, \emptyset);
\quad
 \ul{\omega} : \uMCor \to \Cor, \,\, \ul{\omega}(M)=M^\o,
\end{equation}
and $\lambda$ is a left adjoint of $\ul{\omega}$
\cite[Lemma 1.5.1]{KMSY1}.
They restrict to a corresponding adjoint pair
\begin{equation}\label{eq:omega-lambda2}
 \lambda : \Sm \to \uMSm, \,\, \lambda(X)=(X, \emptyset);
\quad
 \ul{\omega} : \uMSm \to \Sm, \,\, \ul{\omega}(M)=M^\o.
\end{equation}
\end{para}

\begin{para}\label{sec:te}
There is a symmetric monoidal structure
$\te$ on $\uMSm$ and $\uMCor$
given by
\begin{equation}\label{eq:old-tensor}
(\ol{M}, M^\infty) \otimes
(\ol{N}, N^\infty) 
= (\ol{M} \times \ol{N}, M^\infty \times \ol{N} + \ol{M} \times N^\infty).
\end{equation}
which is used in \cite{KSY3}.
All of the functors 
in \eqref{eq:omega-lambda} and \eqref{eq:omega-lambda2}
are monoidal with respect to $\te$ and $\times$.
However, $\otimes$ is {\it not} a product in $\uMSm$.
(For instance, the diagonal map on $M^\o$
is not admissible for $M \to M \otimes M$
unless $M^\infty=\emptyset$.)
Nevertheless, it is shown in \cite[Corollary 1.10.8]{KMSY1}
that the category $\uMSm$ has finite products,
and it yields another symmetric monoidal structure on $\uMCor$,
see Proposition \ref{prop:prod-msm} below.
(See also Remark \ref{rmk:interval} for a drawback.)
For this, we need a preparation.
\end{para}

\begin{para}
Let $M$ be a modulus pair.
A {\it modulus replacement} of $M$ is 
a morphism $f : M' \to M$ in $\uMSm$ such that
(i) $f^\o : {M'}^\o \to M^\o$ is an isomorphism, 
(ii) $f^\o$ extends to a proper morphism
$\ol{f} : \ol{M'} \to \ol{M}$ in $\Sch$, and
(iii) ${M'}^\infty=\ol{f}^*M^\infty$.
\end{para}

\begin{lem}\label{lem:replacement}
\begin{enumerate}
\item 
Any modulus replacement $f : M' \to M$
is an isomorphism in $\uMSm$.
\item 
Let $f : M' \to M$ be a modulus replacement
and let $N \in \uMSm$.
Then, a prime correspondence
$\Gamma \in \Cor(M^\o, N^o)$ defines an element of  $\uMCor(M, N)$
if and only if
 $\Gamma' :=
(f^\o \times \id_{N^\o})^{-1}(\Gamma) \in \Cor({M'}^\o, N^\o)$
belongs to $\uMCor(M', N)$.
\item
Let $M, N$ be modulus pairs and let
$g^\o : M^\o \to N^\o$ be a morphism in $\Sm$.
Suppose that the closure 
$\ol{\Gamma} \subset \ol{M} \times \ol{N}$
of the graph of $g^\o$
is proper over $\ol{M}$.
Then there is a modulus replacement
$f : M' \to M$ such that
${g'}^{\o}:= g^\o \circ f^\o : {M'}^\o \to N^\o$
extends to a morphism 
$\ol{g'} : \ol{M'} \to \ol{N}$ in $\Sch$.
Moreover, 
$g^\o$ defines a morphism 
$g : M \to N$ in $\uMSm$
if and only 
${g'}^{\o}$ defines a morphism 
$g' : M' \to N$ in $\uMSm$.
\end{enumerate}
\end{lem}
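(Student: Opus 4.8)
The plan is to prove the three parts of Lemma~\ref{lem:replacement} in order, since (2) and (3) rely on (1), and each reduces to the defining properties (i)--(iii) of a modulus replacement together with the description of $\uMCor(M,N)$ as a subgroup of $\Cor(M^\o,N^\o)$ cut out by the admissibility and left-properness conditions.

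For part (1): since $f^\o:{M'}^\o\to M^\o$ is an isomorphism in $\Sm$, its inverse $g^\o:=(f^\o)^{-1}$ is a morphism in $\Sm$, and I must check that $g^\o$ is admissible and left proper as a correspondence $M\to M'$, i.e.\ defines $g\in\uMSm(M,M')$ with $g\circ f=\id$ and $f\circ g=\id$. The graph of $g^\o$ is $(\ol f\times\id)$ applied to (the transpose of) the graph of $\ol f$; left-properness of its closure over $\ol M$ follows from (ii), that $\ol f$ is proper. For admissibility one compares pullbacks of $M^\infty$ and ${M'}^\infty$ to the normalization of the closure of the graph of $g^\o$; using (iii), ${M'}^\infty=\ol f^*M^\infty$, the two divisors pull back to the same thing, so the inequality $M^\infty|\ge {M'}^\infty|$ holds with equality. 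The composites $f\circ g$ and $g\circ f$ agree on the smooth loci with the respective identities, hence equal the identities in $\uMCor$ since $\uMCor(M,N)\subset\Cor(M^\o,N^\o)$.

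For part (2): the closure of $\Gamma$ in $\ol M\times\ol N$ and the closure of $\Gamma'$ in $\ol{M'}\times\ol N$ correspond under $\ol f\times\id_{\ol N}$, and likewise for their normalizations $\ol{\Gamma}^N$, $\ol{\Gamma'}^N$ — indeed, since $f^\o\times\id$ is an isomorphism over the smooth loci, the two closures are birational and proper over each other (via the proper $\ol f$), so they have the same normalization. Under this identification, $M^\infty$ and $N^\infty$ pull back to $\ol{\Gamma'}^N$ in a way compatible with (iii), so the admissibility inequality $M^\infty|_{\ol\Gamma^N}\ge N^\infty|_{\ol\Gamma^N}$ holds iff ${M'}^\infty|_{\ol{\Gamma'}^N}\ge N^\infty|_{\ol{\Gamma'}^N}$. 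Left-properness of $\ol\Gamma$ over $\ol M$ is equivalent to that of $\ol{\Gamma'}$ over $\ol{M'}$ because $\ol{M'}\to\ol M$ is proper and $\ol{\Gamma'}\to\ol\Gamma$ is proper (both being closures of isomorphic graphs), so properness over the base transfers in both directions. Extending by linearity from prime correspondences to all of $\Cor$ finishes this part.

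For part (3): here I construct $M'$ explicitly. Let $\ol{\Gamma}\subset\ol M\times\ol N$ be the closure of the graph of $g^\o$, which is proper over $\ol M$ by hypothesis; since $g^\o$ is a morphism (not merely a correspondence), the projection $\ol\Gamma\to\ol M$ is birational, and being proper it is the blow-up-type modification I want. Set $\ol{M'}:=\ol\Gamma$, let $\ol f:\ol{M'}\to\ol M$ be the first projection and ${M'}^\infty:=\ol f^*M^\infty$; one checks ${M'}^\o={M'}^\infty$-complement is isomorphic to $M^\o$ via $f^\o$ (as $\ol f$ is an isomorphism over $M^\o$, the divisor being pulled back from $M^\infty$), so $f:M'\to M$ is a modulus replacement. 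By construction the second projection $\ol{M'}=\ol\Gamma\to\ol N$ extends ${g'}^\o=g^\o\circ f^\o$. The last equivalence — $g^\o$ defines $g:M\to N$ in $\uMSm$ iff ${g'}^\o$ does — is now a special case of part (2) applied to the prime correspondence $\Gamma_{g^\o}$ (noting that $\Gamma'$ is exactly $\Gamma_{{g'}^\o}$ and that the extra condition of being in $\uMSm$ rather than $\uMCor$, i.e.\ coming from an honest morphism of smooth schemes, is preserved since $f^\o$ is an isomorphism). The main obstacle I anticipate is the careful bookkeeping in parts (2) and (3) showing that the two normalizations $\ol\Gamma^N$ and $\ol{\Gamma'}^N$ are canonically identified and that the relevant Cartier divisor pullbacks match under (iii); once that identification is set up cleanly, the admissibility and properness equivalences are formal, but getting the finite-type/proper hypotheses right so that normalization behaves well is where the real content sits.
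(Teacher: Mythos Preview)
Your arguments for (1) and (3) are essentially the paper's. In (3) the paper sets $\ol{M'}$ equal to the \emph{normalization} of $\ol{\Gamma}$ rather than $\ol{\Gamma}$ itself, but your choice works equally well: $\ol{\Gamma}$ is integral (being the closure of the graph of $g^\o$), $\ol f^*M^\infty$ is an effective Cartier divisor on it, and $\ol f^{-1}(M^\o)=\Gamma_{g^\o}\cong M^\o$, so $(\ol{\Gamma},\ol f^*M^\infty)$ is a modulus pair and a modulus replacement of $M$.

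There is, however, a genuine gap in your proof of (2). You assert that $\ol{\Gamma}$ and $\ol{\Gamma'}$ are ``proper over each other'' and hence ``have the same normalization''. Neither claim is correct. There is a proper birational map $\ol{\Gamma'}\to\ol{\Gamma}$ induced by $\ol f\times\id_{\ol N}$, but no map in the other direction in general, and a proper birational morphism between normal integral schemes need not be an isomorphism. For a concrete counterexample take $\ol{M}=\A^2_{x,y}$ with $M^\infty=\{x=0\}$, $\ol{M'}=\Bl_{(0,0)}\A^2$ with ${M'}^\infty=\ol f^*M^\infty$, $\ol N=\A^1_z$ with $N^\infty=\emptyset$, and $\Gamma=\{z=y\}\subset M^\o\times N^\o$. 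Then $\ol{\Gamma}\cong\A^2$ while $\ol{\Gamma'}\cong\Bl_{(0,0)}\A^2$; both are already normal, so $\ol{\Gamma}^N\not\cong\ol{\Gamma'}^N$.

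The paper avoids this entirely: once (1) is proved, $f$ is an isomorphism in $\uMCor$, so pre-composition with $f$ gives a bijection $\uMCor(M,N)\xrightarrow{\sim}\uMCor(M',N)$; since $\uMCor(M,N)\subset\Cor(M^\o,N^\o)$ and composition with $f$ on the level of $\Cor$ is exactly $\Gamma\mapsto(f^\o\times\id)^{-1}(\Gamma)=\Gamma'$, part (2) follows immediately. If you want to salvage your direct approach, you can: the map $\ol{\Gamma'}^N\to\ol{\Gamma}^N$ is proper and surjective, so pulling back effective Cartier divisors along it is faithful (use the valuative criterion of properness to lift codimension-one points of $\ol{\Gamma}^N$ to $\ol{\Gamma'}^N$), and this together with ${M'}^\infty=\ol f^*M^\infty$ gives the equivalence of the admissibility inequalities. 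But the paper's one-line deduction from (1) is cleaner.
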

\begin{proof}
The closure of the graph $\Gamma_{(f^\o)^{-1}}$ of
$(f^\o)^{-1} : M^\o \to {M'}^\o$ in $\ol{M} \times \ol{M'}$
is the transpose ${}^t \Gamma_{\ol{f}}$ 
of the graph of $\ol{f} : \ol{M'} \to \ol{M}$,
which is proper over $\ol{M}$ as $\ol{f}$ is.
The condition (iii) in the definition of modulus replacement
immediately shows that 
$\Gamma_{(f^\o)^{-1}}$ is admissible as well.
Thus $(f^\o)^{-1}$ defines the inverse of $f$ in $\uMSm$.
This proves (1), and (2) is an immediate consequence.

We prove (3).
Let $\ol{M'} \to \ol{\Gamma}$ be the normalization.
The composition 
$\ol{f} : \ol{M'} \to \ol{\Gamma} \hookrightarrow
\ol{M} \times \ol{N} \to \ol{M}$
is a proper morphism restricting to an isomorphism
$f^\o : \ol{f}^{-1}(M^\o) \cong M^\o$. 
Hence it defines a modulus replacement
$f : M'=(\ol{M'}, \ol{f}^*M^\infty) \to M$,
and $g^\o \circ f^\o$ extends to 
a morphism 
$\ol{g'} : \ol{M'} \to \ol{\Gamma} \hookrightarrow
\ol{M} \times \ol{N} \to \ol{N}$.
This proves the first statement.
The rest follows from (2).
\end{proof}

\begin{para}\label{ss:const-newprod}
Take $M_1, M_2 \in \uMSm$.
Let $\pi : \ol{M}\to\ol{M}_1 \times \ol{M}_2$
be the blow-up along $M_1^\infty \times M_2^\infty$.
Define closed subschemes $E, D_1$ and $D_2$ of $\ol{M}$ 
as the exceptional divisor,
the strict transform of $M_1^\infty \times \ol{M}_2$,
and that of $\ol{M}_1 \times M_2^\infty$, respectively.
They can be locally described as follows.
Suppose $\ol{M}_i = \Spec A_i$
and $M_i^\infty$ is defined by the ideal generated by $t_i \in A_i$.
Then $\ol{M} = \Spec R_1 \cup \Spec R_2$, where
\[ R_1 = A_1 \otimes A_2 [u]/(t_1 \otimes 1 - u(1 \otimes t_2))
\text{ and }
   R_2 =A_1 \otimes A_2 [v]/(1 \otimes t_2 - v(t_1 \otimes 1)),
\]
and patching is given by $uv=1$.
The divisor $E$ is defined by the ideals generated 
by $1 \otimes t_2 \in R_1$ and $t_1 \otimes 1 \in R_2$.
Similarly, $D_1$ (resp. $D_2$) is defined 
by $u \in R_1$ and $1 \in R_2$
(resp. $1 \in R_1$ and $v \in R_2$).
From this description, we see that
$D_i$ are effective Cartier divisors
and that
\begin{equation}\label{eq:prod-disjoint}
|D_1| \cap |D_2|=\emptyset, \quad
\ol{p}_i^*M_i^\infty = E + D_i,
\end{equation}
where $\ol{p}_i : \ol{M} \to \ol{M}_i$ is the canonical map.
Since $E$ is also a Cartier divisor, we can define
\begin{equation}\label{eq1.12a}
M_1 \tte M_2 :=
(\ol{M}, E+D_1+D_2) \in \uMSm. 
\end{equation}
Put $M:=M_1 \tten{} M_2$. 
Note that $\ol{p}_i$ defines a morphism in $\uMSm$
\begin{equation}\label{eq:proj-map}
p_i : M \to M_i \qquad \text{for}~ i=1, 2.
\end{equation}
On the other hand, 
the blow-up $\pi$ defines a modulus replacement
\[ M':=(\ol{M}, 2E+D_1+D_2) \overset{\cong}{\longrightarrow}
 M_1 \otimes M_2.
\]
Since ${M'}^\infty \ge M^\infty$,
the identity map on $M_1^\o \times M_2^\o$
defines a canonical morphism
\begin{equation}\label{eq1.12}
M_1\otimes M_2 \to M_1\tten{} M_2.
\end{equation}
This is an isomorphism if and only if 
$M_1^\infty=\emptyset$ or $M_2^\infty=\emptyset$.
Note also that this becomes an isomorphism in $\Sm$
after applying $\uomega$ \eqref{eq:omega-lambda2}.
\end{para}

We now prove that
the category $\uMSm$ has finite products.
Part (1) of the following proposition is obtained in
\cite[Corollary 1.10.8]{KMSY1},
but we include its proof to keep this article self-contained.

\begin{prop}\label{prop:prod-msm}
\begin{enumerate}
\item 
Let $M_1, M_2 \in \uMSm$
and let $M=M_1 \tten{} M_2 $ be
the object defined in \eqref{eq1.12a}.
Then $M$ is a categorical product of $M_1, M_2$ in $\uMSm$.
In particular, $\tte$ is bifunctorial in $M_1$ and $M_2$.
\item 
Let $M_i, N_i \in \uMCor$,
$\Gamma_i \in \uMCor(M_i, N_i)$ for $i=1, 2$
and let $M := M_1 \tten{} M_2, ~N := N_1 \tten{} N_2$.
Then $\Gamma_1 \times \Gamma_2 \in 
\Cor(M_1^\o \times M_2^\o, N_1^\o \times N_2^\o)$
belongs to  $\uMCor(M, N)$.
Consequently,
$\tten{}$ 
yields a symmetric monoidal structure on $\uMCor$.
\item
All of the functors 
in \eqref{eq:omega-lambda} and \eqref{eq:omega-lambda2}
are monoidal with respect to $\tte$ and $\times$.
\end{enumerate}
\end{prop}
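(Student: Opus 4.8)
The proof naturally splits according to the three claims. For (1), the strategy is to verify the universal property of the product directly: given $N \in \uMSm$ together with morphisms $q_i : N \to M_i$ ($i=1,2$), one must produce a unique $q : N \to M$ with $p_i \circ q = q_i$. On the level of smooth interiors there is no choice: $q^\o = (q_1^\o, q_2^\o) : N^\o \to M_1^\o \times M_2^\o$ is forced, so the entire content is showing that $q^\o$ is admissible and left proper as a map into $M = M_1 \tten{} M_2$, \emph{given} that each $q_i^\o$ is admissible and left proper into $M_i$. By Lemma \ref{lem:replacement}(3) (applied after replacing $N$ by a suitable modulus replacement, which changes nothing by Lemma \ref{lem:replacement}(1),(2)) I may assume $q^\o$ extends to $\ol{q} : \ol{N} \to \ol{M}_1 \times \ol{M}_2$, and even — by blowing up $\ol{N}$ further along the preimage of $M_1^\infty \times M_2^\infty$ — that $\ol{q}$ factors through $\pi : \ol{M} \to \ol{M}_1 \times \ol{M}_2$, giving $\ol{q}' : \ol{N} \to \ol{M}$. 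The admissibility condition to check is $N^\infty|_{\ol{N}^{\prime N}} \ge (\ol{q}')^*(E + D_1 + D_2)$. Using the local equations from \ref{ss:const-newprod} and the relation $\ol{p}_i^* M_i^\infty = E + D_i$ from \eqref{eq:prod-disjoint}, admissibility of $q_i$ gives $N^\infty \ge (\ol{q}')^*(E + D_i)$ for each $i$; since $|D_1| \cap |D_2| = \emptyset$, these two inequalities combine to $N^\infty \ge (\ol{q}')^*(E + D_1 + D_2)$, which is exactly what is needed. Left properness of $q$ follows because $\ol{q}'$ is a morphism of proper $k$-schemes (both $\ol{N}$ and $\ol{M}$ are proper since $M_i, N$ are proper modulus pairs — wait, here $\uMSm$ allows non-proper $\ol{M}$, so instead: the closure of the graph of $q^\o$ in $\ol{N} \times \ol{M}$ is the graph of $\ol{q}'$, which is proper over $\ol{N}$ since it is a closed immersion composed with nothing). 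Uniqueness is clear from uniqueness of $q^\o$ together with Lemma \ref{lem:replacement}(2).

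For (2), I would reduce the statement about correspondences to the statement about the schemes underlying their components. Writing $\Gamma_i = \sum_j n_{ij} \Gamma_{ij}$ with $\Gamma_{ij}$ prime, bilinearity of the exterior product reduces us to the case where each $\Gamma_i$ is a single prime correspondence. Let $Z_i \subset M_i^\o \times N_i^\o$ be the prime cycle, with normalized closure $\ol{Z}_i^N \to \ol{M}_i \times \ol{N}_i$; the admissibility of $\Gamma_i$ means $p_{M_i}^* M_i^\infty \ge p_{N_i}^* N_i^\infty$ on $\ol{Z}_i^N$. Now $Z_1 \times Z_2$ need not be a single prime correspondence from $M_1^\o \times M_2^\o$ to $N_1^\o \times N_2^\o$ (its components are products of the $Z_i$ with possibly some multiplicity — actually $Z_1 \times Z_2$ is integral if the $Z_i$ are and $k$ is perfect, so it is prime, but finiteness/surjectivity over components of the source is what makes it a correspondence, which holds since it is a product of finite surjective maps). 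The key point is to compare the normalized closure of $Z_1 \times Z_2$ inside $\ol{M} \times \ol{N}$ — where $\ol{M}, \ol{N}$ are the blown-up schemes from \ref{ss:const-newprod} — with (a modification of) $\ol{Z}_1^N \times \ol{Z}_2^N$. I would handle this by the same blow-up trick: the closure of $Z_1 \times Z_2$ in $\ol{M}_1 \times \ol{M}_2 \times \ol{N}_1 \times \ol{N}_2$ maps to both $\ol{M}_1 \times \ol{M}_2$ and $\ol{N}_1 \times \ol{N}_2$, and after suitable blow-ups we get a scheme $W$ dominating $\ol{Z}_1^N \times \ol{Z}_2^N$ (hence we can pull back divisor inequalities from there) and admitting maps to $\ol{M}$ and $\ol{N}$. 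On $W$, the inequality $(\text{pullback of }E_M + D_1^M + D_2^M) \le (\text{pullback of }N^\infty)$ is checked, as in part (1), by splitting off the two pieces $E_M + D_i^M = \ol{p}_i^* M_i^\infty$ (disjointness of $D_1^M, D_2^M$), applying the admissibility of $\Gamma_i$ on $\ol{Z}_i^N$ — which controls $\ol{p}_i^* M_i^\infty$ by $\ol{p}_i^* N_i^\infty = E_N + D_i^N$ — and then recombining using disjointness of $D_1^N, D_2^N$ on the target side. Left properness of $\Gamma_1 \times \Gamma_2$ follows from that of the $\Gamma_i$ since a product of proper maps is proper. The compatibility with composition needed to promote $\tte$ to a monoidal structure on $\uMCor$ is then formal from part (1) (the monoidal structure on objects is already constructed there) together with this exterior-product compatibility; the coherence (associativity and symmetry) constraints are inherited from those of $\times$ on $\Cor$ via the projections.

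For (3), monoidality of $\lambda$ and $\ulomega$ with respect to $\tte$ and $\times$ is almost immediate: $\ulomega(M_1 \tte M_2) = (\ol{M})^\o = \ol{M}_1^\o \times \ol{M}_2^\o = \ulomega(M_1) \times \ulomega(M_2)$ since the blow-up center $M_1^\infty \times M_2^\infty$ lies in the divisor at infinity, and this isomorphism is visibly natural; for $\lambda$, $\lambda(X) \tte \lambda(Y) = (X, \emptyset) \tte (Y, \emptyset)$, and since one factor has empty divisor the blow-up is trivial and $E = D_1 = D_2 = \emptyset$, so the product is $(X \times Y, \emptyset) = \lambda(X \times Y)$. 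One should also check the unit and associativity/symmetry constraints match, but these are forced by the corresponding identities on $\Cor$ and $\Sm$ under the faithful functor $\ulomega$. The remark after \eqref{eq1.12} that \eqref{eq1.12} becomes an isomorphism after $\ulomega$ is consistent with this.

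\textbf{Main obstacle.} The essential difficulty throughout is the same geometric point: controlling how the divisor $E + D_1 + D_2$ on the blow-up $\ol{M}$ pulls back along an arbitrary morphism (or along the normalized closure of a correspondence), given only information on the separate factors. The reduction that makes this tractable is the identity $\ol{p}_i^* M_i^\infty = E + D_i$ together with $|D_1| \cap |D_2| = \emptyset$ from \eqref{eq:prod-disjoint} — this is what lets a single "product" admissibility condition be assembled from two "factor" conditions without loss. Making the blow-up reductions in part (2) precise (ensuring one genuinely obtains a scheme $W$ mapping compatibly to all the relevant spaces and dominating $\ol{Z}_1^N \times \ol{Z}_2^N$) is the most delicate bookkeeping; I expect to invoke resolution-free arguments using only normalization and blow-ups of ideals, exactly as in \ref{ss:const-newprod} and Lemma \ref{lem:replacement}.
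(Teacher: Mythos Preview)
Your overall strategy matches the paper's proof closely: part (1) via the universal property reduced (through Lemma~\ref{lem:replacement}) to a divisor inequality handled by \eqref{eq:prod-disjoint}, part (2) by checking admissibility and left properness of $\Gamma_1 \times \Gamma_2$ on normalized closures, and part (3) as a formality. Two points deserve more care.

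First, in part (1) you invoke Lemma~\ref{lem:replacement}(3) to extend $q^\o$ to $\ol{q}:\ol{N}\to\ol{M}_1\times\ol{M}_2$, but that lemma has a hypothesis: the closure of the graph must already be proper over $\ol{N}$. You only address left properness \emph{after} the reduction, which is circular. The paper fills this by observing that the closure of the graph of $(q_1^\o,q_2^\o)$ in $\ol{N}\times\ol{M}$ sits inside (a diagonal pullback of) $\ol{\Gamma}_1\times\ol{\Gamma}_2$, which is proper over $\ol{N}\times\ol{N}$ since each $\ol{\Gamma}_i$ is proper over $\ol{N}$; properness then follows. You should insert this argument before applying the lemma.

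Second, and more substantively, your ``recombining'' step in part (2) is incomplete. On the open set where $D_j^M$ vanishes you correctly get $M^\infty = E^M + D_i^M$ and, via admissibility of $\Gamma_i$, the bound $\nu^*(E^M+D_i^M)\ge \nu^*(E^N+D_i^N)$. But this only yields $\nu^*M^\infty \ge \nu^*N^\infty$ on the further locus where $D_j^N$ also vanishes --- the ``diagonal'' open sets $U_{ii}$. On the cross open sets $U_{12}, U_{21}$ (where $D_i^M$ vanishes but $D_j^N$ does, with $i\ne j$) your inequality gives no control over the $D_j^N$ contribution to $N^\infty$. The paper handles these cases by a different move: on $U_{21}$, say, one drops $D_1^M$ to get $M^\infty \ge E^M$, then applies admissibility of $\Gamma_2$ (not $\Gamma_1$) restricted to $\ol{M}\setminus|D_2^M|$, where it reads $\nu^*E^M \ge \nu^*(E^N+D_2^N) = \nu^*N^\infty$. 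So all four open sets require a genuine case split, and in the cross cases you must switch which $\Gamma_i$ you invoke. Your outline uses ``$\Gamma_i$'' uniformly and does not make this switch; as written the argument would not close.
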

\begin{proof}
(1) 
Take $N \in \uMSm$ and $f_i : N \to M_i ~(i=1, 2)$ in $\uMSm$.
We must show that there exists a unique $f : N \to M$
such that $f_i = p_i \circ f$,
where $p_i$ is from \eqref{eq:proj-map}.
By construction,
we find the functor $\ul{\omega}$ from \eqref{eq:omega-lambda2}
commutes with $\tte$ and $\times$
(as it should be,
since $\ul{\omega}$ has a left adjoint $\lambda$,
and $\tte$ will be seen to be a product in $\uMSm$).
Hence $f$ should be given by the product map
$f^\o : N^\o \to M^\o_1 \times M^\o_2$ of $f_1^\o$ and $f_2^\o$,
showing uniqueness.

Now we turn to the existence. 
Let $\ol{\Gamma} \subset \ol{N} \times \ol{M}$
be the closure of the graph of
$f^\o:=f_1^\o \times f_2^\o : N^\o \to M_1^\o \times M_2^\o$.
Let $\ol{\Gamma}_i$ be the closure of the graph of 
$f_i^\o : N^\o \to M_i^\o$ in $\ol{N} \times \ol{M}_i$
for $i=1, 2$.
We then have a commutative diagram
\[
\xymatrix{
\ol{\Gamma} \ar[rr] \ar@{}[d]|-*{\cap} & &
\ol{\Gamma}_1 \times \ol{\Gamma}_2 \ar@{}[d]|-*{\cap}
\ar@/^70pt/[dd]
\\
\ol{N} \times \ol{M} 
\ar[rr]^-{\delta \times \pi} \ar[d] 
&&
\ol{N} \times \ol{N} \times \ol{M}_1 \times \ol{M}_2  \ar[d]
\\
\ol{N} \ar[rr]^-{\delta} & & \ol{N} \times \ol{N},
}
\]
where 
$\pi : \ol{M} \to \ol{M}_1 \times \ol{M}_2$ is the blow-up and 
$\delta : \ol{N} \to \ol{N} \times \ol{N}$ is the diagonal
closed immersion.
The right winding arrow is proper because 
$\ol{\Gamma}_1$ and $\ol{\Gamma}_2$ are proper over $\ol{N}$.
We conclude that $\ol{\Gamma}$ is proper over $\ol{N}$.

Therefore, by Lemma \ref{lem:replacement},
we may assume that
$f^\o:=f_1^\o \times f_2^\o : N^\o \to M_1^\o \times M_2^\o$
extends to a morphism $\ol{f} : \ol{N} \to \ol{M}$.
In particular, $f_i^\o : N^\o \to M_i^\o$ extends to
a morphism $\ol{f}_i: \ol{N} \to \ol{M}_i$ for $i=1,2$. 
The admissibility of $f_i$ means that 
\[ N^\infty \ge \ol{f}_i^* M_i^\infty
= \ol{f}^* \ol{p}_i^* M_i^\infty = \ol{f}^*E + \ol{f}^*D_i,
\]
where the last equality follows from \eqref{eq:prod-disjoint}.
This proves the desired inequality
\begin{equation}\label{ineq:prod-div}
N^\infty \ge \ol{f}^*E + \ol{f}^*D_1 + \ol{f}^*D_2 = \ol{f}^*M^\infty
\end{equation}
restricted to $\ol{M} \setminus |\ol{f}^*D_i|$ for $i=1, 2$,
but we have
$|\ol{f}^* D_1| \cap |\ol{f}^* D_2| 
= \emptyset$ by \eqref{eq:prod-disjoint},
hence \eqref{ineq:prod-div} holds on the whole $\ol{M}$.
This completes the proof of (1).

(2)
We may suppose $\Gamma_i$ are prime correspondences.
Let $\ol{\Gamma}_i$ (resp. $\ol{\Gamma}$) 
be the closure of $\Gamma_i$ (resp. $\Gamma_1 \times \Gamma_2$)
in $\ol{M}_i \times \ol{N}_i$ (resp. $\ol{M} \times \ol{N}$).
We have a commutative diagram
\[
\xymatrix{
\ol{\Gamma} \ar[rr] \ar@{}[d]|-*{\cap} \ar@/_70pt/[dd] &&
\ol{\Gamma}_1 \times \ol{\Gamma}_2 \ar@{}[d]|-*{\cap} 
\ar@/^70pt/[dd]
\\
\ol{M} \times \ol{N} \ar[rr]^-{\pi_M \times \pi_N} \ar[d]&&
\ol{M}_1 \times \ol{M}_2 \times \ol{N}_1 \times \ol{N}_2  \ar[d]
\\
\ol{M} \ar[rr]^-{\pi_M}&&
\ol{M}_1 \times \ol{M}_2 
}
\]
where $\pi_M : \ol{M} \to \ol{M}_1\times \ol{M}_2$
and  $\pi_N : \ol{N} \to \ol{N}_1\times \ol{N}_2$
are the blow-up maps.
By assumption $\ol{\Gamma}_i$ are proper over $\ol{M}_i$,
hence so is the right winding arrow.
It follows that the left winding arrow is proper too.

Let $\ol{\Gamma}^N$ be the normalization of $\ol{\Gamma}$
and let $\nu : \ol{\Gamma}^N \to \ol{M} \times \ol{N}$
be the canonical map.
It remains to prove 
\begin{equation}\label{eq:bifunc}
 \nu^*(M^\infty \times \ol{N}) \geq \nu^*(\ol{M} \times N^\infty). 
\end{equation}
Let $\ol{\Gamma}_i^N$ be the normalization of $\ol{\Gamma}_i$.
The composition map 
$\ol{\Gamma}^N \to \ol{M} \times \ol{N} \to \ol{M}_i \times \ol{N}_i$
factors through $\mu_i : \ol{\Gamma}^N \to \ol{\Gamma}_i^N$.
By assumption, we have 
$\nu_i^*(M_i^\infty \times \ol{N}_i) 
\geq \nu_i^*(\ol{M}_i \times N_i^\infty)$,
where 
$\nu_i : \ol{\Gamma}_i^N \to \ol{M}_i \times \ol{N}_i$
is the canonical map.
Using a commutative diagram
\[
\xymatrix{
\ol{\Gamma}^N \ar[d]_{\mu_i} \ar[r]^-{\nu} & 
\ol{M} \times \ol{N} \ar[d]^-{\ol{p}_i} 
\ar[rr]^-{\pi_M \times \pi_N} & &
\ol{M}_1 \times \ol{M}_2 \times \ol{N}_1 \times \ol{N}_2  
\ar[dll]^{q_i}
\\
\ol{\Gamma}_i^N \ar[r]_-{\nu_i} &
\ol{M}_i \times \ol{N}_i, & &
}
\]
where $q_i$ is the projection and 
$\ol{p}_i=q_i \circ (\pi_M \times \pi_N)$, we obtain
\begin{equation}\label{eq:prod-ineq}
\nu^* \ol{p}_i^*(M_i^\infty \times \ol{N}_i)
\geq \nu^* \ol{p}_i^*(\ol{M}_i \times N_i^\infty)
\quad \text{for $i=1,2$}.
\end{equation}

We write $D_1^M,~ D_2^M$ and $E^M$ for
the strict transform of $M_1^\infty \times \ol{M}_2$,
that of $\ol{M}_1 \times M_2^\infty$ and
the exceptional divisor, respectively.
They are effective Cartier divisors on $\ol{M}$,
and we have $M^\infty = D_1^M+D_2^M+E^M$.
We use similar notations for 
$D_1^N,~ D_2^N,~ E^N \subset \ol{N}$.
Then we have
$\ol{p}_i^*(M_i^\infty \times \ol{N}_i) = (E^M + D_i^M) \times \ol{N}$
and 
$\ol{p}_i^*(\ol{M}_i \times N_i^\infty) = \ol{M} \times (E^N + D_i^N)$.
Hence
\eqref{eq:prod-ineq} can be rewritten as
\begin{equation}\label{eq:prod-ineq2}
\nu^*((E^M + D_i^M) \times \ol{N}) \geq
\nu^*(\ol{M} \times (E^N + D_i^N)) \quad \text{for $i=1,2$}.
\end{equation}
On the other hand, we also have
$|D_1^M| \cap |D_2^M| = |D_1^N| \cap |D_2^N| = \emptyset.$
Hence it suffices to show \eqref{eq:bifunc}
after restricting to
\[
U_{i, j}:=
\nu^{-1}((\ol{M} \setminus |D_i^M|) \times (\ol{N} \setminus |D_j^N|))
\quad \text{for $i, j =1, 2$}.
\]

Suppose first $(i,j)=(1,1)$.
We have
\[
\label{ineq:n}
M^\infty|_{\ol{M} \setminus |D_1^M|} 
= (E^M + D_2^M)|_{\ol{M} \setminus |D_1^M|}, \quad
N^\infty|_{\ol{N} \setminus |D_1^N|} 
= (E^N + D_2^N)|_{\ol{N} \setminus |D_1^N|}.
\]
Hence \eqref{eq:bifunc} on $U_{11}$
follows from \eqref{eq:prod-ineq2} for $i=2$.
The case $(i,j)=(2,2)$ is shown in the same way.
Suppose now $(i,j)=(2,1)$.
We have
\[
 M^\infty|_{\ol{M} \setminus |D_2^M|} 
= (E^M + D_1^M)|_{\ol{M} \setminus |D_2^M|}
\geq E^M|_{\ol{M} \setminus |D_2^M|}.
\]
The left hand side of \eqref{eq:prod-ineq2} for $i=2$
restricts to $\nu^*(E^M \times \ol{N})$ on $U_{21}$.
Hence we get
\begin{align*}
&\nu^*(M^\infty \times \ol{N})|_{U_{21}}
\geq \nu^*(E^M \times \ol{N})|_{U_{21}}
\\
&\geq \nu^*(\ol{M} \times (E^N + D_2^N))|_{U_{21}}
=\nu^*(\ol{M} \times N^\infty)|_{U_{21}},
\end{align*}
as desired.
The case $(i,j)=(1,2)$ is shown in the same way.
(2) is proved.
(3) is obvious.
\end{proof}

\begin{rmk}\label{rmk:interval}
Let $\bcube := (\P^1, \infty) \in \uMSm$
so that $\bcube^\o=\A^1$.
The map $\A^1 \times \A^1 \to \A^1, (x, y) \mapsto xy$
is {\it not} admissible for 
$\bcube \tte \bcube \to \bcube$.
In particular, 
the interval structure on $\A^1$ used in \cite{MV}
cannot be lifted to $\uMCor$
with respect to the monoidal structure $\tte$.
On the other hand,
the same map is admissible for 
$\bcube \te \bcube \to \bcube$,
and it yields such a lift with respect to $\te$.
This is the main advantage of $\te$ over $\tte$.
\end{rmk}

\begin{para}\label{sec:mcor}
Recall from \cite[Definition 1.1]{KMSY1}
that a modulus pair $M$ is called proper if $\ol{M}$ is.
Denote by $\MSm$ and $\MCor$
the full subcategory of $\uMSm$ and $\uMCor$
consisting of proper modulus pairs, respectively.
Our construction of $\tte$ from \eqref{eq1.12a}
restricts to a product on $\MSm$
and to a symmetric monoidal structure on $\MCor$.
(The construction of $\te$ from \eqref{eq:old-tensor}
restricts to  symmetric monoidal structures on $\MSm$ and $\MCor$
as well.)
We use the same letter $\tau$ 
\begin{equation}\label{eq:tau}
\tau : \MSm \to \uMSm, \qquad \tau : \MCor \to \uMCor
\end{equation}
for the inclusion functors,
which are thus monoidal for both $\te$ and $\tte$.
It follows that $\omega := \uomega \tau$ is monoidal 
for both $(\te, \times)$ and $(\tte, \times)$:
\begin{equation}\label{eq:omega} 
\omega : \MSm \to \Sm, \qquad \omega : \MCor \to \Cor.
\end{equation}
\end{para}

\section{Monoidal structures on modulus presheaves with transfers}\label{sec:mono-str}
Tensor products of
(modulus) presheaves with transfers are introduced.

\begin{para}\label{sec:mod-a}
Let $\sA$ be an additive category 
with a symmetric monoidal structure $\bullet$,
and let $\Mod(\sA)$  be the abelian category 
of all additive functors $\sA^\op \to \Ab$.
Recall, that there exists a symmetric monoidal structure
$\bullet$ on $\Mod(\sA)$ (unique up to equivalence)
such that 
the (additive) Yoneda functor $y : \sA \to \Mod(\sA)$
is monoidal, and
such that $\bullet$ is right exact (see, e.g., \cite[Lecture 8]{MVW}, or \cite[A8, A9]{KY}).
The monoidal structure $\bullet$ on $\Mod(\sA)$
is closed, that is, $(-)\bullet F$ has a right adjoint $\uHom(F,-)$, for all $F\in \Mod(\sA)$.
For a representable object, it is given by
\begin{equation}\label{eq:uhom}
\uHom(y(A), F')(B)=F'(A \bullet B),
\quad A, B \in \sA, ~F' \in \Mod(\sA).
\end{equation}
\end{para}

\begin{para}\label{sec:pst}
The standard closed symmetric monoidal structure on 
the category of presheaves with transfers
$\PST := \Mod(\Cor)$ is obtained by
applying the general machinery in \S \ref{sec:mod-a}
to $\sA=\Cor$ and $\bullet=\times$ from \S \ref{sect:cor}.
We denote it by $\ten{\PST}$, or
simply by $\te$ when no confusion can occur.
For $X \in \Sm$,
we use the standard notation
$\Z_\tr(X)=y(X)$ for the Yoneda embedding.
\end{para}

\begin{para}\label{sec:mpst}
Let us apply the results of \S \ref{sec:mod-a}
to $\sA=\uMCor$ from \S \ref{sec:umcor}.
The category
$\uMPST :=\Mod(\uMCor)$ of modulus presheaves with transfers
carries two closed symmetric monoidal structures 
$\te=\ten{\uMPST}$ and $\tte=\tten{\uMPST}$,
which are deduced by $\te$ from \S \ref{sec:te}
and by $\tte$ from \eqref{eq1.12a}, respectively.
For $M \in \uMCor$,
we write $\Z_\tr(M)=y(M) \in \uMPST$ for the Yoneda embedding.
The left Kan extension of $\uomega$ from \eqref{eq:omega-lambda}
\begin{equation}\label{eq:uomega-sh}
\uomega_! : \uMPST \to \PST
\end{equation}
is monoidal with respect to both $(\ten{\uMPST}, \ten{\PST})$ 
and $(\tten{\uMPST}, \ten{\PST})$.
It is a localization and is exact, as it has both left and right adjoints
(see \cite[Proposition 2.3.1]{KMSY1}).
\end{para}

\begin{lem}\label{lem:comp-te-tte}
There is a natural transformation of bifunctors 
$\Psi : \te \to \tte$
which is compatible with \eqref{eq1.12} through the Yoneda embedding.
Moreover, for any $F, F' \in \uMPST$ we have
\[ \uomega_!(\Coker(\Psi_{F, F'} : F \te F' \to F \tte F'))=0.
\] 
\end{lem}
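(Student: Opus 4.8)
The plan is to construct $\Psi$ on representables first and then extend by right-exactness. For $M_1, M_2 \in \uMCor$ the canonical morphism $M_1 \otimes M_2 \to M_1 \tte M_2$ from \eqref{eq1.12} gives, via the Yoneda embedding, a morphism $\Z_\tr(M_1) \te \Z_\tr(M_2) = \Z_\tr(M_1 \otimes M_2) \to \Z_\tr(M_1 \tte M_2) = \Z_\tr(M_1) \tte \Z_\tr(M_2)$. One checks this is natural in $M_1, M_2$ (using bifunctoriality of both $\otimes$ and $\tte$, the latter from Proposition \ref{prop:prod-msm}) and symmetric. To pass to general $F, F' \in \uMPST$, I would use presentations $\bigoplus_i \Z_\tr(A_i) \to F \to 0$ and $\bigoplus_{i'} \Z_\tr(A'_{i'}) \to F' \to 0$ as in \eqref{eq:resolution}, together with the explicit cokernel formulas \eqref{eq:prod-general} for both $\te$ and $\tte$; the morphisms on the representable pieces $\Z_\tr(A_i \otimes A'_{i'}) \to \Z_\tr(A_i \tte A'_{i'})$ are compatible with the two differentials, hence induce $\Psi_{F, F'} : F \te F' \to F \tte F'$ on cokernels. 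Independence of the presentations and bifunctoriality are formal.

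Next I would identify $\Coker(\Psi_{F, F'})$. From the cokernel formulas \eqref{eq:prod-general}, $\Coker(\Psi_{F, F'})$ is the cokernel of $\bigoplus_{i, i'} \Z_\tr(A_i \otimes A'_{i'}) \to \bigoplus_{i, i'} \Z_\tr(A_i \tte A'_{i'})$ modulo the images of the relation terms; more precisely it is computed by a presentation whose generators are $\Z_\tr(A_i \tte A'_{i'}) / \Z_\tr(A_i \otimes A'_{i'})$ and whose relations come from the $B_j$-terms. Since $\uomega_!$ is exact (stated in \S\ref{sec:mpst}) and commutes with direct sums, it suffices to show $\uomega_! \Coker(\Z_\tr(M \otimes N) \to \Z_\tr(M \tte N)) = 0$ for all $M, N \in \uMCor$, or even just that $\uomega_!$ sends $\Z_\tr(M \otimes N) \to \Z_\tr(M \tte N)$ to an isomorphism. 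But $\uomega_! \Z_\tr(M) = \Z_\tr(\uomega(M)) = \Z_\tr(M^\o)$ by definition of the left Kan extension through Yoneda, and by the last sentence of \S\ref{ss:const-newprod} the morphism \eqref{eq1.12} becomes an isomorphism in $\Sm$ after applying $\uomega$ — that is, $(M \otimes N)^\o = M^\o \times N^\o = (M \tte N)^\o$ and \eqref{eq1.12} restricts to the identity. Hence $\uomega_! \Z_\tr(M \otimes N) \to \uomega_! \Z_\tr(M \tte N)$ is the identity of $\Z_\tr(M^\o \times N^\o)$, so its cokernel is $0$; applying exact $\uomega_!$ to the presentation of $\Coker(\Psi_{F,F'})$ then kills everything.

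The compatibility with \eqref{eq1.12} through Yoneda is essentially the definition of $\Psi$ on representables, so there is nothing extra to prove there. The main obstacle is bookkeeping rather than mathematics: one must verify that the morphisms on representable pieces genuinely commute with both the left differential (the $A_i \otimes B'_{j'}$ and $B_j \otimes A'_{i'}$ terms) in the $\te$-presentation and its $\tte$-analogue, so that the induced map on cokernels is well-defined and independent of the chosen resolutions; this is where the naturality of \eqref{eq1.12} in both variables is used in full. Everything else — exactness of $\uomega_!$, its commutation with colimits, and the identification $\uomega_! y(M) = y(M^\o)$ — is quoted from \S\ref{sec:mpst} and the adjunction $\lambda \dashv \uomega$ of \eqref{eq:omega-lambda}.
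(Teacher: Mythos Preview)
Your proposal is correct and follows essentially the same route as the paper: reduce to representables via the presentation formulas of \S\ref{sec:general-formula}, define $\Psi$ on representables by \eqref{eq1.12}, and kill the cokernel under $\uomega_!$ using exactness of $\uomega_!$ together with the fact that \eqref{eq1.12} becomes an isomorphism after applying $\uomega$. The paper's proof is terser (one sentence of reduction, one sentence on representables), but your unpacking of the bookkeeping is accurate and the argument is the same.
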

\begin{proof}
It suffices to consider the case
$F=\Z_\tr(M), ~F'=\Z_\tr(M')$ for $M, M' \in \uMCor$.
Then the morphism \eqref{eq1.12} yields an injective morphism
\begin{multline*}
 \Psi_{\Z_\tr(M), \Z_\tr(N)} : 
\Z_\tr(M) \te \Z_\tr(N)=\Z_\tr(M \te N) \\
\to \Z_\tr(M \tte N)=\Z_\tr(M) \tte \Z_\tr(N)
\end{multline*}
in $\uMPST$
with desired property $\uomega_!(\Coker(\Psi_{\Z_\tr(M), \Z_\tr(N)}))=0$.
\end{proof}

\begin{para}\label{sec:tte-MPST}
Similarly,
we apply \S \ref{sec:mod-a} to $\sA=\MCor$ from \S \ref{sec:mcor}
to obtain two closed symmetric monoidal structures
$\te=\ten{\MPST}$ and $\tte=\tten{\MPST}$
on $\MPST:=\Mod(\MCor)$.
The functor $\tau : \MCor \to \uMCor$ from \eqref{eq:tau}
induces a functor
\begin{equation}\label{eq:tau-sh}
\tau_! : \MPST \to \uMPST,
\end{equation}
which is monoidal for both
$(\ten{\MPST}, \ten{\uMPST})$ and $(\tten{\MPST}, \tten{\uMPST})$.
It is also exact by \cite[Proposition 2.4.1]{KMSY1}.
Hence the same holds for
the left Kan extension
\begin{equation}\label{eq:omega-sh}
\omega_!=\uomega_! \tau_! : \MPST \to \PST,
\end{equation}
of $\omega : \MCor \to \Cor$ from \eqref{eq:omega}.
We use the same notation
$\Z_\tr(M)=y(M) \in \MPST$ for the Yoneda embedding
of $M \in \MCor$.
\end{para}

\begin{para}\label{subsec:CI}
We say $F \in \MPST$ is $\bcube$-invariant
if $p^* : F(M) \to F(M \otimes \bcube)$ is an isomorphism
for any $M \in \MCor$,
where $\bcube:=(\P^1, \infty) \in \MCor$
and $p : M \otimes \bcube \to M$ is the projection.
Let $\CI$ be the full subcategory of $\MPST$
consisting of all $\bcube$-invariant objects.
Recall from \cite[Theorem 2.1.8]{KSY3}
that $\CI$ is a Serre subcategory of $\MPST$,
and that
the inclusion functor $i^\bcube : \CI \to \MPST$
has a left adjoint $h_0^\bcube$ and a right adjoint $h^0_\bcube$:
\begin{align}\label{eq:h0-bcube} 
&h_0^\bcube(F)(M)
=\Coker(i_0^* - i_1^* : F(M \te \bcube) \to F(M)),
\\
\notag
&h^0_\bcube(F)(M)=\Hom(h_0^\bcube(M), F),
\end{align}
for $F \in \MPST$ and $M \in \MCor$.
Here 
$i_\epsilon : (\Spec k, \emptyset) \to \bcube$ denotes
the embedding with image $\epsilon$, for $\epsilon \in \{ 0, 1 \}$.
Note that $h_0^\bcube$ is right exact and a localization.
We write $h_0^\bcube(M):=h_0^\bcube(\Z_\tr(M))$ for $M \in \MCor$.
\end{para}

\begin{remark}
We stress that the monoidal structure involved here is $\te$ and not $\tte$.
Indeed, if we simply replace $\te$ by $\tte$ in the above discussion,
then we may loose the $\bcube$-invariance of $h_0^\bcube(F)$,
because $\bcube$ does not admit an interval structure
with respect to $\tte$ (see Remark \ref{rmk:interval}).
\end{remark}

\begin{remark}\label{rmk:te-ci}
In \cite[Proposition 2.1.9]{KSY3},
it is proved that $\CI$ has 
a symmetric monoidal structure $\te=\ten{\CI}$
for which $h_0^\bcube$ is monoidal.
In particular, for $F, F' \in \CI$, 
one can compute $F \te F'$ by taking
$\ot{F}, \ot{F'} \in \MPST$ such that
$h_0^\bcube(\ot{F})=F$ and $h_0^\bcube(\ot{F'})=F'$
(one may simply take $\ot{F}=i^\bcube(F), \ot{F'}=i^\bcube(F')$),
and then
\eq{tenCI}{
F \ten{\CI} F' = h_0^\bcube(\ot{F} \ten{\MPST} \ot{F'}).
}
It is also closed:
\[
\uHom_{\CI}(F, F') = h^0_\bcube(\uHom_{\MPST}(F, F')).
\]

Unfortunately, a similar construction for $\tte$ does not work,
since our definition of $\CI$ is based on $\te$.
\end{remark}

\begin{lem}\label{lem:hcube-ten}
We have
\[h_0^{\bcube}(F\ten{\MPST} G)\cong h_0^{\bcube}(F\ten{\MPST} h_0^{\bcube}(G)), \quad F,G\in \MPST.\]
\end{lem}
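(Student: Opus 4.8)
The plan is to exploit the fact that $h_0^\bcube$ is a localization functor (as stated in \S\ref{subsec:CI}) together with right exactness of $\ten{\MPST}$. Write $q := h_0^\bcube$ and let $\eta_G : G \to q(G)$ be the unit of the adjunction $(h_0^\bcube, i^\bcube)$. Applying $(-)\ten{\MPST} F$ gives a map $F \ten{\MPST} G \to F \ten{\MPST} q(G)$, hence after applying $q$ a natural map
\[
\alpha : q(F \ten{\MPST} G) \longrightarrow q(F \ten{\MPST} q(G)).
\]
The goal is to show $\alpha$ is an isomorphism. The key point is that the kernel of $\eta_G$ (more precisely, the whole failure of $\eta_G$ to be an isomorphism) is built from objects that become trivial after $q$, and that tensoring with $F$ and applying $q$ kills them.

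\textbf{Step 1: reduce to a statement about $\bcube$-locally trivial objects.} By right exactness of $\ten{\MPST}$, it suffices to understand $\Coker(F\ten{\MPST}G \to F\ten{\MPST}q(G))$ and (using a presentation of $G$) a kernel term. Concretely, since $q$ is a localization with $q\circ i^\bcube \cong \id$, an object $H \in \MPST$ satisfies $q(H) = 0$ if and only if $H$ lies in the Serre subcategory generated by the cokernels (and kernels) of the interval maps $i_0^* - i_1^* : \Z_\tr(M\te\bcube) \to \Z_\tr(M)$; equivalently, $H$ is a filtered colimit/extension of objects of the form $\Coker(\Z_\tr(M\te\bcube)\xr{i_0^*-i_1^*}\Z_\tr(M))$. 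So I would first reduce to showing: if $q(H) = 0$ then $q(F\ten{\MPST} H) = 0$, for all $F \in \MPST$. Given this, apply it to the two-term complex measuring $G \to q(G)$: writing this as an exact sequence $0 \to H' \to G \to q(G) \to H'' \to 0$ with $q(H') = q(H'') = 0$, break it into short exact sequences, tensor with $F$ (right exact, so we get exactness on the right and must track a $\Tor$-type kernel), and apply the right-exact localization $q$; the vanishing statement then forces $\alpha$ to be an isomorphism.

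\textbf{Step 2: prove $q(H)=0 \Rightarrow q(F\ten{\MPST}H)=0$.} By the colimit/extension description and the fact that $q$ and $\ten{\MPST}$ commute with colimits, reduce to $H = \Coker(\Z_\tr(M\te\bcube)\xr{i_0^*-i_1^*}\Z_\tr(M))$. Then $F\ten{\MPST}H = \Coker(F\te\Z_\tr(M\te\bcube) \to F\te\Z_\tr(M))$ by right exactness, i.e. $\Coker(F((-)\te M\te\bcube) \to F((-)\te M))$ after identifying $F\te\Z_\tr(M) = F((-)\otimes M)$ via \eqref{eq:uhom}-type adjunction (representable tensor is "shift"). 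Applying $q = h_0^\bcube$, which is right exact, yields $q(F\ten{\MPST}H) = \Coker\big(q(F((-)\te M\te\bcube)) \to q(F((-)\te M))\big)$. Now the map in question is $i_0^* - i_1^*$ on the last $\bcube$-factor; but $h_0^\bcube(F(-\te M))$ is $\bcube$-invariant in the remaining variable, and more importantly $h_0^\bcube$ applied to $F(- \te M \te \bcube) \to F(-\te M)$ induced by $i_0^* - i_1^*$ is zero because $h_0^\bcube$ by definition \eqref{eq:h0-bcube} annihilates exactly the difference $i_0^* - i_1^*$. Hence the cokernel is all of $q(F(-\te M)) $ \emph{mapped to zero}, wait — rather: the map $q(F(-\te M\te\bcube)) \to q(F(-\te M))$ is itself the zero map after noting $q(F(-\te M \te \bcube)) \cong q(F(-\te M))$ (by $\bcube$-invariance of $h_0^\bcube$-image) and under this identification it becomes $\id - \id = 0$, while surjectivity of $q(\Z_\tr(M\te\bcube)) \to q(\Z_\tr(M))$ being part of $H$'s presentation forces the outer cokernel to vanish. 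This is the heart of the argument and I expect the bookkeeping here — correctly matching the two $\bcube$-factors and invoking $\bcube$-invariance in the right variable — to be the main obstacle.

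\textbf{Step 3: assemble.} With Step 2 in hand, run the spectral-sequence-free argument of Step 1: from $0\to H'\to G\xr{\eta} q(G)\to H''\to 0$ one gets $F\te H' \to F\te G \to F\te q(G) \to F\te H'' \to 0$ exact (right exactness), hence after applying the right-exact $q$ and using $q(F\te H') = q(F\te H'') = 0$ from Step 2, the map $q(F\te G)\to q(F\te q(G))$ is surjective with trivial obstruction, giving the isomorphism. One should double-check there is no genuinely nonzero higher $\Tor$ obstruction on the left; since we only need surjectivity from the right-exact side and injectivity follows symmetrically (or from the universal property: both sides corepresent the same functor $H \mapsto \sH$-type bilinear maps into $\bcube$-invariant targets, by \eqref{eq:description-hom} combined with the adjunction $h_0^\bcube \dashv i^\bcube$), the cleanest finish is actually to bypass Steps 1–3 entirely and argue via universal properties: for any $K\in\CI$,
\[
\Hom_{\CI}(q(F\te G),K) = \Hom_{\MPST}(F\te G, i^\bcube K) = \Hom(F, \uHom(G, i^\bcube K)),
\]
and $\uHom(G,i^\bcube K)$ is again $\bcube$-invariant (right adjoints of $\te$ preserve $\CI$ since $\te$ preserves the interval object $\bcube$), so it equals $\uHom(q(G), i^\bcube K)$, whence $\Hom_{\CI}(q(F\te G),K) = \Hom_{\CI}(q(F\te q(G)),K)$ naturally; Yoneda concludes. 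I would present this universal-property argument as the main proof and relegate the explicit cokernel computation to a remark.
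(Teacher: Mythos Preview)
Your final universal-property argument is correct and is essentially the right proof. The one step you should tighten is the inference ``$\uHom(G,i^\bcube K)\in\CI$, so it equals $\uHom(q(G),i^\bcube K)$'': membership of $\uHom(G,i^\bcube K)$ in $\CI$ is true but is not by itself the reason for the equality. What you actually use is that $\uHom(F,i^\bcube K)\in\CI$ for \emph{every} $F$ (since $\uHom(F,i^\bcube K)(M\te\bcube)=\Hom(F,\uHom(\Z_\tr(\bcube),i^\bcube K)(M\te -))$ and $\uHom(\Z_\tr(\bcube),i^\bcube K)=i^\bcube K$ by $\bcube$-invariance of $K$); applying this with $F$ in the first slot and then the adjunction $(h_0^\bcube,i^\bcube)$ to the map $G\to q(G)$ gives the desired identification $\Hom(F\te G,i^\bcube K)=\Hom(F\te q(G),i^\bcube K)$. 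With that fix, Yoneda finishes cleanly. Steps~1--3 are salvageable in principle but, as you noticed, the bookkeeping in Step~2 is tangled (e.g.\ $F\te\Z_\tr(M)$ is not literally $F((-)\te M)$; you are confusing it with $\uHom(\Z_\tr(M),F)$), and there is no need for them.

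The paper's own proof is a single sentence: it simply invokes that the formula \eqref{tenCI} defining $\ten{\CI}$ is well-defined, as proved in \cite[Proposition~2.1.9]{KSY3}. That well-definedness says precisely that $h_0^\bcube(\wt F\ten{\MPST}\wt G)$ depends only on $h_0^\bcube(\wt F)$ and $h_0^\bcube(\wt G)$, so taking $\wt G=G$ versus $\wt G=i^\bcube h_0^\bcube(G)$ gives the lemma. Your universal-property argument is in effect a self-contained reproof of (the relevant half of) that cited proposition; the mathematics is the same, but you supply the details rather than citing them.
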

\begin{proof}
This follows, e.g.,  from the fact that \eqref{tenCI} is well-defined.
\end{proof}

\begin{para}\label{subsec:RSC}
We define $h_0(F) := \omega_!(h_0^\bcube(F)) \in \PST$, for $F \in \MPST$,
and $h_0(M) := h_0(\Z_\tr(M))$, for $M \in \MCor$
(see \eqref{eq:omega-sh}, \eqref{eq:h0-bcube}).
It comes equipped with a canonical surjection
$\omega_!(\Z_\tr(M))=\Z_\tr(M^\o) \twoheadrightarrow h_0(M)$.
We say $F \in \PST$ has {\em SC-reciprocity} if
for any $X \in \Sm$ and $a \in F(X)$,
there exists a {\em modulus} $M$, i.e., $M\in \MCor$ such that $M^\o=X$
and the Yoneda map $a: \Z_\tr(X) \to F$ defined by $a$ factors through $h_0(M)$.
We define $\RSC$ to be the full subcategory of $\PST$
consisting of objects having SC-reciprocity.
It is an abelian subcategory of $\PST$.
Denote by $i^\natural : \RSC \to \PST$ the inclusion functor,
which is exact and has a right adjoint $\rho : \PST \to \RSC$.
For $F \in \PST$, the counit map $i^\natural \rho F \to F$
is an isomorphism if and only if $F \in \RSC$
(see \cite[Remark 2.2.5, Proposition 2.2.6, Corollary 2.2.7]{KSY3}).
\end{para}

\begin{para}\label{subsec:omegaCI}
Recall from \cite[Proposition 2.3.7]{KSY3} that
$\omega_!$ from \eqref{eq:omega-sh} restricts to 
an exact localization functor
\begin{equation}\label{eq:omega-ci}
\omega_{\CI} : \CI \to \RSC.
\end{equation}
Thus we obtain a functor
\eq{subsec:omegaCI1}{h_0: \MPST\to \RSC,\quad F\mapsto h_0(F)=\omega_!(h_0^{\bcube}(F)).}
Note that by the exactness and monoidality of $\omega_!$,  the natural surjection $F\to h^\bcube_0(F)$, for $F\in \MPST$,
induces a natural surjection
\eq{subsec:omegaCI2}{
(\omega_!F_1)\ten{\PST}\ldots \ten{\PST} (\omega_!F_n)\surj h_0(F_1\bullet\ldots\bullet F_n),}
where $F_i\in\MPST$ and $\bullet\in \{\ten{\MPST}, \tten{\MPST}\}$.

The functor $\omega_{\CI}$ has a right adjoint $\omega^\CI : \RSC \to \CI$,
and the counit map $\omega_\CI \omega^\CI \Rightarrow \id_\RSC$ 
is an isomorphism. Concretely, for $F\in \RSC$ and $M\in \MCor$  we have 
\eq{subsec:omegaCI3}{\omega^{\CI}F(M)
=\{a\in F(M^\o)\mid a \text{ has modulus }M\} \subset F(M^o).}
For brevity we set
\eq{subsec:omegaCI4}{\tF:=\omega^{\CI}F\in \CI, \quad F\in \RSC.}
\end{para}

\begin{para}\label{para:Chow-mod}
For a proper modulus pair $M=(\ol{M}, M_\infty)\in \MCor$,
we denote by $\CH^i(M; j)$ the Chow group with modulus 
introduced in \cite{Binda-Saito}.
When $\ol{M}$ is of pure dimension $d$,
we also write $\CH_0(M):=\CH^d(M):=\CH^d(M; 0)$.
Let $K$ be a $k$-field, then we have an isomorphism
\[\CH^d(M_K) \cong h_0^\bcube(M)(K),\]
where $M_K=(\ol{M}\times_{\Spec k}\Spec K, M_\infty\times_{\Spec k}\Spec K)$, see \cite[3.5 (3)]{RY}.
\end{para}

\begin{para}\label{sec:omega-h}
Let $\HI$ be the full subcategory of $\PST$
consisting of $F \in\PST$ such that
$p^* : F(X) \to F(X \times \A^1)$ is an isomorphism
for any $X \in \Sm$,
where $p : X \times \A^1 \to X$ is the projection.
We have $\HI \subset \RSC$, by \cite[{Corollary 2.3.4}]{KSY3}.

The inclusion functor $i^\flat : \HI \to \PST$ has a left adjoint
\begin{equation}\label{eq:h0-a1} 
h_0^{\A^1} : \PST \to \HI, \quad
F \mapsto \Coker(i_0^* - i_1^* : \uHom(\Z_\tr(\A^1), F) \to F)
\end{equation}
with the similar notation as \eqref{eq:h0-bcube}.
We write $h_0^{\A^1}(X):=h_0^{\A^1}(\Z_\tr(X))$ for $X \in \Sm$.
The symmetric monoidal structure $\te=\ten{\HI}$ is defined in such a way that
$h_0^{\A^1}$ becomes monoidal:
\[
F \ten{\HI} G = h_0^{\A^1}((i^\flat F) \ten{\PST} (i^\flat G)),
\quad F, G \in \HI.
\]
\end{para}

\begin{para}
Let $\NST \subset \PST$ be the category of Nisnevich sheaves with transfers,
and define $\HI_\Nis := \HI \cap \NST, ~\RSC_\Nis := \RSC \cap \NST$.
Recall that the inclusion functor $\PST \to \NST$
admits a left adjoint $F \mapsto F_\Nis$.
We will need the following important results.
\begin{thm}[{\cite[{Theorem 3.1.12}]{VoTmot}, \cite[{Theorem 0.1}]{S}}]\label{thm:sheafification}
\
\begin{enumerate}
\item 
We have $F_\Nis \in \HI_\Nis$, for any $F \in \HI$.
\item 
We have $F_\Nis \in \RSC_\Nis$, for any $F \in \RSC$.
\end{enumerate}
\end{thm}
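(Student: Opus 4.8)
The plan is to derive both statements from the blueprint Voevodsky developed for $\HI$, together with the reciprocity-theoretic refinements Saito introduced for $\RSC$; since these theorems are among the principal external inputs of the present paper, I would only recall the strategy and refer to \cite{VoTmot} and \cite{S} for the details. In both cases the argument has the same three stages: (a) show the unit map $F \to F_\Nis$ is injective, so that $F_\Nis$ is a genuine sheafification and $F$ is separated for the Nisnevich topology; (b) show that $F_\Nis$ inherits a transfer structure compatible with that on $F$; (c) show that $F_\Nis$ inherits the defining invariance, resp. reciprocity, property. Stages (a) and (b) are fairly formal once one has transfers together with a Gersten-type injectivity input; stage (c) is the heart of the matter.

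For part (1), I would argue that stage (a) holds because a homotopy invariant presheaf with transfers injects into its stalk at the generic point of any smooth connected scheme; this forces separatedness, and, via a Mayer--Vietoris argument over Nisnevich distinguished squares using the transfer structure, the gluing needed to compute $F_\Nis$. Stage (b) is the standard fact that Nisnevich sheafification is an exact monoidal endofunctor of $\PST$ preserving $\NST$. For stage (c) I would follow Voevodsky: using the geometric presentation lemma (his analogue of Gabber's lemma) one computes $F_\Nis$ on the semilocalisation of a smooth variety along a finite set of points and reduces $\A^1$-invariance there to that of $F$ itself plus the split structure supplied by the presentation; one then bootstraps this to prove simultaneously that every $X \mapsto H^n_\Nis(X, F_\Nis)$ is homotopy invariant, the case $n = 0$ being the assertion. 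The hard part here is precisely the presentation lemma and the bookkeeping that passes from local rings to global Nisnevich cohomology.

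For part (2), I would run the same three stages with Saito's reciprocity-theoretic substitutes. Stage (a) now rests on the injectivity theorem for reciprocity sheaves from \cite{KSY}: a section of $F \in \RSC_\Nis$ over a smooth connected $X$ is determined by its image in $F(k(X))$ -- equivalently, a morphism of reciprocity sheaves that is an isomorphism on all function fields is an isomorphism -- which again yields separatedness and, together with the transfers, the gluing. Stage (b) is again formal. Stage (c) is the delicate point: to show $F_\Nis$ has SC-reciprocity one must, for $X \in \Sm$ and $a \in F_\Nis(X)$, produce a modulus pair $M \in \MCor$ with $M^\o = X$ through which the Yoneda map $a : \Z_\tr(X) \to F_\Nis$ factors. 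Nisnevich-locally $a$ comes from $F$, which has reciprocity, so one has moduli locally; the problem is to glue these into a single global effective Cartier divisor on a compactification of $X$ satisfying the admissibility conditions, which is carried out by a moving lemma for modulus pairs and a careful analysis of the conductor, building on \cite{KMSY1, KSY3} and the combinatorics of modulus replacements as in Lemma \ref{lem:replacement}.

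I expect the main obstacle in both parts to be stage (c), and for part (2) specifically the passage from a family of local moduli to a global one: because a modulus is only well-defined up to modulus replacement, one must control how the local admissibility bounds patch and rule out any pro-object phenomenon that would obstruct the existence of an honest global divisor. Since Saito's resolution of exactly this point is what makes \ref{thm:sheafification}(2) substantially harder than its homotopy-invariant prototype, I would treat the full verification as a black box and simply invoke \cite{S}.
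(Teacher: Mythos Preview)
The paper gives no proof of this theorem at all: it is stated with attributions to \cite[Theorem 3.1.12]{VoTmot} and \cite[Theorem 0.1]{S} in the header and is used as a black-box input throughout. Your proposal is therefore already more than what the paper does, and your final sentence --- that you would invoke \cite{S} as a black box --- is exactly the paper's approach.

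That said, a brief remark on your sketch of Saito's argument for (2): the actual proof does not proceed by directly gluing local moduli on a compactification of $X$ into a global divisor, as you outline. Rather, Saito works on the modulus side, proving a purity theorem for cube-invariant sheaves in $\uMPST$ satisfying semipurity, and deduces that Nisnevich sheafification preserves $\CI^\tau$; the statement for $\RSC$ then follows via the adjunction $(\omega_\CI, \omega^\CI)$ and the identification $\RSC = \omega_!(\CI)$. So the ``patching local moduli'' picture you describe, while a natural first guess, is not the route taken --- the modulus bookkeeping is absorbed into the $\CI$-formalism rather than handled by a moving lemma on compactifications. This does not affect the correctness of your proposal, since you ultimately defer to \cite{S} anyway, but if you want the sketch to reflect the actual strategy you should rephrase stage (c) for part (2) accordingly.
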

It follows that $\HI_\Nis$ and $\RSC_\Nis$ are abelian  categories,
and that 
$\HI_\Nis$ has a symmetric monoidal structure
$\ten{\HI_\Nis}$ given by 
\[
F \ten{\HI_\Nis} G =(F \ten{\HI} G)_\Nis,
\qquad F, G \in \HI_\Nis.
\]
For $F\in \MPST$ we set $h_{0, \Nis}(F):=(h_0(F))_{\Nis}$ and we obtain a functor
\eq{h0nis}{h_{0,\Nis}: \MPST\to \RSC_{\Nis}.}
For $M\in \MCor$ we also write $h_{0,\Nis}(M):=h_{0,\Nis}(\Z_\tr(M))$.
\end{para}

\section{Lax monoidal\footnote{The term {\em lax monoidal} is used in a loose sense;
it seems a correct mathematical notion which appears in the literature is {\em unbiased oplax monoidal category}.}
 structure for reciprocity sheaves}

\begin{para}\label{subsec: MCorpro}
Recall  the category $\uMCor^{\rm pro}$ from \cite[3.7]{RS}:
The objects are pairs $\cX=(\ol{X}, X^\infty)$, where 
\begin{enumerate}
\item $\ol{X}$ is a separated noetherian scheme over $k$;
\item $\ol{X}=\varprojlim_{i\in I} \ol{X}_i$, with $(\ol{X}_i)_{i\in I}$  a projective system of 
separated  finite type $k$-schemes
indexed by a partially ordered set with affine transition maps $\tau_{i,j}: \ol{X}_i\to \ol{X}_j$, $i\ge j$, and 
$X^\infty=\varprojlim_{i\in I} X_i^\infty$, with $X_i^\infty$ an effective Cartier divisor on $\ol{X}_i$,
such that $\ol{X}_i\setminus |X_i^\infty|$ is smooth, for all $i$, and 
$\tau_{i,j}^*X_{j,\infty}= X_{i,\infty}$, $i\ge j$;
\item $\cX^\o=\ol{X}\setminus |X^\infty|$ is regular.
\end{enumerate}
The morphisms are given by the admissible left proper correspondences which are verbatim defined as in 
\ref{sec:umcor}; the composition is defined as for $\uMCor$, cf. \cite[Proposition 1.2.3]{KMSY1}.

We denote by $\Cor^{\rm pro}$ the full subcategory consisting of pairs $(X,\emptyset)=X$.
Note, if $X=\Spec A$, with $A$ a regular ring over $k$, then $X\in \Cor^\pro$, by \cite[Proposition 1.8]{MR868439}.
By \cite[Lemma 2.8]{RS}, there is a faithful functor
\eq{sec: MCorpro1}{\uMCor^{\rm pro}\to \Pro\ulMCor, \quad \varprojlim_i(\ol{X}_i, X^\infty_i)\mapsto (\ol{X}_i, X^\infty_i)_i.}
In this way we can extend any $F\in \uMPST$ to $\uMCor^{\pro}$ by the formula
\eq{sec: MCorpro2}{F(\cX):=\varinjlim_i F(\cX_i), \quad\text{where } \cX=\varprojlim_i \cX_i,\quad \cX_i=(\ol{X}_i, X^\infty_i).}
\end{para}

\begin{lem}\label{lem:rep-pro}
Let $M\in \uMCor$ and $\cX=\varprojlim_i \cX_i\in \uMCor^{\pro}$. Then the natural map
\[\ulMCor^{\pro}(\cX, M)\xr{\simeq} \Pro\uMCor((\cX_i)_i, M)=\varinjlim_i \uMCor(\cX_i, M),\]
induced by \eqref{sec: MCorpro1}, is an isomorphism, i.e., the extension of  
 $\Z_\tr(M)$ to $\uMCor^\pro$ is representable by $M\in \uMCor^\pro$.
\end{lem}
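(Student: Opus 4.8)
The plan is to deduce the statement from the ``continuity'' of finite correspondences in the pro-direction, together with the fact that the two conditions cutting $\uMCor$ out of $\Cor$---left-properness and admissibility---can each be detected at a finite level. First I would observe that the map is injective: this is already contained in the faithfulness of \eqref{sec: MCorpro1} (\cite[Lemma 2.8]{RS}), since under that functor $M$ is sent to the constant pro-object and $\Pro\uMCor((\cX_i)_i, M)=\varinjlim_i\uMCor(\cX_i, M)$. So it remains to prove surjectivity, i.e.\ that every admissible left proper correspondence $\alpha_i\in\uMCor(\cX_i, M)$, regarded as a cycle on $\cX_i^\o\times M^\o$ and pulled back to $\cX^\o\times M^\o$, lies in $\ulMCor^\pro(\cX, M)$, and that conversely every admissible left proper prime correspondence $\beta$ from $\cX$ to $M$ arises in this way from some level $j$.

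The first ingredient concerns the underlying correspondences. Since $\cX^\o=\ol{X}\setminus|X^\infty|$ equals $\varprojlim_i(\ol{X}_i\setminus|X_i^\infty|)=\varprojlim_i\cX_i^\o$ (condition (2) of \ref{subsec: MCorpro} forces $|X^\infty|$ to be the preimage of each $|X_i^\infty|$), this is a cofiltered limit of finite type $k$-schemes with affine transition maps, and $\cX^\o\times M^\o=\varprojlim_i(\cX_i^\o\times M^\o)$ is noetherian. Hence every closed subscheme of $\cX^\o\times M^\o$ is of finite presentation over it, and the classical limit formalism (EGA~IV, \S 8: descent of finitely presented closed subschemes, and of the properties ``reduced'', ``irreducible'', ``finite'', ``surjective'', ``dominant'') shows that the natural map
\[
\varinjlim_i\ \Cor(\cX_i^\o, M^\o)\ \xr{\simeq}\ \Cor^\pro(\cX^\o, M^\o)
\]
is bijective, where the target is the group of cycles on $\cX^\o\times M^\o$ each of whose components is finite and surjective over a component of $\cX^\o$. (A prime correspondence over $\cX_i^\o$ need not stay prime after pullback, but each of its components is again of the required type, so the pullback maps in this colimit are well defined.)

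It then remains to match the two extra conditions. Fix a prime $\beta$ over $\cX^\o$, descended from a prime $\beta_i$ over $\cX_i^\o$, and write $\ol{\beta}\subseteq\ol{X}\times\ol{M}$ and $\ol{\beta_j}\subseteq\ol{X}_j\times\ol{M}$ ($j\ge i$) for the closures. I would check that, for $j$ large, forming the closure and the normalization commutes with the limit, so that $\ol{\beta}=\varprojlim_{j\ge i}\ol{\beta_j}$ and $\ol{\beta}^N=\varprojlim_{j\ge i}\ol{\beta_j}^N$ (using that scheme-theoretic image descends, and that the integral closure of a domain in its fraction field commutes with filtered colimits of domains along injective maps). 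Given this: $\beta$ is left proper over $\cX$ iff some $\beta_j$ is left proper over $\cX_j$---``$\Leftarrow$'' is base change of properness, and ``$\Rightarrow$'' is descent of properness for the finitely presented morphism $\ol{\beta}\to\ol{X}$ (EGA~IV, 8.10.5); and $\beta$ is admissible for $\cX\to M$ iff some $\beta_j$ is admissible for $\cX_j\to M$---both $X^\infty|_{\ol{\beta}^N}$ and $M^\infty|_{\ol{\beta}^N}$ are pullbacks, along $\ol{\beta}^N\to\ol{\beta_j}^N$, of the corresponding divisors on $\ol{\beta_j}^N$, so admissibility at level $j$ yields it on the limit (pullback of an effective Cartier divisor is effective), while the converse---that the inequality, equivalently the inclusion of a coherent fractional-ideal sheaf into $\cO$, already holds at a finite level---is again a limit argument. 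Combining the three steps gives surjectivity, hence the asserted bijection; this is precisely the statement that the extension of $\Z_\tr(M)$ to $\ulMCor^\pro$ is representable by $M$.

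I expect the main obstacle to be the bookkeeping in the last step: verifying that passing to the closure in $\ol{X}\times\ol{M}$ and to the normalization genuinely commutes with the pro-limit for $j$ large enough---so that the admissibility divisor inequality is the pullback of one at a finite level---and that this inequality then descends. Everything else reduces to routine applications of the limit machinery once the finite-presentation hypotheses are in place, and several of the needed sublemmas are in substance already available in \cite{RS} and \cite{KMSY1}.
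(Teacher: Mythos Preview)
Your proposal is correct and follows essentially the same overall strategy as the paper: reduce to the level of $\Cor$ via the limit formalism and then verify that the two extra conditions (left-properness and admissibility) are compatible with the passage to the limit in both directions. The paper's execution, however, is much terser. Rather than arguing injectivity and surjectivity separately, it simply writes down the forward map (descend a prime $V$ on $\cX$ to some $V_{i_0}$ at a finite level, citing \cite[Lemma~3.8]{RS} for the existence of such a descent already in $\uMCor$) and an explicit inverse (for $W\in\uMCor(\cX_i,M)$, take the cycle-theoretic pullback $\rho_i^*W$ in the sense of Serre; admissibility of each component is preserved by \cite[Proposition~2.3(4)]{RY}, and left-properness by base change), and then says it is direct to check these are mutually inverse.

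The practical difference is that the technical point you correctly flag as the ``main obstacle''---whether closure and normalization commute with the pro-limit well enough to make the admissibility inequality descend---is sidestepped in the paper by outsourcing the descent direction entirely to \cite[Lemma~3.8]{RS} and the ascent direction to \cite[Proposition~2.3(4)]{RY}. Your more hands-on EGA~IV route would work too, but it is more laborious, and since you already acknowledge that the needed sublemmas are available in \cite{RS} and \cite{KMSY1}, you may as well invoke them directly as the paper does rather than reproving them.
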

\begin{proof}
The morphism in the statement is defined as follows:
Let $V\subset \cX^\o\times M^\o$ be an integral prime correspondence; 
we find an index $i_0$ and a closed subscheme  $V_{i_0}\subset \cX^\o_{i_0}\times M^\o$ such that 
$V_{i_0}\in \uMCor(\cX_{i_0}, M)$ and $V_{i_0}\times_{\cX^\o_{i_0}} \cX^\o= V$; then the map from the statement 
maps $V$ to the class of $V_{i_0}\in \varinjlim_i\uMCor(\cX_i, M)$, see \cite[Lemma 3.8]{RS}.

We define a map in the other direction: Let $\rho_i : \cX\to \cX_i$ be the transition map.
Let $W\subset \uMCor(\cX_i, M)$ be a prime correspondence. In particular, $W\to \cX_i^\o$ is
 universally equidimensional of relative dimension. Hence  the cycle-theoretic inverse image $\rho_i^* V$
from \cite[V, C), 7.]{SerreAL} is defined; it is supported on the irreducible components of 
$\cX^\o\times_{\cX^\o_i}W$. Let $W'\in\Cor(\cX^\o, M^\o)$
 be such a component. By, e.g., \cite[Proposition 2.3, (4)]{RY}, $W'$ is an admissible correspondence
from $\cX$ to $M$; further  since $W$ is left proper (see \ref{sec:umcor}), so is $W'$. 
Hence $\rho_i^* V\in \uMCor^{\pro}(\cX, M)$. This gives a well-defined map
\[\varinjlim_i \uMCor(\cX_i, M)\to \uMCor^\pro(\cX, M).\]
It is direct to check that it is inverse to the map from the statement.
\end{proof}

\begin{lem}\label{lem:tensorMPST}
Let $\bullet\in\{\te, \tte\}$ denote one of the symmetric monoidal structures on $\uMCor$ defined in 
\ref{sec:te} and Proposition \ref{prop:prod-msm}, respectively; we also denote by $\bullet$ their extension
to $\uMPST$, see \ref{sec:tte-MPST}. Let $F_1,\ldots, F_n\in \uMPST$; 
we extend $F_1\bullet\ldots\bullet F_n\in \uMPST$ to $\uMCor^{\pro}$ as in 
\eqref{sec: MCorpro2}. Then for all $\cX\in \uMCor^\pro$, the group 
$(F_1\bullet\ldots\bullet F_n)(\cX)$ is equal to the quotient of
\[\bigoplus_{M_1,\ldots, M_n\in \uMCor}F_1(M_1)\te_{\Z} \ldots \te_{\Z}F_n(M_n)\te_{\Z}
\uMCor^\pro(\cX, M_1\bullet\ldots\bullet M_n)\]
by the subgroup generated by elements of the form
\ml{lem:tensorMPST1}{a_1\te \ldots\te a_n\te
(\id_{M_{1}\bullet \ldots\bullet M_{i-1}}\bullet f\bullet\id_{M_{i+1}\bullet \ldots\bullet M_n})\circ h\\
- a_1\te \ldots \te f^*a_i\te\ldots \te a_n\te h,}
where $a_j\in F_j(M_j)$,  $i\in \{1,\ldots, n\}$,  $f\in \uMCor(M',M_i)$, and 
$h\in \uMCor^\pro(\cX, M_1\bullet\ldots\bullet M_{i-1}\bullet M'\bullet M_{i+1}\bullet\ldots \bullet M_n)$.
\end{lem}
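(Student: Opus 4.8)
The plan is to reduce the statement for general $F_1,\dots,F_n \in \uMPST$ to the representable case, where it becomes an instance of Lemma \ref{lem:rep-pro} combined with the explicit presentation \eqref{eq:prod-general} of the monoidal product on $\uMPST$. First I would record the presentation for representables: by iterating \eqref{eq:prod-general} (and using that $\bullet$ is associative and right exact), for $M_1,\dots,M_n \in \uMCor$ one has $\Z_\tr(M_1)\bullet\dots\bullet\Z_\tr(M_n)\cong\Z_\tr(M_1\bullet\dots\bullet M_n)$, and hence by \eqref{sec: MCorpro2} and Lemma \ref{lem:rep-pro} its value on $\cX\in\uMCor^\pro$ is the free abelian group $\uMCor^\pro(\cX, M_1\bullet\dots\bullet M_n)$. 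This is precisely the $n$-fold tensor expression in the statement when each $F_j=\Z_\tr(M_j)$ (so that $F_j(M_j)\ni\id_{M_j}$ generates), and one checks the relations \eqref{lem:tensorMPST1} reduce to the identities already built into composition in $\uMCor^\pro$.

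Next I would treat the general case by choosing, for each $j$, a two-term resolution $\bigoplus_{b}\Z_\tr(B_{j,b})\to\bigoplus_{a}\Z_\tr(A_{j,a})\to F_j\to 0$ as in \eqref{eq:resolution}. Applying the iterated Coker formula \eqref{eq:prod-general} expresses $F_1\bullet\dots\bullet F_n$ as the cokernel of a map between direct sums of representables of the form $\Z_\tr(C_1\bullet\dots\bullet C_n)$ with each $C_j\in\{A_{j,a}, B_{j,b}\}$ and at most one index being a "$B$". Since the extension \eqref{sec: MCorpro2} to $\uMCor^\pro$ is a filtered colimit, it is exact and commutes with the finite direct sums and the cokernel; so evaluating at $\cX$ gives $(F_1\bullet\dots\bullet F_n)(\cX)$ as a cokernel of a map of free abelian groups, each summand being $\uMCor^\pro(\cX, C_1\bullet\dots\bullet C_n)$ by the representable case. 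It then remains to identify this cokernel with the quotient described in the statement.

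For the final identification I would write down the obvious surjection from $\bigoplus_{M_1,\dots,M_n}F_1(M_1)\otimes\dots\otimes F_n(M_n)\otimes\uMCor^\pro(\cX, M_1\bullet\dots\bullet M_n)$ onto $(F_1\bullet\dots\bullet F_n)(\cX)$: an element $a_1\otimes\dots\otimes a_n\otimes h$ is sent to the image of $h$ under the map $\Z_\tr(M_1\bullet\dots\bullet M_n)\to F_1\bullet\dots\bullet F_n$ induced by $a_1\bullet\dots\bullet a_n$ via Yoneda (cf. the pairing \eqref{eq:pairing}). Surjectivity follows because every $a_j$ factors through some $\bigoplus_a\Z_\tr(A_{j,a})$, reducing to the representable case already handled. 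The relations \eqref{lem:tensorMPST1} clearly lie in the kernel since $(f\bullet\id)\circ h$ and $f^*a_i$ give the same morphism into $F_1\bullet\dots\bullet F_n$ by functoriality of Yoneda and bilinearity of $\bullet$. For the reverse inclusion one checks that modulo the subgroup generated by \eqref{lem:tensorMPST1} one may replace each $a_j$ by a finite sum of $\id_{A_{j,a}}$'s pushed forward along the structure maps, thereby landing in the free group $\bigoplus\uMCor^\pro(\cX, A_{1,a_1}\bullet\dots\bullet A_{n,a_n})$; comparing with the cokernel presentation from the previous paragraph — whose relations are exactly the images of the $B$-indexed generators, which are themselves of the form \eqref{lem:tensorMPST1} — finishes the argument.

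The main obstacle I anticipate is the bookkeeping in this last comparison: making precise that the subgroup generated by \eqref{lem:tensorMPST1} is generated by the "one $B$, rest identities" relations after the above normalization, i.e., that the relations \eqref{lem:tensorMPST1} with arbitrary $a_j$ and arbitrary $f$ are consequences of the ones where all $a_j$ and $f$ are (sums of) identities coming from the chosen resolutions. This is a standard but slightly tedious diagram chase over the presentation, and care is needed to handle the fact that only one tensor slot carries a "$B$" at a time while the relation morphisms $f$ may sit in any slot. Everything else — exactness of filtered colimits, associativity and right exactness of $\bullet$, and Lemma \ref{lem:rep-pro} — is either already available or immediate.
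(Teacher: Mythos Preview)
Your approach is correct, but the paper takes a shorter route that avoids your final diagram chase entirely. Rather than choosing arbitrary two-term resolutions of the $F_j$, the paper uses the \emph{canonical} (tautological) presentation
\[
\bigoplus_{f\in \uMCor(M',M)} F(M)\te_\Z \Z_\tr(M')\longrightarrow \bigoplus_{M\in\uMCor} F(M)\te_\Z \Z_\tr(M)\longrightarrow F\longrightarrow 0,
\]
where the left map sends $a\otimes h$ at index $f$ to $a\otimes(f\circ h)-f^*a\otimes h$. With this particular choice, feeding the $n$ presentations into the iterated cokernel formula \eqref{eq:prod-general} produces, on the nose, the big direct sum $\bigoplus_{M_1,\dots,M_n}F_1(M_1)\te_\Z\cdots\te_\Z F_n(M_n)\te_\Z\Z_\tr(M_1\bullet\cdots\bullet M_n)$ as the generator term, and the images of the ``one $B$, rest $A$'' terms are \emph{literally} the relations \eqref{lem:tensorMPST1}. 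So for $\cX\in\uMCor$ the lemma is immediate from \eqref{eq:prod-general}; the passage to $\cX\in\uMCor^\pro$ then goes via exactness of filtered colimits, their compatibility with $\te_\Z$, and Lemma~\ref{lem:rep-pro}, exactly as you outline. Your argument with arbitrary resolutions reaches the same conclusion, but it forces the normalization step (rewriting each $a_j$ through the chosen $A_{j,a}$'s via \eqref{lem:tensorMPST1}) and then the verification that the $B$-relations of your resolution lie in the subgroup generated by \eqref{lem:tensorMPST1}; the canonical resolution has all of this built in from the start.
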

\begin{proof}
First assume $\cX\in \uMCor$. Then the formula holds using the following  presentation of $F\in \uMPST$
\[\bigoplus_{f\in \uMCor(M',M)} F(M)\te_\Z \Z_\tr(M')\to \bigoplus_{M\in\uMCor} F(M)\te_\Z \Z_\tr(M)\to F\to 0,\]
 cf. \cite[\S 2]{SuVo00b}. 
For general $\cX=\varprojlim_i\cX_i\in \uMCor^\pro$, with $\cX_i\in\uMCor$, we have by definition
$(F_1\bullet\ldots\bullet F_n)(\cX)=\varinjlim_i ((F_1\bullet\ldots\bullet F_n) (\cX_i))$; 
hence the formula follows from the exactness of filtered direct limits, their compatibility
with (usual) tensor products, and Lemma \ref{lem:rep-pro}.
\end{proof}

\begin{lem}\label{lem:ten-cond}
We continue to use the notation $\bullet\in\{\te,\tte\}$ from Lemma \ref{lem:tensorMPST}.
Let  $M_i\in \uMCor$, $i=1,2$, and set $M:=M_1\bullet M_2$.
Let $Y\to \ol{M}$ be a morphism from a locally  factorial scheme $Y$, such that the image of no component of $Y$
is contained in $M^\infty$. Then
\[(M^\infty)_{|Y}=
\begin{cases} 
(M^\infty_1)_{|Y} + (M^\infty_2)_{|Y}, & \text{if }\bullet=\te,\\
\max\{(M^\infty_1)_{|Y}, (M^\infty_2)_{|Y}\}, &\text{if }\bullet=\tte,
\end{cases}\]
where the restrictions are along the morphisms $Y\to \ol{M}\to\ol{M}_i$ induced by the projections.
Note, the right hand side makes sense as Weil divisors and so it does as Cartier divisors, by the assumptions on $Y$.
\end{lem}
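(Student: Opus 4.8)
The plan is to reduce the statement to the additivity of pullback of Cartier divisors together with the disjointness relation \eqref{eq:prod-disjoint}. First I would observe that since $Y$ is locally factorial its local rings are integral domains, so one may argue component by component and hence assume $Y$ integral; then Weil and Cartier divisors on $Y$ coincide and the maximum of two effective divisors on $Y$ is unambiguous. Write $g : Y \to \ol{M}$ for the given map and $g_i : Y \to \ol{M}_i$ for the composite with the projection, so that $(M_i^\infty)_{|Y} = g_i^* M_i^\infty$ and $(M^\infty)_{|Y} = g^* M^\infty$.

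The first substantive point is that all pullbacks in sight are legitimately defined. In both cases the supports of the divisors $E$, $D_1$, $D_2$ on $\ol{M}$ (for $\tte$) — or of $M_1^\infty \times \ol{M}_2$ and $\ol{M}_1 \times M_2^\infty$ (for $\te$) — are contained in $|M^\infty|$, and likewise $\ol{p}_i^{-1}(|M_i^\infty|) \subseteq |M^\infty|$; since by hypothesis the generic point of $Y$ does not map into $|M^\infty|$, it maps into none of these supports, so $g^*E$, $g^*D_i$, $g^*M^\infty$, $g_i^*M_i^\infty$ are all well-defined effective divisors on $Y$ and $g^*$ is additive on the Cartier divisors involved.

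For $\bullet = \te$ the claim is then immediate: by the definition \eqref{eq:old-tensor} one has $M^\infty = \ol{p}_1^* M_1^\infty + \ol{p}_2^* M_2^\infty$ on $\ol{M} = \ol{M}_1 \times \ol{M}_2$, and applying $g^*$ (using $g^*\ol{p}_i^* = g_i^*$) gives $(M^\infty)_{|Y} = (M_1^\infty)_{|Y} + (M_2^\infty)_{|Y}$. For $\bullet = \tte$ I would invoke \eqref{eq:prod-disjoint}, namely $\ol{p}_i^*M_i^\infty = E + D_i$ and $|D_1| \cap |D_2| = \emptyset$, together with $M^\infty = E + D_1 + D_2$, to write $(M_i^\infty)_{|Y} = g^*E + g^*D_i$ and $(M^\infty)_{|Y} = g^*E + g^*D_1 + g^*D_2$. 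Since $|g^*D_1| = g^{-1}(|D_1|)$ and $|g^*D_2| = g^{-1}(|D_2|)$ are disjoint, one has $\max\{g^*D_1, g^*D_2\} = g^*D_1 + g^*D_2$ — the pointwise maximum of two effective divisors with disjoint support being their sum — and hence $\max\{(M_1^\infty)_{|Y}, (M_2^\infty)_{|Y}\} = g^*E + \max\{g^*D_1, g^*D_2\} = g^*E + g^*D_1 + g^*D_2 = (M^\infty)_{|Y}$.

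There is no serious obstacle here: the lemma is essentially bookkeeping. The one place that requires a moment's care is verifying that the relevant divisor supports really sit inside $|M^\infty|$, so that the various restrictions are legitimate and the right-hand side of the asserted identity is a well-defined Weil (hence Cartier) divisor, as remarked in the statement; the rest is the additivity of divisor pullback and the elementary fact that disjoint effective divisors have maximum equal to their sum.
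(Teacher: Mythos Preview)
Your proof is correct and follows essentially the same approach as the paper: both arguments handle $\te$ by direct appeal to the definition and handle $\tte$ by invoking the relations $\ol{p}_i^*M_i^\infty = E + D_i$ and $|D_1|\cap|D_2|=\emptyset$ from \eqref{eq:prod-disjoint}. The only cosmetic difference is that the paper phrases the $\tte$ case as an open-cover argument on $U_i = Y\times_{\ol M}(\ol M\setminus D_i)$, whereas you carry out the equivalent computation globally via the identity $\max\{g^*D_1,g^*D_2\}=g^*D_1+g^*D_2$ for effective divisors with disjoint support; these are two ways of saying the same thing.
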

\begin{proof}
For $\bullet=\te$, this is immediate from the definition. For $\bullet=\tte$ recall that
$\ol{M}$ is the blow-up of $\ol{M_1}\times \ol{M_2}$ in $M_1^\infty\times M_2^\infty$ 
and that $M^{\infty}=D_1+D_2+E$, with $D_i$ the strict transform of $p_i^*M_i^\infty$ and $E$ the exceptional divisor.
Set $U_i:= Y\times_{\ol{M}} (\ol{M}\setminus  D_i)$. Then
\[(M^\infty)_{|U_1}= (D_2+E)_{|U_1}= \max\{(M^\infty_1)_{|U_1}, (M^\infty_2)_{|U_1}\}\]
and similar with $(M^\infty)_{|U_2}$. The statement  follows from  $Y=U_1\cup U_2$.
\end{proof}

\begin{lem}\label{lem:fin-sur-pro}
Let $\cX=(X, D)\in \uMCor^{\pro}$.
Assume $\cX=\varprojlim_i \cX_i$ with $\cX_i=(X_i, D_i) \in \uMCor$ such that
the projection maps $X\to X_i $ are {\em flat}.
Let $Y$ be a regular scheme with a finite surjective morphism $\pi: Y\to X$ and let $E$ be 
an effective Cartier divisor on $Y$.
Then $(Y, E)\in \uMCor^{\pro}$. In particular the graph of $\pi$ and its transpose define morphisms
\[\Gamma_{\pi}\in \uMCor^\pro((Y,\pi^* D),\cX ), \quad \Gamma_{\pi}^t\in \uMCor^\pro(\cX, (Y, \pi^*D)).\]
\end{lem}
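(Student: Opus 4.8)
The plan is to reduce the claim that $(Y,E)\in\uMCor^{\pro}$ to producing a presentation of $Y$ as a cofiltered limit of finite-type modulus pairs with the required smoothness and compatibility properties, and then to deduce the statement about the graph correspondences from Lemma \ref{lem:rep-pro} together with the admissibility criteria already recorded in the excerpt. First I would write $\cX=\varprojlim_i\cX_i$ with $\cX_i=(X_i,D_i)$ and flat transition maps $X\to X_i$; by standard limit arguments (absolute noetherian approximation, \cite[IV]{SerreAL}-style descent of finite morphisms, or \cite[Tag 01ZM]-type results) the finite surjective morphism $\pi:Y\to X$ descends: there is an index $i_0$ and a finite surjective morphism $\pi_{i_0}:Y_{i_0}\to X_{i_0}$ of finite-type $k$-schemes with $Y\cong Y_{i_0}\times_{X_{i_0}}X$, and for $i\ge i_0$ we set $Y_i:=Y_{i_0}\times_{X_{i_0}}X_i$, so that $Y=\varprojlim_{i\ge i_0}Y_i$ with flat (indeed the base change of flat) transition maps and finite surjective $\pi_i:Y_i\to X_i$.

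Next I would produce the divisor. Since $E$ is an effective Cartier divisor on $Y=\varprojlim Y_i$ and the $Y_i$ are of finite type, $E$ is defined over some finite level: there is an index $i_1\ge i_0$ and an effective Cartier divisor $E_{i_1}$ on $Y_{i_1}$ pulling back to $E$; put $E_i:=$ pullback of $E_{i_1}$ to $Y_i$ for $i\ge i_1$. The candidate presentation is then $(Y,E)=\varprojlim_{i\ge i_1}(Y_i,E_i+\pi_i^*D_i)$. To see this lies in $\uMCor^\pro$ I must check the three conditions of \ref{subsec: MCorpro}: noetherianness of $Y$ (clear, since $Y\to X$ is finite and $X$ is noetherian), the limit presentation with affine transition maps (the transition maps of the $X_i$ were affine and finite morphisms are affine, and pullbacks preserve affineness, so this holds), that $Y_i\setminus|E_i+\pi_i^*D_i|$ is smooth for $i\gg0$, and that $Y^\o=Y\setminus|E+\pi^*D|$ is regular. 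The regularity of $Y^\o$ is given by hypothesis ($Y$ is regular, so any open subscheme is). For the smoothness of $Y_i\setminus|E_i+\pi_i^*D_i|$: this open subscheme of $Y_i$ has limit $Y\setminus|E+\pi^*D|$, which is a regular (indeed flat over the regular $\cX^\o$? — here one must be slightly careful) scheme, and smoothness is a constructible/limit-stable property (\cite[EGA IV 17.7.8]), so it descends to a finite level; one may need to shrink by further removing the closed locus where $Y_i\to\Spec k$ fails to be smooth, and then note this locus maps into the complement of $\cX^\o$ eventually. I would also use that $\tau_{i,j}^*$ compatibility of the divisors holds by construction since the $E_i$ and $\pi_i^*D_i$ are defined by pullback.

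Finally, once $(Y,\pi^*D)\in\uMCor^\pro$ is established, the graph $\Gamma_\pi\subset Y\times X$ and its transpose ${}^t\Gamma_\pi\subset X\times Y$ are integral prime correspondences: admissibility of $\Gamma_\pi\in\uMCor^\pro((Y,\pi^*D),\cX)$ amounts to $(\pi^*D)|_{\ol{\Gamma}_\pi^N}\ge D|_{\ol{\Gamma}_\pi^N}$, but $\ol{\Gamma}_\pi^N=Y$ (the graph is already closed and $Y$ is regular hence normal), and this is an equality by definition of $\pi^*D$; left properness holds because $\pi$ is finite hence proper. For the transpose ${}^t\Gamma_\pi\in\uMCor^\pro(\cX,(Y,\pi^*D))$, admissibility requires $D|_{\ol{{}^t\Gamma_\pi}^N}\ge(\pi^*D)|_{\ol{{}^t\Gamma_\pi}^N}$; again the normalization is $Y$ and we have equality, and left properness over $X$ is automatic since $X$ is noetherian and the closure of ${}^t\Gamma_\pi$ is $Y$, finite over $X$. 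To make sense of these as morphisms in $\uMCor^\pro$ rather than as a naive cycle I would invoke Lemma \ref{lem:rep-pro} (with the roles adjusted) or, more directly, note that since both $(Y,\pi^*D)$ and $\cX$ are now genuine pro-objects with the flat-transition presentations above, the correspondences are already defined at finite level and descend by the same mechanism as in \cite[Proposition 2.3]{RY}. The main obstacle I anticipate is the bookkeeping in the descent of $\pi$, $E$, and especially the smoothness of $Y_i\setminus|E_i+\pi_i^*D_i|$ to a common finite level while keeping the transition maps flat — i.e. making the limit argument precise enough that all three defining conditions of $\uMCor^\pro$ hold simultaneously for $i$ large; the correspondence-theoretic part at the end is then essentially formal given the admissibility criteria already in the excerpt.
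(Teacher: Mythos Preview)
Your overall strategy matches the paper's: descend $\pi$ to a finite level, descend $E$, and verify the conditions of \ref{subsec: MCorpro}. However, there is a concrete error in your candidate presentation. You write $(Y,E)=\varprojlim_{i}(Y_i,E_i+\pi_i^*D_i)$, but the limit of the right-hand side is $(Y, E+\pi^*D)$, not $(Y,E)$; so as written you are not proving the stated claim. Moreover, including $\pi_i^*D_i$ in the finite-level divisor does not buy you smoothness of the complement: the map $Y_i\setminus|\pi_i^*D_i|\to X_i\setminus|D_i|$ is finite surjective onto a smooth base, but finite surjective over smooth does not imply smooth, so $Y_i\setminus|E_i+\pi_i^*D_i|$ has no reason to be smooth.

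The paper's proof handles smoothness by shrinking the total space rather than enlarging the divisor, which is exactly the fix you mention parenthetically but do not carry out. Since $Y=\varprojlim_i Y_i$ is regular and the projections $Y\to Y_i$ are flat, \cite[Corollaire (6.5.2)]{EGAIV2} produces regular open neighborhoods $U_i\subset Y_i$ of the image of $Y\to Y_i$; as each $U_i$ is of finite type over the perfect field $k$, regular means smooth. One then arranges the $U_i$ into a compatible projective system with $\varprojlim_i U_i=Y$. Now $U_i\setminus|E_{i_0|U_i}|$ is automatically smooth (being open in a smooth scheme), so the presentation $(Y,E)=\varprojlim_i(U_i,E_{i_0|U_i})$ satisfies all conditions of \ref{subsec: MCorpro} with no further bookkeeping and for \emph{any} effective Cartier divisor $E$. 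Your treatment of the graph correspondences $\Gamma_\pi$ and $\Gamma_\pi^t$ at the end is correct and essentially formal, as you say.
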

\begin{proof}
Since $X$ is noetherian, the $\sO_X$-algebra $\cA=\pi_*\sO_Y$ is a coherent $\sO_X$-module.
It follows that there exists an index $i_0$ and a coherent $\sO_{X_{i_0}}$-algebra $\cA_{i_0}$ such that $\cA_{i_0| X}=\cA$. 
Set $Y_{i_0}= \Spec \cA_{i_0}$ and $Y_i=X_i\times_{X_{i_0}} Y_{i_0}$ for $i\ge i_0$.
It follows that for all $i\ge i_0$ we have a cartesian diagram
\[\xymatrix{ Y\ar[r]\ar[d]^{\pi} & Y_i\ar[d]\\ X\ar[r] & X_i,}\]
and $\varprojlim_i Y_i \cong Y$. The horizontal maps in this diagram are flat by assumption.
Since $Y$ is regular, we find regular open neighborhoods $U_i\subset Y_i$ around the images of the projections $Y\to Y_i$,
see \cite[Corollaire (6.5.2)]{EGAIV2}. Since the $U_i$ are of finite type over the perfect ground field $k$, they are smooth, and we can
arrange them into a projective system $(U_i)_i$ with $Y=\varprojlim_i U_i$. If $E$ is an effective Cartier divisor on $Y$,
we clearly find
a large enough index $i_0$ and an effective Cartier divisor $E_{i_0}$ on $U_{i_0}$, such that $E= E_{i_0|Y}$.
Hence $(Y,E)=\varprojlim_{i\ge i_0} (U_i, E_{i_0|U_i})\in\uMCor^\pro$.
\end{proof}

The following proposition will be the main tool, which makes  the purely categorical 
defined tensor product on $\uMPST$ accessible via symbolic computations. 
Note that this is not completely formal since the category of regular and excellent at most 1-dimensional $k$-schemes
does not have a reasonable monoidal structure.
\begin{prop}\label{prop:ten-rep-1dim}
Let $\cX=(X,D)=\varprojlim_i \cX_i\in \uMCor^{\pro}$, where $\cX_i=(X_i, D_i)\in \uMCor$. Assume 
\begin{enumerate}
\item $\dim X\le 1$;
\item $X$ is excellent and connected;
\item the projection $X\to X_i$ is flat for all $i$.
\end{enumerate}
Let $F_1,\ldots,F_n\in \uMPST$, and for $\bullet\in \{\te, \tte\}$ extend 
$F_1\bullet\ldots\bullet F_n$ to $\uMCor^\pro$ as in \eqref{sec: MCorpro2}.
Then
\eq{prop:ten-rep-1dim0}{
(F_1\bullet\ldots\bullet F_n)(\cX)=
\left(\bigoplus_{(Y, E_1,\ldots, E_n)\in \Lambda(\cX)} F_1(Y, E_1)\te_\Z \ldots \te_\Z F_n(Y, E_n)\right)/ R,}
where 
\[\Lambda(\cX):=\left\{(Y\to X, E_1,\ldots, E_n)\,\middle\vert\, 
\begin{minipage}{0.48\textwidth}
$Y$ is regular,
$Y\to X$  is finite surjective, 
$E_i$ are effective Cartier divisors on $Y$,
 with
$D_{|Y}\ge E_1\bullet\ldots\bullet E_n$
\end{minipage}
\right\}
,\]
here we use the notation 
\[E_1\bullet \ldots \bullet E_n= \begin{cases} E_1+\ldots + E_n, & \text{if } \bullet=\te,\\ 
\max\{E_1,\ldots, E_n\},&\text{if } \bullet=\tte,\end{cases}\] 
and  $R$ is the subgroup generated by elements
\eq{prop:ten-rep-1dim1}{(a_1\te \ldots \te f_* b_i\te a_{i+1}\te \ldots \te a_n) 
- (f^*a_1\te \ldots \te b_i\te f^*a_{i+1}\te \ldots \te f^*a_n) ,}
where $(Y, E_1, \ldots,E_n)$, $(Y', E_1',\ldots, E_n')\in \Lambda(\cX)$,
$i\in \{1,\ldots, n\}$, $a_j\in F_j(Y, E_j)$, $b_i\in F_i(Y', E_i')$
$f: Y'\to Y$ is finite surjective,  $E_j'\ge f^* E_j$, for $j\neq i$, and $f^*E_i\ge E_i'$.
\end{prop}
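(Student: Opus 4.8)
The plan is to refine the presentation of $(F_1\bullet\ldots\bullet F_n)(\cX)$ furnished by Lemma~\ref{lem:tensorMPST} using $\dim X\le 1$, by constructing mutually inverse maps between it and the claimed presentation \eqref{prop:ten-rep-1dim0}. Throughout $\bullet\in\{\te,\tte\}$, and I assume for simplicity that $X$ is irreducible; since $\cX^\o$ is regular and dense in $X$, in the general (connected) case the only change is that the schemes $Y$ occurring below map finitely onto an irreducible component of $X$ rather than onto all of $X$.

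\emph{Geometric normalization of correspondences.} Write $P:=M_1\bullet\ldots\bullet M_n\in\uMCor$ and let $\ol P$ be its underlying scheme. Let $h\in\uMCor^\pro(\cX,P)$ be a prime correspondence, with underlying integral cycle $V\subset\cX^\o\times P^\o=\cX^\o\times(M_1^\o\times\ldots\times M_n^\o)$, and let $\ol V$ be the closure of $V$ in $X\times\ol P$. Since $h$ is left proper and $\dim X\le 1$, the map $\ol V\to X$ is proper, dominant and quasi-finite, hence finite surjective; as $X$ (hence $\ol V$) is excellent, the normalization $Y:=\ol{V}^{N}$ is still finite over $X$, and it is regular, being normal of dimension $\le 1$. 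Hence by Lemma~\ref{lem:fin-sur-pro} (which uses hypothesis~(3)) we have $(Y,E)\in\uMCor^\pro$ for every effective Cartier divisor $E$ on $Y$, and $\Gamma_\pi^t\in\uMCor^\pro(\cX,(Y,\pi^*D))$ for $\pi:Y\to X$ the structure map. Write $\psi_j:Y\to\ol M_j$ for the projections and $E_j:=\psi_j^*M_j^\infty$ (an effective Cartier divisor, as the generic point of $Y$ maps into $M_j^\o$). By Lemma~\ref{lem:ten-cond} the admissibility of $h$ becomes $\pi^*D=D_{|Y}\ge E_1\bullet\ldots\bullet E_n$, i.e.\ $(Y,E_1,\ldots,E_n)\in\Lambda(\cX)$; moreover each $\psi_j$ is then a morphism $(Y,E_j)\to M_j$ in $\uMSm$, and $h$ factors as
\[
\cX\xr{\ \delta_{(Y,E_\bullet)}\ }(Y,E_1)\bullet\ldots\bullet(Y,E_n)\xr{\ \psi_1\bullet\ldots\bullet\psi_n\ }P,
\]
where $\delta_{(Y,E_\bullet)}:=\Delta\circ\Gamma_\pi^t$ and $\Delta$ is the diagonal $Y\to Y^n$ when $\bullet=\te$, resp.\ its strict transform under the relevant iterated blow-up when $\bullet=\tte$ (which exists because those blow-up centres restrict to Cartier divisors on the regular scheme $Y$). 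The key point is that $\Delta$ is admissible as a morphism $(Y,D_{|Y})\to(Y,E_1)\bullet\ldots\bullet(Y,E_n)$ precisely when $(Y,E_\bullet)\in\Lambda(\cX)$, the two conventions for $E_1\bullet\ldots\bullet E_n$ (sum for $\te$, max for $\tte$) matching the two monoidal structures through Lemma~\ref{lem:ten-cond}.

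\emph{The two maps and mutual inverseness.} Using the universal pairing \eqref{eq:pairing} (for the extension of $F_1\bullet\ldots\bullet F_n$ to $\uMCor^\pro$), I define a homomorphism $\Phi$ from the right-hand side of \eqref{prop:ten-rep-1dim0} to $(F_1\bullet\ldots\bullet F_n)(\cX)$ by
\[
\Phi(a_1\te\cdots\te a_n)=\delta_{(Y,E_\bullet)}^*(a_1\bullet\cdots\bullet a_n)\qquad\text{for }(Y,E_\bullet)\in\Lambda(\cX),\ a_j\in F_j(Y,E_j),
\]
and, from the presentation of Lemma~\ref{lem:tensorMPST}, a homomorphism $\Psi$ in the other direction by $\Psi(a_1\te\cdots\te a_n\te h):=[\psi_1^*a_1\te\cdots\te\psi_n^*a_n]_{(Y,E_\bullet)}$ on a generator with $h$ prime (normalized as above), extended $\Z$-linearly in $h$. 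That $\Phi$ kills $R$ is a projection-formula computation: for $f:Y'\to Y$ finite surjective over $X$ one verifies the identity of correspondences $(\Gamma_f\bullet\cdots\bullet\id\bullet\cdots\bullet\Gamma_f)\circ\delta_{(Y',E'_\bullet)}=(\id\bullet\cdots\bullet{}^t\Gamma_f\bullet\cdots\bullet\id)\circ\delta_{(Y,E_\bullet)}$ (both equal to the cycle $\{(\pi f(y'),f(y'),\ldots,y',\ldots,f(y'))\}$, with $y'$ in the $i$-th slot), and then invokes the functoriality \eqref{eq:description-hom1}, \eqref{eq:description-hom2} of the pairing in each slot. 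Granting that $\Psi$ is well defined (the crux, below), the two maps are inverse: $\Phi\circ\Psi=\id$ because the displayed factorization of $h$ and repeated use of \eqref{lem:tensorMPST1} (to move each $\psi_j$ out of the correspondence slot into the $a_j$ slot) give $a_1\te\cdots\te a_n\te h\equiv\psi_1^*a_1\te\cdots\te\psi_n^*a_n\te\delta_{(Y,E_\bullet)}$ in the presentation of Lemma~\ref{lem:tensorMPST}; and $\Psi\circ\Phi=\id$ because the normalization of the prime correspondence $\delta_{(Y,E_\bullet)}$ is $Y$ itself, with the $\psi_j$ replaced by identities. Thus $\Phi$ is an isomorphism, which is the assertion.

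\emph{The main obstacle.} The delicate step is the well-definedness of $\Psi$, i.e.\ that it kills the relations \eqref{lem:tensorMPST1}. Fix $g\in\uMCor(M',M_i)$ and a prime $h\in\uMCor^\pro(\cX,M_1\bullet\ldots\bullet M'\bullet\ldots\bullet M_n)$ with normalization datum $(Y,E_\bullet,\psi_\bullet)$, so $\psi_i:(Y,E_i)\to M'$; by the factorization, $(\id\bullet g\bullet\id)\circ h=(\psi_1\bullet\ldots\bullet(g\circ\psi_i)\bullet\ldots\bullet\psi_n)\circ\delta_{(Y,E_\bullet)}$. Now $g\circ\psi_i\in\uMCor((Y,E_i),M_i)$ is in general a $\Z$-combination $\sum_l m_l[W_l]$ of prime correspondences; putting $Z_l:=\ol{W_l}^{N}$ with $f_l:Z_l\to Y$ finite surjective over $X$, admissibility of $g$ yields $f_l^*E_i\ge E^{(l)}_i$ and $f_l^*E_j=E^{(l)}_j$ for $j\ne i$, so $(Z_l,E^{(l)}_\bullet)\in\Lambda(\cX)$ by monotonicity of $\bullet$, $(Z_l,E^{(l)}_\bullet,\psi^{(l)}_\bullet)$ is the normalization datum of the $l$-th prime component of $(\id\bullet g\bullet\id)\circ h$, and $[W_l]^*a_i=(f_l)_*(\psi_i^{(l)*}a_i)$. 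The relation \eqref{prop:ten-rep-1dim1} for $f_l$ then reads exactly
\[
[\psi_1^*a_1\te\cdots\te[W_l]^*a_i\te\cdots\te\psi_n^*a_n]_{(Y,E_\bullet)}\equiv[\psi_1^{(l)*}a_1\te\cdots\te\psi_n^{(l)*}a_n]_{(Z_l,E^{(l)}_\bullet)}\pmod R,
\]
and summing over $l$ with the multiplicities $m_l$ identifies $\Psi\big(a_1\te\cdots\te a_n\te(\id\bullet g\bullet\id)\circ h\big)$ with $\Psi$ applied to the second term of \eqref{lem:tensorMPST1}. The remaining points (the reduction to $X$ irreducible, and the admissibility of $\Delta$, which by Lemma~\ref{lem:ten-cond} is literally the defining condition of $\Lambda(\cX)$) are routine.
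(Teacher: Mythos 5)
Your construction mirrors the paper's almost step for step: your $\Phi$ is the paper's $\psi$, your $\Psi$ is the paper's $\theta$, and the overall architecture (define both maps, show each kills its relations, verify mutual inverseness via the factorization of a prime $h$ through the normalization of its closure) is identical. The ``geometric normalization'' paragraph, the projection-formula argument that $\Phi$ kills $R$, and the two composites $\Phi\circ\Psi=\id$, $\Psi\circ\Phi=\id$ are all fine and match the paper.

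There is, however, a gap in the well-definedness of $\Psi$, precisely at the place you flag as the crux. After decomposing $g\circ\psi_i=\sum_l m_l[W_l]$, you assert that $(Z_l,E^{(l)}_\bullet,\psi^{(l)}_\bullet)$ with $Z_l=\ol{W_l}^N$ is ``the normalization datum of the $l$-th prime component of $(\id\bullet g\bullet\id)\circ h$''. This is not true in general: the prime decomposition of $(\id\bullet g\bullet\id)\circ h$ is $\sum_l m_l d_l[V_l]$, where $V_l\subset\cX^\o\times\prod_j M_j^\o$ is the image of $W_l$ under the projection $w\mapsto(\pi p(w),\psi_1 p(w),\ldots,q(w),\ldots,\psi_n p(w))$ and $d_l$ is the degree of $W_l\to V_l$, and the normalization $\wt{V_l}$ of $\ol{V_l}$ is in general a proper finite quotient of $Z_l$, not $Z_l$ itself. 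The degree $d_l$ can be $>1$: take $n=2$, $\cX^\o=\Spec k$, $Y^\o=\Spec L$ with $[L:k]=2$, $\psi_1$ constant, $\psi_i$ injective but $g\circ\psi_i$ constant; then $W_l\cong\Spec L$ maps onto a $k$-point with degree $2$. Since your $\Psi$ is defined only on prime generators and extended $\Z$-linearly, its value on $a_1\otimes\cdots\otimes a_n\otimes(\id\bullet g\bullet\id)\circ h$ is $\sum_l m_l d_l\bigl[p_1^{(l)*}a_1\otimes\cdots\otimes p_n^{(l)*}a_n\bigr]_{(\wt{V_l},\ldots)}$, which is \emph{not} the expression $\sum_l m_l\bigl[\psi_1^{(l)*}a_1\otimes\cdots\otimes\psi_n^{(l)*}a_n\bigr]_{(Z_l,\ldots)}$ you obtain from $\Psi(\text{second term})$ and identify with it. The two do coincide modulo $R$, but this requires one further application of $R$ to the finite surjective map $Z_l\to\wt{V_l}$ of degree $d_l$ together with the identity $(\tilde f_l)_*\tilde f_l^*=d_l\cdot\id$ and a verification that the $\Lambda(\cX)$- and modulus inequalities descend from $Z_l$ to $\wt{V_l}$. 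The paper's proof of this step is longer precisely because it tracks all of this: the decomposition $\sum_U m_U d_U\sigma(U)$ keeps the degree $d_U$ explicit, the chain of equalities uses $d_U\cdot p_1^*a_1=\sigma_{U*}\sigma_U^*p_1^*a_1$ to absorb $d_U$ via $R$, and the commutative-diagram chase verifies the required inequalities. As written, your argument for the well-definedness of $\Psi$ is incomplete at this point; the rest is sound.
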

\begin{proof} The proof is similar to the one of \cite[Proposition 5.1.3]{IR} (there for $\PST$).
We will give a detailed proof, since we have to take care of the moduli. 
For a morphism $g$ we also denote by $g$ the finite correspondence determined by the graph of $g$;
if $g$ is finite and surjective, we denote by $g^t$ the finite correspondence determined by the transpose of the graph of $g$.
Let $f:Y'\to Y$ be as in \eqref{prop:ten-rep-1dim1}. Then $f\in \ulMCor^{\pro}((Y', E_j'), (Y,E_j))$, for $j\neq i$,
and $f^t\in\ulMCor^{\pro}((Y,E_i), (Y', E_i'))$, see Lemma \ref{lem:fin-sur-pro}.
Hence $R$ is well-defined. 
Denote the right hand side of \eqref{prop:ten-rep-1dim0} by $T(\cX)$.
The rest of the proof consists of three steps.

\vspace{2mm}
\noindent
{\bf Step 1.}
We construct a map 
\begin{equation}\label{eq:bar-theta}
\bar{\theta}: (F_1\bullet\ldots\bullet F_n)(\cX)\to T(\cX).
\end{equation}
For $\ul{M}=(M_1, \ldots, M_n)\in\ulMCor^{\times n}$ set $\ul{M}^\bullet:=M_1\bullet\ldots\bullet M_n$ and define
\eq{prop:ten-rep-1dim3}{\theta_{\ul{M}}: 
F_1(M_1)\te_\Z\ldots \te_\Z F_n(M_n)\te \ulMCor^{\pro}(\cX, \ul{M}^\bullet )\to T(\cX)}
as follows:
Let $V\in\ulMCor^{\pro}(\cX, \ul{M}^\bullet)$ be a prime correspondence and denote by 
$\ol{V}\subset X\times \ol{\ul{M}^\bullet}$ its closure.
In particular, $V\to X^\o:=X\setminus D$ is finite surjective; hence $\dim V=\dim X\le 1$ 
and $\ol{V}\to X$ is also finite and surjective. Let $\wt{V}:=\ol{V}^N$ be the normalization of $\ol{V}$; it is a regular scheme.
Since $X$ is excellent the induced map $\wt{V}\to X$ is finite surjective, hence $(\wt{V},E)\in \ulMCor^{\pro}$, for any effective
Cartier divisor $E$ on $\wt{V}$, by Lemma \ref{lem:fin-sur-pro}.
Denote by $p_i: \wt{V}\to \ol{M}_i$  the maps induced by projections and
set $E_i:= p_i^*M_{i,\infty}$.
By Lemma \ref{lem:ten-cond} we have $D_{|\wt{V}}\ge E_1\bullet\ldots\bullet E_n$. Hence
$(\wt{V}\to X, E_1,\ldots, E_n)\in \Lambda(\cX)$. 
For $a_i\in F_i(M_i)$,  we define 
\[\theta_{\ul{M}}(a_1\otimes\ldots\te a_n\otimes V):= \text{class of } (p_1^*a_1\otimes\ldots\te p_n^*a_n) \text{ in }T(\cX).\]
We extend this additively to obtain the map \eqref{prop:ten-rep-1dim3}.
Define 
\[\theta:=\oplus \theta_{\ul{M}} : 
\bigoplus_{\ul{M}} F_1(M_1)\otimes_\Z\ldots\te_\Z F_n(M_n)\otimes\ulMCor^{\pro}(\cX, \ul{M}^\bullet)\to T(\cX).\]
We claim 
\eq{prop:ten-rep-1dim4} {\theta(\eqref{lem:tensorMPST1})=0.}
We show this in the case $i=1$ in  \eqref{lem:tensorMPST1}. (The proof for general $i$ is similar.)
Thus we have to show the following: Let $a_j\in F_j(M_j)$, 
$f\in \ulMCor(M',M_1)$, and $h\in \ulMCor^{\pro}(\cX, M'\bullet N)$, with $N:=M_2\bullet\ldots\bullet M_n$;  then 
\eq{{prop:ten-rep-1dim4}}
{\theta(a_1\otimes\ldots \te a_n\otimes (f\bullet\id_N)\circ h)= \theta(f^*a_1\otimes a_2\te \ldots\te a_n\otimes h).}
We may assume that $f=V\subset {M'}^{\o}\times M_1^\o$ and 
$h=W\subset X^\o\times {M'}^{\o}\times N^\o$ are prime correspondences.
We have 
\eq{prop:ten-rep-1dim6}{(f\bullet\id_N)\circ h=\sum_U m_U\cdot d_U\cdot \sigma(U),}
where the sum is over the irreducible components $U$ of $W\times_{{M'}^{\o}\times N^{\o}} V\times N^{\o}$, 
$\sigma$ is the natural map induced by projection
\[\sigma: W\times_{{M'}^{\o}\times N^{\o}} V\times N^{\o}= W\times_{{M'}^\o} V\to X^\o\times M_1^\o\times N^\o,\]
$d_U=[U:\sigma(U)]=$ degree of $U/\sigma(U)$, and 
\eq{prop:ten-rep-1dim6.5}{m_U=\sum_{j\ge 0} (-1)^j 
\text{length}({\rm Tor}_j^{\sO_{{M'}^\o\times N^\o, \eta_U}}(\sO_{W,\eta_U}, \sO_{V\times N^\o, \eta_U})),}
where by abuse of notation we denote by $\eta_U$ the images of the generic point of $U$ in the various schemes. Denote by
$\wt{\sigma(U)}$, $\wt{U}$, $\wt{W}$ the normalizations of the closures $\ol{\sigma(U)}$, $\ol{U}$, $\ol{W}$, respectively.
We obtain the following commutative diagram
\[\xymatrix{
      &   & \wt{\sigma(U)}\ar[rr]^{p_{1}}\ar[dr]\ar[drr]^{p_{j}}\ar@/_1pc/[dll]  
                                                         &  &\ol{M_1} \\
X & \wt{U}\ar[l]\ar[ur]^{\sigma_U}\ar[dr]^s &      & \ol{N}\ar[r] & \ol{M}_j, &  j\ge 2\\
            &          &  \wt{W}\ar@/^1pc/[ull]\ar[ur]\ar[urr]_{q_{j}}\ar[rr]_{q_{1}}       &   &\ol{M'},
}\]
in which all maps are induced projections. Set 
\[ E_{\sigma(U), j}:=p_j^*M_{j,\infty}, \quad \text{for }j\ge 1,\]
\[E_{W,1}:=q_1^* M'_\infty, \quad E_{W,j}:= q_j^*M_{j,\infty}, \quad \text{for  }j\ge 2,\]
and
\[E_{U,1}:=\sigma_U^*E_{\sigma(U),1}, \quad  E_{U,j}:=\sigma_U^*E_{\sigma(U),j}=s^*E_{W,j}, \quad
\text{for } j\ge 2.\]
Note that the tuples
$(\wt{\sigma(U)}\to X, E_{\sigma(U)})$, $(\wt{U}\to X, E_{U})$, 
$(\wt{W}\to X, E_{W})$ are in $\Lambda(\cX)$, where $E_{U}=(E_{U,1},\ldots, E_{U,n})$, etc.
Furthermore, on $\wt{U}$ we have
\eq{prop:ten-rep-1dim7}{s^*E_{W,1}\ge E_{U,1}. }
Indeed, the natural maps $\ol{U}\to \ol{M_1}$, and $\ol{U}\to \ol{M'}$ factor via the natural maps
$\ol{V}\to \ol{M_1}$ and $\ol{V}\to \ol{M'}$, respectively.
Since $M'_{\infty|\wt{V}}\ge M_{1,\infty|\wt{V}}$ by the modulus condition which $V$ satisfies, the analog inequality
also holds on $\wt{U}$, see e.g. \cite[Proposition 2.3(4)]{RY}.

In the following we will use the notation 
\eq{prop:ten-rep-1dim7.5}{[a]_i:=a_i\te\ldots\te a_n, \quad [p^*a]_i:= p_i^*a_i\te\ldots\te p_n^*a_n,\quad \text{etc.}}
We compute in $T(\cX)$:
\begin{align*}
\theta([a]_1\otimes(f\bullet\id_N)\circ h) & =
\sum_U m_U\cdot d_U\cdot  [p^*a]_1, 
& &\text{by defn},\\
 &= \sum_U m_U\cdot  (\sigma_{U*}\sigma^*_U p_1^*a_1)\te [p^*a]_2 \\
&=\sum_U m_U\cdot [\sigma^*_U p^*a]_1,
& & \text{by \eqref{prop:ten-rep-1dim1}},\\
&=\sum_U m_U\cdot  (\sigma^*_U p_1^*a_1)\te [s^* q^*a]_2\\
&=\left(\sum_U m_U\cdot  s_*\sigma^*_U p_1^*a_1\right)\otimes  [q^*a]_2,
& & \text{by \eqref{prop:ten-rep-1dim1}},
\end{align*}
for the last equality we  use \eqref{prop:ten-rep-1dim7} to apply \eqref{prop:ten-rep-1dim1}
in the situation 
\[(f, E, E', b_1, a)= 
(s, E_{W}, E_{U},  \sigma_U^* p_1^*a_1, q^*a ).\]
We have 
\[q_1\in \ulMCor^{\pro}((\wt{W}, E_{W,1}), M'), \quad p_1\circ \sigma_U\in \ulMCor^{\pro}((\wt{U}, E_{U,1}), M_1)\]
and by \eqref{prop:ten-rep-1dim7} also
\[s^t\in\ulMCor^{\pro}((\wt{W}, E_{W,1}), (\wt{U}, E_{U,1})).\]
By \eqref{prop:ten-rep-1dim6.5} and flat base change for Tor we have 
\[m_U=\sum_{j\ge 0} (-1)^j 
\text{length}({\rm Tor}_j^{\sO_{{M'}^\o, \eta_U}}(\sO_{W,\eta_U}, \sO_{V, \eta_U})).\]
Hence in $\ulMCor^{\pro}((\wt{W}, E_{W,1}), M_1)$
\eq{prop:ten-rep-1dim8}{
\sum_U m_U \cdot (p_1\circ\sigma_U)\circ s^t  = \sum_{U} m_U \cdot U' = f\circ q_1,
}
where $U'\subset (\wt{W}\setminus|E_{W,1}|)\times M_1^\o$ is the restriction  of 
the image of $\wt{U}\to \wt{W}\times\ol{M}_1$.
Together with the above we obtain
\begin{align*}
\theta([a]_1\otimes(f\bullet\id_N)\circ h)& = 
\left(\sum_U m_U\cdot   (p_1\circ \sigma_U\circ s^t)^*a_1\right)\otimes  [q^*a]_2\\
                                                     &= q_1^*f^*a_1\otimes  [q^*a]_2\\
                                                    & = \theta( f^*a_1\te [a]_2\te h).
\end{align*}
This shows that $\theta$ satisfies \eqref{prop:ten-rep-1dim4}.
Thus $\theta$ factors to give a well-defined map \eqref{eq:bar-theta}.
%\[\bar{\theta}: (F_1\bullet\ldots\bullet F_n)(\cX)\to T(\cX).\]

\vspace{2mm}
\noindent
{\bf Step 2.}
We construct a map 
\begin{equation}\label{eq:bar-psi}
\bar{\psi}: T(\cX)\to (F_1\bullet\ldots \bullet F_n)(\cX).
\end{equation}
%We define a map in the other direction.
Let $(\pi: Y\to X, E_1,\ldots, E_n)\in \Lambda(\cX)$ and $a_i\in F_i(Y, E_i)$.
By Lemma \ref{lem:fin-sur-pro} we find a smooth $k$-scheme $U$ with a $k$-morphism 
$Y\to U$, effective Cartier divisors $E_{U,i}$, and elements $\tilde{a}_i\in F_i(U, E_{U,i})$, 
such that $E_{U,i|Y}=E_i$, $\tilde{a}_{i|Y}=a_i$. Denote by $\Gamma\subset (Y\setminus|\pi^*D|)\times U^{\times n}$ the
graph of the diagonal map $Y\setminus|\pi^*D|\to U^{\times n}$. 
By definition of $\Lambda(\cX)$ and Lemma \ref{lem:ten-cond}
we have 
\[\Gamma\in \ulMCor^{\pro}((Y, \pi^*D), (U, E_{U,1})\bullet\ldots\bullet (U, E_{U,n})).\]
Using the description of $F_1\bullet\ldots\bullet F_n$ from Lemma \ref{lem:tensorMPST} 
(and the notation from \eqref{prop:ten-rep-1dim7.5}) we set 
\eq{prop:ten-rep-1dim8.5}{\psi_Y([a]_1):= 
\text{class of } ([\tilde{a}]_1\otimes \Gamma) \in (F_1\bullet\ldots\bullet F_n)(Y, \pi^*D)}
and 
\[\psi([a]_1):=\pi_*\psi_Y([a]_1)= \text{class of }([\tilde{a}]_1\otimes (\Gamma\circ \pi^t) ) 
\in (F_1\bullet\ldots \bullet F_n)(\cX).\]
We claim that $\psi_Y([a]_1)$ (and hence $\psi([a]_1)$) is well-defined, i.e.,
independent of the choices of $Y\to U$, $E_{U,i}$, and $\tilde{a}_i$.
Indeed, let $Y\to V$, $E_{V,i}$, and  $a'_i$ be different choices; it suffices to consider the case in which
we have a commutative diagram 
\[\xymatrix{ & V\ar[d]^f\\ Y\ar[ur]\ar[r] & U,}\]
and $E_{V,i}= f^*E_{U,i}$, $a'_i=f^*\tilde{a}_i$.
Set $\cU_i=(U, E_{U,i})$ and $\cV_i=(V, E_{V,i})$ and 
denote by $\Gamma_U$  and $\Gamma_V$ the graphs of
$Y\setminus |\pi^*D|\to U^{\times n}$ and $Y\setminus |\pi^*D|\to V^{\times n}$, respectively. 
We have in $\ulMCor^\pro((Y, \pi^*D), \cU_1\bullet \ldots\bullet \cU_n)$
\[\Gamma_U= \underbrace{f\bullet\ldots\bullet f}_{n\text{-times}}\circ \Gamma_V= 
(f\bullet\id_{\cV_2\bullet\ldots \bullet\cV_n})\circ\ldots \circ (\id_{\cV_1\bullet\ldots\bullet \cV_{n-1}}\bullet f)\circ \Gamma_V.\]
Thus $[\tilde{a}]_1\otimes \Gamma_U = [a']_1\otimes \Gamma_V$ in $(F_1\bullet\ldots\bullet F_n)(Y, \pi^*D)$,
by \eqref{lem:tensorMPST1}.
Hence $\psi([a]_1)$ is well-defined and we can extend it additively to obtain a map
\[\psi: \bigoplus_{(Y, E_1, \ldots,E_n)\in \Lambda(\cX)} F(Y, E_1)\te_\Z\ldots\te_\Z F_n(Y, E_n)\to 
(F_1\bullet\ldots\bullet F_n)(\cX).\]
We claim 
\eq{prop:ten-rep-1dim9}{\psi(\eqref{prop:ten-rep-1dim1})=0.}
We show this in the case $i=1$ in  \eqref{prop:ten-rep-1dim1}. (The proof for general $i$ is similar.)
To this end, let $\alpha:=(f, E, E', a, b_1)$ be as in \eqref{prop:ten-rep-1dim1}.
We find a finite surjective map  between smooth $k$-schemes $\tilde{f}: U'\to U$ which fits in a cartesian diagram
\begin{equation}\label{cd-yu}
\xymatrix{Y'\ar[r]^{u'}\ar[d]^{f}& U'\ar[d]^{\tilde{f}}\\ Y\ar[r]^{u} & U,}
\end{equation}
effective Cartier divisors $\wt{E}_{i}$ on $U$ and $\wt{E}'_i$ on $U'$, elements $\tilde{a}_j\in F_j(U, \wt{E}_j)$, 
$j\ge 2$, and $\tilde{b}_1\in F_1(U', \wt{E}'_1)$, such that $(\tilde{f}, \wt{E}, \wt{E}', \tilde{a}, \tilde{b}_1)$ 
restricts to $\alpha$; furthermore we can assume
\[\tilde{f}^*\wt{E}_1\ge \wt{E}'_1, \quad \wt{E}'_j\ge \tilde{f}^* \wt{E}_j,\quad j\ge 2.\]
Let $\Gamma$ and $\Gamma'$ be the graphs of $Y\setminus |D_{|Y}|\to U^{\times n}$ and 
$Y'\setminus |D_{|Y'}|\to (U')^{\times n}$, respectively.
For $j\ge 1$, set 
\[\cU_j:=(U, \wt{E}_j), \quad \cU'_j:=(U', \wt{E}'_j), \quad \cY:=(Y, D_{|Y}), \quad \cY':=(Y', D_{|Y'}).\]
We write $\cU_j^\o= U\setminus |\wt{E}_j|$, etc. By assumption we have 
\[\Gamma\in \ulMCor^{\pro}(\cY, \cU_1\bullet\ldots\bullet\cU_n), \quad 
\Gamma'\in\ulMCor^{\pro}(\cY', \cU_1'\bullet\ldots\bullet\cU_n')\]
\[f\in \ulMCor^\pro(\cY', \cY), \quad f^t\in \ulMCor^\pro(\cY, \cY')\]
\[\tilde{f}^t\in \ulMCor(\cU_1, \cU'_1), \quad \tilde{f}\in\ulMCor(\cU'_j, \cU_j),\quad j\ge 2.\]
Set
\[\cV:=\cU_2\bullet\ldots\bullet \cU_n,\quad \cV':=\cU'_2\bullet\ldots\bullet\cU'_n.\]
We obtain
\[\tilde{f}^{\bullet n-1}=\underbrace{\tilde{f}\bullet\ldots\bullet \tilde{f}}_{(n-1)\text{-times}}\in 
\ulMCor(\cV',\cV).\]
In $\ulMCor(\cY, \cU'_1\bullet \cV)$ the following equality holds
\eq{prop:ten-rep-1dim10}{(\id_{U'_1}\bullet \tilde{f}^{\bullet n-1})\circ\Gamma'\circ f^t= (\tilde{f}^t\bullet\id_{\cV})\circ\Gamma.}

Indeed, this is a purely cycle theoretic question; 
it suffices to show the equality in 
$\Cor(Y \setminus D|_Y, (\cU_1^\o) \times \cV^\o)$.
We may therefore suppose  $D|_Y, E, E'$ are all trivial.
Then \eqref{prop:ten-rep-1dim10} follows from the following identity of cycles on $Y$
\[f_* ((\id_{U'}\times\tilde{f}^{\times n-1})\circ \Gamma')^*[U'\times U^{\times n-1}]
= \Gamma^*(\tilde{f}\times \id_{U^{\times n-1}})_*[U'\times U^{\times n-1}], \]
which holds, e.g.,  by \cite[2.2. Lemma (4)]{Fu-Rat-Eq}.
%We claim that the following diagram of correspondences commutes:
%\begin{equation}\label{cd:fulton}
%\xymatrix{
%Y' \ar[r]^{u'} 
%%\ar@/^20pt/[rrr]^{(\id_{U'_1}\bullet \tilde{f}^{\bullet n-1})\circ\Gamma'\circ f^t}
%& U' \ar[rr]^-{\id_{U'} \times \tilde{f}^{\times n-1}}
%&& U' \times U^{\times n-1}
%\\
%Y \ar[r]_{u} \ar[u]^{f^t} 
%%\ar@/_20pt/[rrr]_{(\tilde{f}^t\bullet\id_{\cV})\circ\Gamma}
%& U \ar[rr]_{\id_U^{\times n}} \ar[u]^{\tilde{f}^t}
%&& U^{\times n} \ar[u]_{\tilde{f}^t \times \id_{U^{\times n-1}}}
%}
%\end{equation}
%The commutativity of the left square follows from
%\eqref{cd-yu} since $f$ is finite and $u$ is flat
%(see \cite[Proposition 1.7]{Fulton}).
%The commutativity of the right square follows from
%a similar base change formula which one can check directly from the definition of the composition of finite correspondences
%using the identities
%\begin{align*}
%\tilde{f}^t &= U\times_U U' \in \Cor(U, U'),\\
%\id_{U'}\times \tilde{f}^{\times n-1}& = U'\times_{(U'\times U^{\times n-1})} (U'\times U^{\times n-1})\in 
%\Cor(U',U'\times U^{\times n-1}),\\
%\id_{U^\times n} & = U\times_{U^{\times n}} U^{\times n}\in \Cor(U, U^{\times n}),\\
%\tilde{f}^t\times\id_{U^{\times n-1}} & = U^{\times n}\times_{U^{\times n}} (U'\times U^{n-1})\in 
%\Cor(U^{\times n}, U'\times U^{n-1}),
%\end{align*}
%which are all smooth correspondences. 
%The correspondences $Y \to U' \times U^{\times n-1}$ in \eqref{cd:fulton}
%via the upper left (resp. lower right)
%equals the left (resp. right) hand side of \eqref{prop:ten-rep-1dim10}.
%This yields \eqref{prop:ten-rep-1dim10}.

We compute in $(F_1\bullet\ldots\bullet F_n)(\cY)$
\begin{align*}
f_*\psi_{Y'}(b_1\te [f^*a]_2)&= \tilde{b}_1\te[\tilde{f}^*\tilde{a}]_2\otimes (\Gamma'\circ f^t), 
& & \text{by defn},\\
                     &=\tilde{b}_1\te [\tilde{a}]_2\otimes ((\id_{\cU_1'}\bullet \tilde{f}^{\bullet n-1})\circ \Gamma'\circ f^t), 
& & \text{by \eqref{lem:tensorMPST1}}\\
                                    & = \tilde{b}_1\te [\tilde{a}]_2\otimes ((\tilde{f}^t\bullet\id_{\cV})\circ\Gamma),
& & \text{by \eqref{prop:ten-rep-1dim10}}\\
                                   & = (\tilde{f}^t)^*\tilde{b}_1\otimes [\tilde{a}]_2\otimes \Gamma,
& &\text{by \eqref{lem:tensorMPST1}}\\
                                  & = \tilde{f}_*\tilde{b}_1\otimes [\tilde{a}]_2\otimes \Gamma\\
                                  &= \psi_Y(f_*b_1\otimes [a]_2), 
& &\text{by defn.}
\end{align*}
Pushing forward to $\cX$ we obtain  $\psi(b_1\te [f^*a]_2)=\psi(f_*b_1\otimes [a]_2)$,

Thus $\psi$ factors to give a well-defined map \eqref{eq:bar-psi}.

\vspace{2mm}
\noindent
{\bf Step 3.}
We show the two maps $\bar{\theta}$ and $\bar{\psi}$ from \eqref{eq:bar-theta} and \eqref{eq:bar-psi}
are inverse to each other.
%By definition of $\bar{\theta}$ and $\bar{\psi}$ we have 

By definition we have 
\[\bar{\theta}\circ\bar{\psi}=\id_{T(\cX)}.\]
Thus it remains to show that $\bar{\psi}$ is surjective.
To this end, let $M_1,\ldots, M_n\in \ulMCor$, $a_i\in F_i(M_i)$, and  let $V\in \ulMCor^\pro(\cX, M_1\bullet\ldots\bullet M_n)$
be a prime correspondence. Denote by $\wt{V}$ the normalization of $\ol{V}\subset X\times \ol{M_1\bullet\ldots\bullet M_n}$
and denote by $p_i: \wt{V}\to \ol{M_i}$ the maps induced by the projection maps.
By Lemma \ref{lem:fin-sur-pro}, we find a morphism $\wt{V}\to U$ with $U$ a smooth $k$-scheme such that
$p_i$ factors via $\tilde{p}_i:U\to \ol{M_i}$.
Set $E_i:=p_i^* M_{i,\infty}$ and  $E_{U,i}:=\tilde{p}_i^*M_{i,\infty}$.
Then $(\pi:\tilde{V}\to X, E_1, \ldots, E_n)\in \Lambda(\cX)$.
Let $\Gamma$ be the graph of $\wt{V}\setminus |\pi^*D|\to U^{\times n}$. We compute in 
$(F_1\bullet\ldots\bullet F_n)(\cX)$
\begin{align*}
\psi([p^*a]_1)&= [\tilde{p}^*a]_1 \otimes \Gamma\circ \pi^t, 
& &\text{by defn},\\
                               &= [a]_1\te (\tilde{p}_1\bullet\ldots\bullet \tilde{p}_n)\circ \Gamma\circ\pi^t, 
& &\text{by \eqref{lem:tensorMPST1},}\\
                              &= [a]_1\otimes V.
\end{align*}
This shows that $\psi$ is surjective and completes the proof.
\end{proof}

\begin{rmk}\label{rmk:MF}
Taking $\cX=(X, \emptyset)$ in Proposition \ref{prop:ten-rep-1dim}  we see
that 
\[(F_1\bullet\ldots \bullet F_n) (X,\emptyset)= (\ul{\omega}_!F_1\ten{\PST}\ldots \ten{\PST}\ul{\omega}_!F_n)(X)\]
equals the tensor product of Mackey functors evaluated at $X$, see, e.g., \cite[2.8]{KY}, or  \cite[4.1]{IR}.
In particular, the above proposition is a generalization of \cite[Proposition 5.1.3]{IR}.
\end{rmk}

\begin{para}\label{subsec:not}
We introduce some notations: 
Let $F \in \PST$ (resp. $F \in \MPST$).
If $K$ is a field containing $k$, then 
we write (see \ref{subsec: MCorpro})
\[F(K):=F(\Spec K) \quad (\text{resp.}~ F(K):=\omega_!F(\Spec K)). \]
If $a\in F(K)$ and $L/K$ is a field extension inducing $f:\Spec L\to \Spec K$, 
then we write 
\[a_L:=f^*a\in F(L),\] and if $f$ is finite and $b\in F(L)$, then we write
\[\Tr_{L/K}(b):=f_*b:=(f^t)^*b\in F(K).\]
If $C/K$ is a regular projective curve, 
$D$ an effective divisor on $C$,
$a\in F(C \setminus D)$ 
(resp. $a \in F(C, D)$)
and $i:x\inj C \setminus D$ is a closed point,
then we write 
\[a(x)= i^*a\in F(x)=F(K(x)).\] 
\end{para}

\begin{thm}\label{thm:CI-te-fields}
Let $F_1,\ldots, F_n\in\MPST$. Let $K$ be a field containing $k$. Let $\bullet\in \{\te, \tte\}$, 
then (see \eqref{subsec:omegaCI1} and  Notation \ref{subsec:not})
\[
h_0(F_1\bullet\cdots\bullet F_n)(K)
=\left( \bigoplus_{L/K} F_1(L)\otimes_{\Z}\ldots\otimes_\Z F_n(L)\right)/R(K),\]
where the sum is over all finite field extensions $L/K$ (one may restrict to those inside a fixed algebraic closure of $K$)
and $R(K)$ is the subgroup generated by the following elements:
\begin{enumerate}[label=(R\arabic*)]
\item\label{R1}  
$(a_1\otimes \cdots \otimes\Tr_{L'/L}(a_{i})\otimes\cdots \otimes a_n) - 
                   (a_{1,L'}\otimes\cdots \otimes a_i\otimes\cdots \otimes a_{n,L'}),$
where 
$L'/L/K$ is a tower of finite field extensions,
$i\in [1,n]$, $a_j\in F_j(L)$, for $j\neq i$, $a_i\in F_i(L')$;
\item\label{R2} 
\[\sum_{x\in C\setminus |D|} v_x(f)\cdot a_1(x)\otimes\cdots\otimes a_n(x),\]
where $C$ is a regular projective curve over $K$, $D=D_1\bullet\cdots \bullet D_n$, with 
$D_i$ effective divisors on $C$, such that
$a_i\in (\tau_!F_i)(C, D_i)$, 
and $f\in K(C)^\times$ satisfies 
$f\equiv 1$ mod  $D$,
and $v_x$ denotes the normalized valuation of $K(C)$ attached to $x$.
Here we use the notation
\[D_1\bullet\cdots\bullet D_n=\begin{cases} 
 D_1+\ldots+D_n, & \text{if } \bullet=\te,\\
\max\{D_1,\ldots, D_n\}, &\text{if }\bullet=\tte,
\end{cases}\]
and for a divisor $D=\sum_j n_j x_j$ on $C$ the notation $f\equiv 1$ mod $D$ means
$v_{x_j}(f-1)\ge n_j$, for all $j$.
\end{enumerate}
\end{thm}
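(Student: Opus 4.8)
The plan is to derive the presentation from Proposition \ref{prop:ten-rep-1dim}, applied to two well-chosen pro-modulus pairs over $K$, combined with the fact that $h_0=\omega_!\circ h_0^\bcube$ is computed on function fields by a single cokernel. Write $G:=F_1\bullet\cdots\bullet F_n\in\MPST$. First I would reduce to a cokernel in $\uMCor^\pro$. Since $h_0^\bcube(G)=\Coker\big(\uHom_{\MPST}(\Z_\tr(\bcube),G)\to G\big)$, the map being induced by $i_0-i_1\colon\Z_\tr(\Spec k,\emptyset)\to\Z_\tr(\bcube)$, and since $\omega_!$ and evaluation at the pro-object $\Spec K$ are exact, $h_0(G)(K)$ is the cokernel of $i_0^*-i_1^*$ from $\omega_!\uHom_{\MPST}(\Z_\tr(\bcube),G)(\Spec K)$ to $\omega_!G(\Spec K)$. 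For any $H\in\MPST$ one has a natural isomorphism $\omega_!H(\Spec K)\cong(\tau_!H)(\Spec K,\emptyset)$, the target being the value of the extension of $\tau_!H$ to $\uMCor^\pro$: both sides are right exact in $H$ and agree on representables, because $\uMCor^\pro((\Spec K,\emptyset),M)=\Cor^\pro(\Spec K,M^\o)$ for $M\in\MCor$ (admissibility and left properness being automatic, the latter since $\ol M$ is proper). The analogous identification $\omega_!\uHom_{\MPST}(\Z_\tr(\bcube),G)(\Spec K)\cong(\tau_!G)(\bcube_K)$, with $\bcube_K:=(\Spec K,\emptyset)\te\bcube=(\P^1_K,(\infty))\in\uMCor^\pro$, is the delicate point — it requires moving $\omega_!$, $\tau_!$ and the pro-evaluation past the internal hom by $\Z_\tr(\bcube)$ — and I would establish it as in \cite{IR}, via the projection formula for the monoidal functors $\tau$ and $\omega$ after reducing to representables. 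Granting this,
\[h_0(G)(K)=\Coker\big(G(\bcube_K)\xr{i_0^*-i_1^*}G(\Spec K,\emptyset)\big),\]
both terms evaluated in $\uMCor^\pro$.

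Next I would compute the two terms using Proposition \ref{prop:ten-rep-1dim}. Both pro-modulus pairs satisfy its hypotheses: writing $K=\Frac(A)$ for a regular finitely generated $k$-algebra $A$ one has $\Spec K=\varprojlim_f\Spec A[1/f]$ and $\bcube_K=\varprojlim_f(\P^1_{A[1/f]},(\infty))$ with flat affine transition maps, and $\Spec K$, $\P^1_K$ are regular, excellent, connected and of dimension $\le 1$. For $\cX=(\Spec K,\emptyset)$, $\Lambda(\cX)$ forces every $E_i=0$, so the result collapses to the value at $\Spec K$ of $\omega_!F_1\ten{\PST}\cdots\ten{\PST}\omega_!F_n$, i.e.\ of the tensor product of the associated Mackey functors (Remark \ref{rmk:MF}); its Somekawa-type presentation (cf.\ \cite{KY}, \cite{IR}) is $\big(\bigoplus_{L/K}F_1(L)\otimes\cdots\otimes F_n(L)\big)/R_0$ with $R_0$ generated by \ref{R1}. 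For $\cX=\bcube_K$, Proposition \ref{prop:ten-rep-1dim} presents $G(\bcube_K)$ as a quotient of $\bigoplus F_1(Y,E_1)\otimes\cdots\otimes F_n(Y,E_n)$, the sum over $\Lambda(\bcube_K)$ = finite surjections $Y\to\P^1_K$ with $Y$ regular (disjoint unions of smooth projective curves over finite extensions of $K$) equipped with effective Cartier divisors $E_i$ subject to $(\infty)_{|Y}\ge E_1\bullet\cdots\bullet E_n$.

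Finally I would compute the cokernel. On a generator $a_1\otimes\cdots\otimes a_n$ attached to $(Y\to\P^1_K,E_\bullet)$ one has $i_\epsilon^*(a_1\otimes\cdots\otimes a_n)=\sum_y v_y\,a_1(y)\otimes\cdots\otimes a_n(y)$, summed over the fibre of $Y\to\P^1_K$ over $\epsilon$, whence $i_0^*-i_1^*$ outputs $\sum_{x\in Y\setminus|E|}v_x(t_Y)\,a_1(x)\otimes\cdots\otimes a_n(x)$, where $t_Y$ is the pullback of the coordinate on $\A^1\subset\P^1$; the substitution replacing $t_Y$ by $t_Y/(t_Y-1)$ rewrites this as $\sum_x v_x(f)\,a_1(x)\otimes\cdots\otimes a_n(x)$ with $f\equiv 1$ modulo $E_1\bullet\cdots\bullet E_n$, which on each connected component $C$ of $Y$ is exactly a relation \ref{R2} (take $D_i=E_i$). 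Conversely every relation \ref{R2} arises so: realise the given $f\in K(C)^\times$ as a finite morphism $C\to\P^1_K$ and use Lemma \ref{lem:ten-cond} to see that the conditions $a_i\in(\tau_!F_i)(C,D_i)$ and $f\equiv 1\bmod D$ amount precisely to membership in $\Lambda(\bcube_K)$. Hence the image of $i_0^*-i_1^*$ is generated by \ref{R2}, and combining with the previous step gives the stated presentation. The one genuine obstacle is the functorial identification above (commuting the exact localizations $\omega_!$, $\tau_!$ and the pro-evaluation past $\uHom(\Z_\tr(\bcube),-)$); once it is in place the remainder is bookkeeping with Proposition \ref{prop:ten-rep-1dim}, closely paralleling the proof of \cite[Proposition 5.1.3]{IR} and the analogous computation for $K$-groups of reciprocity functors, with the moduli carried along throughout.
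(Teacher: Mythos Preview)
Your approach is essentially the paper's: express $h_0(G)(K)$ as a cokernel of the form $G(\text{cube over }K)\to G(\Spec K,\emptyset)$ in $\uMCor^\pro$, apply Proposition~\ref{prop:ten-rep-1dim} to both terms, and read off the relations. The one substantive difference is the choice of cube model. The paper works with $(\P^1,1)\cong\bcube$ and the sections $i_0,i_\infty$, rather than with $\bcube=(\P^1,\infty)$ and $i_0,i_1$. Then $\Lambda((\P^1_K,1_K))$ consists of finite surjections $f:C\to\P^1_K$ with $f^*(1)\ge D$, which is literally the condition $f\equiv 1\bmod D$, and $(i_0^*-i_\infty^*)$ on a generator is $\sum_x v_x(f)\,a_1(x)\otimes\cdots\otimes a_n(x)$ on the nose. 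Your coordinate choice forces a M\"obius transformation and your intermediate formula is not correct as written: $i_0^*-i_1^*$ applied to a generator gives $\sum_x v_x\big(t_Y/(t_Y-1)\big)\,a(x)$, not $\sum_x v_x(t_Y)\,a(x)$ (the fibre over $1$ contributes nothing to the latter). Likewise, for the converse direction, ``realise $f$ as a finite morphism $C\to\P^1_K$'' does not land in $\Lambda(\bcube_K)$, since $f\equiv 1\bmod D$ puts $|D|$ in the fibre over $1$, not $\infty$; one must instead take $f/(f-1):C\to\P^1_K$ to obtain $f^*(\infty)\ge D$ while keeping $(i_0^*-i_1^*)$ equal to $\div(f)$. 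These are easily fixed, but the paper's coordinate choice makes them disappear entirely.

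Regarding your ``delicate point'': the paper handles the identification $\omega_!\big(G(-\te(\P^1,1))\big)(X)\cong(\tau_!G)(\P^1_X,1_X)$ in one line, invoking only that $\omega_!=\uomega_!\tau_!$ with $\tau_!$ exact and monoidal. Underlying this is the fact (implicit in \cite{KMSY1} and used elsewhere in the paper, e.g.\ in the proof of Corollary~\ref{cor:assocII}) that $\tau_!H(N)$ is the colimit of $H(M)$ over proper compactifications $M$ of $N$, and that compactifying $(\P^1_X,1_X)$ amounts to compactifying $X$ and tensoring with $(\P^1,1)$. Your plan to justify it via a projection formula as in \cite{IR} is correct and comes to the same thing.
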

\begin{proof}
Let $M\in \MCor$ and $G\in \MPST$. With the notations from \ref{sec:tte-MPST}, \ref{subsec:CI}, and \ref{subsec:omegaCI} 
we have
\eq{thm:CI-te-fields1}{
h_0^{\bcube}G(M)\cong \Coker(G(M\te(\P^1, 1))\xr{i_0^*-i_\infty^*} G(M)).}
(Here we use the canonical isomorphism $(\P^1, 1)\cong \bcube$,
which is induced by the unique automorphism of $\P^1$ which switches $1$ and $\infty$ and fixes $0$.)
Now let $G=  F_1\bu{}\ldots \bu{} F_n$.
Combining \eqref{thm:CI-te-fields1} with the fact that $\omega_!=\uomega_!\tau_!$ and that $\tau_!$ is exact and monoidal for
$\bullet$ (see \eqref{eq:tau-sh}), we obtain for $X\in \Sm$
\[h_0(F_1\bullet\ldots\bullet F_n)(X)=
\frac{(\tau_!F_1\bu{\uMPST}\ldots\bu{\uMPST}\tau_!i F_n)(X,\emptyset)}
{(i_0^*-i_\infty^*) (\tau_!F_1\bu{\uMPST}\ldots\bu{\uMPST}\tau_!F_n)(\P^1_X, 1_X)}.\]
We can write $K=\varinjlim_i A_i$ (filtered direct limit) 
with $A_i$ smooth $k$-algebras such that the induced maps $A_i\to K$ are flat.
Hence, taking $X=\Spec A_i$ in the above formula and taking the direct limit yields
\[
h_0(F_1\bu{}\ldots\bu{} F_n)(K)=
\frac{(\tau_!F_1\bu{\uMPST}\ldots\bu{\uMPST}\tau_! F_n)(K)}
{(i_0^*-i_\infty^*) (\tau_! F_1\bu{\uMPST}\ldots\bu{\uMPST}\tau_! F_n)(\P^1_K, 1_K)}.\]
Now the statement follows from Proposition \ref{prop:ten-rep-1dim} and  the observation that $f\in K(C)^\times$ 
in \ref{R2} is the same as a finite surjective map $f: C\to \P^1_K$ such that $f^*(1)\ge D_1\bullet\ldots\bullet D_n$.
\end{proof}

\begin{para}\label{subsec:not-cond}
Denote 
\[\Phi=\{\text{henselian discrete valuation fields of geometric type over }k\},\]
i.e., $\Phi$  is the set of henselian discrete valuation fields of the form
$\Frac(\cO_{U,x}^h)$, where $U\in \Sm$, $x\in U^{(1)}$, and $\cO_{U,x}^h$ denotes the henselization of
the local ring $\cO_{U,x}$. For $L\in \Phi$ we denote by $\cO_L$ its underlying DVR, and by $\fm_L\subset \cO_L$ the 
maximal ideal. Set $S=\Spec \cO_L$ and denote by $s\in S$ the closed point.
For $G\in\ulMPST$ and $n\ge 0$ we set 
\[G(\cO_L, \fm_L^{-n}):=\begin{cases} G(S,\emptyset), & \text{if } n=0,\\ G(S, n\cdot s),&\text{else.}\end{cases}\]
\end{para}

\begin{cor}\label{cor:mod-genRSC}
Let $F_1,\ldots, F_n\in \MPST$ and set $F:=F_1\bullet\ldots \bullet F_n\in \MPST$, with $\bullet\in \{\te, \tte\}$.
Let $L\in \Phi$, then $h_0(F)(L)$
is generated by all the elements of the form 
\[\pi_*([a_1,\ldots, a_n]_{L'}),\]
where $L'/L$ is a finite field extension, 
$[a_1,\ldots, a_n]_{L'}$ denotes the image of $a_1\otimes\ldots\otimes a_n\in F_1(L')\te_\Z\ldots\te_\Z F_n(L')$
under the natural map
\[F_1(L')\te_\Z\ldots\te_\Z F_n(L')\to (F_1\ten{\PST}\ldots\ten{\PST} F_n)(L')
\xr{\eqref{subsec:omegaCI2}}F(L'),\]
and $\pi:\Spec L'\to \Spec L$ is the finite morphism.
Furthermore, if  $a_i\in \Im(\tau_!F_i(\cO_{L'},\fm_{L'}^{-r_i})\to F_i(L'))$, where the $r_i\ge 0$,
then,
 \eq{cor:mod-genRSC1}{\pi_*([a_1,\ldots, a_n]_{L'})\in \Im(\tau_!h^{\bcube}_0(F)(\cO_L,\fm_L^{-n})\to h_0(F)(L)),}
where
\[n=\begin{cases}
 \lceil\frac{r_1+\ldots+r_n}{e(L'/L)}\rceil, & \text{if }\bullet=\te,\\
 \lceil\frac{\max\{r_1,\ldots,r_n\}}{e(L'/L)}\rceil, & \text{if }\bullet=\tte,
\end{cases}\]
with $e(L'/L)$ the ramification index of $L'/L$ and the map in \eqref{cor:mod-genRSC1}
is induced  from  $h_0(F)(L)= \tau_!h^{\bcube}_0(F)(\Spec L, \emptyset)$.
\end{cor}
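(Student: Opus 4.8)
The plan is to read off the generation statement from Theorem~\ref{thm:CI-te-fields} (applied with $K=L$), and to prove the refinement~\eqref{cor:mod-genRSC1} by running the construction of the map $\psi$ from the proof of Proposition~\ref{prop:ten-rep-1dim} over the one-dimensional pro-object $\cX:=(\cO_L,\fm_L^{-n})=(\Spec\cO_L,n\cdot s_L)\in\uMCor^\pro$. As a preliminary I would record that $h_0(F)(L)=\tau_!h_0^\bcube(F)(\Spec L,\emptyset)$ (this is already used in the statement): one has $\omega_!=\uomega_!\tau_!$, and $\uomega_!$ agrees with $G\mapsto G(-,\emptyset)$ on representable objects while both are right exact, whence $\uomega_!G(X)=G(X,\emptyset)$ for $X\in\Sm$; passing to a flat pro-presentation $L=\varinjlim_iA_i$ (as in the proof of Theorem~\ref{thm:CI-te-fields}) gives the identity, and the map occurring in~\eqref{cor:mod-genRSC1} is then restriction along the open immersion $(\Spec L,\emptyset)\hookrightarrow\cX$ in $\uMCor^\pro$. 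Now Theorem~\ref{thm:CI-te-fields} with $K=L$ presents $h_0(F)(L)$ as a quotient of $\bigoplus_{L'/L}F_1(L')\otimes_\Z\cdots\otimes_\Z F_n(L')$. For $(\Spec L,\emptyset)$ an element of $\Lambda$ is a finite $L$-scheme $Y=\coprod_j\Spec L'_j$ with all $E_i=0$, and the map $\psi$ of Proposition~\ref{prop:ten-rep-1dim} carries the generator $a_1\otimes\cdots\otimes a_n$ of the $L'$-summand to $\pi_*$ of the class of $\tilde a_1\otimes\cdots\otimes\tilde a_n$ tensored with the graph of the diagonal $\Spec L'\to U^{\times n}$ over some smooth finite-type model $U$. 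By the monoidality of $\omega_!$ this class is exactly $[a_1,\dots,a_n]_{L'}\in h_0(F)(L')$, so the generators of the presentation are precisely the $\pi_*([a_1,\dots,a_n]_{L'})$, which proves the generation statement.

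For~\eqref{cor:mod-genRSC1}, put $e=e(L'/L)$, let $n$ be as in the statement, and let $\pi\colon\Spec\cO_{L'}\to\Spec\cO_L$ be the finite surjective morphism attached to the integral closure $\cO_{L'}$ of $\cO_L$ in $L'$. Since $L\in\Phi$, the pro-object $\cX$ is excellent, connected, of dimension one, and admits a flat pro-presentation, so Proposition~\ref{prop:ten-rep-1dim} applies to $\cX$, and $(\Spec\cO_{L'},\fm_{L'}^{-r_i})\in\uMCor^\pro$ by Lemma~\ref{lem:fin-sur-pro}. By the choice of $n$ one has on $\Spec\cO_{L'}$
\[\pi^*(n\cdot s_L)=ne\cdot s_{L'}\ \ge\ r_1\,s_{L'}\bullet\cdots\bullet r_n\,s_{L'},\]
the right-hand side being $(\sum_i r_i)\,s_{L'}$ for $\bullet=\te$ and $(\max_i r_i)\,s_{L'}$ for $\bullet=\tte$; hence $(\Spec\cO_{L'}\to\Spec\cO_L,\ \fm_{L'}^{-r_1},\dots,\fm_{L'}^{-r_n})\in\Lambda(\cX)$. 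Lifting the $a_i$ to $\tilde a_i\in\tau_!F_i(\cO_{L'},\fm_{L'}^{-r_i})$ over a common smooth finite-type model (Lemma~\ref{lem:fin-sur-pro}) and feeding $\tilde a_1\otimes\cdots\otimes\tilde a_n$ into the map $\psi$ from the proof of Proposition~\ref{prop:ten-rep-1dim} for $\cX$ produces an element of $(\tau_!F_1\bullet\cdots\bullet\tau_!F_n)(\cX)=\tau_!F(\cO_L,\fm_L^{-n})$ (as $\tau_!$ is monoidal); its image $b$ under the map $\tau_!F\surj\tau_!h_0^\bcube(F)$ (surjective, $\tau_!$ being exact) is the desired lift. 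Indeed, every ingredient of $\psi$ is compatible with restriction along the open immersions $(\Spec L,\emptyset)\hookrightarrow\cX$ and $(\Spec L',\emptyset)\hookrightarrow(\Spec\cO_{L'},\fm_{L'}^{-ne})$: the diagonal graph restricts to the diagonal graph of $\Spec L'$ and $\Gamma_\pi^t$ restricts to the transpose graph of $\pi\colon\Spec L'\to\Spec L$, so the restriction of $b$ to $(\Spec L,\emptyset)$ equals $\pi_*([a_1,\dots,a_n]_{L'})$ by the identification of symbols from the first paragraph. This gives~\eqref{cor:mod-genRSC1}.

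The only step that is not a formal compatibility is the modulus estimate $\pi^*(n\cdot s_L)\ge r_1 s_{L'}\bullet\cdots\bullet r_n s_{L'}$ together with the fact, built into Proposition~\ref{prop:ten-rep-1dim} via Lemma~\ref{lem:ten-cond}, that the diagonal correspondence then lands in the correct product modulus pair; for $\bullet=\tte$ this also uses that the diagonal of a one-dimensional regular scheme lifts to the relevant blow-up. This is exactly the point at which the distinction between the two conventions for $\bullet$ produces the two formulas $n=\lceil(\sum_ir_i)/e\rceil$ and $n=\lceil(\max_ir_i)/e\rceil$; everything else — the passage to finite-type models, the restriction to generic fibers, and the bookkeeping relating Theorem~\ref{thm:CI-te-fields} to the symbol notation $[a_1,\dots,a_n]_{L'}$ — is routine.
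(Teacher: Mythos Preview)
Your proposal is correct and follows essentially the same approach as the paper: the generation statement is read off from Theorem~\ref{thm:CI-te-fields} applied with $K=L$, and the modulus refinement~\eqref{cor:mod-genRSC1} is obtained by applying Proposition~\ref{prop:ten-rep-1dim} to the pro-object $\cX=(\Spec\cO_L,n\cdot s_L)$ and checking that $(\Spec\cO_{L'}\to\Spec\cO_L,\fm_{L'}^{-r_1},\ldots,\fm_{L'}^{-r_n})\in\Lambda(\cX)$ via the ramification estimate. The paper's own proof is the terse two-line version of exactly this, so your elaboration is faithful.
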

\begin{proof}
The first statement holds by
Theorem \ref{thm:CI-te-fields}, the second by Proposition \ref{prop:ten-rep-1dim}.
\end{proof}

\begin{para}\label{subsec:cond}
Let $F\in\RSC_{\Nis}$. 
Let  $a\in F(L)$, $L\in \Phi$, (see \ref{subsec:not-cond}).
Recall from \cite[Definition 4.14]{RS} that the {\em motivic conductor}  $c^F_L(a)$  of $a$ at $L$ is defined by:
\[c^F_L(a):=\min\{n\ge 0\mid a\in \tau_!\tF(\cO_L,\fm_L^{-n})\}.\]
\end{para}

\begin{cor}\label{cor:cond-t}
Let $F_1,\ldots, F_n\in \RSC_{\Nis}$.
Set $F:=h_{0,\Nis}(\tF_1\bullet\ldots\bullet\tF_n)$, with $\bullet\in\{\te, \tte\}$ (see \eqref{subsec:omegaCI4}).
Let $L\in \Phi$ and $a_i\in F_i(L)$. Then (with the notation from Corollary \ref{cor:mod-genRSC})
\[c^F_L([a_1,\ldots, a_n]_L)\le \begin{cases}
c^{F_1}_L(a_1)+\ldots+c^{F_n}_L(a_n), & \text{if } \bullet=\te,\\
\max\{c^{F_1}_L(a_1),\ldots, c^{F_n}_L(a_n)\},& \text{if } \bullet=\tte.
\end{cases}\]
\end{cor}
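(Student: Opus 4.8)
The plan is to read the bound off from Corollary~\ref{cor:mod-genRSC} in the case of no residue field extension, and then to transport it from the presheaf $h_0^\bcube(-)$ to the reciprocity sheaf $F$.

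Write $G:=\widetilde{F}_1\bullet\cdots\bullet\widetilde{F}_n\in\MPST$, so that $F=h_{0,\Nis}(G)=(h_0(G))_\Nis$, and set $r_i:=c^{F_i}_L(a_i)$; let $n_0$ denote the right-hand side of the asserted inequality, so $n_0=r_1+\cdots+r_n$ if $\bullet=\te$ and $n_0=\max\{r_1,\ldots,r_n\}$ if $\bullet=\tte$. By the very definition of the motivic conductor (\ref{subsec:cond}) we have $a_i\in\tau_!\widetilde{F}_i(\cO_L,\fm_L^{-r_i})$, viewed inside $\widetilde{F}_i(L)=F_i(L)$. Apply Corollary~\ref{cor:mod-genRSC} to the $\MPST$-objects $\widetilde{F}_1,\ldots,\widetilde{F}_n$, taking $L'=L$ (so $\pi=\id$ and $e(L'/L)=1$) and the above $r_i$. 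Since $e(L'/L)=1$, the integer occurring in \eqref{cor:mod-genRSC1} is exactly $n_0$, so \eqref{cor:mod-genRSC1} reads
\[
[a_1,\ldots,a_n]_L\in\Im\bigl(\tau_! h_0^\bcube(G)(\cO_L,\fm_L^{-n_0})\longrightarrow h_0(G)(L)\bigr),
\]
the map being restriction to the generic point $(\Spec L,\emptyset)\to(\cO_L,\fm_L^{-n_0})$ in $\uMCor^\pro$. Fix a lift $x\in\tau_! h_0^\bcube(G)(\cO_L,\fm_L^{-n_0})$ of $[a_1,\ldots,a_n]_L$.

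By Theorem~\ref{thm:sheafification}(2) and \eqref{subsec:omegaCI1}, both $h_0(G)$ and $F=h_{0,\Nis}(G)$ lie in $\RSC$, so the Nisnevich sheafification $\beta\colon h_0(G)\to F$ is a morphism in $\RSC$; let $\phi\colon h_0^\bcube(G)\to\omega^\CI F=\widetilde{F}$ be the morphism in $\CI$ adjoint to $\beta$ under $\omega_\CI\dashv\omega^\CI$ (here $\omega_\CI h_0^\bcube(G)=h_0(G)$ by \eqref{subsec:omegaCI1}). Being a morphism in $\CI\subseteq\MPST$, $\phi$ is compatible with all the transition maps defining the extension \eqref{sec: MCorpro2} to $\uMCor^\pro$; hence $\phi$ sends $x$ to an element $\phi(x)\in\tau_!\widetilde{F}(\cO_L,\fm_L^{-n_0})$ whose restriction to $(\Spec L,\emptyset)$ is the image of $[a_1,\ldots,a_n]_L\in h_0(G)(L)$ under $\beta$ on $L$-sections. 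Using $\omega_!\omega^\CI=\id_\RSC$ and the identification $\tau_!(-)(\Spec L,\emptyset)=\omega_!(-)(\Spec L)$ for the ambient groups, this element is precisely $[a_1,\ldots,a_n]_L\in F(L)$ in the notation of Corollary~\ref{cor:mod-genRSC}. Since $\tau_!\widetilde{F}(\cO_L,\fm_L^{-n_0})$ is a subgroup of $F(L)$ — the relevant restriction map being injective by the injectivity property of reciprocity sheaves, cf.\ \cite{KSY, RS} — we obtain $[a_1,\ldots,a_n]_L\in\tau_!\widetilde{F}(\cO_L,\fm_L^{-n_0})$, i.e.\ $c^F_L([a_1,\ldots,a_n]_L)\le n_0$.

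The only genuinely delicate point is this transport step: one must know that replacing the presheaf $h_0^\bcube(G)$ by the reciprocity sheaf $\omega^\CI h_{0,\Nis}(G)$ cannot enlarge the conductor. This rests on Saito's theorem that Nisnevich sheafification preserves $\RSC$ (so that $\omega^\CI F$, and with it the adjoint $\phi$, is available) and on the naturality of the formula \eqref{sec: MCorpro2} under morphisms of $\MPST$. Everything else is a specialization of Corollary~\ref{cor:mod-genRSC} — equivalently of Proposition~\ref{prop:ten-rep-1dim} — together with a routine bookkeeping of identifications.
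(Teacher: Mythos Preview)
Your proof is correct and takes essentially the same approach as the paper. The paper's argument is more compressed: it invokes the natural transformation $h_0^{\bcube}\to\omega^{\CI}h_0$ arising from the adjunction $(\omega_{\CI},\omega^{\CI})$ and then the inclusion $\tau_!\omega^{\CI}(h_0(G))(\cO_L,\fm_L^{-r})\hookrightarrow\tau_!\omega^{\CI}(F)(\cO_L,\fm_L^{-r})$ induced by sheafification, whereas you combine these two steps into a single adjoint map $\phi\colon h_0^{\bcube}(G)\to\omega^{\CI}F$; the content is identical, and both rest on Corollary~\ref{cor:mod-genRSC} with $L'=L$.
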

\begin{proof}
By the adjointness of $(\omega_{\CI}, \omega^{\CI})$ we have a natural transformation
$h_0^{\bcube}\to \omega^{\CI}h_0$ of functors $\MPST\to \CI$.  Thus the
 statement follows from  Corollary \ref{cor:mod-genRSC} and the natural map
\[\tau_!\omega^{\CI}(h_0(\tF_1\bullet\ldots\bullet \tF_n))(\cO_L,\fm^{-r}_L)
\to\tau_!\omega^{\CI}(F)(\cO_L,\fm^{-r}_L), \quad r\ge 0.\]
\end{proof}

\begin{defn}[cf. {\cite[4.2.1]{IR}}]\label{defn:Lin}
Let $F_1,\ldots, F_n, H\in \RSC_{\Nis}$. Denote by 
\[\Lin_{\RSC}^{\bullet}(F_1,\ldots, F_n;H),\quad \bullet\in \{\te, \tte\},\]
the set of collections of maps 
\[\phi=
\bigg\{\phi_{X_1,\ldots, X_n}: F_1(X_1)\times \ldots\times F_n(X_n)\to 
H(X_1\times\ldots\times X_n )\bigg\}_{X_1,\ldots,  X_n\in \Sm},\] 
satisfying the following properties:
\begin{enumerate}[label = (L\arabic*)]
\item\label{L1} $\phi_{X_1,\ldots, X_n}$ is a multilinear morphism of $\Z$-modules, for all $X_i\in \Sm$.
\item\label{L2} For all $i\in [1,n]$, $X_1,\ldots, X_n$, $X_i'\in \Sm$,  $f\in \Cor(X_i', X_i)$,
                        and all $a_j\in F_j(X_j)$, $j=1,\ldots, n$,  we have 
\[\phi_{X_1,\ldots X_i', X_{i+1},\ldots, X_n}(a_1,\ldots,f^*a_i, a_{i+1},\ldots, a_n)=
 f_i^*\phi_{X_1,\ldots, X_n}(a_1,\ldots, a_n),\]
where $f_i=\id_{X_1\times\ldots\times X_{i-1}}\times f\times \id_{X_{i+1}\times\ldots\times X_n}$.
\item\label{L3} For all $L\in \Phi$ and all $a_i\in F_i(L)$ we have (see \ref{subsec:cond} for notation)
\[c^{H}_L(\phi_L(a_1,\ldots, a_n))\le \begin{cases}
c_L^{F_1}(a_1)+\ldots+ c_L^{F_n}(a_n),& \text{if } \bullet=\te,\\
\max\{c_L^{F_1}(a_1),\ldots, c_L^{F_n}(a_n)\}, &\text{if } \bullet=\tte.
\end{cases}\]
Here $\phi_L=\phi_{\Spec L}$, where $\phi_S$, for $S\in \Cor^{\pro}$,  is defined as the composition
\ml{defn:Lin1}{
\phi_S: F_1(S)\times\ldots \times F_n(S) = \varinjlim_U (F_1(U)\times \ldots F_n(U))\\
                                      \xr{\phi} \varinjlim_U H(U\times\ldots\times U)\xr{\Delta_U^*} \varinjlim_U H(U)= H(S),}
where $U$ runs through all smooth $k$-models of $S$,  $\Delta_U: U\to U\times\ldots\times U$ denotes the diagonal,
and we use \ref{L2} to extend $\phi$ to the colimit.
\end{enumerate} 
\end{defn}

\begin{lem}\label{lem:L3}
Let $\phi=\{\phi_{X_1,\ldots, X_n}\}$ be a collection of maps as in Definition \ref{defn:Lin}.
Assume $\phi$ satisfies \ref{L1}, \ref{L2}.
Then $\phi$ satisfies \ref{L3} if and only if for all $a_i\in\tF_i(\cX_i)$, with proper modulus pairs $\cX_i$ we have 
\[\phi_{\cX_1^\o,\ldots,\cX_n^\o}(a_1,\ldots, a_n)\in \tH(\cX_1\bullet\ldots\bullet \cX_n).\]
\end{lem}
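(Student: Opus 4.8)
The plan is to unwind both \ref{L3} and the stated condition to the level of henselian discrete valuation fields $L \in \Phi$ and then compare them using the characterization of the motivic conductor via $\omega^{\CI}$. First I would recall that for $F \in \RSC_{\Nis}$ and $a \in F(L)$ one has by \cite[Definition 4.14]{RS} (reproduced in \ref{subsec:cond}) the equivalence $c^F_L(a) \le n \iff a \in \tau_!\tF(\cO_L, \fm_L^{-n})$, so that \ref{L3} is literally a statement about membership of $\phi_L(a_1,\dots,a_n)$ in $\tau_!\tH(\cO_L,\fm_L^{-n})$ for the appropriate $n$ (the sum or the max of the $c^{F_i}_L(a_i)$). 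Likewise, given $a_i \in \tF_i(\cX_i)$ the restriction to the generic point of $\cX_i^\infty$ lands in some $F_i(L)$ with $c^{F_i}_L(a_i) \le$ the multiplicity of $\cX_i^\infty$ there; this is the mechanism by which the ``local'' condition of the lemma will be fed into \ref{L3}.

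For the direction ``$\Rightarrow$'' (\ref{L3} implies the stated membership), I would argue as follows. Let $a_i \in \tF_i(\cX_i)$ with $\cX_i$ proper modulus pairs, and set $\cY := \cX_1 \bullet \cdots \bullet \cX_n$, $b := \phi_{\cX_1^\o,\dots,\cX_n^\o}(a_1,\dots,a_n) \in H(\cY^\o)$ (here I use \ref{L2} together with the fact that $\cY^\o = \cX_1^\o \times \cdots \times \cX_n^\o$, since $\omega$ is monoidal for $\bullet$, see \ref{sec:mcor}). To show $b \in \tH(\cY)$, by the description \eqref{subsec:omegaCI3} of $\omega^{\CI}$ it suffices to show $b$ has modulus $\cY$, and by Saito's theory of the motivic conductor — more precisely by \cite[Theorem ?]{RS} characterizing when a section has a given modulus in terms of its conductor at all $L \in \Phi$ centered on the boundary — this reduces to a valuation-theoretic inequality at each such $L$. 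Concretely, for $L$ the henselization of the local ring of $\cY$ at a codimension-one point lying over $\cX_i^\infty$, one restricts $a_i$ to obtain $a_{i,L} \in F_i(L)$ with $c^{F_i}_L(a_{i,L})$ bounded by the coefficient of $\cX_i^\infty$ pulled back to that point, invokes \ref{L3} to bound $c^H_L(b_L)$ by the sum (resp.\ max) of these, and then by Lemma \ref{lem:ten-cond} this sum (resp.\ max) is exactly the coefficient of $\cY^\infty$ at $L$. So $c^H_L(b_L) \le$ the coefficient of $\cY^\infty$ there, for every relevant $L$, which is the modulus condition.

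For the converse ``$\Leftarrow$'', I would specialize: given $L \in \Phi$ and $a_i \in F_i(L)$, set $r_i := c^{F_i}_L(a_i)$ and $\cX_i := (\ol{\cO_L}, r_i \cdot s)$ in the sense of \ref{subsec:not-cond}, i.e.\ view $\Spec \cO_L$ as a modulus pair with boundary $r_i \cdot s$. Then $a_i \in \tF_i(\cX_i)$ by definition of the conductor, and $\cX_1 \bullet \cdots \bullet \cX_n = (\ol{\cO_L}, n \cdot s)$ with $n = r_1 + \cdots + r_n$ (resp.\ $\max r_i$) by Lemma \ref{lem:ten-cond}. Applying the hypothesis gives $\phi_{L}(a_1,\dots,a_n) \in \tH(\cX_1 \bullet \cdots \bullet \cX_n) = \tH(\ol{\cO_L}, n\cdot s)$, which by the conductor characterization says exactly $c^H_L(\phi_L(a_1,\dots,a_n)) \le n$, i.e.\ \ref{L3}. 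One small technical point: $\cO_L$ itself need not be of finite type over $k$, but one approximates by a smooth model as in the proof of Theorem \ref{thm:CI-te-fields}, or simply works in $\Cor^\pro$ and $\uMCor^\pro$ throughout, using \eqref{sec: MCorpro2}.

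\textbf{Main obstacle.} The crux is the passage between ``$b$ has modulus $\cY$'' and the pointwise conductor bounds at all $L \in \Phi$ centered on $\cY^\infty$ — i.e.\ having a clean statement that the motivic conductor detects the modulus, and checking that it is only codimension-one points of the boundary that matter (so that a regular/normal crossing argument, or reduction along $\cY^\infty$'s generic points, is legitimate). This is where one must be careful to cite the right results from \cite{RS} (and possibly \cite{S}, \cite{KSY}), and to handle the fact that $\cX_i$ need not be smooth along $\cX_i^\infty$, only that $\cX_i^\o$ is; the use of Lemma \ref{lem:ten-cond} with $Y \to \ol{M}$ from a locally factorial (e.g.\ regular) scheme is precisely designed to sidestep this, so the argument should be funneled through finite covers by regular curves / regular one-dimensional local schemes as in Proposition \ref{prop:ten-rep-1dim}.
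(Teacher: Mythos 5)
Your proposal follows essentially the same route as the paper: both directions of the equivalence are reduced to the pointwise characterization of moduli via the motivic conductor, and Lemma~\ref{lem:ten-cond} is used to compare $v_L$ of $\cX_1^\infty\bullet\ldots\bullet\cX_n^\infty$ with the sum (resp.\ max) of the $v_L(\cX_i^\infty)$. The reference you call ``[RS, Theorem~?]'' is \cite[Theorem~4.15(4)]{RS}, and your converse-direction device of modelling $(\Spec\cO_L, r_i\cdot s)$ by a smooth compactified pair $\cU_{r_i}=(\ol U, r_i\ol Z + B)$ (so that the hypothesis, which is stated for \emph{proper} modulus pairs, actually applies) is exactly what the paper does.

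One caveat: in the ``$\Rightarrow$'' direction you phrase the test as ``for $L$ the henselization at a codimension-one point of $\cY$ lying over $\cX_i^\infty$.'' The criterion \cite[Theorem~4.15(4)]{RS} that the paper invokes is stronger -- it says $\alpha\in\tH(\cX_1\bullet\ldots\bullet\cX_n)$ iff $c_L^H(\rho^*\alpha)\le v_L(X_\infty)$ for \emph{every} $L\in\Phi$ and \emph{every} $\rho\in\ol X(\cO_L)$, not only for divisorial valuations on $\cY$ itself. There is no separate reduction to codimension-one points of $\cY$ (and you correctly flag this as the main obstacle). Fortunately this does not break your argument: \ref{L3} gives a conductor bound at every $L\in\Phi$ uniformly in the $a_i$, so for an arbitrary $\rho$ one still has $\rho^*\alpha=\phi_L(\rho_1^*a_1,\ldots,\rho_n^*a_n)$ by \ref{L2}, hence $c_L^H(\rho^*\alpha)\le\sum_i c_L^{F_i}(\rho_i^*a_i)$ (resp.\ max) by \ref{L3}, and $c_L^{F_i}(\rho_i^*a_i)\le v_L(\rho_i^*\cX_i^\infty)$ again by \cite[Theorem~4.15(4)]{RS} applied to $a_i\in\tF_i(\cX_i)$; Lemma~\ref{lem:ten-cond} then identifies $\sum_i v_L(\rho_i^*\cX_i^\infty)$ (resp.\ $\max$) with $v_L(\rho^*X_\infty)$. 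Replacing the ``codimension-one'' phrasing by the full test set is the only correction needed; the structure of your argument is otherwise the paper's.
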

\begin{proof}
Set $\alpha:=\phi_{\cX_1^\o,\ldots,\cX_n^\o}(a_1,\ldots, a_n)$,
and $\cX_1\bullet\ldots\bullet \cX_n=:(\ol{X}, X_\infty)$.
By \cite[Theorem 4.15, (4)]{RS} we have  
\[\alpha\in \tH(\cX_1\bullet\ldots\bullet \cX_n)\Longleftrightarrow c_{L}^{H}(\rho^*\alpha)\le v_L(X_\infty),\quad
 \forall L\in \Phi,\, \rho \in \ol{X}(\sO_L),   \]
where on the right hand side $v_L(X_\infty)$ is the multiplicity of $\rho^*X_\infty$. 
Observe  that $\rho^*\alpha= \phi_L(\rho_1^*a_1,\ldots, \rho_n^*a_n)$, where
$\rho_i:\Spec L\to \cX_i^\o$ are induced by $\rho$ and  projection. 
Thus the {\em only if} direction follows directly from Lemma \ref{lem:ten-cond}.
For the other direction let $b_i\in F_i(L)$, $L\in \Phi$, and set $r_i:=c_L^{F_i}(b_i)$.
We find smooth models $(U,Z)$ of $(\Spec \sO_L, \fm_L)$ with compactification
$\cU=(\ol{U}, \ol{Z}+B)$ and elements $\tilde{b}_i\in F_i(\cU_{r_i})$ restricting to $b_i$,
where $\cU_{r_i}=(\ol{U}, r_i \ol{Z}+B)$. Note that the map $\Spec \sO_L\to U$ 
(coming from the structure as model of $\sO_L$)
maps the closed point to the generic point of $\ol{Z}$ and hence is not contained in $B$.
By assumption $\phi_{\cU_1^\o,\ldots, \cU_n^\o}(\tilde{b}_1,\ldots\tilde{b}_n)\in \tH(\cU_1\bullet\ldots\bullet\cU_n)$.
Denote by $\rho: \Spec \sO_L\to \ol{\cU_1\bullet\ldots\bullet\cU_n}$ the diagonal map.
Thus (by Lemma \ref{lem:ten-cond})
\mlnl{c_L^H(\phi_L(b_1,\ldots, b_n))=c_L^H(\rho^*\phi_{\cU_1^\o,\ldots, \cU_n^\o}(\tilde{b}_1,\ldots\tilde{b}_n))
\le  v_L(\cU_{r_1}\bullet\ldots\bullet \cU_{r_n})=\\
\begin{cases}
v_L(\cU_{r_1})+\ldots+ v_L(\cU_{r_n})= r_1+\ldots r_n, &\text{if }\bullet=\te,\\
\max\{v_L(\cU_{r_1}),\ldots,v_L(\cU_{r_n})\}=\max\{r_1,\ldots, r_n\}, & \text{if }\bullet=\tte.
\end{cases}}
Hence $\phi$ satisfies \ref{L3}.
\end{proof}

\begin{lem}\label{lem:ffc}
Let $\varphi: F\to G$ be a morphism in $\RSC_{\Nis}$. Then $\varphi$ is an isomorphism if and only if
$\varphi(K): F(K)\to G(K)$ is an isomorphism, for all finitely generated $k$-fields $K$.
\end{lem}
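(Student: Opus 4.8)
The plan is built around the injectivity property of reciprocity sheaves from \cite{KSY}, which was flagged in the introduction as one of the key inputs. The ``only if'' direction is immediate: if $\varphi$ is an isomorphism in $\RSC_{\Nis}$, then $\varphi(\Spec K)$ is an isomorphism for every $k$-field $K$, in particular for all finitely generated ones. So all the content lies in the converse.

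Assume $\varphi(K) : F(K)\to G(K)$ is an isomorphism for every finitely generated $k$-field $K$. I would form the kernel $N$ and the cokernel $Q$ of $\varphi$ and show that $N=0=Q$; since the inclusion $\RSC_{\Nis}\hookrightarrow \PST$ is full, this is equivalent to $\varphi$ being an isomorphism. First one checks $N,Q\in\RSC_{\Nis}$: because $\RSC$ is an abelian subcategory of $\PST$ (see \ref{subsec:RSC}), the presheaf kernel $N$ of $\varphi$ lies in $\RSC$, and being a subsheaf of $F\in\NST$ it is already a Nisnevich sheaf, so $N\in\RSC_{\Nis}$; likewise the presheaf cokernel of $\varphi$ lies in $\RSC$, and its Nisnevich sheafification $Q$ lies in $\RSC_{\Nis}$ by Theorem \ref{thm:sheafification}(2). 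Next, for any $k$-field $K$ the scheme $\Spec K$ has no nontrivial Nisnevich covers, so $H\mapsto H(K)$ is an exact functor on $\NST$; combined with the hypothesis this gives $N(K)=\ker(\varphi(K))=0$ and $Q(K)=\coker(\varphi(K))=0$ for all finitely generated $k$-fields $K$.

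Finally I would invoke the injectivity theorem of \cite{KSY}: for any $H\in\RSC_{\Nis}$ and any smooth connected $k$-scheme $X$ with function field $k(X)$, the restriction map $H(X)\to H(k(X))$ is injective. As $k$ is perfect, $k(X)$ is a finitely generated $k$-field, so applying this to $H=N$ and to $H=Q$ and using the previous step shows $N(X)=Q(X)=0$ for every smooth connected $X$. Since every object of $\Sm$ is a finite coproduct of connected ones and $N,Q$ are additive presheaves, we conclude $N=0=Q$, hence $\varphi$ is an isomorphism. The only genuinely nontrivial ingredient is the injectivity theorem, which we are allowed to assume; the rest is bookkeeping, and the main point one must be slightly careful about is that the kernel and cokernel formed in $\RSC_{\Nis}$ agree with the presheaf kernel and the sheafified presheaf cokernel, together with the (standard) exactness of evaluation at a field on $\NST$. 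I do not expect any real obstacle beyond this.
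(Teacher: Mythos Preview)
Your argument is correct and follows the same approach as the paper: reduce to showing that an object of $\RSC_{\Nis}$ vanishing on all function fields is zero, and deduce this from the injectivity theorem of \cite{KSY} (the paper additionally cites \cite[Corollary 3.2.3]{KSY3} to ensure the injectivity applies in the present setup). You have simply spelled out the kernel/cokernel bookkeeping that the paper compresses into the phrase ``$\RSC_{\Nis}$ is abelian.''
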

\begin{proof}
By Theorem \ref{thm:sheafification}, 
the category $\RSC_{\Nis}$ is abelian. Hence it suffices to show that a sheaf  
$F\in\RSC_{\Nis}$ is zero if and only if $F(K)=0$, for all $K$.
This follows from \cite[Theorem 6]{KSY} and \cite[Corollary 3.2.3]{KSY3}.
\end{proof}

\begin{thm}\label{thm:ten-rep}
Let $F_1,\ldots, F_n, H\in \RSC_{\Nis}$.
Then there is a natural isomorphism
\[\Hom_{{\RSC}}(h_0(\tF_1\bullet\ldots\bullet\tF_n), H)= \Lin^{\bullet}_{\RSC}(F_1,\ldots, F_n;H), \quad \bullet\in \{\te,\tte\}.\]
\end{thm}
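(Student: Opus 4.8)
The plan is to exhibit mutually inverse maps between the two sides; all the geometric content has already been isolated in Lemma~\ref{lem:L3} and Corollary~\ref{cor:cond-t}, so what remains is bookkeeping with the universal properties collected in \S\ref{sec:mod-a}. Write $\cG := \tF_1\bullet\ldots\bullet\tF_n \in \MPST$ for a fixed $\bullet\in\{\te,\tte\}$, and recall the identifications $\omega_!\tF_i = F_i$ (from $\omega_\CI\omega^\CI = \id$), $\tH := \omega^\CI H \in \CI$ with $\omega_\CI\tH = H$, and $h_0(\cG) = \omega_\CI(h_0^\bcube(\cG))$.

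To pass from a $\bullet$-linear map to a morphism, start with $\phi \in \Lin^\bullet_\RSC(F_1,\ldots,F_n;H)$. By \ref{L1} and \ref{L2} the maps $\phi_{X_1,\ldots,X_n}$ are multilinear and compatible with finite correspondences, and by Lemma~\ref{lem:L3} the hypothesis \ref{L3} is equivalent to the assertion that $\phi_{\cX_1^\o,\ldots,\cX_n^\o}(a_1,\ldots,a_n)\in\tH(\cX_1\bullet\ldots\bullet\cX_n)$ for all proper modulus pairs $\cX_i$ and all $a_i\in\tF_i(\cX_i)$. Since $\cX_1\bullet\ldots\bullet\cX_n\in\MCor$ (see \S\ref{sec:mcor}) and $\tH(\cX)\subseteq H(\cX^\o)$ is a subpresheaf of $\omega^*H$ (see \eqref{subsec:omegaCI3}), $\phi$ thus restricts to a family of multilinear maps
\[ \tilde\phi_{\cX_1,\ldots,\cX_n}: \tF_1(\cX_1)\times\ldots\times\tF_n(\cX_n)\longrightarrow \tH(\cX_1\bullet\ldots\bullet\cX_n), \]
and, because $\omega$ is monoidal for $(\bullet,\times)$ and the defining equalities in \ref{L2} may be tested after applying $\omega$, this family satisfies the conditions \eqref{eq:description-hom1} and \eqref{eq:description-hom2} relative to the monoidal structure $\bullet$ on $\MCor$. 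Hence by \eqref{eq:description-hom} applied to $\sA = \MCor$ (iterated over the $n$ factors) it corresponds to a morphism $\Theta_\phi: \cG\to\tH$ in $\MPST$. As $\tH\in\CI$ and $h_0^\bcube$ is left adjoint to $i^\bcube$, $\Theta_\phi$ factors uniquely through $\overline\Theta_\phi: h_0^\bcube(\cG)\to\tH$ in $\CI$; applying $\omega_\CI$ then yields $\Psi(\phi) := \omega_\CI(\overline\Theta_\phi): h_0(\cG)\to H$ in $\RSC$.

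Conversely, given $\psi: h_0(\cG)\to H$ in $\RSC$, compose it with the surjection
\[ q : F_1\ten{\PST}\ldots\ten{\PST}F_n = (\omega_!\tF_1)\ten{\PST}\ldots\ten{\PST}(\omega_!\tF_n)\surj h_0(\cG) \]
of \eqref{subsec:omegaCI2}. By the universal property \eqref{eq:description-hom} for $\sA=\Cor$, the morphism $\psi\circ q$ is the same datum as a family $\Phi(\psi) = \{\phi_{X_1,\ldots,X_n}\}$ satisfying \ref{L1} and \ref{L2} automatically. For \ref{L3}: by construction of $q$ out of the pairing \eqref{eq:pairing} and the canonical surjection $\cG\to h_0^\bcube(\cG)$, the map $\phi_L$ of \eqref{defn:Lin1} sends $(a_1,\ldots,a_n)$ to $\psi([a_1,\ldots,a_n]_L)$ in the notation of Corollary~\ref{cor:mod-genRSC}. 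Since $H$ is a Nisnevich sheaf, $\psi$ factors through $h_{0,\Nis}(\cG)$, which induces a morphism $\omega^\CI(h_{0,\Nis}(\cG))\to\tH$ in $\CI$ compatible with the filtrations $(-)(\cO_L,\fm_L^{-r})$; hence $c^H_L(\psi(y))\le c^{h_{0,\Nis}(\cG)}_L(y)$ for all $y$, and combining this with the bound of Corollary~\ref{cor:cond-t} for $[a_1,\ldots,a_n]_L$ gives exactly the inequality required in \ref{L3}. Thus $\Phi(\psi)\in\Lin^\bullet_\RSC(F_1,\ldots,F_n;H)$.

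It then remains to check $\Phi\circ\Psi = \id$ and $\Psi\circ\Phi = \id$, and naturality in $F_1,\ldots,F_n,H$. Both identities are diagram chases: for a morphism $\psi$ one observes that $q$ is itself the image, under the bijection \eqref{eq:description-hom}, of the tautological pairing on $\cG$ followed by $\cG\to h_0^\bcube(\cG)$, so that $\Psi(\Phi(\psi))\circ q = \psi\circ q$ and $q$ is an epimorphism; for a $\bullet$-linear $\phi$ one uses that $\Theta_\phi$ was defined precisely so as to recover $\tilde\phi$, hence $\phi$, on representables. The genuinely nontrivial points are the two imported ingredients rather than the assembly itself: in the direction $\Psi$, the use of Lemma~\ref{lem:L3} to descend $\phi$ from $\Sm$ to the categories of proper modulus pairs (i.e.\ that \ref{L3} is exactly the modulus-growth condition that turns $\phi$ into a map out of $\tF_1\bullet\ldots\bullet\tF_n$), and in the direction $\Phi$, the verification of \ref{L3}, which rests on Corollary~\ref{cor:cond-t} together with the monotonicity of the motivic conductor under morphisms in $\RSC_\Nis$. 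The remaining work — matching $\phi_L$ of \eqref{defn:Lin1} with $\psi\circ q$ on field sections, and keeping the identifications $\omega_!\tF_i=F_i$, $\omega_\CI\omega^\CI=\id$ and $\omega_\CI h_0^\bcube=h_0$ straight — is routine.
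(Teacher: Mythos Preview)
Your proposal is correct and takes a genuinely different route from the paper. The paper embeds both sides into $\Lin_\PST(F_1,\ldots,F_n;H)=\Hom_\PST(F_1\ten{\PST}\cdots\ten{\PST}F_n,H)$ via \eqref{thm:ten-rep1}--\eqref{thm:ten-rep3} and shows they define the same subset: the inclusion $\Hom_\RSC(h_0(\cG),H)\subset\Lin^\bullet_\RSC$ is obtained from Corollary~\ref{cor:cond-t} exactly as you do, but for the reverse inclusion the paper stays at the $\PST$/$\NST$ level, sets $G=\ker(F_1\ten{\NST}\cdots\to h_{0,\Nis}(\cG))$, and proves $\hat\phi(G)_\Nis=0$ by an explicit computation on function fields using the generators \ref{R2} of Theorem~\ref{thm:CI-te-fields} together with Lemma~\ref{lem:L3} and the function-field injectivity of Lemma~\ref{lem:ffc}. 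Your argument replaces that computation by a categorical lift: Lemma~\ref{lem:L3} already says that $\phi$ restricts to a multilinear family $\tilde\phi$ on $\MCor$ valued in $\tH$, and since $\tF_i\subset\omega^*F_i$ and $\tH\subset\omega^*H$ as sub-$\MPST$'s with the inherited $\MCor$-action, the compatibilities \eqref{eq:description-hom1}--\eqref{eq:description-hom2} for $\tilde\phi$ are literally specializations of \ref{L2}; hence $\tilde\phi$ assembles to $\Theta_\phi:\cG\to\tH$, which factors through $h_0^\bcube(\cG)$ by the adjunction $(h_0^\bcube,i^\bcube)$, and $\omega_\CI$ finishes the construction of $\Psi$.

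What this buys: your route is shorter and more conceptual, avoiding both Theorem~\ref{thm:CI-te-fields} and Lemma~\ref{lem:ffc}; the only price is care with the monoidality of $\omega_!$ and the compatibility of the pairings \eqref{eq:pairing} across $\MPST$ and $\PST$ when checking $\Phi\circ\Psi=\id$ (which you rightly flag as routine, using that every section of $F_i$ has \emph{some} modulus). The paper's argument, by contrast, makes the vanishing visible at the level of curves and divisors, which is closer in spirit to the explicit computations in \S\ref{sec:appl}.
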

\begin{proof}
Denote by $\Lin_{\PST}(F_1,\ldots, F_n;H)$ the set of those $\phi=\{\phi_{X_1,\ldots, X_n}\}$ 
satisfying only \ref{L1} and \ref{L2}. We have 
\eq{thm:ten-rep1}{\Lin_{\RSC}^\bullet(F_1,\ldots, F_n; H)\subset \Lin_{\PST}(F_1,\ldots, F_n; H).}
Set 
\[F:=h_0(\tF_1\bullet\ldots\bullet \tF_n).\]
The surjection \eqref{subsec:omegaCI2} induces an inclusion
\eq{thm:ten-rep2}{\Hom_{\RSC}(F, H)\subset \Hom_{\PST}(F_1\ten{\PST}\ldots\ten{\PST}F_n, H).}
Since $H$ is a Nisnevich sheaf  we can replace $F$ by $F_{\Nis}$ and $\RSC$ by $\RSC_{\Nis}$
on the left hand side and  on the right hand side $\PST$ by $\NST$.
By \cite[Lemma 2.1]{SuVo00b} we have a canonical identification
\eq{thm:ten-rep3}{\Lin_{\PST}(F_1,\ldots, F_n;H)= \Hom_{\PST}(F_1\ten{\PST}\ldots\ten{\PST}F_n, H).}
Observe that if $\varphi: H_1\to H_2$ is a morphism in $\RSC_\Nis$ and $a\in H_1(L)$, $L\in \Phi$,
then $c_L^{H_2}(\varphi(a))\le c_L^{H_1}(a)$, by definition of the motivic conductor, see \ref{subsec:cond}.
Hence it follows  directly from Corollary \ref{cor:cond-t} that \eqref{thm:ten-rep3}
induces an inclusion 
\[ \Hom_{{\RSC}}(F, H)\subset \Lin^{\bullet}_{\RSC}(F_1,\ldots, F_n;H).\]
For the other inclusion, let $\phi\in \Lin^\bullet_{\RSC}(F_1,\ldots, F_n;H)$ and 
denote  by 
\[\hat{\phi}: F_1\ten{\NST}\ldots\ten{\NST} F_n\to H\]
the induced map. Set
\[G:=\Ker(F_1\ten{\NST}\ldots\ten{\NST} F_n\xr{\eqref{subsec:omegaCI2}_\Nis} F_{\Nis}).\]
It remains to show 
\eq{thm:ten-rep4}{\hat{\phi}(G)_{\Nis}=0.}
By definition $\hat{\phi}(G)_{\Nis}$ is a sub-$\NST$ of $H$, hence it is in $\RSC_{\Nis}$.
By Lemma \ref{lem:ffc} it suffices to show $0=\hat{\phi}(G)_{\Nis}(K)=\hat{\phi}(G(K))$, for all function fields $K$ over $k$.
By Theorem \ref{thm:CI-te-fields},  the group $G(K)$ is generated by elements of the form 
\[\alpha:=\sum_{x\in C\setminus |D|} v_x(f) \cdot a_1(x)\otimes\ldots\otimes a_n(x)\]
with $C/K$, $D=D_1\bullet\ldots\bullet D_n$, $a_i\in \tau_!\tF_i(C,D_i)$ and $f\equiv 1$ mod $D$ as in 
\ref{thm:CI-te-fields}\ref{R2}.
We compute
\begin{align*} 
\hat{\phi}(\alpha) & =\sum_{x\in C\setminus|D|}v_x(f) \Tr_{K(x)/K}\big(\phi_{K(x)}(a_1(x),\ldots, a_n(x))\big), 
&& \\
                        & =\sum_{x\in C\setminus|D|}v_x(f) \Tr_{K(x)/K}\big(\phi_{C\setminus|D|}(a_1,\ldots, a_n)(x)\big), 
&& \text{by }\ref{L2},\\
                       & =\Div_C(f)^* \phi_{C\setminus|D|}(a_1,\ldots, a_n),
\end{align*}
where  we view $\Div_C(f)\in \Cor^{\rm pro}(\Spec K, C\setminus|D|)$.
For the first equality, note, if $K'/K$ is a finite field extension, then the composition 
\mlnl{F_1(K')\otimes_\Z\ldots\otimes_\Z F_n(K')\xr{\psi_{K'}} (F_1\ten{\NST}\ldots\ten{\NST}F_n)(K')\\
\xr{\Tr_{K'/K}} (F_1\ten{\NST}\ldots\ten{\NST}F_n)(K)\xr{\hat{\phi}} H(K)}
maps an element $b_1\otimes\ldots \otimes b_n$ to $\Tr_{K'/K}(\phi_{K'}(b_1,\ldots, b_n))$,
with $\phi_{K'}=\phi_{\Spec K'}$ defined as in \eqref{defn:Lin1} and $\psi_{K'}$ defined as in \eqref{prop:ten-rep-1dim8.5}.
We have
\eq{thm:ten-rep5}{\phi_{C\setminus |D|}(a_1,\ldots, a_n)\in \tau_!\wt{H}(C,D).}
Indeed, this follows directly from Lemma \ref{lem:L3} and Lemma \ref{lem:ten-cond}.
Thus
\[\hat{\phi}(\alpha)= (i_0^*-i_\infty^*)(f^t)^*\phi_{C\setminus |D|}(a_1,\ldots, a_n)=0,\]  
where $f^t\in \ulMCor^{\pro}((\P^1_K, 1), (C,D))$ denotes the transpose of the graph of $f:C\to \P^1_K$.
This proves  \eqref{thm:ten-rep4}. 
\end{proof}

\begin{cor}\label{cor:assoc}
Let $F_1, \ldots, F_n\in \RSC_{\Nis}$ and $\bullet\in\{\te, \tte\}$. 
Let $0=m_0< m_1<m_2<\ldots< m_r=n$,  $r\ge 1$.
For $j=1,\ldots, r$ set
\[\ul{\tF}_j:=\tF_{m_{j-1}+1}\bullet\ldots\bullet \tF_{m_j}\in\MPST,\quad \text{and}\quad
H_j:= \wt{h_{0,\Nis}(\ul{\tF}_j)}\in \CI.\]
Denote by 
\[\tau_{j}\in \Lin_{\RSC}^\bullet(F_{m_{j-1}+1},\ldots,F_{m_j};\; h_{0,\Nis}(\ul{\tF}_j))\]
and
\[\tau_{1,\ldots, r}\in 
\Lin_{\RSC}^\bullet\Big(h_{0,\Nis}(\ul{\tF}_1),\ldots, h_{0,\Nis}(\ul{\tF}_r)\,;\, 
h_{0,\Nis}(H_1\bullet\ldots\bullet H_r)\Big)\]
 the maps corresponding via Theorem \ref{thm:ten-rep} to the identity
on $h_{0,\Nis}(\ul{\tF}_j)$ and on 
$h_{0,\Nis}(H_1\bullet\ldots\bullet H_r)$, respectively.
Then the composition 
\[\tau:=\tau_{1,\ldots,r}\circ(\tau_{1}\times\ldots\times \tau_r): 
F_1\times \ldots \times F_n\to h_{0,\Nis}(H_1\bullet\ldots\bullet H_r)\]
lies in $\Lin_{\RSC}^{\bullet}(F_1,\ldots,F_n; h_{0,\Nis}(H_1,\ldots, H_r))$
and the induced map 
\eq{cor:assoc1}{h_{0,\Nis}(\tF_1\bullet\ldots\bullet \tF_n)\to
h_{0,\Nis}(H_1\bullet\ldots\bullet H_r)}
is compatible with the natural surjection of $F_1\ten{\NST}\ldots\ten{\NST} F_n$ to either side (see \eqref{subsec:omegaCI2});
in particular it is surjective.
\end{cor}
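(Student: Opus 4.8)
The plan is to show that $\tau$ lies in $\Lin^{\bullet}_{\RSC}(F_1,\ldots,F_n;h_{0,\Nis}(H_1\bullet\cdots\bullet H_r))$ by checking \ref{L1}--\ref{L3}, then to produce \eqref{cor:assoc1} from Theorem \ref{thm:ten-rep}, and finally to identify it with the map assembled from the surjections \eqref{subsec:omegaCI2}. Conditions \ref{L1} and \ref{L2} are formal. Writing $Y_j:=X_{m_{j-1}+1}\times\cdots\times X_{m_j}$, the collection $\tau_{X_1,\ldots,X_n}=(\tau_{1,\ldots,r})_{Y_1,\ldots,Y_r}\circ\big((\tau_1)_{X_1,\ldots,X_{m_1}}\times\cdots\times(\tau_r)_{X_{m_{r-1}+1},\ldots,X_n}\big)$ is multilinear in $a_1,\ldots,a_n$ because each $\tau_j$ and $\tau_{1,\ldots,r}$ is; and for $i$ in the $j$-th block and $f\in\Cor(X_i',X_i)$, \ref{L2} for $\tau_j$ followed by \ref{L2} for $\tau_{1,\ldots,r}$ applied to the external product correspondence $\id\times f\times\id\in\Cor(Y_j',Y_j)$ (using that $\Cor$ is symmetric monoidal, so external products of finite correspondences compose as expected) yields \ref{L2} for $\tau$. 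The same bookkeeping, together with \ref{L2} and the factorization $U\to U^{\times r}\to U^{\times n}$ of the diagonal, shows that for a field $L$ one has $\tau_L=(\tau_{1,\ldots,r})_L\circ\big((\tau_1)_L\times\cdots\times(\tau_r)_L\big)$ as maps $\prod_iF_i(L)\to h_{0,\Nis}(H_1\bullet\cdots\bullet H_r)(L)$, which will be used below.

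For \ref{L3} I would apply Corollary \ref{cor:cond-t} twice. Since $\tau_j$ corresponds, via Theorem \ref{thm:ten-rep}, to the natural morphism $h_0(\ul{\tF}_j)\to h_{0,\Nis}(\ul{\tF}_j)$, unwinding that identification gives $(\tau_j)_L(a_{m_{j-1}+1},\ldots,a_{m_j})=[a_{m_{j-1}+1},\ldots,a_{m_j}]_L$ in $h_{0,\Nis}(\ul{\tF}_j)(L)$, in the notation of Corollary \ref{cor:mod-genRSC}; likewise $(\tau_{1,\ldots,r})_L$ returns the corresponding class $[-,\ldots,-]_L$ in $h_{0,\Nis}(H_1\bullet\cdots\bullet H_r)(L)$. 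Now Corollary \ref{cor:cond-t} applied to $(F_{m_{j-1}+1},\ldots,F_{m_j})$ bounds $c^{h_{0,\Nis}(\ul{\tF}_j)}_L\big((\tau_j)_L(\cdots)\big)$ by the $\bullet$-combination (sum if $\bullet=\te$, maximum if $\bullet=\tte$) of the $c^{F_i}_L(a_i)$ with $i$ in the $j$-th block, and Corollary \ref{cor:cond-t} applied to $(h_{0,\Nis}(\ul{\tF}_1),\ldots,h_{0,\Nis}(\ul{\tF}_r))$ — noting $\omega^{\CI}h_{0,\Nis}(\ul{\tF}_j)=H_j$ — bounds $c^{h_{0,\Nis}(H_1\bullet\cdots\bullet H_r)}_L\big(\tau_L(a_1,\ldots,a_n)\big)$ by the $\bullet$-combination of the $c^{h_{0,\Nis}(\ul{\tF}_j)}_L\big((\tau_j)_L(\cdots)\big)$. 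As a sum of block-sums is the total sum and a maximum of block-maxima is the total maximum, combining the two bounds gives exactly \ref{L3}, for both $\bullet=\te$ and $\bullet=\tte$. Hence $\tau\in\Lin^{\bullet}_{\RSC}(F_1,\ldots,F_n;h_{0,\Nis}(H_1\bullet\cdots\bullet H_r))$, and Theorem \ref{thm:ten-rep} furnishes a morphism $h_0(\tF_1\bullet\cdots\bullet\tF_n)\to h_{0,\Nis}(H_1\bullet\cdots\bullet H_r)$ in $\RSC$; since the target is a Nisnevich sheaf this factors uniquely through $h_{0,\Nis}(\tF_1\bullet\cdots\bullet\tF_n)$, which is \eqref{cor:assoc1}.

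For the compatibility, let $\psi\colon F_1\ten{\NST}\cdots\ten{\NST}F_n\surj h_{0,\Nis}(\tF_1\bullet\cdots\bullet\tF_n)$ be the Nisnevich sheafification of \eqref{subsec:omegaCI2} (here $\omega_!\tF_i=F_i$), and let $q\colon F_1\ten{\NST}\cdots\ten{\NST}F_n\to h_{0,\Nis}(H_1\bullet\cdots\bullet H_r)$ be obtained by applying \eqref{subsec:omegaCI2} inside each block, tensoring the resulting epimorphisms (still an epimorphism, as $\ten{\NST}$ is right exact), and then applying \eqref{subsec:omegaCI2} to $H_1,\ldots,H_r\in\MPST$ (using $\omega_!H_j=h_{0,\Nis}(\ul{\tF}_j)$, from the counit isomorphism $\omega_{\CI}\omega^{\CI}\simeq\id$), all maps sheafified; thus $q$ is a composite of epimorphisms. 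Both $\eqref{cor:assoc1}\circ\psi$ and $q$ are morphisms of Nisnevich sheaves out of $F_1\ten{\NST}\cdots\ten{\NST}F_n$, hence are determined by their restrictions to $F_1\ten{\PST}\cdots\ten{\PST}F_n$, i.e.\ by the corresponding elements of $\Lin_{\PST}(F_1,\ldots,F_n;h_{0,\Nis}(H_1\bullet\cdots\bullet H_r))$ under \eqref{thm:ten-rep3}. Unwinding the correspondence of Theorem \ref{thm:ten-rep} and the construction of \eqref{subsec:omegaCI2} on sections, one checks that on the pairing element $a_1\bullet\cdots\bullet a_n$ (for $a_i\in F_i(X_i)$) both maps produce the nested class $\big[[a_1,\ldots,a_{m_1}],\ldots,[a_{m_{r-1}+1},\ldots,a_n]\big]$; alternatively, the image sheaf of $\eqref{cor:assoc1}\circ\psi-q$ is a sub-$\NST$ of $h_{0,\Nis}(H_1\bullet\cdots\bullet H_r)$, hence lies in $\RSC_{\Nis}$, so by Lemma \ref{lem:ffc} and Theorem \ref{thm:CI-te-fields} it suffices to check the vanishing on the generating classes over function fields, where the same computation applies. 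Therefore $\eqref{cor:assoc1}\circ\psi=q$, which is the asserted compatibility; and since $q$ is an epimorphism, so is \eqref{cor:assoc1}.

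The only genuinely delicate step is this last compatibility: one must match the morphism furnished by the universal property (Theorem \ref{thm:ten-rep}) applied to the composite $\tau$ against the two surjections built by hand from \eqref{subsec:omegaCI2}, which amounts to a careful unwinding of the identification $\Lin_{\PST}(F_1,\ldots,F_n;-)=\Hom_{\PST}(F_1\ten{\PST}\cdots\ten{\PST}F_n,-)$ together with the description of \eqref{subsec:omegaCI2} on sections and of $(-)_L$ from \eqref{defn:Lin1}. Granting Theorems \ref{thm:ten-rep} and \ref{thm:CI-te-fields} and Corollary \ref{cor:cond-t}, the verification of \ref{L1}--\ref{L3}, the construction of \eqref{cor:assoc1}, and its surjectivity are otherwise routine.
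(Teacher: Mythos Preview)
Your proof is correct and follows the same approach as the paper: verify \ref{L1} and \ref{L2} directly, obtain \ref{L3} by two nested applications of Corollary \ref{cor:cond-t}, and then invoke Theorem \ref{thm:ten-rep}. The paper's proof is much terser---it simply notes that once \ref{L1} and \ref{L2} hold, the map $\hat\tau$ corresponding to $\tau$ via \eqref{thm:ten-rep3} is the natural surjection $F_1\ten{\NST}\cdots\ten{\NST}F_n\surj h_{0,\Nis}(H_1\bullet\cdots\bullet H_r)$, so the compatibility and surjectivity are immediate; you spell out this identification more carefully (via Lemma \ref{lem:ffc} and the explicit formula $\tau_L(a_1,\ldots,a_n)=[[\ul a_1]_L,\ldots,[\ul a_r]_L]_L$), but the substance is the same.
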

\begin{proof}
It is direct to see that $\tau$ satisfies \ref{L1} and \ref{L2} of Definition \ref{defn:Lin}.
Hence by \eqref{thm:ten-rep3} it factors via the natural surjection 
\[F_1\ten{\NST}\ldots \ten{\NST}F_n\surj h_{0,\Nis}(H_1\bullet\ldots\bullet H_r)=:G.\]
Thus for $L\in \Phi$ and $a_i\in F_i(L)$ we have 
\[\tau_L(a_1,\ldots,a_n)=[ [\ul{a}_1]_L,\ldots, [\ul{a}_r]_L ]_L\in G(L), \]
where $\tau_L$ is defined as in \eqref{defn:Lin1}, $\ul{a}_j=(a_{m_{j-1}+1},\ldots, a_{m_j})$, and 
the bracket notation on the right is as in Corollary \ref{cor:mod-genRSC}.
Therefore, Corollary \ref{cor:cond-t} yields in the case $\bullet=\te$
\[c_L^G(\tau_L(a_1, \ldots, a_n))\le \sum_{j=1}^{r}c_L^{h_{0,\Nis}(\ul{F}_j)}([\ul{a}_j]_L) 
\le \sum_{i=1}^n c^{F_i}_L(a_i),\]
and in the case $\bullet=\tte$
\[c_L^G(\tau_L(a_1, \ldots, a_n))\le \max_{j}\{ c_L^{h_{0,\Nis}(\ul{F}_j)}([\ul{a}_j]_L)\} 
\le \max_i \{c^{F_i}_L(a_i)\},\] 
i.e., $\tau$ satisfies \ref{L3}; hence the statement.
\end{proof}

\begin{remark}
Corollary \ref{cor:assoc}, shows that 
\[\RSC_\Nis^{\times n}\to \RSC_\Nis,\quad (F_1, \ldots, F_n)\mapsto h_{0,\Nis}(F_1\bullet\ldots\bullet F_n),\]
 $\bullet\in\{\te,\tte\}$,
is a lax monoidal structure on $\RSC_\Nis$, in the sense that there is only a weak form
of associativity. See also Corollary \ref{cor:assocII} below.
\end{remark}

\begin{lem}\label{lem:RSC-mod-twist}
Let $H\in \RSC_{\Nis}$ and $\cY\in\ulMCor$. Set $Y:=\cY^\o$.
Let $H_{\cY}$ be the presheaf on $\Sm$ defined by 
(see \eqref{subsec:omegaCI3}, \eqref{subsec:omegaCI4})
\[
H_{\cY}(X):= \tau_!\tH((X,\emptyset)\otimes \cY) \subset H(X \times Y).
\]
Then $H_{\cY}\in\RSC_{\Nis}$. 
Moreover, 
given $a\in H_{\cY}(X)$ and $\bullet\in\{\te,\tte\}$,
if $M$ and $N\in\MCor$ are modulus compactifications of 
$X$ and  $Y$, respectively, 
such that $a$ has modulus $M\bullet N$ as an element of $H(X\times Y)$,
then $M$ is a modulus for $a$ as an element of $H_{\cY}(X)$.
\end{lem}
\begin{proof}
For any $X'\in \Sm$ the inclusion 
\mlnl{\Cor(X', X)=\ulMCor((X',\emptyset), (X,\emptyset))\\
\inj \ulMCor((X',\emptyset)\te \cY, (X,\emptyset)\te \cY),}
given by $f\mapsto f\times\id_{Y}$,
endows $H_{\cY}$ with the structure of a presheaf with transfers, which is also a Nisnevich sheaf.
Let $a\in H_{\cY}(X)$ and assume there exist modulus compactifications $M$ and $N$
of $X$ and $Y$, respectively, such that $a\in H(X\times Y)$ has modulus $M\bullet N$,
 i.e., the Yoneda map $\Z_\tr(X\times Y)\to H$ induced by $a$ factors via $h_0(M\bullet N)$ (see \ref{subsec:RSC}).
The exterior product with $\id_Y$ also induces an inclusion 
\eq{lem:RSC-mod-twist1}{\ulMCor((X', \emptyset)\otimes \bcube,M)\inj 
\ulMCor((X',\emptyset)\otimes (Y,\emptyset) \otimes\bcube, M\bullet N).}
Indeed, for $\bullet=\te$ observe that if $Z\subset X'\times \P^1\setminus\{\infty\}\times X$,
then the closure of $Z\times \id_Y$ in $X'\times Y \times \P^1\times \ol{M}\times \ol{N}$
is contained in $X'\times Y \times \P^1\times \ol{M}\times Y$; 
for $\bullet =\tte$  it follows from the inclusion 
\mlnl{\ulMCor((X',\emptyset)\otimes (Y,\emptyset) \otimes\bcube, M\te N)\\
\inj \ulMCor((X',\emptyset)\otimes (Y,\emptyset)\otimes\bcube, M\tte N)}
induced by the natural map $M\te N\to M\tte N$, see \eqref{eq1.12}.
By definition of $\tH=\omega^{\CI}H$ we have $a\in \tH(M\bullet N)\cap H_{\cY}(X)$.
Since $\tH$ is $\bcube$-invariant, so is $\tau_!\tH$ (see \cite[Lemma 1.14]{S}); thus
\mlnl{\gamma\in \ulMCor((X',\emptyset)\otimes (Y, \emptyset) \otimes\bcube, M\bullet N)\\
 \Longrightarrow i_0^*\gamma^*a=i_1^*\gamma^* a \quad \text{in }H(X'\times Y).}
Hence the natural map $\Z_\tr(X)\to H_{\cY}$ induced by $a\in H_{\cY}(X)$ factors by \eqref{lem:RSC-mod-twist1}
via $h_0(M)$, i.e., $M$ is a modulus for $a\in H_{\cY}(X)$.  Finally, observe for any $a\in \tau_!\tH((X,\emptyset)\te \cY)$ 
we always 
find compactifications $M$ and $N$ of $X$ and $Y$, respectively, such that
$M\te N$ is a modulus for $a$. It follows that any $a\in H_{\cY}(X)$ has a modulus, thus $H_{\cY}\in \RSC_{\Nis}$.
\end{proof}

\begin{cor}\label{cor:assocII}
Let $F_1,\ldots, F_n, G_1,\ldots, G_r\in \RSC_{\Nis}$  and $\bullet\in\{\te, \tte\}$. 
Set $\ul{\tF}=\tF_1\bullet\ldots\bullet \tF_n$, $\ul{\tG}=\tG_1\bullet\ldots\bullet\tG_r\in \MPST$.
There is a natural surjection 
\[h_{0,\Nis}(\ul{\tF})\ten{\NST}h_{0,\Nis}(\ul{\tG})\surj h_{0,\Nis}(\ul{\tF}\bullet \ul{\tG})
\quad \text{in }\NST,\]
which is compatible with the surjection of 
$F_1\ten{\NST}\ldots\ten{\NST}G_r$ to either side.
\end{cor}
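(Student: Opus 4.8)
The plan is to realise the desired arrow as a factorisation of the canonical surjection out of the full tensor product. Set $A:=h_{0,\Nis}(\ul{\tF})$ and $B:=h_{0,\Nis}(\ul{\tG})$, and — reading $h_{0,\Nis}(\ul{\tF},\ul{\tG})$ as $h_{0,\Nis}(\ul{\tF}\bullet\ul{\tG})=h_{0,\Nis}(\tF_1\bullet\cdots\bullet\tF_n\bullet\tG_1\bullet\cdots\bullet\tG_r)$ via associativity of $\bullet$, just as $h_{0,\Nis}(H_1,\dots,H_r)$ was read in Corollary \ref{cor:assoc} — set $H:=h_{0,\Nis}(\ul{\tF}\bullet\ul{\tG})$. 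Applying the surjection \eqref{subsec:omegaCI2} to the lists $\tF_1,\dots,\tF_n$, to $\tG_1,\dots,\tG_r$, and to their concatenation (and using $\omega_!\omega^{\CI}=\id$), then sheafifying, gives surjections $s_F\colon F_1\ten{\NST}\cdots\ten{\NST}F_n\surj A$, $s_G\colon G_1\ten{\NST}\cdots\ten{\NST}G_r\surj B$ and $s_H\colon F_1\ten{\NST}\cdots\ten{\NST}G_r\surj H$. Since $\ten{\NST}$ is right exact and associative, $s:=s_F\ten{\NST}s_G\colon F_1\ten{\NST}\cdots\ten{\NST}G_r\surj A\ten{\NST}B$ is surjective as well. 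I will show that $s_H$ factors through $s$; the induced map $A\ten{\NST}B\to H$ is then automatically a surjection compatible with the surjection of $F_1\ten{\NST}\cdots\ten{\NST}G_r$ to either side, which is the assertion.

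To obtain this factorisation it is enough that $N:=\Ker(s)$ maps to $0$ in $H$ under $s_H$. The image of $N$ in $H$ is a sub-$\NST$ of $H\in\RSCNis$, hence lies in $\RSCNis$: indeed $\RSC$ is closed under subobjects in $\PST$ — if $a\in F'(X)\subseteq F(X)$ with $F\in\RSC$ and $M$ is a modulus for $a\in F(X)$ (so $M^\o=X$), then because $\Z_\tr(X)\surj h_0(M)$ the factoring map $h_0(M)\to F$ takes values in $F'$, so $M$ is a modulus for $a\in F'(X)$ — and Nisnevich sheafification preserves $\RSC$ by Theorem \ref{thm:sheafification}(2). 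By Lemma \ref{lem:ffc} it therefore suffices to check that $N(K)\to H(K)$ is zero for every finitely generated field extension $K/k$. For such $K$, every Nisnevich cover of $\Spec K$ is refined by the identity, so sections over $\Spec K$ of all the sheaves in play agree with the underlying presheaf values; by Remark \ref{rmk:MF} these presheaf tensor products evaluated at $\Spec K$ are tensor products of the associated Mackey functors over $K$, carrying the standard presentation by projection-formula relations, while Theorem \ref{thm:CI-te-fields} presents $A(L)$, $B(L)$ and $H(K)$.

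Let $T(K)$ be the quotient of $\bigoplus_{L/K}F_1(L)\otimes_{\Z}\cdots\otimes_{\Z}G_r(L)$ (the sum over finite extensions $L/K$) by the subgroup generated by the relations \ref{R1}; then $s_H(K)$ is the projection $T(K)\surj T(K)/(\text{relations }\ref{R2})=H(K)$ and $s(K)\colon T(K)\surj(A\ten{\NST}B)(K)$. By right exactness of the Mackey tensor product, $N(K)=\Ker(s(K))$ is generated by the classes in $T(K)$ of the elements
\[
\sum_{x\in C\setminus|D|}v_x(f)\cdot a_1(x)\otimes\cdots\otimes a_n(x)\otimes(b_1)_{L(x)}\otimes\cdots\otimes(b_r)_{L(x)},
\]
where $C/L$, the $D_i$, the $a_i$, $D:=D_1\bullet\cdots\bullet D_n$ and $f$ are the data of a relation \ref{R2} for $A(L)$ over some finite $L/K$ (so $C/L$ is a regular projective curve, $D_i$ effective divisors on $C$ with $a_i\in\tau_!\tF_i(C,D_i)$, $f\in L(C)^\times$ with $f\equiv 1$ mod $D$, and $L(x)$ the residue field of $x$), the $b_j\in G_j(L)$ are arbitrary and $(-)_{L(x)}$ denotes base change; together with the symmetric generators built from a relation \ref{R2} for $B(L)$. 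Now regard $C$ as a regular projective curve over $K$ via $C\to\Spec L\to\Spec K$ and pull the $b_j$ back along $C\to\Spec L$ to elements of $\tau_!\tG_j(C,\emptyset)$. Since $D_1\bullet\cdots\bullet D_n\bullet 0\bullet\cdots\bullet 0=D$ for both $\bullet=\te$ and $\bullet=\tte$, the curve $C/K$, the divisors $(D_1,\dots,D_n,0,\dots,0)$, the sections $(a_1,\dots,a_n,b_1|_C,\dots,b_r|_C)$ and $f\in K(C)^\times=L(C)^\times$ form the datum of a relation \ref{R2} for $H(K)$, whose associated relation is exactly the displayed element; hence its class lies in $\Ker(T(K)\to H(K))$. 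The symmetric generators are treated identically, so $N(K)\to H(K)$ vanishes, which finishes the proof.

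The step I expect to be the real work is the last paragraph: establishing the stated presentation of $N(K)$ through the Mackey-functor tensor product — which forces one to track the several nested finite field extensions and the way transfers interact with the relations \ref{R1} — and invoking from the theory of \cite{RS} that ``restriction to a closed point'' and ``pull-back from a point'' on the moduli-twisted presheaves $\tau_!\tF_i,\tau_!\tG_j$ behave as used (in particular that the pull-back of an element of $G_j(L)$ along $C\to\Spec L$ lies in $\tau_!\tG_j(C,\emptyset)$). What makes the matching uniform in the two monoidal structures is merely the observation $E\bullet 0=E$ for $\bullet\in\{\te,\tte\}$, which lets the curve and the rational function witnessing the $A$- (resp.\ $B$-) relation witness the corresponding $H$-relation without any change.
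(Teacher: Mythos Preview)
Your approach is correct and genuinely different from the paper's.

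The paper never works with the explicit field-point presentation of the kernel. Instead it argues by the universal property established in Theorem~\ref{thm:ten-rep}: taking $H=h_{0,\Nis}(\ul{\tF}\bullet\ul{\tG})$ and the tautological element of $\Lin^\bullet_{\RSC}(F_1,\ldots,F_n,G_1,\ldots,G_r;H)$, it curries. First one freezes $b_j\in G_j(Y_j)$ and observes that the resulting collection $\phi_{\ul{b}}$ lies in $\Lin^\bullet_{\RSC}(F_1,\ldots,F_n;H_{\cY})$, where $H_{\cY}$ is the ``twisted'' reciprocity sheaf $X\mapsto\tau_!\tH((X,\emptyset)\otimes\cY)$ of Lemma~\ref{lem:RSC-mod-twist}; this yields a map $h_{0,\Nis}(\ul{\tF})\to H_{\cY}$. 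Then one freezes $\alpha\in h_{0,\Nis}(\ul{\tF})(X)$ and checks that $\phi_\alpha\in\Lin^\bullet_{\RSC}(G_1,\ldots,G_r;H_X)$, producing $h_{0,\Nis}(\ul{\tG})\to H_X$. The resulting bilinear map satisfies \ref{L1} and \ref{L2}, hence gives the desired arrow via \eqref{eq:description-hom}.

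By contrast, you bypass Theorem~\ref{thm:ten-rep} and Lemma~\ref{lem:RSC-mod-twist} entirely and go straight to Theorem~\ref{thm:CI-te-fields}: you identify generators of $\Ker(s)(K)$ as R2-relations for one factor tensored with pure tensors of the other, and then observe that such an element is itself an R2-relation for $H(K)$ because $E\bullet 0=E$. Your reduction of $N(K)$ to the stated generating set does need one extra line you only gesture at: an arbitrary element of $G_\cdot(L)$ is a sum of traces $\Tr_{L''/L}$ of pure tensors at levels $L''/L$, and the projection-formula relation (R1) in the outer Mackey product converts $(\rho,\Tr_{L''/L}\beta)_L$ into $(\rho_{L''},\beta)_{L''}$; since $\Ker(s_F)$ is a sub-PST, $\rho_{L''}\in\Ker(s_F)(L'')$ is again a sum of R2-relations, now at level $L''$. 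With that said, everything checks out, including $b_j|_C\in\tau_!\tG_j(C,\emptyset)$: this follows because $G_j(L)=\tau_!\tG_j(\Spec L,\emptyset)$ (from $\omega_!\omega^{\CI}=\id$ and $\uomega_!(-)(X)=(-)(X,\emptyset)$) and $(C,\emptyset)\to(\Spec L,\emptyset)$ is a morphism in $\uMCor^\pro$.

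What each approach buys: the paper's currying argument is coordinate-free and makes the role of the conductor condition \ref{L3} transparent, at the cost of introducing the auxiliary twisted sheaves $H_{\cY}$; your argument is more elementary and self-contained, relying only on the presentation of Theorem~\ref{thm:CI-te-fields} and the Mackey description of the PST tensor on field points, but requires some bookkeeping with iterated finite extensions.
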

\begin{proof}
%We apply the argument below to the case $H=h_{0,\Nis}(\ul{\tF}\bullet \ul{\tG})$.
%Let $H\in \RSC_{\Nis}$ and $\phi\in\Lin^{\bullet}_{\RSC}(F_1,\ldots, F_n, G_1,\ldots,
%
%Let $H:=h_{0,\Nis}(\ul{\tF}\bullet \ul{\tG}) \in \RSC_{\Nis}$ 
%and $\phi\in\Lin^{\bullet}_{\RSC}(F_1,\ldots, F_n, G_1,\ldots,G_r\,;\, H)$.
%
Let $H:=h_{0,\Nis}(\ul{\tF}\bullet \ul{\tG}) \in \RSC_{\Nis}$ and
%take an arbitrary element
\[\phi\in\Lin^{\bullet}_{\RSC}(F_1,\ldots, F_n, G_1,\ldots,G_r\,;\, H).\]
 Fix $b_i\in G_j(Y_j)$, set $\ul{b}:=b_{1},\ldots, b_r$, $Y:=Y_1\times\ldots \times Y_r$,
$H_Y(X):=H(X\times Y)$, for $X\in \Sm$, and define
\[\phi_{\ul{b}, X_1,\ldots, X_{n}}: F_1(X_1)\times \ldots\times F_{n}(X_{n})
                           \to H_{Y}(X_1\times\ldots\times X_{n})\]
by 
\[\phi_{\ul{b}, X_1,\ldots, X_{n}}(a_1,\ldots, a_n):=\phi(a_1,\ldots, a_n, \ul{b}).\]
By Lemma \ref{lem:RSC-mod-twist} we have $H_Y\in \RSC_{\Nis}$.
Furthermore, it is clear that $\phi_{\ul{b}}:=\{\phi_{\ul{b}, X_1,\ldots, X_{n}}\}$ satisfies \ref{L1} and \ref{L2}
of Definition \ref{defn:Lin}; that it satisfies  \ref{L3} follows from the Lemmas \ref{lem:L3} and \ref{lem:RSC-mod-twist}.
More precisely, if $a_i\in \tF_i(\cX_i)$ and $b_j\in \tG_j(\cY_j)$, for some $\cX_i,\cY_j\in \MCor$ with $\cX_i^\o=X$ and 
$\cY_i^\o=Y_i$, then 
\[\phi_{\ul{b}, X_1,\ldots, X_n}(a_1,\ldots, a_n)\in \tH(\cX_1\bullet\ldots\bullet \cX_n\bullet\cY_1\ldots\bullet\cY_r)
\subset \widetilde{(H_{\cY})}(\cX_1\bullet\ldots\bullet \cX_n),\]
where $\cY=\cY_1\bullet\ldots\bullet \cY_r$ and $H_{\cY}$ is defined as in Lemma \ref{lem:RSC-mod-twist}.

Thus $\phi_{\ul{b}}$ induces by Theorem \ref{thm:ten-rep} a well-defined map
\[\hat{\phi}_{\ul{b}}: h_{0,\Nis}(\ul{\tF})\to H_{\cY}\subset H_Y.\]
Fix $X$ and $\alpha\in h_{0,\Nis}(\ul{\tF})(X)$. 
Define
\[\phi_{\alpha,Y_1,\ldots, Y_r} :  G_1(Y_1)\times \ldots \times G_{r}(Y_r)\to H_X(Y):=H(X\times Y)\]
by 
\[\phi_{\alpha,Y_1,\ldots, Y_r}(b_1,\ldots, b_r):= \hat{\phi}_{\ul{b}}(\alpha).\]
It is direct to check that $\phi_{\alpha}=\{\phi_{\alpha, Y_1,\ldots, Y_r}\}$ satisfies \ref{L1} and \ref{L2}.
Assume that $b_j$ has modulus $\cY_j\in \MCor$ as above.
Let $\cX=(\ol{X}, X_{\infty})$ be a compactification of $(X,\emptyset)$. Set $\cX^{(m)}=(\ol{X}, m X_\infty)$
and $\cY=\cY_1\bullet\ldots\bullet\cY_r$.
Then (see e.g. \cite[Lemma 1.4(5, Remark 1.5)]{S})
\[\tau_!\tH((X,\emptyset)\otimes \cY)=\varinjlim_{m}\tH(\cX^{(m)}\otimes \cY),\]
i.e., any $c\in \tau_!\tH((X,\emptyset)\otimes \cY)\subset H_X(Y)$ has a modulus of the form 
$\cX^{(m)}\te \cY$, some $m\ge 1$.
Thus 
\[\phi_{\alpha,Y_1,\ldots, Y_r}(b_1,\ldots, b_r)\in\tau_!\tH((X,\emptyset)\otimes \cY)\subset \widetilde{(H_X)}(\cY),\]
where the inclusion holds by Lemma \ref{lem:RSC-mod-twist} (with the role of $X$ and $Y$ interchanged).
Therefore, $\phi_\alpha$ satisfies \ref{L3}, by Lemma \ref{lem:L3}. Hence we obtain an induced map
$\hat{\phi}_{\alpha}: h_{0,\Nis}(\ul{\tG})\to H_X$. 
It is direct to check that the induced map $\hat{\phi}=\{\hat{\phi}_{X, Z}\}$,
with 
\[\phi_{X,Z}: h_{0,\Nis}(\ul{\tF})(X) \times h_{0,\Nis}(\ul{\tG})(Z)\to H(X\times Z), \quad
(\alpha, \beta)\mapsto \hat{\phi}_{\alpha}(\beta),\]
satisfies \ref{L1} and \ref{L2} and therefore induces the map from the statement.
\end{proof}

\begin{lem}\label{lem-T-rexact}
Let $F_1,\ldots, F_n\in\RSC_{\Nis}$ and set $\ul{\tF}=\tF_1\bullet\ldots\bullet \tF_n\in\MPST$,  $\bullet\in\{\te, \tte\}$.
 Let $G'\to G\to G''\to 0$  be an exact sequence in $\RSC_{\Nis}$.
\begin{enumerate}[label= (\arabic*)]
\item\label{lem-T-rexact2}  The natural map $h_{0,\Nis}(\ul{\tF}\bullet \tG)\surj h_{0,\Nis}(\ul{\tF}\bullet \wt{G''})$ is surjective.
\item\label{lem-T-rexact3}Assume that the induced maps
  \eq{lem-T-rexact3.1}{\tau_!\tG(\sO_L, \fm_L^{-n})\surj \tau_!\widetilde{G''}(\sO_L, \fm_L^{-n}), \quad L\in \Phi,\, n\ge 0,}
are surjective. Then the following sequence is exact
\[h_{0,\Nis}(\ul{\tF}\bullet \wt{G'})\to h_{0,\Nis}(\ul{\tF}\bullet \wt{G})\to h_{0,\Nis}(\ul{\tF}\bullet \wt{G''})\to 0.\]
\end{enumerate}
\end{lem}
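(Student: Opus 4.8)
The plan is to reduce both statements, by Lemma \ref{lem:ffc}, to the analogous assertions on sections over finitely generated fields, where the presentations of Theorem \ref{thm:CI-te-fields} and Proposition \ref{prop:ten-rep-1dim} become available. Throughout I use that for a presheaf $P$ on $\Sm$ one has $P_\Nis(\Spec K)=P(\Spec K)$ for every field $K$ (the Nisnevich site of $\Spec K$ is trivial), so that $h_{0,\Nis}(-)(K)=h_0(-)(K)$ and a cokernel formed in $\RSC_\Nis$ is computed naively on field‑valued points; combined with the vanishing of higher Nisnevich cohomology of the spectrum of a field, this gives that $G(K)\to G''(K)$ is surjective for any surjection $G\surj G''$ in $\RSC_\Nis$.

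Part \ref{lem-T-rexact2}: by Lemma \ref{lem:ffc} (applied to the cokernel) it is enough that $h_0(\ul{\tF}\bullet\wt G)(K)\to h_0(\ul{\tF}\bullet\wt{G''})(K)$ be surjective for every finitely generated $K$. By Theorem \ref{thm:CI-te-fields} the target is generated by symbols $a_1\otimes\cdots\otimes a_n\otimes b$ with $a_i\in F_i(L)$, $b\in G''(L)$ and $L/K$ finite; lifting $b$ along $G(L)\surj G''(L)$ produces a preimage of each generator, hence surjectivity.

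Part \ref{lem-T-rexact3}: first, applying \ref{lem-T-rexact2} to the surjection $G'\surj N:=\Ker(G\to G'')$ in $\RSC_\Nis$, the images of $h_{0,\Nis}(\ul{\tF}\bullet\wt{G'})$ and of $h_{0,\Nis}(\ul{\tF}\bullet\wt N)$ in $h_{0,\Nis}(\ul{\tF}\bullet\wt G)$ coincide, so we may assume $0\to G'\to G\to G''\to 0$ is short exact in $\RSC_\Nis$. All functors involved kill $G'\to G\to G''$, so we obtain a complex, and by Lemma \ref{lem:ffc} (applied to the cokernel of its first map, through which the second factors) exactness is equivalent to exactness of
\[
h_0(\ul{\tF}\bullet\wt{G'})(K)\to h_0(\ul{\tF}\bullet\wt{G})(K)\to h_0(\ul{\tF}\bullet\wt{G''})(K)\to 0
\]
for every finitely generated field $K$. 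Next, \eqref{thm:CI-te-fields1} together with exactness and monoidality of $\tau_!$ realizes $h_0(\ul{\tF}\bullet\wt H)(K)$, functorially in $H\in\RSC_\Nis$, as the cokernel of $i_0^*-i_\infty^*$ from the value of $\tau_!(\ul{\tF}\bullet\wt H)$ at $(\P^1_K,1)$ to its value at $(\Spec K,\emptyset)$. Since a morphism of right exact sequences induces a right exact sequence on cokernels (an elementary diagram chase), it suffices to prove that for $\cX=(\Spec K,\emptyset)$ and $\cX=(\P^1_K,1)$ the sequence
\[
\tau_!(\ul{\tF}\bullet\wt{G'})(\cX)\to\tau_!(\ul{\tF}\bullet\wt{G})(\cX)\to\tau_!(\ul{\tF}\bullet\wt{G''})(\cX)\to 0
\]
is exact. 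Feeding in the presentation of Proposition \ref{prop:ten-rep-1dim}, and using that $\wt{(-)}=\omega^\CI$ is left exact (so the middle exactness of the resulting sequences of abelian groups is automatic) whereas $\otimes_\Z$ is right exact, and that lifting the transfer relations \eqref{prop:ten-rep-1dim1} reduces to the same kind of surjectivity, everything comes down to the statement: for each regular curve $Y$ finite surjective over $\cX^\o$ and each effective Cartier divisor $E$ on $Y$, the map $\wt G(Y,E)\to\wt{G''}(Y,E)$ is surjective. For $\cX=(\Spec K,\emptyset)$ these "curves" are $0$‑dimensional and surjectivity is already contained in \ref{lem-T-rexact2}, so the genuine content is the case of a regular projective curve $Y$ over $K$.

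This curve‑level surjectivity is the crux, and the only place where hypothesis \eqref{lem-T-rexact3.1} enters. Given $b\in\wt{G''}(Y,E)$, I would first lift it to $a_0\in G(K(Y))$ by surjectivity over the function field; by the modulus criterion \cite[Theorem 4.15]{RS}, $a_0$ satisfies the modulus bound for $(Y,E)$ at all but finitely many closed points $y$, while at each exceptional $y$ the image of $b$ in $G''(L_y)$, $L_y=\Frac(\cO^h_{Y,y})$, has conductor $\le v_y(E)$ and therefore lifts into $\wt G(\cO_{L_y},\fm_{L_y}^{-v_y(E)})$ by \eqref{lem-T-rexact3.1}; subtracting yields a local correction $c_y\in G'(L_y)$ making $(a_0)_y+c_y$ have conductor $\le v_y(E)$. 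The main obstacle is the globalization: producing a single $c\in G'(K(Y))$ that realizes the prescribed corrections at the exceptional points and has conductor $\le v_y(E)$ at all other closed points, so that $a_0+c\in\wt G(Y,E)$ lifts $b$. I expect to handle this via the Gersten/injectivity properties of $\RSC_\Nis$‑sheaves over regular curves (\cite{KSY}, \cite{S}), which should reduce it to an approximation statement for sections of $G'$ on $Y$ with prescribed ramification bounds; this is the step I anticipate being the most delicate.
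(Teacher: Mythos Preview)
Your approach to \ref{lem-T-rexact2} is fine and, though phrased differently from the paper (which argues by showing $\Lin^\bullet(\ul F,G'';H)\hookrightarrow\Lin^\bullet(\ul F,G;H)$ via Theorem~\ref{thm:ten-rep}), is equally short.

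For \ref{lem-T-rexact3} your strategy diverges substantially from the paper and contains a real gap that you yourself flag. After your reductions, everything hinges on the surjectivity
\[
\wt G(Y,E)\;\longrightarrow\;\wt{G''}(Y,E)
\]
for every regular projective curve $Y$ over $K$ and every effective divisor $E$. This is a \emph{global} statement, whereas hypothesis \eqref{lem-T-rexact3.1} is purely \emph{local} (only over henselian DVRs in $\Phi$). Your proposed globalization---lift over $K(Y)$, correct at bad points via \eqref{lem-T-rexact3.1}, then patch using Gersten/injectivity---is not carried out, and the patching step is genuinely delicate: you must produce a single $c\in G'(K(Y))$ realizing the prescribed henselian germs at finitely many points while keeping $c_L^{G'}$ bounded by $v_y(E)$ at \emph{all} other points. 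Even the preliminary lift $a_0\in G(Y^\o)$ of $b\in G''(Y^\o)$ is not automatic, since $H^1_\Nis(Y^\o,G')$ need not vanish for $G'\in\RSC_\Nis$ on an affine curve. Concretely, in the presentation of Theorem~\ref{thm:CI-te-fields} this gap is exactly the problem of lifting an (R2)-relation for $G''$---which involves a global section $a_{n+1}\in\wt{G''}(C,D_{n+1})$---to an (R2)-relation for $G$ with the \emph{same} $D_{n+1}$; enlarging $D_{n+1}$ would break the congruence $f\equiv 1\bmod D$.

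The paper sidesteps this entirely by invoking Theorem~\ref{thm:ten-rep}: one shows for every $H\in\RSC_\Nis$ that
\[
0\to\Lin^\bullet(\ul F,G'';H)\to\Lin^\bullet(\ul F,G;H)\to\Lin^\bullet(\ul F,G';H)
\]
is exact. Given $\phi\in\Lin^\bullet(\ul F,G;H)$ killed by $G'$, one defines $\bar\phi$ on $G''$ by lifting Nisnevich-locally and observing that the result is independent of lifts (since $\phi$ vanishes on $G'$); conditions \ref{L1}, \ref{L2} are immediate. The only place the hypothesis enters is in checking \ref{L3} for $\bar\phi$: given $b\in G''(L)$ with $L\in\Phi$, one needs a lift $\tilde b\in G(L)$ with $c^G_L(\tilde b)\le c^{G''}_L(b)$, and this is \emph{exactly} \eqref{lem-T-rexact3.1}. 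No global lifting on curves is ever required. This is why the hypothesis is formulated over $\Phi$: it matches the shape of \ref{L3}, not the shape of the curve presentation you are trying to use.
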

\begin{proof}
We write $\Lin^\bullet$ instead of $\Lin^\bullet_{\RSC}$ and 
$\ul{F}=(F_1,\ldots, F_n)$.
For \ref{lem-T-rexact2} it suffices by Theorem \ref{thm:ten-rep} to show that for all $H\in \RSC_{\Nis}$
the natural map
$\Lin^\bullet(\ul{F}, G'' \,;\, H )\to \Lin^{\bullet}(\ul{F}, G\,;\, H)$
is injective. This follows directly from the surjectivity $G\surj G''$ in $\RSC_{\Nis}$.
Similarly for \ref{lem-T-rexact3} we have to show that for all $H\in \RSC_{\Nis}$ the sequence
\[0\to \Lin^{\bullet}(\ul{F},G''\,;\, H)\to \Lin^{\bullet}(\ul{F},G\,;\, H)\to 
\Lin^{\bullet}(\ul{F},G'\,;\, H)\]
is exact. It remains to show the exactness in the middle.
To this end, let $\phi\in \Lin^{\bullet}(\ul{F},G\,;\, H)$ map to zero in $\Lin^{\bullet}(\ul{F},G'\,;\, H)$.
For $a_j\in F_j(X_j)$ and $b\in G''(Y)$, take a Nisnevich cover
$\{ V_i\to Y\}_i$  such that there exist $\tilde{b}_i\in G(V_i)$ lifting $b_{|V_i}$.
Set $\bar{\phi}_{\ul{X}, V_i}(\ul{a}, b_{|V_i}):=\phi_{\ul{X}, Y}(\ul{a}, \tilde{b}_i)$
(with the obvious shorthand notation).
By assumption this  glues to give an element
\[\bar{\phi}_{\ul{X}, Y}(\ul{a}, b)\in H(\ul{X}\times Y).\]
It is immediate to check that this is independent of the lifts $\tilde{b}_i$ of $b_{|V_i}$ and the choice of the cover $\{V_i\to Y\}_i$.
It is also direct to check that $\bar{\phi}=\{\bar{\phi}_{\ul{X}, Y}\}_{\ul{X}, Y\in \Sm}$ satisfies 
\ref{L1} and \ref{L2} of Definition \ref{defn:Lin}. For  \ref{L3},
let $a_j\in F_j(L)$ and $b\in G''(L)$; by \eqref{lem-T-rexact3.1}  we find
$\tilde{b}\in G(L)$ with $c^G_L(\tilde{b})= c^{G''}_L(b)$; 
hence, if $\bullet =\te$
\[c^H_L(\bar{\phi}_L(\ul{a}, b))=c_L^H(\phi_L(\ul{a}, \tilde{b}))\le \sum_{i} c_L^{F_i}(a)+ c_L^{G}(\tilde{b})
=\sum_{i} c_L^{F_i}(a)+ c_L^{G''}(b).\]
If $\bullet =\tte$, we replace $+$ by $\max$.
Thus,
in either case, 
$\bar{\phi}\in \Lin^{\bullet}(\ul{F}, G''\,;\, H)$ maps to $\phi$. This completes the proof.
\end{proof}

\begin{cor}\label{cor:comp-te-tte}
Let $F_1,\ldots, F_n\in \RSC_{\Nis}$ and assume all but one of the $F_i$ are proper.
Then the natural surjection (see \ref{eq1.12}, \eqref{subsec:omegaCI2})
\[h_{0,\Nis}(\tF_1\te\ldots\te \tF_n)\xr{\simeq} h_{0,\Nis}(\tF_1\tte\ldots\tte \tF_n)\]
is an isomorphism.
(Recall that $F\in \RSC_{\Nis}$ is called {\em proper} if $c_L^F(a)=0$, for all $a\in F(L)$, $L\in \Phi$.)
\end{cor}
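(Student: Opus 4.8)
The plan is to deduce this from the representability result Theorem~\ref{thm:ten-rep}: the crux will be that the hypothesis forces the two conductor bounds occurring in condition~\ref{L3} of Definition~\ref{defn:Lin} to coincide, so that $\Lin^{\te}_{\RSC}$ and $\Lin^{\tte}_{\RSC}$ become literally the same functor. First I would set up the comparison map and the $\Hom$-identification. The canonical morphism \eqref{eq1.12} induces (iterating, cf.\ Lemma~\ref{lem:comp-te-tte}) a morphism $\tF_1\te\cdots\te\tF_n\to\tF_1\tte\cdots\tte\tF_n$ in $\MPST$, hence, upon applying $h_{0,\Nis}$, a morphism
\[\phi\colon h_{0,\Nis}(\tF_1\te\cdots\te\tF_n)\longrightarrow h_{0,\Nis}(\tF_1\tte\cdots\tte\tF_n)\]
in $\RSC_{\Nis}$. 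For $\bullet\in\{\te,\tte\}$ I write $e^{\bullet}\colon F_1\times\cdots\times F_n\to h_{0,\Nis}(\tF_1\bullet\cdots\bullet\tF_n)$ for the universal multilinear collection, i.e.\ the composite of $F_1\times\cdots\times F_n\to F_1\ten{\NST}\cdots\ten{\NST}F_n$ with \eqref{subsec:omegaCI2}; by construction $\phi\circ e^{\te}=e^{\tte}$. Since each $H\in\RSC_{\Nis}$ is a Nisnevich sheaf, Theorem~\ref{thm:ten-rep} together with the adjunction for Nisnevich sheafification gives, for $\bullet\in\{\te,\tte\}$ and $H\in\RSC_{\Nis}$, a bijection $\Hom_{\RSC_{\Nis}}(h_{0,\Nis}(\tF_1\bullet\cdots\bullet\tF_n),H)\cong\Lin^{\bullet}_{\RSC}(F_1,\ldots,F_n;H)$, $\psi\mapsto\psi\circ e^{\bullet}$, under which $e^{\bullet}$ corresponds to $\id$.

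The key step will be to show that, under the hypothesis, $\Lin^{\te}_{\RSC}(F_1,\ldots,F_n;H)=\Lin^{\tte}_{\RSC}(F_1,\ldots,F_n;H)$ inside $\Lin_{\PST}(F_1,\ldots,F_n;H)$, for every $H\in\RSC_{\Nis}$. Here conditions~\ref{L1} and~\ref{L2} do not involve $\bullet$, while the right-hand side of~\ref{L3} is $\sum_i c^{F_i}_L(a_i)$ for $\te$ and $\max_i c^{F_i}_L(a_i)$ for $\tte$. Choosing $i_0$ with $F_i$ proper for all $i\neq i_0$ (possible by hypothesis), one has $c^{F_i}_L(a_i)=0$ for all such $i$, all $L\in\Phi$ and all $a_i\in F_i(L)$, by the definition of proper; since conductors are non-negative, both $\sum_i c^{F_i}_L(a_i)$ and $\max_i c^{F_i}_L(a_i)$ equal $c^{F_{i_0}}_L(a_{i_0})$. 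Thus~\ref{L3} for $\te$ and for $\tte$ is one and the same condition, which gives the desired equality.

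Finally, I would apply the bijection of Theorem~\ref{thm:ten-rep} with $\bullet=\tte$ and $H=h_{0,\Nis}(\tF_1\te\cdots\te\tF_n)$: by the previous step $e^{\te}$ lies in $\Lin^{\tte}_{\RSC}(F_1,\ldots,F_n;H)$, so it corresponds to a morphism $\psi\colon h_{0,\Nis}(\tF_1\tte\cdots\tte\tF_n)\to h_{0,\Nis}(\tF_1\te\cdots\te\tF_n)$ with $\psi\circ e^{\tte}=e^{\te}$. Then $\psi\circ\phi$ and $\id$ both carry $e^{\te}$ to $e^{\te}$ (using $\phi\circ e^{\te}=e^{\tte}$), and since precomposition with $e^{\te}$ is injective on $\Hom_{\RSC_{\Nis}}(h_{0,\Nis}(\tF_1\te\cdots\te\tF_n),-)$ by Theorem~\ref{thm:ten-rep}, this forces $\psi\circ\phi=\id$; symmetrically $\phi\circ\psi=\id$, so $\phi$ is an isomorphism. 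I expect no serious obstacle: the argument is essentially formal, and the one point that must be handled carefully is the equality $\sum_i c^{F_i}_L(a_i)=\max_i c^{F_i}_L(a_i)$, which is exactly where the assumption ``all but one of the $F_i$ is proper'' gets used. A more hands-on alternative would be to invoke Lemma~\ref{lem:ffc} and the presentation of Theorem~\ref{thm:CI-te-fields} on function fields and compare the relation subgroups $R(K)$ for $\te$ and $\tte$ directly, but this would require the same conductor bookkeeping.
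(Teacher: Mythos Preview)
Your proposal is correct and follows essentially the same approach as the paper: the paper's proof is the one-line observation that the hypothesis forces $\Lin^{\tte}_{\RSC}(F_1,\ldots,F_n;H)=\Lin^{\te}_{\RSC}(F_1,\ldots,F_n;H)$ for all $H$, and then invokes Theorem~\ref{thm:ten-rep}. Your write-up simply unpacks this Yoneda-style conclusion by explicitly constructing the inverse $\psi$, which is a faithful (if more verbose) elaboration of the same argument.
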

\begin{proof}
By assumption
$\Lin^{\tte}_{\RSC}(\ul{F}\,;\,H)= \Lin^{\te}_{\RSC}(\ul{F}\,;\,H)$, where as above $\ul{F}=F_1,\ldots, F_n$;
thus the  statement follows from Theorem \ref{thm:ten-rep}.
\end{proof}

\section{Applications}\label{sec:appl}
In this section we compute $h_{0,\Nis}(F_1\bullet\ldots\bullet F_n)$, $\bullet\in \{\te, \tte\}$,
in certain cases, where the $F_i\in\MPST$ are modulus lifts of the multiplicative - or the additive group, of abelian varieties,
or of generalized Jacobians. We also compare it to the reciprocity functors defined in \cite{IR}.

\begin{para}\label{not:GmM}
For $M \in \MCor$ we write 
\begin{equation}\label{eq:def-reduced-h}
h_0^\bcube(M)^0 :=
\ker(h_0^\bcube(M) \to h_0^\bcube(\Spec k, \emptyset)=\Z)
\in \CI,
\end{equation}
where the map is induced by the structure morphism of $M$.
We have 
\[\omega_{\CI}h_0^\bcube(M)^0= h_0(M)^0:=\Ker (h_0(M)\to h_0(\Spec k, \emptyset)) \in \RSC.\]
Hence we get a natural map
\[h_0^\bcube(M)^0\to \omega^{\CI}h_0(M)^0=\wt{h_0(M)^0}.\]
This map is in general not an isomorphism (e.g. not in the case $M=\G_a^M$ considered below.)

Set $\G_m^M:=(\P^1, (0)+(\infty)), ~ \G_a^M:=(\P^1, 2(\infty))$.
Fix $* \in \{ m, a \}$.
We put
\[
\G_*^\# := h_0^\bcube(\G_*^M)^0 \in \CI.
\]
By \cite[Theorem 1.1]{RY},
we have an isomorphism in $\RSC$
\begin{equation}\label{eq:Gsharp-G}
(\omega_\CI \G_*^\#)_{\Nis} \cong \G_*.
\end{equation}
For any field $K$ containing $k$,
we identify 
$\G_*^\#(K)=\G_*(K)$. 
We regard a rational function $f \in K(t)$ as
a morphism $f : \P^1_K \to \P^1_K = (\ol{\G_*^M})_K \to \ol{\G_*^M}$.
Let $D$ be an effective divisor on $\P^1_K$
such that $D \ge f^*((\G_*^M)^\infty)$.
Let $\Gamma_f \in \MCor((\P^1_K, D), \G_*^M)$
be the graph of $f$ (restricted to $\P_K^1 \setminus |D|$).
Set $\epsilon = 1$ (resp. $0$) if $*=m$ (resp. $a$),
regarded as a constant function.
Then the image of $\Gamma_f - \Gamma_\epsilon$
by the natural map
\[ \MCor((\P^1_K, D), \G_*^M) 
=\Z_\tr(\G_*^M)(\P^1_K, D) \to h_0^\bcube(\G_*^M)(\P^1_K, D)
\]
belongs to $\G_*^\#(\P^1_K, D)$,
which we denote by $f^\#$.
For a closed point $i : x \hookrightarrow \P^1_K \setminus |D|$,
we write $f(x):=i^*(f^\#) \in \G_*^\#(K(x))=\G_*(K(x))$.
\end{para}

\subsection{The case of homotopy invariant sheaves}

\begin{lem}\label{lem:bcHI}
Let $F\in \RSC_{\Nis}$. Assume there exists a natural number $n\ge 0$, such that 
$c^F_L(a)\le n$, for all  $L\in \Phi$ and all $a\in F(L)$ (see \ref{subsec:not-cond}, \ref{subsec:cond} for notation).
Then $F\in \HI_{\Nis}$ and we can take $n=1$.
\end{lem}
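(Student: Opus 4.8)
The plan is to first upgrade the hypothesis to the assertion that $c^F_L(a)\le 1$ for \emph{all} $L\in\Phi$ and $a\in F(L)$, and then to deduce $\A^1$-invariance of $F$ from this; the former statement already contains the claim that one may take $n=1$.

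For the reduction, suppose for contradiction that $c^F_L(a)\ge 2$ for some $L\in\Phi$ and $a\in F(L)$. Write $L=\Frac(\cO_{U,x}^h)$ with $U\in\Sm$ and $x\in U^{(1)}$, and (after shrinking $U$) assume $\ol{\{x\}}$ is a smooth divisor. For an integer $e$ coprime to $\ch(k)$ I would take the degree-$e$ Kummer cover $U'\to U$ totally (tamely) ramified along $\ol{\{x\}}$; choosing a point $x'$ over $x$, the field $L':=\Frac(\cO_{U',x'}^h)$ again lies in $\Phi$ and $L'/L$ is totally ramified of degree $e$. The decisive input is the behaviour of the motivic conductor under tamely ramified base change established in \cite{RS}: the \emph{log part} $c^F_L(a)-1$ of the conductor is multiplied by the ramification index, so that $c^F_{L'}(a_{L'})\ge e\,(c^F_L(a)-1)+1\ge e+1$. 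Choosing $e\ge n$ then contradicts the hypothesis applied to $a_{L'}\in F(L')$. Hence $c^F_L(a)\le 1$ for all $L$ and $a$.

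Now assume $c^F\le 1$. Since $i_0^*\circ p^*=\id_{F(X)}$, the map $p^*\colon F(X)\to F(X\times\A^1)$ is injective; as $F$ is a Nisnevich sheaf, surjectivity may be checked for $X=\Spec R$ with $R$ essentially smooth henselian local, working in $\ulMCor^{\pro}$ (see \ref{subsec: MCorpro}). Given $a\in F(\A^1_R)$, the conductor criterion \cite[Theorem~4.15]{RS} together with $c^F\le 1$ shows that $a$ extends to a section of $\tau_!\tF$ on $(\P^1_R,\infty_R)=(\Spec R,\emptyset)\otimes\bcube$: for $\rho\in\P^1_R(\cO_L)$ avoiding $\infty_R$ one has $c^F_L(\rho^*a)=0$, and otherwise $v_L(\rho^*\infty_R)\ge 1\ge c^F_L(\rho^*a)$. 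By Remark \ref{rmk:interval} the multiplication $\bcube\otimes\bcube\to\bcube$, $(t,s)\mapsto ts$, is a morphism in $\ulMCor$; pulling $a$ back along $(\Spec R,\emptyset)\otimes\bcube_{(t)}\otimes\bcube_{(s)}\to(\Spec R,\emptyset)\otimes\bcube$ and using $\bcube$-invariance of $\tau_!\tF$ (\cite[Lemma 1.14]{S}) to identify the restrictions of this pullback to $s=0$ and to $s=1$, one obtains $a=p_t^*(i_{0,t}^*a)$ in $\tau_!\tF((\Spec R,\emptyset)\otimes\bcube)$, where $i_{0,t}^*a\in\tau_!\tF(\Spec R,\emptyset)=F(\Spec R)$ (the last identification as in the proof of Corollary \ref{cor:assocII}, using $F\in\RSC$). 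Restricting to the open $\A^1_R\subset\P^1_R$ gives $a=p^*(i_0^*a)$, so $p^*$ is surjective and $F\in\HI_\Nis$.

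The main obstacle is the first step: one must extract from \cite{RS} the precise statement on the motivic conductor under tamely ramified base change (that the log conductor scales by the ramification index), and check that tame extensions $L'/L$ of arbitrarily large degree can be realised inside $\Phi$; the Kummer cover construction is routine but uses $\ol{\{x\}}$ smooth and $e$ prime to the characteristic. The second step is the standard argument that a $\bcube$-invariant presheaf all of whose sections extend to a single $\bcube$-twist is $\A^1$-invariant, and should go through smoothly once the cited results are invoked.
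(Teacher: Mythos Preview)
Your first step has a genuine gap. The lower bound $c^F_{L'}(a_{L'})\ge e\,(c^F_L(a)-1)+1$ under tame base change is not established in \cite{RS} for general $F\in\RSC_\Nis$. What one does get from the pushforward axiom (c3) (using $\pi_*\pi^*a=e\cdot a$) is $c^F_{L'}(a_{L'})\ge e\,(c^F_L(e\cdot a)-1)+1$, so your claim amounts to $c^F_L(e\cdot a)=c^F_L(a)$; this fails, for instance, whenever $a$ is $e$-torsion. The argument can be repaired by choosing two coprime degrees $e_1,e_2>n$, both prime to $\ch(k)$, and using B\'ezout to see that not both $c^F_L(e_ia)$ can drop below $c^F_L(a)$---but that is not what you wrote, and in any case no pullback formula of the shape you invoke is available as a black box in \cite{RS}.

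The paper's reduction from $c^F\le n$ to $c^F\le 1$ is different and shorter: it uses only the pushforward bound (c3) of \cite[Theorem 4.15(2)]{RS}. Taking totally ramified extensions $\pi_i\colon\Spec L_i\to\Spec L$ of degree $n+i$ for $i=0,1$ (no tameness needed), the hypothesis gives $c^F_{L_i}(\pi_i^*a)\le n$, hence $c^F_L((n+i)a)=c^F_L(\pi_{i*}\pi_i^*a)\le\lceil n/(n+i)\rceil=1$; then $a=(n+1)a-na$ yields $c^F_L(a)\le 1$. For the implication $c^F\le 1\Rightarrow F\in\HI_\Nis$ the paper simply cites \cite[Corollaries 4.33, 4.36]{RS}; your direct argument via the multiplication $\bcube\otimes\bcube\to\bcube$ is essentially how that corollary is proved and looks fine.
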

\begin{proof}
By \cite[Corollaries 4.33,  4.36]{RS} we have for general $F\in \RSC_{\Nis}$
\eq{lem:bcHI1}{F\in \HI_{\Nis} \Longleftrightarrow c_L^F(a)\le 1, \text{ for all } L\in \Phi, a\in F(L).}
By \cite[Theorem 4.15(2)]{RS} the motivic conductor $c^F=\{c^F_L\}_{L\in \Phi}$ is a conductor in the sense
of {\em loc. cit.}, in particular it satisfies 
\eq{c3}{\quad \quad c^F_L(f_*a)\le \lceil c^F_{L'}(a)/e\rceil,}
where $f:\Spec L'\to \Spec L$ is finite of ramification index $e$ and $a\in F(L')$.
Now assume $F$ is as in the assumptions of the lemma.
Let $L\in \Phi$ and $a\in F(L)$. Let $L_i/L$ be a  totally ramified extensions of degree $n+i$, with $i=0,1$, 
and denote by $\pi_i: \Spec L_i\to \Spec L$ the corresponding morphisms.
We compute
\begin{align*}
c_L(a)&=c_L((n+1)a-n a) & &\\
        &\le \max\{c_L((n+1)a), c_L(n a)\} & &\\
       &= \max\{c_L(\pi_{1*}\pi_1^*a), c_L(\pi_{0*}\pi_0^*a)\} & &\\
         &\le \max\{\lceil c_{L_1}(\pi_1^*a)/(n+1)\rceil, \lceil c_{L_0}(\pi_0^*a)/n\rceil\},& 
&\text{by \eqref{c3}}\\
        &\le 1, 
\end{align*}
where the last inequality holds by assumption. This proves the lemma.
\end{proof}

\begin{thm}\label{thm:tHI-tRSC}
Let $F_1,\ldots, F_n \in \HI_{\Nis}$, $\bullet\in \{\te, \tte\}$. 
Then
\[h_{0,\Nis}(\tF_1\bullet\ldots\bullet\tF_n) =F_1\ten{\HI_{\Nis}}\ldots \ten{\HI_{\Nis}} F_n.\]
\end{thm}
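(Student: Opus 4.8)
The plan is to identify both sides by their universal properties. By Theorem~\ref{thm:ten-rep} the sheaf $F:=h_{0,\Nis}(\tF_1\bullet\cdots\bullet\tF_n)\in\RSC_{\Nis}$ (it lies in $\RSC_{\Nis}$ by \eqref{subsec:omegaCI1} and Theorem~\ref{thm:sheafification}) represents $H\mapsto\Lin^\bullet_{\RSC}(F_1,\ldots,F_n;H)$ on $\RSC_{\Nis}$, whereas, unwinding the adjunction \eqref{eq:h0-a1}, Nisnevich sheafification, and \eqref{thm:ten-rep3}, the object $F_1\ten{\HI_{\Nis}}\cdots\ten{\HI_{\Nis}}F_n=(h_0^{\A^1}(F_1\ten{\PST}\cdots\ten{\PST}F_n))_{\Nis}$ is the maximal $\A^1$-invariant quotient, inside $\NST$, of the canonical surjection $e\colon F_1\ten{\NST}\cdots\ten{\NST}F_n\twoheadrightarrow F$ obtained by sheafifying \eqref{subsec:omegaCI2}; equivalently, it represents $H\mapsto\Lin_{\PST}(F_1,\ldots,F_n;H)$ (collections satisfying only \ref{L1} and \ref{L2}) on $\HI_{\Nis}$. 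I would therefore reduce the theorem to: (a) $F\in\HI_{\Nis}$; and (b) for every $H\in\HI_{\Nis}$ one has $\Lin^\bullet_{\RSC}(F_1,\ldots,F_n;H)=\Lin_{\PST}(F_1,\ldots,F_n;H)$, i.e. \ref{L3} is automatic when the target is homotopy invariant.

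For (a): since each $F_i\in\HI_{\Nis}$, the equivalence \eqref{lem:bcHI1} gives $c^{F_i}_L(a)\le1$ for all $L\in\Phi$ and $a\in F_i(L)$. By Corollary~\ref{cor:mod-genRSC} the group $F(L)$ is generated by the elements $\pi_*([a_1,\ldots,a_n]_{L'})$ with $L'/L$ finite, and \eqref{cor:mod-genRSC1} (with $r_i=c^{F_i}_{L'}(a_i)\le1$) bounds the motivic conductor of each such generator by $n$ (by $1$ if $\bullet=\tte$); since $c^F$ is a conductor by \cite[Theorem 4.15(2)]{RS}, this bounds $c^F_L$ on all of $F(L)$, so $F\in\HI_{\Nis}$ by Lemma~\ref{lem:bcHI} (alternatively one may just quote Corollary~\ref{cor:cond-t}). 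For (b): let $\phi$ satisfy \ref{L1} and \ref{L2} with $H\in\HI_{\Nis}$. By Lemma~\ref{lem:L3} it suffices to check $\alpha:=\phi_{\cX_1^\o,\ldots,\cX_n^\o}(a_1,\ldots,a_n)\in\tH(\cX_1\bullet\cdots\bullet\cX_n)$ for all $a_i\in\tF_i(\cX_i)$ and all proper $\cX_i$. Writing $(\ol X,X_\infty):=\cX_1\bullet\cdots\bullet\cX_n$, by \cite[Theorem 4.15(4)]{RS} this amounts to $c^H_L(\rho^*\alpha)\le v_L(X_\infty)$ for all $L\in\Phi$ and $\rho\in\ol X(\cO_L)$; but $H\in\HI_{\Nis}$ forces $c^H_L(\rho^*\alpha)\le1$, so the only case to treat is $v_L(X_\infty)=0$, in which $\rho$ already factors through $\ol X\setminus|X_\infty|=\cX_1^\o\times\cdots\times\cX_n^\o$ (Lemma~\ref{lem:ten-cond}), whence $\rho^*\alpha$ extends to $\cO_L$ and $c^H_L(\rho^*\alpha)=0$. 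I expect this to be the crux of the argument: it is precisely where the ``conductor $\le1$'' description of homotopy-invariant reciprocity sheaves is used to collapse the extra modulus/reciprocity constraint.

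Finally I would assemble the two halves. By (a) the surjection $e$ factors uniquely through a surjection $q\colon F_1\ten{\HI_{\Nis}}\cdots\ten{\HI_{\Nis}}F_n\twoheadrightarrow F$; conversely, the canonical multilinear family $F_1(X_1)\times\cdots\times F_n(X_n)\to (F_1\ten{\HI_{\Nis}}\cdots\ten{\HI_{\Nis}}F_n)(X_1\times\cdots\times X_n)$ satisfies \ref{L1}, \ref{L2}, and, by (b), \ref{L3}, so by Theorem~\ref{thm:ten-rep} it induces a morphism $r\colon F\to F_1\ten{\HI_{\Nis}}\cdots\ten{\HI_{\Nis}}F_n$; both $q$ and $r$ are compatible with the respective surjections from $F_1\ten{\NST}\cdots\ten{\NST}F_n$, and since that presheaf surjects onto both targets one gets $q\circ r=\id$ and $r\circ q=\id$, so $q$ is the asserted (natural) isomorphism. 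The only incidental point to spell out along the way is the associativity identity $F_1\ten{\HI_{\Nis}}\cdots\ten{\HI_{\Nis}}F_n=(h_0^{\A^1}(F_1\ten{\PST}\cdots\ten{\PST}F_n))_{\Nis}$, which follows from $h_0^{\A^1}$ being a monoidal localization together with exactness and monoidality of Nisnevich sheafification, and is routine.
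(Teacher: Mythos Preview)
Your proof is correct, and step (a) coincides with the paper's argument. However, the paper reaches the conclusion by a shorter path that avoids step (b) and the representability theorem entirely. Once $F\in\HI_{\Nis}$ is established, the paper simply observes the chain of natural surjections
\[
F_1\ten{\NST}\cdots\ten{\NST}F_n \surj F \surj F_1\ten{\HI_{\Nis}}\cdots\ten{\HI_{\Nis}}F_n,
\]
where the first is \eqref{subsec:omegaCI2} sheafified and the second exists because $\omega^*$ carries $\HI$ into $\CI$ (so $h_0^{\A^1}\omega_!$ factors through $h_0^{\bcube}$, whence $h_0(\tF_1\bullet\cdots\bullet\tF_n)\surj h_0^{\A^1}(F_1\ten{\PST}\cdots\ten{\PST}F_n)$). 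Applying $h_0^{\A^1}$ and Nisnevich sheafification to the whole chain leaves $F$ fixed (by (a)) and turns the left term into the right term, so the composite is the identity and both surjections are isomorphisms.

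Your route via Theorem~\ref{thm:ten-rep} and the explicit verification that \ref{L3} is automatic for $H\in\HI_{\Nis}$ is a genuine alternative: it trades the adjunction/localization argument for a direct analysis of multilinear maps, and along the way isolates the statement $\Lin^{\bullet}_{\RSC}(F_1,\ldots,F_n;H)=\Lin_{\PST}(F_1,\ldots,F_n;H)$ for $H\in\HI_{\Nis}$, which is not recorded in the paper but is a useful fact in its own right. The cost is that you invoke the heavier machinery of Theorem~\ref{thm:ten-rep} and Lemma~\ref{lem:L3}, whereas the paper's argument needs only the elementary adjunction between $h_0^{\A^1}$ and the inclusion $\HI\hookrightarrow\PST$.
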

\begin{proof}
Denote by $F$ the left hand side. 
It follows from the Corollaries \ref{cor:mod-genRSC} and \ref{cor:cond-t}, \eqref{lem:bcHI1}, \eqref{c3},
 and Lemma \ref{lem:bcHI} that 
$F\in \HI_{\Nis}$.
We claim that there are natural surjections
\[F_1\ten{\NST}\ldots\ten{\NST} F_n\surj F \surj F_1\ten{\HI_{\Nis}}\ldots \ten{\HI_{\Nis}} F_n.\]
The first map is obtained by the Nisnevich sheafification of \eqref{subsec:omegaCI2}.
For the second,
we first notice that for any $G \in \MPST$
there is a natural surjection
\[ 
h_0(G) \twoheadrightarrow
h_0^{\A^1} h_0(G) \cong h_0^{\A^1} \omega_!(G),
%h_0(G):=\omega_\CI h_0^{\bcube} G \twoheadrightarrow
%h_0^{\A^1} \omega_\CI h_0^{\bcube} G \cong h_0^{\A^1} \omega_! G,
\]
where the left surjection is from \eqref{eq:h0-a1}
and the right isomorphism is from \cite[Proposition 2.28]{KSY3}.
We then apply it to $G=\tF_1\bullet\ldots\bullet\tF_n$,
use the monoidality of $\omega_!$, and sheafify.
The claim is proved.

Applying $h_0^{\A^1}$ (see \ref{sec:omega-h}) and Nisnevich sheafifying
yield a factorization of the identity 
\[F_1\ten{\HI_\Nis} \ldots \ten{\HI_\Nis} F_n\surj   F \surj F_1\ten{\HI_\Nis} \ldots \ten{\HI_\Nis} F_n.\]
This proves the statement.
\end{proof}

\begin{problem}
Suppose that $G_i \in \MPST$  such that $F_i := \omega_!G_i \in \RSC$.
We have a natural surjection
\[ h_{0,\Nis}(G_1 \te \cdots \te G_n)
\surj h_{0,\Nis}(\tF_1\bullet\dots\bullet\tF_n).
\]
This is in general not an isomorphism, see, e.g., Theorem \ref{thm:Ga-te-Ga-arbCh}
and Corollary \ref{ga-ga-p}.
What happens if $G_i\in \CI$ and $F_i\in\HI$?
Is the map then also injective?
\end{problem}

\begin{para}\label{para:tenHI-Gm}
Let $\cK^M_n$ be the restriction to $\Sm$ of the improved Milnor $K$-sheaf from \cite[1.]{Ke10};
in particular it is a Zariski sheaf.
By \cite[Proposition 10, (8)]{Ke10} (see also \cite[Theorem 7.6]{Ke09}) 
\[\cK^M_n\cong \cH^n(\Z(n)),\]
where $\Z(n)$ is Voevodsky's motivic complex. In particular, $\cK^M_n\in \HI_{\Nis}$, 
by \cite[Theorem 3.1.12]{VoTmot}.
Employing $\Z(1)[1]\cong \G_m$ in $\DM$ (see \cite[Theorem 4.1]{MVW}) we obtain
\[\cK^M_n\cong \cH^0(\Z(n)[n])\cong \cH^0(\Z(1)[1]^{\ten{\DM} n})\cong \G_m^{\ten{\HI_\Nis} n}.\] 
Thus Theorem \ref{thm:tHI-tRSC} yields an isomorphism
\eq{para:tenHI-Gm1}{h_{0, \Nis}(\widetilde{\G_m}^{\bullet n})\xr{\simeq} \cK^M_n, \quad \bullet\in \{\te,\tte\}.}
It is direct to check that when evaluated at a $k$-field $K$ this map is given by 
\[(a_1\otimes\ldots\otimes a_n)_{K'/K}\mapsto \Nm_{K'/K}\{a_1,\ldots, a_n\},\]
where $K'/K$ is a finite field extension and $a_i\in (K')^\times$.
However, Theorem \ref{thm:tHI-tRSC} does not imply  the following result.
\end{para}

\begin{prop}\label{prop:Milnor-K}
We have isomorphisms in $\RSC_\Nis$
\[h_{0,\Nis}((\G^M_m/1)^{\bullet n})\cong h_{0, \Nis}((\G_m^\#)^{\bullet n}) \cong \cK_n^M,
\qquad \bullet\in\{\te,\tte\},
\]
where $\G^M_m/1= \Coker (i_{1}: \Z\to \Z_\tr(\G^M_m))\in \MPST$
and $i_1: \{1\}\inj \P^1$ (see \ref{not:GmM} for notation).
\end{prop}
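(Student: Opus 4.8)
The plan is to prove directly that $h_{0,\Nis}((\G_m^\#)^{\bullet n})\cong \cK^M_n$ and to reduce the rest to this. First I would exploit the rational point $i_1:(\Spec k,\emptyset)\to \G_m^M$ (admissible because $1\notin\{0,\infty\}$): it splits the structure map, so $\Z_\tr(\G_m^M)\cong \Z\oplus(\G_m^M/1)$ in $\MPST$, and applying $h_0^\bcube$ (which fixes the unit) gives $h_0^\bcube(\G_m^M/1)\cong h_0^\bcube(\G_m^M)^0=\G_m^\#$. By Lemma \ref{lem:hcube-ten} applied $n$ times, the surjection $(\G_m^M/1)^{\te n}\twoheadrightarrow(\G_m^\#)^{\te n}$ becomes an isomorphism after $h_0^\bcube$, hence $h_{0,\Nis}((\G_m^M/1)^{\te n})\cong h_{0,\Nis}((\G_m^\#)^{\te n})$. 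For the $\tte$--variant of $\G_m^M/1$ I would use that $\omega_!$ kills the cokernel of the natural transformation $\Psi:\te\to\tte$ (Lemma \ref{lem:comp-te-tte}): since $h_0$ is right exact this makes $h_{0,\Nis}((\G_m^M/1)^{\te n})\to h_{0,\Nis}((\G_m^M/1)^{\tte n})$ surjective, and $\G_m^M/1\twoheadrightarrow\G_m^\#$ gives a further surjection onto $h_{0,\Nis}((\G_m^\#)^{\tte n})$; once the outer terms are identified with $\cK^M_n$ the middle one is too, the composite $\cK^M_n\to h_{0,\Nis}((\G_m^M/1)^{\tte n})\to\cK^M_n$ being compatible with the canonical surjection from $\G_m^{\ten{\NST} n}$ and hence an isomorphism, and then Lemma \ref{lem:ffc} applies. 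So everything reduces to $h_{0,\Nis}((\G_m^\#)^{\bullet n})\cong\cK^M_n$ for both $\bullet$.

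The first step, and the main obstacle, is to show $h_{0,\Nis}((\G_m^\#)^{\bullet n})\in\HI_\Nis$. I would run the argument from the proof of Theorem \ref{thm:tHI-tRSC}: by Lemma \ref{lem:bcHI} it suffices to bound the motivic conductor, and Corollary \ref{cor:mod-genRSC} reduces this to a bound, for $L\in\Phi$ and $a\in\G_m^\#(L)=L^\times$, on the least $r$ with $a\in\tau_!\G_m^\#(\cO_L,\fm_L^{-r})$. This is exactly where Theorem \ref{thm:tHI-tRSC} genuinely fails to apply, since $\G_m^\#$ is not of the form $\widetilde{F}$; what is needed is that the modulus filtration on $\G_m^\#$ is the expected one, i.e. $c^{\G_m^\#}_L\le 1$ --- equivalently that $\tau_!\G_m^\#(C,E)$ is precisely the group of invertible functions on $C\setminus|E|$ --- which I would extract from the explicit computation of $h_0^\bcube(\G_m^M)$ in \cite{RY}. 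Granting this, Corollary \ref{cor:mod-genRSC} bounds the conductor of $h_{0,\Nis}((\G_m^\#)^{\bullet n})$ by $n$, and Lemma \ref{lem:bcHI} gives $h_{0,\Nis}((\G_m^\#)^{\bullet n})\in\HI_\Nis$.

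With this in hand, the natural surjection $\G_m^{\ten{\NST} n}\twoheadrightarrow h_{0,\Nis}((\G_m^\#)^{\bullet n})$ of \eqref{subsec:omegaCI2} (using $(\omega_\CI\G_m^\#)_\Nis\cong\G_m$ from \eqref{eq:Gsharp-G}) factors through $h_0^{\A^1}$ over the maximal $\A^1$--invariant quotient $\G_m^{\ten{\HI_\Nis} n}=\cK^M_n$ (\eqref{para:tenHI-Gm1}), giving an epimorphism $\bar q:\cK^M_n\twoheadrightarrow h_{0,\Nis}((\G_m^\#)^{\bullet n})$. To see that $\bar q$ is injective it suffices, by Lemma \ref{lem:ffc}, to check it on a finitely generated field $K$. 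There, Theorem \ref{thm:CI-te-fields} presents $h_0((\G_m^\#)^{\bullet n})(K)$ as $\bigl(\bigoplus_{L/K}(L^\times)^{\otimes n}\bigr)/N$ with $N$ generated by the relations (R1), (R2), while $\cK^M_n(K)=K^M_n(K)=\bigl(\bigoplus_{L/K}(L^\times)^{\otimes n}\bigr)/N'$ via the norm maps, and $\bar q(K)$ is induced by the identity of $\bigoplus_{L/K}(L^\times)^{\otimes n}$; hence $\bar q(K)$ is an isomorphism once $N=N'$. The inclusion $N'\subseteq N$ is the existence of $\bar q$, and $N\subseteq N'$ says precisely that (R1) and (R2) hold in $K^M_n(K)$: (R1) is the projection formula for norms, and (R2) is Weil reciprocity for $\{f,a_1,\ldots,a_n\}$ on $C$ --- the hypothesis $f\equiv 1 \bmod D$, together with $D\ge D_i$, makes the tame--symbol residues at the points of $|D|$ vanish, leaving precisely $\sum_{x\notin|D|}v_x(f)\cdot a_1(x)\otimes\ldots\otimes a_n(x)$. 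Thus $\bar q$ is an isomorphism; since every sheaf occurring lies in $\RSC_\Nis$, the resulting isomorphisms are isomorphisms in $\RSC_\Nis$, which is the assertion of the proposition.
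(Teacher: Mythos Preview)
Your approach is correct and takes a genuinely different route from the paper's. The paper does \emph{not} attempt to show $h_{0,\Nis}((\G_m^\#)^{\bullet n})\in\HI_\Nis$ a priori. Instead it uses the chain of surjections $h_{0,\Nis}((\G_m^\#)^{\te n})\surj F\surj h_{0,\Nis}(\wt{\G_m}^{\tte n})\cong\cK^M_n$ (the last isomorphism by Theorem~\ref{thm:tHI-tRSC}) to produce the map $\alpha$ to $K^M_n(K)$, and then builds a section $\beta:K^M_n(K)\to h_0((\G_m^\#)^{\te n})(K)$ by verifying the Steinberg relation $a\otimes(1-a)\otimes\boldsymbol{b}=0$ directly in the target, via an explicit rational-function computation in the style of \cite[Proposition~5.3.1]{IR}. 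Your argument trades this Steinberg calculation for the conductor bound on $\G_m^\#$: once that bound is in hand, Corollary~\ref{cor:mod-genRSC} and Lemma~\ref{lem:bcHI} force $\HI$-invariance, the map $\bar q$ from $\cK^M_n$ comes for free by universality, and your check that (R2) dies in $K^M_n(K)$ via Weil reciprocity (the tame symbol at $x\in|D|$ vanishes because $f(x)=1$) gives the inverse on fields. One remark: the conductor bound $c^{\G_m^\#}_L\le 1$ does not really come from \cite{RY}; it is elementary. For $a\in L^\times$ write $a=ut^m$ with $u\in\cO_L^\times$ and $t$ a uniformizer; then $([u]-[1])+m([t]-[1])$ lies in $\uMCor^\pro((\cO_L,s),\G_m^M)^0$ since $|v_L(u)|=0$ and $|v_L(t)|=1$, and its image in $\G_m^\#(L)\cong L^\times$ is $u\cdot t^m=a$. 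This is the only point where your sketch is genuinely vague. Your approach is more structural and entirely sidesteps the explicit Steinberg verification; the paper's approach is more self-contained in that it never needs to analyze the modulus filtration on $\G_m^\#$.
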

\begin{proof}
The first isomorphism for $\bullet = \te$
follows from $h_0^{\bcube}(\G^M_m/1)\cong \G_m^\#$ and Lemma \ref{lem:hcube-ten}.
Thus for any $F \in \RSC_\Nis$ appearing in the statement,
we have a chain of surjective maps
\[
h_{0,\Nis}((\G_m^\#)^{\te n})
\surj
F 
\surj
h_{0,\Nis}(\widetilde{\G_m}^{\tte n})
\cong \cK_n^M.
\]
Hence by Lemma \ref{lem:ffc} it suffices to show that
$\alpha : h_{0}((\G_m^\#)^{\te n})(K)  \to K_n^M(K)$ is bijective for any finitely generated $k$-field $K$.
For this, we construct a surjection
$\beta : K_n^M(K) \surj h_{0}((\G_m^\#)^{\te n})(K)$
such that $\alpha \circ \beta = \id$.
We want to define $\beta$ by $\beta(\{a_1,\ldots, a_n\})= a_1\otimes\ldots\otimes a_n$.
If this is well defined, then $\beta$  is automatically surjective since
it is compatible with the surjection of  $\G_m^{\te{\PST} n}(K)$ to either side.
Showing well-definedness of $\beta$ amounts to showing
\begin{equation}\label{eq:vanishing-milK}
a_1 \otimes \dots \otimes a_n = 0
~\text{in}~ h_{0}((\G_m^\#)^{\te n})(K)
\end{equation}
for $a_1, \dots, a_n \in K^\times$
such that $a_i + a_j=1$ for some $i<j$.
This can be shown by a slight modification of
the proof of \cite[Proposition 5.3.1]{IR},
so we will be brief.

We may suppose $(i, j)=(1, 2)$,
and put $a:=a_1 = 1-a_2$,
$\boldsymbol{b}:=a_3 \otimes \dots \otimes a_n$.
Let $K'=K(c, \mu)$ be a finite extension of $K$ 
generated by $c, \mu \in {K'}^\times$ 
such that $c^6=a$ and such that
$\mu$ is of order $12$ (resp. $4$, resp. $3$)
if the characteristic is neither $2$ nor $3$
(resp. $3$, resp. $2$). 
Consider rational functions in $K'(t)$
\begin{align*} 
&f=
\frac{t^6-a}{t^6-(a+1)t^4+(a+1)t^2-a}
=\frac{(t^2-c^2)(t^2-\mu^2 c^2)(t^2 - \mu^4 c^2)}
{(t^2-c^6)(t^2-\mu^2)(t^2-\mu^{10})},
\\
&g_1=t, \quad g_2=1-t, \quad g_i=a_i ~(i=3, \dots, n)
\end{align*}
so that (with the notation from \ref{not:GmM})
\begin{align*}
&g_1^\# \in \G_m^\#(\P^1_{K'}, (0)+(\infty)),\\
&g_2^\# \in \G_m^\#(\P^1_{K'}, (1)+(\infty)), \\
&g_i^\# \in \G_m^\#(\P^1_{K'},\emptyset)\quad (i=3,\dots,n).
\end{align*}
Since $f \equiv 1 \mod (0)+(1)+2(\infty)$,
we may apply Theorem \ref{thm:CI-te-fields} (R2)
to get a vanishing element in $h_0((\G_m^\#)^{\te n})(K')$.
A slight modification of the computation in {\em loc. cit.} shows that
$4(a \otimes (1-a) \otimes \boldsymbol{b})=0$ in $h_0((\G_m^\#)^{\te n})(K')$.
Since $[K':K]$ is divisible by $24$, we get
$96(a \otimes (1-a) \otimes \boldsymbol{b})=0$ in $h_0((\G_m^\#)^{\te n})(K)$.
Now, \eqref{eq:vanishing-milK} follows from
exactly the same argument as \cite[Lemma 5.8]{MVW}.
\end{proof}

\subsection{Comparison with the K-group of reciprocity functors}

\begin{thm}\label{cor:tenT}
Let $F_1,\ldots, F_n\in \RSC_{\Nis}=\RSC\cap \NST$. 
Let $\mathbf{Reg}^{\le 1}\subset\ulMCor^{\pro}$ be the full subcategory 
whose objects are regular schemes of dimension $\le 1$, which are of finite type and separated
over a finitely generated field extension of $k$. 
Then the restriction (via \ref{sec: MCorpro2}) of $F_i$ to $\mathbf{Reg}^{\le 1}$ is a 
reciprocity functor in the sense of \cite[Definition 1.5.1]{IR}. Furthermore, if 
$T(F_1,\ldots, F_n)$ denotes the reciprocity functor defined in \cite[4.2.3]{IR} and $K/k$ is a finitely generated
field, then
\[h_{0,\Nis}(\tF_1\tte \ldots\tte \tF_n)(K)= T(F_1, \ldots, F_n)(K).\]
\end{thm}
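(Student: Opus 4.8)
The plan is to treat the two assertions in turn.

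\medskip

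\emph{That $F_i$ restricts to a reciprocity functor.} The Mackey part of \cite[Definition 1.5.1]{IR} — pull-back along all morphisms, push-forward along finite surjective maps via transposed graphs, and the base-change and projection-formula identities — is simply the restriction to $\mathbf{Reg}^{\le 1}\subset\ulMCor^{\pro}$ of the transfer structure on $F_i\in\PST$, so there is nothing to prove there. The specialization maps and, above all, the local symbols together with the reciprocity law are supplied by the theory of the motivic conductor for $F_i\in\RSC_{\Nis}$ from \cite{RS}: for a regular curve $C$ over a finitely generated field $K/k$ with function field $E=K(C)$, for $a\in F_i(E)$, $g\in E^\times$ and a closed point $x$, one uses \cite[Theorem 4.15]{RS} to choose an effective divisor $D$ on the smooth compactification $\ol C$ with $a\in\tau_!\tF_i(\ol C,D)$, and defines $(a,g)_x\in F_i(K(x))$ by the pairing with $g$ described in \cite{RS} — the same mechanism appearing at the end of the proof of Theorem \ref{thm:ten-rep}. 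When $C$ is proper, the reciprocity $\sum_x\Tr_{K(x)/K}(a,g)_x=0$ is then the identity $(i_0^*-i_\infty^*)(g^t)^*a=0$, valid because $\tau_!\tF_i$ is $\bcube$-invariant. That these data satisfy all the axioms of \cite[Definition 1.5.1]{IR} follows from the formal properties of the motivic conductor collected in \cite[Theorem 4.15]{RS} (it is a conductor in the sense of \emph{loc.\ cit.}, cf.\ \eqref{c3}); in particular the conductor that \cite{IR} attaches to this reciprocity-functor structure agrees with the motivic conductor $c^{F_i}_L$, $L\in\Phi$ (see \ref{subsec:cond}).

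\medskip

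\emph{The comparison of $K$-groups.} Since $K$ is a field, $\Spec K$ admits no nontrivial Nisnevich covers, hence $h_{0,\Nis}(\tF_1\tte\cdots\tte\tF_n)(K)=h_0(\tF_1\tte\cdots\tte\tF_n)(K)$. Theorem \ref{thm:CI-te-fields} with $\bullet=\tte$ then identifies the right-hand side with
\[\Big(\bigoplus_{L/K}F_1(L)\otimes_\Z\cdots\otimes_\Z F_n(L)\Big)\Big/R(K),\]
where $R(K)$ is generated by the projection-formula relations \ref{R1} and by the relations \ref{R2}, in which the modulus condition reads $a_i\in\tau_!\tF_i(C,D_i)$ and $g\equiv 1\bmod\max\{D_1,\dots,D_n\}$. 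On the other hand, the $K$-group $T(F_1,\dots,F_n)(K)$ of \cite[4.2.3]{IR} admits, by the explicit presentation in \cite[\S 5]{IR} (which parallels Theorem \ref{thm:CI-te-fields}), exactly the same generators $\bigoplus_{L/K}\bigotimes_i F_i(L)$ modulo the projection formula and a Weil-reciprocity relation attached to a curve $C/K$ and $g\in K(C)^\times$. The plan is to check that these two relation subgroups coincide. Using the first part — that the \cite{IR}-local symbol of $F_i$ is the one built there from the motivic conductor and that \cite{IR} uses the $\sup=\max$ convention on conductors — the \cite{IR}-relation attached to $g$ with $g\equiv 1\bmod\max_i\mathfrak f(a_i)$ collapses: the contributions at $x\in|D|$ vanish by the local-symbol bound, while at $x\notin|D|$ one has $(a_i,g)_x=v_x(g)\,a_i(x)$, so the sum becomes $\sum_{x\notin|D|}v_x(g)\,a_1(x)\otimes\cdots\otimes a_n(x)$, i.e.\ a relation of type \ref{R2}; conversely every \ref{R2}-relation arises in this way. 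The crucial compatibility making this work is that, with respect to $\tte$, the modulus of a product is the $\max$ of the moduli of the factors (Lemma \ref{lem:ten-cond}) — which is also precisely why \ref{L3} for $\bullet=\tte$ matches the $\sup$-bound of \cite{IR}. Hence the two quotients agree, which is the assertion.

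\medskip

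\emph{Main obstacle.} I expect the genuine work to be the reconciliation of bookkeeping in the second step: showing that the reciprocity-functor structure manufactured in the first step is the one \cite{IR} attaches intrinsically — equivalently, that the \cite{IR}-conductor coincides with the motivic conductor — and that the relation ``$\sum$ over all $x$ with local symbols'' and the relation ``$\sum$ over $x\notin|D|$ with $v_x(g)\,a_i(x)$'' generate the same subgroup; this requires a careful analysis of the local symbol at the points of $|D|$ and of the $\max$-versus-$\sup$ conventions on both sides. Everything else — the Mackey structure, the sheafification reduction over a field, and the explicit presentation — is either formal or a direct consequence of Theorem \ref{thm:CI-te-fields} and \cite{RS}.
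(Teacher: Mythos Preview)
Your relation-matching in the second step is the right computation, but it proves the wrong identity. In \cite{IR} the explicit generator-and-relation presentation you invoke defines the presheaf $LT(F_1,\ldots,F_n)$ (see \cite[Definition 4.2.3(1)]{IR}), while $T(F_1,\ldots,F_n)$ is by definition $\Sigma(LT(F_1,\ldots,F_n))$ with $\Sigma$ the reciprocity-sheafification functor of \cite[Proposition 3.1.4]{IR}. So what your comparison of relations via Theorem \ref{thm:CI-te-fields} establishes is
\[
LT(F_1,\ldots,F_n)(K)\;\cong\;h_0(\tF_1\tte\cdots\tte\tF_n)(K)=:F(K),
\]
and a priori $T(K)$ is only a quotient of $LT(K)$. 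There is no citation in \cite[\S 5]{IR} that gives you $LT(K)=T(K)$ for free; that equality is precisely the content of the theorem.

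The paper closes this gap abstractly rather than by matching conductors or local symbols. Having proved the first assertion --- that the restriction of $F_{\Nis}$ to $\mathbf{Reg}^{\le 1}$ is a reciprocity functor in the sense of \cite{IR} --- one uses the \emph{adjunction property} of $\Sigma$: the composite $LT\to F\to F_{\Nis}$ lands in a reciprocity functor, hence factors through $T=\Sigma(LT)$. Since $LT(K)\surj T(K)$ by construction of $\Sigma$, the factorization
\[
LT(K)\;\surj\;T(K)\;\to\;F_{\Nis}(K)=F(K)
\]
of the isomorphism $LT(K)\xr{\simeq}F(K)$ forces both arrows to be isomorphisms. Your ``main obstacle'' --- identifying the \cite{IR}-conductor with the motivic conductor and reconciling the $\sup$ conventions --- is thus bypassed entirely; what is actually needed from the first part is only that $F_{\Nis}$ satisfies the axioms of \cite[Definition 1.5.1]{IR}, and for this the paper checks (Inj) and the modulus condition directly from the injectivity and weak-reciprocity results for $\RSC_{\Nis}$ in \cite{KSY}, \cite{KSY3}, without ever constructing local symbols by hand.
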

\begin{proof}
For the first statement, observe that $F\in \RSC_{\Nis}$ clearly restricts to
a Nisnevich sheaf with transfers on $\mathbf{Reg}^{\le 1}$ in the sense of \cite[Definition 1.2.1]{IR} satisfying
the condition (FP) from \cite[Definition 1.3.5]{IR}. The condition (Inj) from {\em loc. cit.}, i.e., the injectivity of
the restriction $F(X)\inj F(U)$ for $U\subset X$ open dense in $\mathbf{Reg}^{\le 1}$ is satisfied by 
\cite[Theorem 6]{KSY} and \cite[Corollary 3.2.3]{KSY3}. Finally, note by 
\cite[Lemma 5.1.7]{KSY} and \cite[Corollary 3.2.3]{KSY3}, the reciprocity sheaf $F$  has weak reciprocity in the 
sense of \cite[5.1.6]{KSY}, which implies that any element $a\in F(K(C))$, 
where $K(C)$ is the function field of a regular projective curve over a finitely generated $k$-field,
has a modulus in the sense of \cite[Definition 1.4.1]{IR}. Hence $F$ defines a reciprocity functor
in the sense of \cite[Definition 1.5.1]{IR}. This shows the first statement.

For the second statement observe that by Theorem \ref{thm:CI-te-fields} we have 
a morphism of presheaves with transfers on $\mathbf{Reg}^{\le1}$
\eq{cor:tenT1}{LT(F_1,\ldots, F_n)\to h_0(\tF_1\tte\ldots\tte \tF_n):=F ,}
where $LT(F_1,\ldots, F_n)$ is defined in \cite[Definition 4.2.3(1)]{IR}, and 
it induces an isomorphism
\eq{cor:tenT2}{LT(F_1,\ldots, F_n)(K)\xr{\simeq} F(K).}
Let $\Sigma$ be the functor from \cite[Proposition 3.1.4]{IR}.
By the adjunction property of $\Sigma$, the composition of 
\eqref{cor:tenT1} with the natural map $F\to F_{\Nis}$ factors as follows in the category of presheaves with transfers on
${\rm Reg}^{\le 1}$
\[LT(F_1,\ldots,F_n)\to T(F_1,\ldots, F_n)\overset{\text{defn}}{=}\Sigma(LT(F_1,\ldots, F_n))\to F_{\Nis}.\]
Thus the isomorphism \eqref{cor:tenT2} factors as 
\[LT(F_1,\ldots,F_n)(K)\surj T(F_1,\ldots, F_n)(K)\to F_{\Nis}(K)=F(K),\]
where the first map on the left is surjective by the construction of $\Sigma$, see \cite[Proposition 3.1.4]{IR}.
This implies the statement.
\end{proof}

\subsection{Tensors of additive groups}
\begin{cor}\label{cor:prop-tten}
Let $F\in \RSC_{\Nis}$.
\begin{enumerate}[label=(\arabic*)]
\item\label{cor:prop-tten1} Assume  $\ch(k)\neq 2$.
We have
\[h_{0,\Nis}(\wt{\G_a}\tte \wt{\G_a}\tte \wt{F})=0.\]
\item\label{cor:prop-tten2} Assume the characteristic of $k$ is zero.
Let $G$ be a unipotent commutative group scheme over $k$ and $A$ an abelian $k$-variety.
Then $G, A\in \RSC_{\Nis}$ and 
\[h_{0,\Nis}(\wt{A}\te \tG\te \wt{F})=h_{0,\Nis}(\wt{A}\tte \tG\tte \wt{F})=0.\]
\end{enumerate}
\end{cor}
\begin{proof}
\ref{cor:prop-tten1}.   By \cite[Theorem 5.5.1]{IR}, we have $T(\G_a, \G_a, F)(K)=0$, for all function fields $K$.
Hence the statement follows from Theorem \ref{cor:tenT} and Lemma \ref{lem:ffc}.
\ref{cor:prop-tten2}. 
The first statement follows from \cite[Corollary 3.2.5]{KSY3}.  To show the vanishing, recall that a unipotent commutative group scheme
in characteristic zero is a product of $\G_a$. By \cite[Corollary 1.2]{RY14} we have $T(A,\G_a)(K)=0$.
Thus the statement follows from Corollary \ref{cor:assocII}, Corollary \ref{cor:comp-te-tte},
and Theorem \ref{cor:tenT}.
\end{proof}
\begin{remark}
The vanishing results above were conjectured by Bruno Kahn, even before a precise definition of the terms were
available.
\end{remark}

Next we compute $h_{0,\Nis}(\wt{\G_a}\te \wt{\G_a})$, in particular it does not vanish.

\begin{para}\label{para:omega}
Denote by $\Omega^n=\Omega^n_{-/\Z}$ the sheaf of absolute K\"ahler differentials.
By \cite[Corollary 3.2.5]{KSY3} we have $\Omega^n\in \RSC_{\Nis}$.
Note also that $\G_a=\Omega^0$ in $\RSC_{\Nis}$. 
%
%
%For $L\in \Phi$ and $a\in \Omega^n_L$ we have by \cite[Theorem 6.4]{RS} 
%\eq{cor:DR01}{ c^{\Omega^n}_L(a)\le r \Longleftrightarrow a\in \begin{cases}
%\Omega^n_{\cO_L}, &\text{if } r=0,\\
%\frac{1}{t^{r-1}}\cdot \Omega^{n}_{\cO_L}(\log), & \text{if }r\ge 1,
%\end{cases}}
%see \ref{subsec:not-cond}, \ref{subsec:cond} for notation.
% 
%
Furthermore, for a function field $K$ and a regular projective $K$-curve $C$,
the local symbol at a closed point $x\in C$ (in the sense of \cite[Proposition 5.9]{KSY})
$(-,-)_{C/K,x}:\Omega^q_{K(C)}\times K(C)^\times\to \Omega^q_K$
is given by 
\eq{para:omega2}{(a,f)_{C/K,x}=\Res_{C/K,x}(a\dlog f), }
where $\Res_{C/K,x}$ is the residue symbol at $x$ (see e.g. \cite[17.4]{Kunz}).
If $K$ is non-perfect, this requires a small argument, see \cite[Lemma 7.11]{RS}.
\end{para}

\begin{para}\label{para:P}
Let $\cP^1$ be the sheaf on the Zariski site of all schemes given by 
\[\cP^1(X)= \Gamma(X, \Delta_X^{-1}(\sO_{X\times_\Z X}/ I_{\Delta_X}^2)),\]
where $\Delta_X: X\to X\times_{\Z} X$ is the diagonal and $I_{\Delta_X}$ is the ideal  sheaf defined by $\Delta_{X}$.
We have the isomorphisms  
\eq{para:P1}{\varphi, \varphi':\cP^1\xr{\simeq} \Omega^1\oplus\G_a}
given by 
\[\varphi(a\otimes b)= adb\oplus ab\quad \text{and}\quad \varphi'(a\otimes b)=bda\oplus ab.
\]
The inverse of $\varphi$ is given by 
\eq{para:P2}{\varphi^{-1}(adb \oplus c)=a\otimes b-ab\otimes1+c\otimes 1,}
and similarly for $\varphi'$.
The restriction of $\cP^1$ to $\Sm$ can be equipped with transfers 
via $\varphi$ or $\varphi'$,
but the following lemma shows they are the same.
We obtain $\cP^1\in \RSC_{\Nis}$.
Note that by \eqref{para:P1}
any open dense immersion $j : U \hookrightarrow X$
induces an injection:
\begin{equation}\label{eq:open-emb-inj}
j^* : \cP^1(X) \hookrightarrow \cP^1(U).
\end{equation}
\end{para}

\begin{lem}\label{lem:transferP}
The two transfer structures on $\cP^1$ induced by 
$\varphi$ and $\varphi'$ coincide.
\end{lem}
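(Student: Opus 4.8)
The plan is to reduce the assertion to the single fact that the de Rham differential $d\colon\G_a\to\Omega^1$ is a morphism of presheaves with transfers. Since the two transfer structures on $\cP^1|_{\Sm}$ are obtained by transporting the fixed transfer structure of $\Omega^1\oplus\G_a$ along the two Zariski-sheaf isomorphisms $\varphi,\varphi'$ of \eqref{para:P1}, they coincide if and only if the composite automorphism $\psi:=\varphi'\circ\varphi^{-1}$ of $\Omega^1\oplus\G_a$ respects transfers. Using the formula \eqref{para:P2} for $\varphi^{-1}$ and linearity of $\varphi'$ one computes, for local sections $a,b$ of $\sO$ and $c$ of $\G_a$,
\[
\psi(a\,db\oplus c)=\varphi'(a\otimes b)-\varphi'(ab\otimes 1)+\varphi'(c\otimes 1)
=\bigl(b\,da\oplus ab\bigr)-\bigl((a\,db+b\,da)\oplus ab\bigr)+\bigl(dc\oplus c\bigr)
=(dc-a\,db)\oplus c,
\]
so $\psi(\omega\oplus c)=(dc-\omega)\oplus c$; in other words $\psi$ is the ``upper triangular'' endomorphism of $\Omega^1\oplus\G_a$ with diagonal components $-\id_{\Omega^1}$ and $\id_{\G_a}$ and off-diagonal component $d\colon\G_a\to\Omega^1$. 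A map between finite direct sums in $\PST$ is a morphism in $\PST$ as soon as each of its components is; since $-\id_{\Omega^1}$, $\id_{\G_a}$ and $0$ obviously are, it remains only to prove that $d\colon\G_a\to\Omega^1$ is a morphism in $\PST$ (equivalently in $\RSC_{\Nis}$).

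To show $d$ commutes with $\alpha^*$ for every finite correspondence $\alpha\in\Cor(X,Y)$, I would first reduce to the case where $X=\Spec K$ is a field. Indeed, $\G_a=\Omega^0$ and $\Omega^1$ lie in $\RSC_{\Nis}$, so by the input of Lemma \ref{lem:ffc} (namely \cite[Theorem 6]{KSY} together with \cite[Corollary 3.2.3]{KSY3}) their restriction maps to the generic points are injective, and the action of a finite correspondence is compatible with passing to generic points; hence it suffices to treat $X=\Spec K$. In that case $\alpha$ is a $\Z$-linear combination of closed points $\Spec L\subset Y_K$, each finite over $\Spec K$, and $\alpha^*$ is the corresponding combination of ``pull back along $\Spec L\to Y$, then apply $\Tr_{L/K}$''. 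For $\Omega^0$ and $\Omega^1$ these transfers restrict on function fields to the usual trace maps on absolute Kähler differentials, as follows from the description of the reciprocity structure of $\Omega^n$ worked out in \cite{RS}. Now $d=d_{-/\Z}$ trivially commutes with pullback along the ring map $K\to L$, and it commutes with $\Tr_{L/K}$ by the classical compatibility of the trace on differential forms with the exterior derivative: for $L/K$ separable this follows from the residue formalism (cf.\ \cite{Kunz}), while the purely inseparable case is trivial since the trace is then multiplication by $[L:K]$, which vanishes in the residue characteristic. Combining these gives $\alpha^*\circ d=d\circ\alpha^*$ over all function fields, hence over all of $\Sm$, so $d$ is a morphism in $\PST$ and $\psi$ is an isomorphism in $\PST$, which proves the lemma.

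The main obstacle is the second paragraph, and within it the identification of the transfer maps attached to $\Omega^n$ as a reciprocity sheaf with the classical traces on Kähler differentials together with the fact that those traces commute with $d$; granting the former (which is the substantive input from \cite{RS}), the latter is standard, and the remaining reductions are formal. I would remark that this step can be phrased more economically by invoking directly that the de Rham differential $d\colon\Omega^{n}\to\Omega^{n+1}$ is a morphism of reciprocity sheaves, in which case the whole proof collapses to the explicit computation of $\varphi'\circ\varphi^{-1}$ recorded above.
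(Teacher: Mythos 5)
Your reduction is clean and in fact isolates what the paper's direct computation is implicitly verifying: the two transfer structures agree precisely when $\psi:=\varphi'\circ\varphi^{-1}$ commutes with every $Z^*$, the computation $\psi(\omega\oplus c)=(dc-\omega)\oplus c$ (which I checked and is correct) shows $\psi$ is built from $-\id_{\Omega^1}$, $\id_{\G_a}$, $0$ and $d\colon\G_a\to\Omega^1$, and hence everything hinges on $d$ being a morphism in $\PST$. The paper instead computes $f_*\alpha$, $f_\bigstar\alpha$, $f_*\beta$, $f_\bigstar\beta$ directly over fields in the separable and purely inseparable cases; your route packages the same content as the single conceptual statement ``$d$ is a morphism of presheaves with transfers,'' which the paper itself invokes as known (e.g.\ it writes ``$\Tr d=d\Tr$'' without further comment inside its own proof of this very lemma, and again inside the proof of Theorem~\ref{thm:Ga-te-Ga}). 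So the approach is legitimate, genuinely more structural, and not circular.

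There is, however, a factual error in your purely inseparable sub-case. You assert that for $L/K$ purely inseparable of degree $p$ ``the trace is then multiplication by $[L:K]$, which vanishes.'' That is false for $\Omega^1$: with $L=K[x]$, $x^p=y\in K$, one has $\Tr_{L/K}(ax^{p-1}\,dx)=a\,dy\neq 0$ in general (this nonzero value is exactly what drives the paper's own computation of $f_*\alpha$ in the degree-$p$ case). The field trace $L\to K$ \emph{is} identically zero in this situation, so $d\circ\Tr_{\G_a}=0$, but one still has to check that $\Tr_{\Omega^1}$ kills exact forms, i.e.\ $\Tr_{\Omega^1}(da)=0$ for all $a\in L$; writing $a=\sum_i c_i x^i$ with $c_i\in K$ and using the formulas $\Tr(x^i\,dc)=0$ and $\Tr(cx^i\,dx)=0$ for $i\le p-2$ gives this, since $da$ contributes no $x^{p-1}\,dx$ term. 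Alternatively, and better, simply cite the standard fact that the de Rham differential is a morphism of presheaves with transfers (as you yourself suggest in your closing remark), which avoids the sub-case analysis entirely. As written, though, the stated justification for the inseparable case is wrong even though the conclusion is right, and a reader following your sketch literally would be misled.
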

\begin{proof}
Let $Z\in \Cor(X, Y)$ be a finite prime correspondence. For $\alpha\in \cP^1(Y)$ denote by
$Z^*\alpha$ the action defined via $\varphi$, and by $Z^\bigstar\alpha$ the action defined by
$\varphi'$. We have to show $Z^*\alpha=Z^\bigstar \alpha$ in $\cP^1(X)$.
%This question is local on $X$. 
%After shrinking $X$ we can therefore assume 
By \eqref{eq:open-emb-inj},
we may shrink $X$ and therefore assume 
that $Z$ is smooth. Denote by $f: Z\to Y$ and $g: Z\to X$ the maps induced by projection
and write $f^*=\Gamma_f^*$ and $g_*= (\Gamma_g^t)$, where $\Gamma_f$ and $\Gamma_g^t$
are the graph of $f$ and the transpose of the graph of $g$, respectively, similar with $\bigstar$.
We obtain $Z^*=g_*f^*$ and $Z^\bigstar=g_\bigstar f^\bigstar$.
Obviously we have $f^*=f^\bigstar$. It remains to show $g_*=g_\bigstar$. This is local in $X$ and we can therefore
assume $X=\Spec K$, with $K$ a field and $g$ corresponds to a finite field extension $L/K$.
Via $\varphi$ (resp. $\varphi'$) the pushforward $g_*$ (resp. $g_\bigstar$)  corresponds to $\Tr\oplus \Tr$
 on $\Omega^1\oplus \G_a$ (we write $\Tr$ for $\Tr_{L/K}$). 
 Note
 \[\varphi^{-1}(adb\oplus c)= a\otimes b- ab\otimes 1 + c\otimes 1, \quad 
 {\varphi'}^{-1}(adb\oplus c)= b\otimes a- 1\otimes ab + 1\otimes c.\]
 By transitivity it suffices to consider the cases in which $L/K$ is either separable or purley inseparable
 of degree $p$.

{\em 1st case: $L/K$ is separable:}
From the isomorphism $\varphi$ we find that any element in $\cP^1(L)$
can be written as a sum of elements
\[\alpha:= a\otimes b- ab\otimes 1 + c\otimes 1, \quad a, c\in L, b\in K.\]
Using that $\Tr$ is $K$-linear and commutes with $d$ we obtain
\begin{align*}
f_\bigstar(\alpha)& =\Tr(a)\otimes b -1\otimes \Tr(a)b + 1\otimes \Tr(a)b\\
                            &\quad - \Tr(a)b\otimes 1 +1\otimes \Tr(a)b- 1\otimes \Tr(a)b\\
                         &\quad + \Tr(c)\otimes 1 -1\otimes \Tr(c)  + 1\otimes \Tr(c)\\
                         &= \Tr(a)\otimes b- \Tr(a)b\otimes 1 + \Tr(c)\otimes 1\\
                         &=f_*(\alpha).
\end{align*}

{\em 2nd case: $L/K$ is purely inseparable of degree $p$.}
We can write $L=K[x]$ with $x\in L\setminus L^p$ and $x^p=:y\in K$.
From the isomorphism $\varphi$ we see that we can write any element as a sum of the following elements
\[\alpha=ax^i\otimes x - ax^{i+1}\otimes 1 + c\otimes 1, \quad a\in K, i\in \{0,\ldots, p-1\}, c\in L\]
\[\beta= ax^i\otimes b - abx^i\otimes 1 + c\otimes 1, \quad a, b\in K, i\in \{0,\ldots, p-1\}, c\in L.\]
Note that $0=\Tr: L\to K$, and for $a\in K$ we have 
\eq{lem:transferP1}{\Tr(x^i da)=0,\quad \text{all } i, \text{ and}\quad  
\Tr(a x^idx)=\begin{cases}0, & \text{if } i\in \{0,\ldots, p-2\}\\ ady, & \text{if }i=p-1, \end{cases}}
see, e.g., \cite[(2.2.6) Definition]{KunzTr} (or \cite[Theorem 2.6]{Ru}) for the definition of 
the trace on differential forms of a purely inseparable extension.
Using that $(a\otimes 1-1\otimes a)(1\otimes y-y\otimes 1)=0$ in $\cP^1(K)$ we obtain
\begin{align*}
f_\bigstar\alpha & = {\varphi'}^{-1}\Tr\varphi'(\alpha)\\
                       &=\begin{cases} 
                                   0, &\text{if } i\in \{0,\ldots, p-2\},\\ 
                            -(y\otimes a - 1\otimes ya), &\text{if } i=p-1
                              \end{cases}\\
                      &=\begin{cases} 0, &\text{if } i\in \{0,\ldots, p-2\},\\
                                           a\otimes y -ay\otimes 1, & \text{if } i=p-1,
                             \end{cases}\\
                       &= \varphi^{-1}\Tr\varphi(\alpha)\\
                       &= f_*\alpha.
\end{align*}
Furthermore, it follows directly from the above formulas and $\Tr d=d\Tr$ that we have 
$f_*\beta=0=f_\bigstar \beta$.
This completes the proof.
\end{proof}

\begin{prop}\label{prop:map-to-Ga-te-Ga}
Assume $\ch(k)\neq 2$. Let $K$ be a $k$-field. Then the morphism
\[\Psi_K: \cP^1(K)\to h_{0,\Nis}(\G_a^\#\te\G_a^\#)(K), \quad a\otimes b\mapsto [a,b]_K\]
is well-defined and surjective (see Corollary \ref{cor:mod-genRSC} for the bracket notation on the right).
\end{prop}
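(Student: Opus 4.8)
The plan is to realize $\Psi_K$ as the composite of the map $K\otimes_\Z K=\G_a^\#(K)\otimes_\Z\G_a^\#(K)\to h_{0,\Nis}(\G_a^\#\te\G_a^\#)(K)$, $a\otimes b\mapsto[a,b]_K$ (see Corollary~\ref{cor:mod-genRSC}; this map is additive in each variable, being induced by the canonical pairing into the tensor product followed by the surjection~\eqref{subsec:omegaCI2}, and we use that $h_{0,\Nis}(-)(K)=h_0(-)(K)$ for any field $K$, so that Theorem~\ref{thm:CI-te-fields} applies), with the presentation $\cP^1(K)=(K\otimes_\Z K)/I_{\Delta_K}^2$ from~\ref{para:P}. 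So the content of the first assertion is that this composite kills $I_{\Delta_K}^2$. Transporting through the isomorphism $\varphi\colon\cP^1(K)\xr{\simeq}\Omega^1_{K/\Z}\oplus\G_a(K)$ of~\eqref{para:P1}, under which $a\otimes b\mapsto a\,db\oplus ab$, and using the presentation of $\Omega^1_{K/\Z}$ by $\Z$-bilinearity of $(a,b)\mapsto a\,db$ together with the Leibniz rule, one checks that --- granted the bilinearity just noted --- it is enough to verify the single identity
\[
[a,bc]_K+[abc,1]_K=[ab,c]_K+[ac,b]_K\qquad\text{in }h_{0,\Nis}(\G_a^\#\te\G_a^\#)(K)
\]
for all $a,b,c\in K$; equivalently, that the class of $a\otimes bc+abc\otimes 1-ab\otimes c-ac\otimes b$, which generates $I_{\Delta_K}^2$ inside $K\otimes_\Z K$, is zero. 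We may assume $a,b,c\in K^\times$, the identity being immediate from bilinearity otherwise.

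To prove the identity I would apply the relation~\ref{thm:CI-te-fields}\ref{R2} on the curve $C=\P^1_K$ with coordinate $t$. One picks rational functions $g_1,g_2\in K(t)$, viewed as morphisms to $\ol{\G_a^M}=\P^1$ as in~\ref{not:GmM}, effective divisors $D_i\ge g_i^*(\G_a^M)^\infty$ so that $g_i^\#\in\G_a^\#(\P^1_K,D_i)$, and an $f\in K(t)^\times$ with $f\equiv 1\bmod D_1+D_2$; then~\ref{R2} gives $\sum_x v_x(f)\,g_1(x)\otimes g_2(x)=0$, the sum being over the zeros and poles of $f$, which one arranges to be $K$-rational, so that $g_i(x)\in\G_a^\#(K(x))=K$ is simply the value of $g_i$. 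The guiding ansatz is to normalize $g_1g_2$ to the constant $abc$ and to let $f$ have divisor $x_1+x_2-x_3-x_4$, the $x_i$ chosen so that $(g_1,g_2)$ takes there the four value-pairs $(a,bc)$, $(abc,1)$, $(ab,c)$, $(ac,b)$ --- the pattern $+1,+1,-1,-1$ of multiplicities being forced by $\deg\div(f)=0$ --- so that the left-hand side of~\ref{R2} becomes precisely $[a,bc]_K+[abc,1]_K-[ab,c]_K-[ac,b]_K$. The hard part is to do this compatibly with the modulus condition $f\equiv 1\bmod D_1+D_2$: a first-order choice of the $g_i$ makes $f-1$ vanish to too small an order at the poles of $g_2$, so the congruence fails for the minimal admissible $D_i$. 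As in the proof of Proposition~\ref{prop:Milnor-K}, I expect one is forced to pass to a more elaborate configuration --- of higher degree, possibly after adjoining a root of unity or a square root --- which yields only a nonzero integer multiple of the four-term relation; this multiple then has to be cleared, and the hypothesis $\ch(k)\ne 2$ enters at this point, through the auxiliary square root and the denominators it forces.

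For surjectivity: by Theorem~\ref{thm:CI-te-fields} (see also Corollary~\ref{cor:mod-genRSC}) the group $h_{0,\Nis}(\G_a^\#\te\G_a^\#)(K)$ is generated by the symbols $[\alpha,\beta]_L$ with $L/K$ finite and $\alpha,\beta\in\G_a^\#(L)=L$, so it suffices to rewrite each such, modulo the relations~\ref{R1} and~\ref{R2}, as a $\Z$-linear combination of symbols $[a,b]_K$ with $a,b\in K$. This is the usual spreading-out argument: realize $L$ as the residue field $K(x)$ of a closed point $x$ of a smooth projective $K$-curve $C$, realize $\alpha,\beta$ as values at $x$ of sections of $\G_a^\#$ over appropriate $(C,D_i)$ (possible by Riemann--Roch), choose $f\in K(C)^\times$ with $f\equiv 1\bmod D_1+D_2$ whose divisor couples $x$ with closed points of strictly smaller degree, apply~\ref{R2}, and bring the lower-degree contributions down by~\ref{R1}; one then concludes by induction on $[L:K]$, the argument being parallel to the one for $\PST$ in~\cite{IR}. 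Together with the first part this shows that $\Psi_K$ is a well-defined surjection.
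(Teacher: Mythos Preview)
Your outline for well-definedness is correct in structure: one must kill the generators $c\otimes ab + cab\otimes 1 - ca\otimes b - cb\otimes a$ of $I_{\Delta_K}^2$, and this is done via a single application of~\ref{R2} on $\P^1_K$. But your ansatz ($g_1g_2=abc$, degree-two $f$) does not close, as you already noticed, and your guess that one must pass to an extension and clear an integer multiple is off. The paper instead takes $g_1=ct$, $g_2=t$ (so $g_1^\#,g_2^\#\in\G_a^\#(\P^1_K,2\infty)$) and the degree-six function
\[
f=\frac{(t^2-\tfrac{a^2}{4})(t^2-b^2)(t^2-(1+\tfrac{ab}{2})^2)}{(t^2-1)(t^2-\tfrac{a^2b^2}{4})(t^2-(\tfrac{a}{2}+b)^2)},
\]
chosen so that numerator and denominator agree through the $t^4$-coefficient, whence $f\equiv 1\bmod 4(\infty)$. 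Expanding $\sum_x v_x(f)\,[ct(x),t(x)]$ over the twelve $K$-rational zeros and poles and using $\Z$-bilinearity, the relation drops out directly, with no multiple to clear. The hypothesis $\ch(k)\ne 2$ enters only through the halves in the evaluation points, not through any field extension.

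Your surjectivity argument is a different route from the paper's. Rather than reducing $[\alpha,\beta]_L$ by degree via~\ref{R2} and an induction (which would require showing one can always choose $f$ with a simple zero at $x$ and all remaining zeros and poles at points of strictly smaller degree, compatible with the modulus --- this is not obvious and you have not justified it), the paper proves the single identity $\Tr_{K'/K}\circ\Psi_{K'}=\Psi_K\circ\Tr_{K'/K}$ and then invokes Corollary~\ref{cor:mod-genRSC}: since every generator of the target is of the form $\Tr_{K'/K}[\alpha,\beta]_{K'}=\Tr_{K'/K}\Psi_{K'}(\alpha\otimes\beta)$, trace compatibility gives it as $\Psi_K(\Tr_{K'/K}(\alpha\otimes\beta))$. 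The separable case is an easy projection-formula computation; the purely inseparable degree-$p$ case requires a second, independent application of~\ref{R2} (with $g_1=ay/t$, $g_2=t$, and a suitable $f$ involving $t^{2p}$) to establish $\Tr_{K'/K}[ax^{p-1},x]_{K'}=[a,y]_K-[ay,1]_K$ where $x^p=y$. This is where $p\ne 2$ is used again. Your induction might be made to work, but the trace-compatibility approach is more conceptual and what the paper does.
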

\begin{proof}
We have $\cP^1(K)= K\otimes_{\Z} K/ I_{\Delta_K}^2$.
As a group $I_{\Delta_K}^2$ is generated by elements of the form 
\eq{prop:map-to-Ga-te-Ga1}{(c\otimes1)\cdot ( 1\te ab+ab\te 1-a\te b-b\te a), \quad a,b,c\in K\setminus\{0\}.}
Thus for the well-definedness of $\Psi$ it suffices to show that this element is mapped to zero in 
$h_{0,\Nis}(\G_a^\#\te\G_a^\#)(K)$. 
To this end, consider the following functions on $\P^1_K$ (where $\P^1_K\setminus\{\infty\}=\Spec K[t]$)
\[f=\frac{(t^2-\frac{a^2}4)(t^2-b^2)(t^2-(1+\frac{ab}2)^2)}{(t^2-1)(t^2-\frac{a^2b^2}4)(t^2-(\frac{a}2+b)^2)},
\quad g_1= c\cdot t, \quad  g_2=t.\]
One can check that $f\equiv 1 \mod 4(\infty)$ and that $g_1^\#, g_2^\# \in \G_a^\#(\P^1,2\infty)$
(see \ref{not:GmM} for notation).
Therefore, by Theorem \ref{thm:CI-te-fields}, 
in $(\G_a^\# \ten{\CI} \G_a^\#)(K)$ we have  
\begin{align*}
0
&=\sum_{x\in \P^1\setminus\{\infty\}}v_x(f) \Tr_{K(x)/K}([g_1(x), g_2(x)]_{K(x)})\\
&=ca\te \frac{a}2 +2cb\te b+c(2+ab)\te (1+\frac{ab}2)\\
&\qquad -2c\te 1-cab\te\frac{ab}2-c(a+2b)\te (\frac{a}2+b)\\
&= c\te ab+cab\te 1-ca\te b-cb\te a.
\end{align*}
Thus $\Psi_K$ is well-defined.
By Corollary \ref{cor:mod-genRSC} $\Psi_K$ is surjective if we show
\eq{prop:map-to-Ga-te-Ga2}{\Tr_{K'/K}\circ \Psi_{K'}= \Psi_{K}\circ \Tr_{K'/K}: 
\cP^1(K')\to h_{0,\Nis}(\G_a^\#\te\G_a^\#)(K),}
for all finite field extensions $K'/K$.
By definition of the transfer structure on $\cP^1$ we have to show this equality after precomposing with 
\eqref{para:P2}.
By the transitivity of the trace it suffices to consider separately the cases in which  $K'/K$ is either separable or
purely inseparable of degree $p=\ch(k)$. Set $\Psi'_K:=\Psi_K\circ \eqref{para:P2}$ and $\Tr:=\Tr_{K'/K}$.

{\em 1st case: $K'/K$ separable:} In this case $\Omega^1_{K'}$ is generated by elements of the form
$a'db$ with $a'\in K'$ and $b\in K$. Let $c'\in K'$. We compute
\begin{align*}
\Psi'_K\Tr(a'db\oplus c') & = \Psi_{K}'(\Tr(a')db\oplus \Tr(c'))\\
                                & = [\Tr(a'),b]_K - [\Tr(a')b, 1]_K + [\Tr(c'), 1]_K\\
                                & = \Tr([a', b]_{K'} - [a'b,1]_{K'}+ [c',1]_{K'})\\
                               &=\Tr\Psi'_{K'}(a'db\oplus c'),
\end{align*}
where for the pre-last equality we use projection formulas.
This yields \eqref{prop:map-to-Ga-te-Ga2} in this case.
Note that the argument on $\G_a$ works for any field extension $K'/K$, thus we can assume $c' =0$ in the following.

{\em 2nd case: $K'/K$ is purely inseparable of degree $p$.}
In this case we can write $K'=K[x]$, where $x\in K'$ and  $y:=x^p\in K$. 
It follows that every element in $\Omega^1_{K'}$ can be written as a sum of elements of the form 
\[x^i a db,  \quad x^{i}adx, \quad a,b\in K, i\in \{0,\ldots, p-1\}.\]
For $x^i adb$ we argue as in the case above.
For $0\le i\le p-2$ we have $x^iadx=\frac{1}{i+1}a d x^{i+1}$ and 
thus $\Tr(x^iadx)= \frac{1}{i+1}a d \Tr(x^{i+1})$ and we can again argue similarly as in the above case.
(In fact these traces are zero, but this is not needed.)
It remains to consider the element $x^{p-1}a dx$. 
We have 
\[\Psi'_{K}\Tr(x^{p-1}a dx)= \Psi'_K(ady)=[a,y]_K- [ay,1]_K.\]
On the other hand 
\begin{align*}
\Tr \Psi'_{K'}(x^{p-1}a dx) &= \Tr ([ax^{p-1},x]_{K'}- [ay,1]_{K'})\\
                                     &= \Tr [ax^{p-1},x]_{K'},
\end{align*}
for the last equality we use the projection formula and $\Tr(1)=p=0$ (in the case under consideration).
Thus it remains to show
\eq{prop:map-to-Ga-te-Ga3}{\Tr([ax^{p-1},x]_{K'})-[a,y]_K+[ay,1]_K=0.}
To this end, consider the following functions on $\P^1_K$ (where $\P^1_K\setminus\{\infty\}=\Spec K[t]$)
\[f=\frac{(t^{2p}-y^2)(t^2-1)}{(t^2-y^2)(t^{2p}-1)}, \quad g_1=\frac{ay}{t}, \quad g_2=t.\]
Then $g_1^\#\in \G^\#_a(\P^1_K, 2\cdot 0)$, $g_2^\#\in \G_a^\#(\P^1_K, 2\cdot \infty)$ and 
$f\equiv 1$ mod $2\cdot (0+\infty)$.
Note that $V(t^p-y)$ defines a point in $\P^1_K\setminus \{0,\infty\}$ with residue field $K'$.
By Theorem \ref{thm:CI-te-fields} we obtain in $h_{0,\Nis}(\G_a^\#\te\G_a^\#)(K)$
\begin{align*}
0&=\sum_{z\in \P^1_K\setminus\{0,\infty\}} v_z(f) \cdot \Tr_{K(z)/K}([g_1(z), g_2(z)]_{K(z)})\\
  &= 2 \Tr_{K'/K}([ay/x,x]_{K'}) + 2[ay,1]_{K} - 2 [a,y]_K - 2p[ay,1]_K\\
  &=   2 (\Tr_{K'/K}([ax^{p-1},x]_{K'}) + [ay,1]_{K} -  [a,y]_K).
\end{align*}
Since $p\neq 2$, this shows \eqref{prop:map-to-Ga-te-Ga3} and completes the proof of the proposition.
\end{proof}

\begin{thm}\label{thm:Ga-te-Ga-arbCh}
Assume $\ch(k)\neq 2$.
Then there are canonical isomorphisms in $\RSC_{\Nis}$
\[h_{0,\Nis}(\G_a^M/0\te \G_a^M/0)\cong 
h_{0,\Nis}(\G_a^\# \te \G_a^\#)\cong  \cP^1,\]
where $\G_a^M/0= \Coker (i_0: \Z \to \Z_{\tr}(\G_a^M))$ (see \ref{not:GmM} for notation).
\end{thm}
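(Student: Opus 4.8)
The statement is the conjunction of two isomorphisms; the first is formal and the second carries the content. For the first isomorphism, note that $i_0\colon(\Spec k,\emptyset)\to\G_a^M$ is a section of the structure morphism, so $\Z_\tr(\G_a^M)\cong\Z\oplus\Z_\tr(\G_a^M/0)$ in $\MPST$; applying the additive right-exact functor $h_0^\bcube$ and comparing with \eqref{eq:def-reduced-h} gives $h_0^\bcube(\G_a^M/0)\cong\G_a^\#$, exactly as in the $\G_m$-case of Proposition \ref{prop:Milnor-K}. Then Lemma \ref{lem:hcube-ten}, applied to each tensor factor in turn and combined with the symmetry of $\ten{\MPST}$, yields $h_0^\bcube(\G_a^M/0\ten{\MPST}\G_a^M/0)\cong h_0^\bcube(\G_a^\#\ten{\MPST}\G_a^\#)$; applying the exact functor $\omega_!$ and Nisnevich-sheafifying produces the first isomorphism.

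For the second isomorphism I would construct a morphism $\Psi\colon\cP^1\to h_{0,\Nis}(\G_a^\#\ten{\MPST}\G_a^\#)$ in $\RSC_\Nis$ extending the maps $\Psi_K$ of Proposition \ref{prop:map-to-Ga-te-Ga} and then prove it is an isomorphism. That proposition supplies, for each $k$-field $K$, a well-defined surjection $\Psi_K$ which by its proof is compatible with extension of $K$ and with transfers (the latter being \eqref{prop:map-to-Ga-te-Ga2}); together with the description $\cP^1(X)=\sO(X)\otimes_\Z\sO(X)/I_{\Delta_X}^2$ and the natural surjection \eqref{subsec:omegaCI2}, these assemble to a morphism $\Psi$, surjective because each $\Psi_K$ is. By Lemma \ref{lem:ffc} it then suffices to show $\Psi_K$ is injective for every finitely generated field $K/k$, which I would do by producing a left inverse.

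Using the presentation of $h_{0,\Nis}(\G_a^\#\ten{\MPST}\G_a^\#)(K)$ from Theorem \ref{thm:CI-te-fields}, set $\beta_K\bigl((a\otimes b)_{L/K}\bigr)=\Tr^{\cP^1}_{L/K}(a\otimes b)\in\cP^1(K)$ for a generator indexed by a finite extension $L/K$ and $a,b\in\G_a^\#(L)=L$, where $\Tr^{\cP^1}$ is the transfer of Lemma \ref{lem:transferP}. One checks $\beta_K$ kills the relation (R1) of Theorem \ref{thm:CI-te-fields} by the projection formula for the transfers on $\cP^1\cong\Omega^1_{-/\Z}\oplus\G_a$ (the computation being that of Lemma \ref{lem:transferP}), and kills (R2) as follows: given a regular projective curve $C/K$, effective divisors $D_i$ with $a_i\in(\tau_!\G_a^\#)(C,D_i)$, and $f\equiv1\bmod D_1+D_2$, one reduces by $\Z$-linearity and passage to a common finite cover — using (R1) — to $a_i$ given by rational functions $g_i$ on $C$ with $2\cdot(\text{polar divisor of }g_i)\le D_i$; under $\varphi$ one has $g_1\otimes g_2=g_1\,dg_2\oplus g_1g_2\in\cP^1(K(C))$, and a bookkeeping with \eqref{cor:DR01}, in which the extra pole created by $d$ is absorbed by the factor $2$ in the modulus of $\G_a^M=(\P^1,2(\infty))$, shows this section has motivic conductor $\le D_1+D_2$ along $C$, i.e.\ $g_1\otimes g_2\in\tau_!\wt{\cP^1}(C,D_1+D_2)$. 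Then $f\equiv1\bmod D_1+D_2$ forces
\[
\sum_{x}v_x(f)\,\Tr_{K(x)/K}\bigl(g_1(x)\otimes g_2(x)\bigr)=\Div_C(f)^*(g_1\otimes g_2)=(i_0^*-i_\infty^*)(f^t)^*(g_1\otimes g_2)=0,
\]
exactly as in the last display in the proof of Theorem \ref{thm:ten-rep}. Hence $\beta_K$ is well defined, and since $\beta_K(\Psi_K(a\otimes b))=\beta_K([a,b]_K)=a\otimes b$ while such elements generate $\cP^1(K)$, we get $\beta_K\circ\Psi_K=\id$; with the surjectivity of $\Psi_K$ and Lemma \ref{lem:ffc} this finishes the proof. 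The main obstacle is the verification of (R2): beyond the reduction to graphs it requires a precise matching of the modulus $D_i$ of $\G_a^\#$, which bounds a section by "twice its polar order", against the logarithmic pole filtration on absolute Kähler differentials, where differentiation costs one extra unit — and this is also the point where the hypothesis $\ch(k)\neq2$ enters, through Proposition \ref{prop:map-to-Ga-te-Ga}.
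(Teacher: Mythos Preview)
Your argument for the first isomorphism matches the paper's exactly.

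For the second isomorphism, you and the paper run the same strategy in opposite directions, and your direction creates difficulties you have not resolved. The paper constructs the map \emph{towards} $\cP^1$: for a prime correspondence $Z\in\Cor(X,\A^1_s\times\A^1_t)$ it sets $\theta(Z):=Z^*(s\otimes t)\in\cP^1(X)$, checks directly via the residue theorem that $\theta$ kills the $\bcube$-homotopy relation (reducing to a function field and writing $Z=\pi\circ f^t$ for $f\in K(C)^\times$ with $f\equiv 1\bmod D$), and thereby obtains a morphism $\bar\theta$ in $\PST$ automatically. Then on fields one checks $\bar\theta(K)\circ\Psi_K=\id$ with the surjective $\Psi_K$ of Proposition~\ref{prop:map-to-Ga-te-Ga}, and Lemma~\ref{lem:ffc} finishes. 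Note that your $\beta_K$ is nothing but $\bar\theta(K)$; the paper simply globalizes it first.

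By contrast, you try to globalize $\Psi$, and your sentence ``these assemble to a morphism $\Psi$'' hides real work. To get a morphism $\cP^1\to h_{0,\Nis}(\G_a^\#\te\G_a^\#)$ in $\RSC_\Nis$ you must show that the natural map $\G_a\ten{\NST}\G_a\to h_{0,\Nis}(\G_a^\#\te\G_a^\#)$ kills the kernel of $\G_a\ten{\NST}\G_a\surj\cP^1$; this is not just Proposition~\ref{prop:map-to-Ga-te-Ga} on fields, but requires knowing that the image of that kernel is a sub-$\NST$ lying in $\RSC_\Nis$ so that Lemma~\ref{lem:ffc} can be invoked. None of this is in your write-up, and it is strictly more than what the paper needs.

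Your verification of (R2) for $\beta_K$ also has gaps. First, the reduction of an arbitrary $a_i\in(\tau_!\G_a^\#)(C,D_i)$ to a graph of a rational function after a finite cover is plausible but not carried out; one must control how the modulus behaves under pullback and how the (R1)-type manipulations interact with the tensor. Second, and more seriously, your conductor bookkeeping invokes \eqref{cor:DR01} to bound $c^{\cP^1}_L(g_1\otimes g_2)$, but Remark~\ref{rmk:Ga} explicitly records that this formula fails for the $\G_a$-summand of $\cP^1$ in positive characteristic; so in char $p>2$ your argument that $g_1\otimes g_2\in\tau_!\wt{\cP^1}(C,D_1+D_2)$ does not go through as written. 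The paper sidesteps this entirely: the residue computation for $\bar\theta$ uses only elementary pole-order estimates for $\pi^*(s\,dt)\dlog f$ and $\pi^*(st)\dlog f$ on the curve, not the motivic conductor of $\cP^1$.

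In short: swap the direction of your global map. Construct $\bar\theta$ by pulling back the universal element $s\otimes t$ along correspondences (so transfer-compatibility is free), prove the factorization through $h_0$ by a residue argument, and then use your $\Psi_K$ only as a field-level one-sided inverse.
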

\begin{proof}
The first isomorphism follows from $h_0^{\bcube}(\G^M_a/0)\cong \G_a^\#$ and Lemma \ref{lem:hcube-ten}.
For a prime correspondence 
$Z\in \Cor(X, \P^1_t\setminus\{\infty\}\times \P^1_s\setminus\{\infty\})$ (here $t$, $s$ are the coordinates away from $\infty$)
we define
\[\theta(Z):= Z^* (s\otimes t)\in \cP^1(X),\]
where we view $s\otimes t\in \cP^1(\P^1_s\setminus \{\infty\}\times \P^1_s\setminus\{\infty\})$ and 
$Z^*$ denotes the transfer action. Clearly $\theta$ extends to a well-defined morphism in $\PST$
\[\theta: \frac{\uomega_!\Z_\tr(\G_a^M\te \G_a^M)}{(i_0\times \id)\uomega_!\Z_\tr(\G_a^M)
+ (\id\times i_0)\uomega_!\Z_\tr(\G_a^M)}\to \cP^1,\]
where $i_0: \Spec k\to (\P^1, 2\cdot \infty)$ is induced by the zero-section.
We want to show that $\theta$
factors via
\eq{thm:Ga-te-Ga-arbCh1}{\bar{\theta}: h_{0,\Nis}(\G_a^M/0\te \G_a^M/0)\to \cP^1.}
To this end it suffices to show that if  $Z\in \uMCor((\P^1_X, 1_X), \G^M_a\te\G^M_a)$ is a prime correspondence
then
\[\theta(Z\circ i_0 - Z\circ i_\infty)=0 \quad \text{in }\cP^1(X).\]
It suffices to show this in the case where $X=\Spec K$ is a function field
by \eqref{eq:open-emb-inj}.
Hence we can assume that the correspondence $Z$ factors via
\[Z= \pi\circ (f^t),\]
where $\pi: C\to \P^1\times \P^1$ is a morphism from a proper regular $K$-curve,  
$f\in K(C)^\times$ is a function satisfying $f\equiv 1$ mod $\pi^*(2\cdot \infty\times \P^1 + \P^1\times 2\cdot \infty):=D$,
and $f^t\subset \P^1\times C$ is the transpose of the graph of $f$.
Using the identification \eqref{para:P1} and \eqref{para:omega2} we obtain
\begin{align*}
\theta(Z\circ i_0 - Z\circ i_\infty)& = \Div_C(f)^* \pi^*(sdt\oplus st)\\
                                         & =\sum_{x\in C\setminus |D| } \Res_{C/K,x}(\pi^*(sdt)\dlog f)\\
                                          &\quad \oplus \sum_{x\in C\setminus |D| } \Res_{C/K,x}(\pi^*(st)\dlog f).
\end{align*}
Now $s\in \sO(\P^1_s\setminus \{\infty\})$ has pole divisor equal to $\infty$
thus $\pi^*s\in \sO(C\setminus|D|)$ has  pole  equal to $\pi^*(p_1^*\infty)$, where $p_1:\P^1_s\times \P^1_t\to \P^1_s$
is the first projection;
similar with $\pi^*t$. 
Thus if $x\in |D|$ and $n_i$ is the multiplicity of $x$ in $\pi^*(p_1^*\infty)$,
then $\pi^*(sdt)$ and $\pi^*(st)$ have a pole of order at most $n_1+n_2+1$;
on the other hand we have $f\in 1+\fm_x^{2n_1+2n_2}$. 
We find
\[\pi^*(sdt)\dlog f\in \Omega^2_{C,x}, \quad \pi^*(st)\dlog f\in \Omega^1_{C,x}, \quad \text{for }x\in |D|.\]
We obtain
\[
\theta(Z\circ i_0 - Z\circ i_\infty) = \sum_{x\in C} \Res_{C/K,x}((\pi^*(sdt)\oplus \pi^*(st))\dlog f)=0,
\]
by the reciprocity law for the residual symbol. 
This shows the existence of $\bar{\theta}$. 
By Lemma \ref{lem:ffc} it suffices to show that this is an isomorphism on function fields $K/k$.
It is direct to check that the composition
\[\cP^1(K)\xr{\Psi_K} h_{0,\Nis}(\G_a^\#\te \G_a^\#)(K)\cong 
h_{0,\Nis}(\G_a^M/0\te \G_a^M/0)(K)\xr{\bar{\theta}} \cP^1(K) \]
is the identity, where $\Psi_K$ is the map from Proposition \ref{prop:map-to-Ga-te-Ga}.
Since $\Psi_K$ is surjective it follows that $\bar{\theta}(K)$ is an isomorphism.
This completes the proof.
\end{proof}

\begin{thm}\label{thm:Ga-te-Ga}
Assume $k$ is of characteristic zero.
Then, in $\RSC_\Nis$,  we have isomorphisms
\[ h_{0,\Nis}(\G_a^M/0\te \G_a^M/0)\cong 
h_{0,\Nis}(\G_a^\# \te \G_a^\#)\cong h_{0,\Nis}(\wt{\G_a}\te \wt{\G_a})\cong \cP^1.
\]
\end{thm}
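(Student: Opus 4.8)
The plan is to reduce everything to Theorem~\ref{thm:Ga-te-Ga-arbCh} and Theorem~\ref{thm:ten-rep}. Since $\ch(k)=0\neq 2$, Theorem~\ref{thm:Ga-te-Ga-arbCh} already supplies the first, the second and the last isomorphism, so it remains to identify $h_{0,\Nis}(\wt{\G_a}\te\wt{\G_a})$ with $\cP^1$, compatibly with the natural surjections from $\G_a\ten{\NST}\G_a$ (see \eqref{subsec:omegaCI2}). First I would build a morphism $\Theta\colon h_{0,\Nis}(\wt{\G_a}\te\wt{\G_a})\to\cP^1$ in $\RSC_{\Nis}$ using Theorem~\ref{thm:ten-rep}, applied to the collection of maps $\phi_{X_1,X_2}\colon \G_a(X_1)\times\G_a(X_2)\to\cP^1(X_1\times X_2)$ which, under \eqref{para:P1}, is given by $(a_1,a_2)\mapsto p_1^*a_1\,d(p_2^*a_2)\oplus p_1^*a_1 p_2^*a_2$, where $p_i$ are the two projections. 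Conditions \ref{L1} and \ref{L2} are routine — the latter using Lemma~\ref{lem:transferP}, so that the transfer on $\cP^1$ in the first (resp.\ second) slot can be computed via $\varphi$ (resp.\ $\varphi'$) and then fed into the projection formula. The hard part will be \ref{L3}: since the motivic conductor of a direct sum is the maximum of the two conductors, it comes down to the inequalities
\[c^{\Omega^1}_L(a\,db)\le c^{\G_a}_L(a)+c^{\G_a}_L(b),\qquad c^{\G_a}_L(ab)\le c^{\G_a}_L(a)+c^{\G_a}_L(b)\]
for $L\in\Phi$ and $a,b\in L$, which I would verify by an elementary comparison of (logarithmic) pole orders against the explicit formula \eqref{cor:DR01} for the motivic conductor of $\Omega^n$ (recall $\G_a=\Omega^0$). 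This is precisely the point at which $\ch(k)=0$ is indispensable: in positive characteristic \eqref{cor:DR01} fails, and so does the conclusion, cf.\ Corollary~\ref{ga-ga-p}.

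With $\Theta$ in hand, Lemma~\ref{lem:ffc} reduces the theorem to showing that $\Theta(K)$ is bijective for every finitely generated field $K/k$. Surjectivity is immediate: by Theorem~\ref{thm:CI-te-fields} the group $h_0(\wt{\G_a}\te\wt{\G_a})(K)$ is generated by the classes $[a,b]_L$ of $a\otimes b$ with $a,b\in\wt{\G_a}(L)=L$ and $L/K$ finite, on which $\Theta(K)$ takes the value $\Tr_{L/K}(\overline{a\otimes b})\in\cP^1(K)=K\otimes_\Z K/I_{\Delta_K}^2$, and $\cP^1(K)$ is generated by the $\overline{a\otimes b}=\Theta(K)([a,b]_K)$. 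For injectivity I would produce a section $\Psi_K\colon \cP^1(K)\to h_0(\wt{\G_a}\te\wt{\G_a})(K)$, $\overline{a\otimes b}\mapsto[a,b]_K$, by repeating the construction of Proposition~\ref{prop:map-to-Ga-te-Ga} almost verbatim: its well-definedness amounts to the vanishing of the generators $(c\otimes 1)(1\otimes ab+ab\otimes 1-a\otimes b-b\otimes a)$ of $I_{\Delta_K}^2$, which follows from the very computation in that proof, since the rational functions $f,\ g_1=ct,\ g_2=t$ used there satisfy $g_i\in\wt{\G_a}(\P^1_K,2(\infty))$ (their pole at $\infty$ is simple) and $f\equiv 1\bmod 4(\infty)=2(\infty)+2(\infty)$, so that Theorem~\ref{thm:CI-te-fields}\ref{R2} applies unchanged. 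Surjectivity of $\Psi_K$ then follows from Corollary~\ref{cor:mod-genRSC} together with the transfer-compatibility $\Tr_{L/K}\circ\Psi_L=\Psi_K\circ\Tr_{L/K}$, proved exactly as \eqref{prop:map-to-Ga-te-Ga2} — note that in characteristic zero only the separable case occurs there, so it collapses to the projection formula for the transfers on $\cP^1$. Since $\Theta(K)\circ\Psi_K=\id_{\cP^1(K)}$ by construction, $\Psi_K$ is injective, hence bijective, hence inverse to $\Theta(K)$; and $\Theta$ is compatible with the natural surjections by construction, which yields the stated chain of isomorphisms.

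To summarise, the only genuinely new input is the verification of \ref{L3} for $\phi$, i.e.\ the conductor estimate $c^{\cP^1}_L(\phi_L(a,b))\le c^{\G_a}_L(a)+c^{\G_a}_L(b)$; the underlying calculation with pole orders is elementary, but it relies on the characteristic-zero description \eqref{cor:DR01} and is the reason the theorem does not extend to positive characteristic. Everything else — the well-definedness and surjectivity of $\Psi_K$, and the residue/local-symbol reciprocity implicit in them — is carried over with only cosmetic changes from the proofs of Proposition~\ref{prop:map-to-Ga-te-Ga} and Theorem~\ref{thm:Ga-te-Ga-arbCh}.
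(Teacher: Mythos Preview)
Your proposal is correct and follows essentially the same approach as the paper: construct the map $h_{0,\Nis}(\wt{\G_a}\te\wt{\G_a})\to\cP^1$ via Theorem~\ref{thm:ten-rep} by verifying \ref{L1}--\ref{L3} for the obvious bilinear pairing (with \ref{L3} reducing to \eqref{cor:DR01}, which is where $\ch(k)=0$ enters), then check bijectivity on function fields using $\Psi_K$ and Lemma~\ref{lem:ffc}. The only difference is organizational: the paper does not reprove Proposition~\ref{prop:map-to-Ga-te-Ga} for $\wt{\G_a}$, but instead composes the new map with the natural surjection $h_{0,\Nis}(\G_a^\#\te\G_a^\#)\surj h_{0,\Nis}(\wt{\G_a}\te\wt{\G_a})$ and observes that the \emph{existing} $\Psi_K$ is inverse to the composite on function fields --- so both arrows in the factorization are isomorphisms at once. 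Your route works too (and your observation that $g_1,g_2\in\wt{\G_a}(\P^1_K,2(\infty))$ is exactly what is needed), it just repeats a computation that the paper sidesteps; your use of Lemma~\ref{lem:transferP} to handle \ref{L2} by switching between $\varphi$ and $\varphi'$ is a nice shortcut compared with the paper's direct verification via $g_*d=dg_*$.
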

\begin{proof}
The first isomorphism follows from $h_0^{\bcube}(\G^M_a/0)\cong \G_a^\#$ and Lemma \ref{lem:hcube-ten}.
For $X, Y\in \Sm$ define
\[\phi_{X,Y}: \G_a(X)\times \G_a(Y)\to \cP^1(X\times Y)\]
by
\[\phi_{X,Y}(a,b):= p^*(a)\otimes q^*(b) \text{ mod } I_{\Delta_{X\times Y}}^2,\]
where $p: X\times Y \to X$ and $q: X\times Y\to Y$ are the projections
and set $\phi:=\{\phi_{X,Y}\}_{X,Y}$. Clearly $\phi$ satisfies \ref{L1} from Definition \ref{defn:Lin}.
For \ref{L2} consider $f\in \Cor(X', X)$, $a\in\G_a(X) $, $b\in \G_a(Y)$. We have to show
\eq{thm:Ga-te-Ga1}{(f\times \id_Y)^*\phi_{X,Y}(a,b)= \phi_{X', Y}(f^*a, b) \quad \text{in } \cP^1(X'\times Y).}
It suffices to show this after shrinking $X'$ arbitrarily around the generic points of $X'$ by \eqref{eq:open-emb-inj}.
Thus we can assume that $f$ factors as $f=h\circ g^t$, 
where $h: Z\to X$ is a morphism in $\Sm$ and $g: Z\to X'$ is finite and surjective and $g^t\subset X'\times Z$
is the transpose of the graph. Clearly \eqref{thm:Ga-te-Ga1} holds for $f=h$.
Thus we can assume $f=g^t$ with $g: X\to X'$ finite and surjective.
Let $p':X'\times Y\to X'$ and $q': X'\times Y\to Y$ be the projections and set $g_Y:= g\times \id_Y$.
Note 
\eq{thm:Ga-te-Ga2}{ q= q'\circ g_Y.}
Recall that the transfer action is induced by \eqref{para:P1}. 
We compute
\begin{align*}
(f\times \id_Y)^*\phi_{X,Y}(a,b) & =g_{Y*} \phi_{X,Y}(a,b) \\
                                            &= g_{Y*}( p^*(a)dq^*(b))\\
                                           &\quad \oplus g_{Y*}(p^*(a)\cdot q^*(b)), &
\text{via }\eqref{para:P1}, \\
                           & =g_{Y*}( p^*(a)\cdot  g_{Y}^*d{q'}^*(b))\\
                           &\quad  \oplus    g_{Y*}(p^*(a)\cdot g_Y^*{q'}^*(b)),& \text{by }\eqref{thm:Ga-te-Ga2},\\
                          & = (g_{Y*}p^*(a))\cdot  d{q'}^*(b)\\
                             & \quad \oplus    (g_{Y*}p^*(a))\cdot {q'}^*(b), & \text{proj. formula},\\
                         & ={p'}^* (g_*a) \cdot d {q'}^*b \oplus {p'}^* (g_*a) \cdot {q'}^*b\\
                         &= \phi_{X', Y}(g_*a, b), & \text{via }\eqref{para:P1},\\
                         &= \phi_{X',Y}(f^*a, b).
\end{align*}
This shows \eqref{thm:Ga-te-Ga1}. The corresponding formula with $f\in \Cor(Y',Y)$ involving
$(\id_X\times f)^*$ is proved similarly. (Use that the differential $d: \sO\to \Omega^1$
 is a morphism in $\PST$ hence $g_*d=d g_*$.) Thus $\phi$ satisfies \ref{L2}.
Let $L\in \Phi$ and $a,b\in L$. For \ref{L3} we have to show
(see \ref{subsec:not-cond}, \ref{subsec:cond} for notation)
\eq{thm:Ga-te-Ga2.5}{c^{\cP^1}_L(\phi_L(a,b))\le c_L^{\G_a}(a)+c_L^{\G_a}(b).}
For $L\in \Phi$ and $a\in \Omega^n_L$ we have by \cite[Theorem 6.4]{RS} 
\eq{cor:DR01}{ c^{\Omega^n}_L(a)\le r \Longleftrightarrow a\in \begin{cases}
\Omega^n_{\cO_L}, &\text{if } r=0,\\
\frac{1}{t^{r-1}}\cdot \Omega^{n}_{\cO_L}(\log), & \text{if }r\ge 1.
\end{cases}}
By \eqref{para:P1} we have 
\[c^{\cP^1}_L(\phi_L(a,b))=\max\{c^{\Omega^1}_L(adb), c_L^{\G_a}(a\cdot b)\}.\]
Thus \ref{L3} follows from \eqref{cor:DR01} (with $n=0,1$).
By Theorem \ref{thm:ten-rep} we obtain a morphism in $\RSC_{\Nis}$
\[\phi: h_{0,\Nis}(\wt{\G_a}\te \wt{\G_a})\to \cP^1.\]
Composing with the natural surjection $h_{0,\Nis}(\G_a^\#\te \wt\G_a^\#)\surj h_{0,\Nis}(\wt{\G_a}\te \wt{\G_a})$
we obtain the morphism
\eq{thm:Ga-te-Ga3}{h_{0,\Nis}(\G_a^\#\te \G_a^\#)\to \cP^1.}
By Lemma \ref{lem:ffc} it suffices to show that this is an isomorphism on function fields $K/k$.
On such a $K$ the morphism $\Psi_K$ from Proposition \ref{prop:map-to-Ga-te-Ga}
is clearly inverse to \eqref{thm:Ga-te-Ga3}; this proves the theorem.
\end{proof}

\begin{remark}\label{rmk:Ga}
The isomorphism $h_{0,\Nis}(\wt{\G_a}\te \wt{\G_a})\cong \cP^1$ from
Theorem \ref{thm:Ga-te-Ga} does not generalize to positive characteristic, see Corollary \ref{ga-ga-p} below.
One reason is that formula \eqref{cor:DR01} 
does not hold in this case.
Indeed, if ${\rm char}(k)=p>0$, then  for $L\in \Phi$ with local parameter $t$ and  $r\ge 2$
\[\wt{\G_a}(\cO_L,\fm_L^{-r})=\begin{cases}
\sum_{j\ge 0}F^j(\frac{1}{t^{r-1}}\cdot\cO_L),&\text{if } (r,p)=1,\\
\sum_{j\ge 0}F^j(\frac{1}{t^r}\cdot \cO_L),&\text{if } p\mid r,
\end{cases}
\]
where $F$ denotes the Frobenius, see \cite[Theorem 7.15]{RS}. 
Hence in positive characteristic 
\eqref{cor:DR01} does not hold.
(For example, $F(1/t^{r-1})\cdot 1/t^{s-1}\not\in \wt{\G_a}(\cO_{L}, \fm_L^{-(r+s)})$ in general.)
\end{remark}

\begin{cor}\label{ga-ga-p}
Suppose the characteristic of $k$ is $p \ge 3$ and let $K$ be a perfect $k$-field.
Then  $h_{0,\Nis}(\wt{\G_a}\te\wt{\G_a})(K)=0$. 
\end{cor}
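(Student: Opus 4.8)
The plan is to pin $F:=h_{0,\Nis}(\wt{\G_a}\te\wt{\G_a})$, over a perfect field, down to a quotient of $\cP^1$ and then collapse it using the Frobenius endomorphism of $\G_a$ — the feature that disappears in characteristic $0$. For a $k$-field $K$ the group $F(K)$ is the one presented in Theorem~\ref{thm:CI-te-fields} (Nisnevich sheafification changes nothing over a field).

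First I would assemble the setup. The natural surjection $h_{0,\Nis}(\G_a^\#\te\G_a^\#)\surj F$ (cf.\ the proof of Theorem~\ref{thm:Ga-te-Ga}; it is compatible, through \eqref{subsec:omegaCI2}, with the canonical surjections from $\G_a\ten{\NST}\G_a$), combined with the isomorphism $h_{0,\Nis}(\G_a^\#\te\G_a^\#)\cong\cP^1$ of Theorem~\ref{thm:Ga-te-Ga-arbCh} (available since $\ch k=p\neq2$), realizes $F(K)$ as a quotient of $\cP^1(K)=(K\otimes_\Z K)/I_{\Delta_K}^2$ under a surjection carrying $a\otimes b$ to $[a\otimes b]_K$. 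Since $K$ is perfect, $\Omega^1_{K/\Z}=\Omega^1_{K/\F_p}=0$, so $\varphi$ of \eqref{para:P1} gives $\cP^1(K)\cong\G_a(K)=K$ via $a\otimes b\mapsto ab$; thus $F(K)$ is a quotient of $(K,+)$ under a homomorphism $\eta_K$ with $\eta_K(ab)=[a\otimes b]_K$, and in particular $F(K)$ is generated by the "degenerate" brackets $[c\otimes1]_K$, $c\in K$.

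The crux is the Frobenius. The absolute Frobenius $\operatorname{Frob}_{\G_a}\colon\G_a\to\G_a$, $f\mapsto f^p$, is a morphism in $\RSC_\Nis$: it respects transfers because $\Tr(f^p)=\Tr(f)^p$ over any finite extension in characteristic $p$. Applying $\omega^\CI$ and then $h_{0,\Nis}(\id_{\wt{\G_a}}\te(-))$ produces an endomorphism $\psi$ of $F$ whose effect on sections over a $k$-field $L$ is $[a\otimes b]_L\mapsto[a\otimes b^p]_L$ (since $\operatorname{Frob}$ acts on $\omega_!\wt{\G_a}(\Spec L)=\G_a(L)=L$ by $b\mapsto b^p$). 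Over a perfect $K$, $\psi$ fixes every generator, as $[c\otimes1^p]_K=[c\otimes1]_K$; being additive, $\psi$ must then be $\id_{F(K)}$. Hence $[a\otimes b]_K=[a\otimes b^p]_K$ for all $a,b\in K$, i.e.\ $\eta_K(a(b^p-b))=0$. If $K\neq\F_p$ I would choose $b_0$ with $d_0:=b_0^p-b_0\neq0$ (the Frobenius-fixed subfield of a field of characteristic $p$ is $\F_p$); then $\{ad_0:a\in K\}=K\subseteq\ker\eta_K$, so $F(K)=0$.

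It remains to treat $K=\F_p$ (which forces $k=\F_p$). Here $F(\F_p)$ is a quotient of $\cP^1(\F_p)=\F_p$, generated by $[1\otimes1]_{\F_p}$. Choosing $\beta\in\F_{p^2}$ with $\Tr_{\F_{p^2}/\F_p}(\beta)=1$, relation (R1) of Theorem~\ref{thm:CI-te-fields} (for the tower $\F_{p^2}/\F_p/\F_p$) gives $[1\otimes1]_{\F_p}=[1\otimes\beta]_{\F_{p^2}}$; by Corollary~\ref{cor:mod-genRSC} this element is $\Tr_{\F_{p^2}/\F_p}$ applied to an element of $F(\F_{p^2})$, and $F(\F_{p^2})=0$ by the previous paragraph. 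Hence $F(\F_p)=0$ as well. The only genuinely delicate step is the use of $\psi$: the setup in the second paragraph exists precisely so that over a perfect field every element of $F(K)$ is a sum of brackets $[c\otimes1]_K$, on which the single Frobenius twist $\id\te\operatorname{Frob}$ cannot act nontrivially, and that rigidity is what forces the vanishing. (In characteristic $0$ no such endomorphism exists, consistently with $F(K)=\cP^1(K)\neq0$ from Theorem~\ref{thm:Ga-te-Ga}.)
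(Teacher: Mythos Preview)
Your proof is correct, and it takes a genuinely different route from the paper's.

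The paper argues by an explicit relation of type \ref{R2}: for $a\in K$ it writes down functions $g_1=at$, $g_2=t^p$, $f=(t^{p+1}-1)/t^{p+1}$ on $\P^1_K$, checks that the resulting element $\alpha_a$ equals $[a,1]_K$ in $h_{0,\Nis}(\G_a^\#\te\G_a^\#)(K)\cong K$, and then observes that $g_2=t^p\in\wt{\G_a}(\P^1_K,2\infty)$ (Remark~\ref{rmk:Ga}) while $f\equiv 1\bmod 4(\infty)$, so the \emph{same} relation kills $\alpha_a$ in $h_{0,\Nis}(\wt{\G_a}\te\wt{\G_a})(K)$. The whole computation is a single uniform one, valid also for $K=\F_p$.

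Your approach replaces this explicit cycle-theoretic calculation with a functorial one: the Frobenius endomorphism of $\G_a$ (which is a morphism in $\PST$ since $\Tr_{B/A}(b^p)=\Tr_{B/A}(b)^p$ over any $\F_p$-algebra --- this is the polynomial identity $\operatorname{tr}(M^p)=(\operatorname{tr} M)^p$ for matrices in characteristic $p$) induces an endomorphism $\psi$ of $F$ sending $[a,b]\mapsto[a,b^p]$. Over a perfect $K$ the generators $[c,1]_K$ are fixed by $\psi$, whence $\psi=\id$ and $\eta_K(a(b^p-b))=0$ for all $a,b$; choosing $b_0\notin\F_p$ kills everything. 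What you trade for this elegance is the need for a separate argument when $K=\F_p$ (handled via trace from $\F_{p^2}$), which the paper's explicit computation avoids. Conversely, your argument bypasses entirely the conductor computation of Remark~\ref{rmk:Ga} and makes transparent the conceptual reason for the vanishing --- the very Frobenius action mentioned in the discussion after Theorem~\ref{intro-thm2} in the introduction.
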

\begin{proof}
Since $\Omega^1_K=0$, for perfect $K$, Proposition \ref{prop:map-to-Ga-te-Ga} yields a surjection
\eq{ga-ga-p1}{K\xr{\simeq}  h_{0,\Nis}(\G_a^\#\te \G_a^\#)(K)
                          \surj h_{0,\Nis}(\wt{\G_a}\te\wt{\G_a})(K), \quad a\mapsto [a,1]_K.}
Take $a \in K$ and
consider the following functions on $\P^1_K$
\begin{align*}
g_1=at,\ 
g_2=t^p,\
f=\frac{t^{p+1}-1}{t^{p+1}} \in K(t).
\end{align*}
Set
\[
\alpha_a := \sum_{x\in \P^1_K \setminus\{\infty\}}v_x(f) \Tr_{K(x)/K}([g_1(x), g_2(x)]_{K(x)})
\in h_{0,\Nis}(\G_a^\#\te\G_a^\#)(K).
\]
Let $x_1,\ldots, x_n\in\P^1_K$ be the points corresponding to the irreducible factors of $t^{p+1}-1$ and denote by
$K_i=K(x_i)$ their residue fields.
Then 
\[\alpha_a= \sum_{i=1}^n \Tr_{K_i/K}([a t(x_i), t(x_i)^p]_{K_i}).\]
Under the map $h_{0,\Nis}(\G_a^\#\te\G_a^\#)(K)\to K$ induced from the isomorphism in Theorem \ref{thm:Ga-te-Ga-arbCh}, 
$\alpha_a$ is sent to
\[\sum_{i=1}^n \Tr_{K_i/K}(a t(x_i)^{p+1})= \sum_{i=1}^n [K_i:K] a=(p+1)a=a.\]
Since this last map is the inverse of the first map in \eqref{ga-ga-p1} we obtain 
\[[a,1]_K= \alpha_a\quad \text{in }h_{0,\Nis}(\G_a^\#\te\G_a^\#)(K).\]
On the other hand,
since $f\equiv 1 \mod 4(\infty)$
and $g_1, g_2 \in \wt{\G_a}(\P^1_K,2(\infty))$, by Remark \ref{rmk:Ga},
the image of $\alpha_a$ in 
$h_{0,\Nis}(\wt{\G_a}\te \wt{\G_a})(K)$ is zero by Theorem \ref{thm:CI-te-fields}.
This proves the statement.
\end{proof}

\subsection{Tensors of the additive - with the multiplicative group}

\begin{prop}\label{prop:DR-to-T}
Assume $\ch(k)\neq 2,3,5$ and let $K$ be a field containing $k$.
Then the morphism 
\eq{prop:DR-to-T0}{
\theta: \Omega^n_K \to h_{0,\Nis}(\G_a^\# \te (\G_m^\#)^{\te n})(K), }
\[\quad a\dlog b_1\cdots \dlog b_n\mapsto [a, b_1,\ldots, b_n]_K,
\]
is well-defined and surjective.
\end{prop}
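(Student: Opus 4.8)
The plan is to follow the template of Proposition~\ref{prop:map-to-Ga-te-Ga}: first prove that $\theta$ is well defined by checking it annihilates a presentation of $\Omega^n_K$, and then deduce surjectivity from Theorem~\ref{thm:CI-te-fields} together with a compatibility with transfers. I will use the elementary presentation of the absolute Kähler differentials: $\Omega^n_K=\Omega^n_{K/\Z}$ is generated by the symbols $a\,\dlog b_1\cdots\dlog b_n$ with $a\in K$ and $b_i\in K^\times$ (these generate because $dx=x\,\dlog x$ and $d0=0$), subject to: (i) additivity in $a$; (ii) $\dlog$-additivity in each slot, $\dlog(b_ib_i')=\dlog b_i+\dlog b_i'$; (iii) the alternating relation, $a\,\dlog b_1\cdots\dlog b_n=0$ whenever $b_i=b_j$ for some $i\neq j$; and (iv) the Leibniz relation $(a+b)\,\dlog(a+b)\wedge\eta=a\,\dlog a\wedge\eta+b\,\dlog b\wedge\eta$ for $a,b,a+b\in K^\times$ and $\eta=\dlog c_1\wedge\cdots\wedge\dlog c_{n-1}$. (This is clear for $n=1$ and follows for general $n$ by passing to $n$-th exterior powers over $K$.) Since $\G_a^\#(L)=L$ is additive, $\G_m^\#(L)=L^\times$ is multiplicative, and the tensor product in Theorem~\ref{thm:CI-te-fields} is over $\Z$, the assignment $a\otimes b_1\otimes\cdots\otimes b_n\mapsto[a,b_1,\dots,b_n]_K$ automatically respects~(i) and~(ii); so it remains to treat~(iv) and~(iii).

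The Leibniz relation~(iv) is the heart of the matter. Fixing $a,b,a+b\in K^\times$ and $c_1,\dots,c_{n-1}\in K^\times$, I would produce, imitating the construction in Proposition~\ref{prop:map-to-Ga-te-Ga} and Proposition~\ref{prop:Milnor-K}, a rational function $f\in K(t)$ and functions $g_0,g_1$ on $\P^1_K$ with $g_0^\#\in\G_a^\#(\P^1_K,D_0)$, $g_1^\#\in\G_m^\#(\P^1_K,D_1)$ and $f\equiv1\bmod D_0+D_1$, such that evaluating relation~(R2) of Theorem~\ref{thm:CI-te-fields} at the family $g_0,g_1,c_1,\dots,c_{n-1}$ and rewriting the local contributions $[g_0(x),g_1(x),c_1,\dots,c_{n-1}]_{K(x)}$ produces the identity
\[
[a+b,a+b,c_1,\dots,c_{n-1}]_K=[a,a,c_1,\dots,c_{n-1}]_K+[b,b,c_1,\dots,c_{n-1}]_K
\]
(possibly after passing to an auxiliary extension and cancelling a universal nonzero integer by a torsion argument as in the proof of Proposition~\ref{prop:Milnor-K}). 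The hypothesis $\ch(k)\neq2,3,5$ should enter precisely here, in order that $f$ (a cyclotomic-type function, with an extra factor interpolating the additive data $a$, $b$, $a+b$) be available over $K$ and that the coefficient bookkeeping go through; the restrictions on $2$ and $3$ are as in Proposition~\ref{prop:Milnor-K} and the extra prime $5$ comes from the additive slot. \emph{Identifying this $f$ explicitly and checking the bookkeeping is the step I expect to be the main obstacle.}

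For the alternating relation~(iii): by Corollary~\ref{cor:assocII} (grouping the $n$ copies of $\G_m^\#$ into a single factor) together with Proposition~\ref{prop:Milnor-K}, the class $[a,b_1,\dots,b_n]_K$ is $\Z$-bilinear in $a\in K$ and in the Milnor symbol $\{b_1,\dots,b_n\}\in\cK^M_n(K)$; in particular the $\G_m^\#$-slots obey all Milnor relations, so $[a,\dots,b,\dots,b,\dots]_K=\pm[a,\dots,b,-1,\dots]_K$ and~(iii) reduces to the vanishing $[a,b_1,\dots,b_n]_K=0$ whenever some $b_i=-1$. But once~(iv) is known, this vanishing is formal in characteristic $\neq2$ (it is the same manipulation that proves $\dlog(-1)=0$ in $\Omega^1_{K/\Z}$, using~(iv) for the pairs $(1,1)$ and $(1,-2)$). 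Hence $\theta$ is well defined.

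Finally, surjectivity. The image of $\theta$ is a subgroup containing every $[a,b_1,\dots,b_n]_K$. By Theorem~\ref{thm:CI-te-fields}, $h_0(\G_a^\#\te(\G_m^\#)^{\te n})(K)$ is generated by the images of $L\otimes_\Z(L^\times)^{\otimes n}$ over finite field extensions $L/K$, and the generator $[a,b_1,\dots,b_n]_L$, viewed in $h_0(\cdots)(K)$, equals $\Tr_{L/K}$ of the corresponding class in $h_0(\cdots)(L)$, which by the well-definedness just proved (applied to $L$) is $\Tr_{L/K}\big(\theta_L(a\,\dlog b_1\cdots\dlog b_n)\big)$. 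Thus it suffices to prove $\Tr_{L/K}\circ\theta_L=\theta_K\circ\Tr^{\Omega}_{L/K}$, where $\Tr^{\Omega}_{L/K}\colon\Omega^n_L\to\Omega^n_K$ is the trace on Kähler differentials. By transitivity of traces one reduces to $L/K$ separable or purely inseparable of degree $p$. If $L/K$ is separable then $\Omega^n_L=\Omega^n_K\otimes_KL$, so one may take $b_i\in K^\times$, and the identity follows from relation~(R1) of Theorem~\ref{thm:CI-te-fields} and the projection formula for $\Tr$ on $\G_a^\#$. If $L=K(x)$ with $x^p\in K$, the additional generators carry one factor $\dlog x$; their trace on the differential side is computed exactly as in the proof of Lemma~\ref{lem:transferP}, while on the $h_0$-side it is handled by one further reciprocity computation with~(R2) of Theorem~\ref{thm:CI-te-fields} modelled on~\eqref{prop:map-to-Ga-te-Ga3} (again using $\ch(k)\neq2$). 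This yields surjectivity and completes the proof.
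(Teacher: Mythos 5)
Your overall plan coincides with the paper's: use a presentation of $\Omega^n_K$, kill the Leibniz-type relation by a carefully chosen (R2)-relation from Theorem~\ref{thm:CI-te-fields}, kill the alternating relation by a Milnor $K$-theory observation, and obtain surjectivity from a trace-compatibility argument split into the separable and purely inseparable cases. But the proposal has a genuine gap and a flawed step.

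The gap is the one you yourself flag: you never produce the rational function that realises the Leibniz relation, and this is the core content of the proof. The paper uses the two-relation presentation from Bloch--Esnault and handles the Leibniz-type relation
$[ca,a,\boldsymbol{b}]_K+[c(1-a),1-a,\boldsymbol{b}]_K=0$
by means of the function
$f=\dfrac{(t-a)(t-(1-a))(t+1)(t^2+a^2-a+1)}{t^5+a(1-a)(a^2-a+1)}$,
with $g_0=ct$, $g_1=t$, which satisfies $f\equiv 1 \bmod (0)+3(\infty)$; the primes $2,3,5$ come in because the auxiliary roots live in an extension of degree divisible only by these, and one needs to divide by them after taking norms. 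Writing down such an $f$, verifying the modulus condition, and doing the residue bookkeeping is precisely where the technical work is; without it your argument is only a plan. (Note also that the paper's relation carries an arbitrary coefficient $c$ in the $\G_a$-slot, whereas your relation~(iv) has coefficient equal to the $\dlog$-argument; it is not obvious your list~(i)--(iv) presents $\Omega^n_{K/\Z}$ without an extra argument to recover the $c$-version.)

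The flawed step is your treatment of the alternating relation~(iii). You invoke Corollary~\ref{cor:assocII} to claim that $[a,b_1,\dots,b_n]_K$ ``is $\Z$-bilinear in $a$ and in the Milnor symbol $\{b_1,\dots,b_n\}\in\cK^M_n(K)$'', and deduce that the $\G_m^\#$-slots obey the Milnor relations. But the surjection provided by Corollary~\ref{cor:assocII} runs
$h_{0,\Nis}(\wt{\G_a})\ten{\NST}h_{0,\Nis}(\wt{\G_m}^{\te n})\surj h_{0,\Nis}(\wt{\G_a}\te\wt{\G_m}^{\te n})$,
i.e.\ onto a \emph{further quotient} of $h_{0,\Nis}(\G_a^\#\te(\G_m^\#)^{\te n})$, and relations that hold in the target of a surjection do not lift to its source. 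So this argument does not show vanishing in the group you actually need. The paper's route is different and does work: by Proposition~\ref{prop:Milnor-K} the symbol $[b_1,\dots,b_n]_K$ with a repeated entry is $2$-torsion already in $h_0((\G_m^\#)^{\te n})(K)\cong K^M_n(K)$, and because a constant $a/2\in\G_a^\#$ tensors relations (R1), (R2) to relations (R1), (R2), the element $[(a/2),b_1,\dots,b_n]_K$ is $2$-torsion in $h_0(\G_a^\#\te(\G_m^\#)^{\te n})(K)$, whence $[a,b_1,\dots,b_n]_K=2\cdot[(a/2),b_1,\dots,b_n]_K=0$. No reduction to $\dlog(-1)=0$ is needed and no claim about the $\G_m^\#$-slots satisfying Steinberg is made.

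Your surjectivity sketch is essentially correct and matches the paper; there is a genuine reciprocity computation (analogous to~\eqref{prop:map-to-Ga-te-Ga3}) to do in the purely inseparable case, but you have correctly identified where it is needed.
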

\begin{proof}
First we show the well-definedness.
We employ the same strategy of Hiranouchi \cite{Hira} and Ivorra-R\"ulling \cite{IR},
but with different  functions, since the modulus condition is different.
According to \cite[Lemma 4.1]{BE03},
our task is to show that the elements of the forms 
\begin{enumerate}
\item[(i)]
$
[ca, a, b_2, \cdots, b_n]_K + [c(1-a), (1-a), b_2, \cdots, b_n]_K\ \  
$
\item[(ii)]
$
[a, b_1,\cdots,b_n]_K \quad 
\text{($b_i=b_j$ for some $i<j$)}
$
\end{enumerate}
are zero in $h_{0,\Nis}(\G_a^\# \te (\G_m^\#)^{\te n})(K)$,
where $a\in K\setminus\{0,1\}, b_i\in K^\times, c\in K$.
We will use the notations introduced in \S \ref{not:GmM}.

(i)
Set $\boldsymbol{b}:=b_2,\ldots, b_n$.
If $a$ is a root of $t^2-t+1=0$, then so is $1-a$; furthermore, $a^3=-1$ and hence
\[[ca, a, \boldsymbol{b}]_K=0=[c(1-a),(1-a), \boldsymbol{b}]_K.\]
Given $a\in K\setminus\{0,1\}$ with $a^2-a+1\neq 0$ and $b_i\in K^\times$ for $i=2,\dots,n$, 
we consider the following rational functions
\begin{align*}
&f=\frac{(t-a)(t-(1-a))(t+1)(t^2+a^2-a+1)}{t^5+a(1-a)(a^2-a+1)},
\\
&g_0=ct,\qquad g_1=t, \qquad g_i=b_i ~(i=2,\dots,n)
\end{align*}
so that
\begin{align*}
&g_0^\# \in \G_a^\#(\P^1,2(\infty)),\\
&g_1^\# \in \G_m^\#(\P^1,(0)+(\infty)),\\
&g_i^\# \in \G_m^\#(\P^1,\emptyset)\quad (i=2,\dots,n).
\end{align*}
One can easily check that $f\equiv 1\mod (0)+3(\infty)$.
Therefore (R2) shows that
\eq{prop:DR-to-T1}{
\sum_{x\in \P^1\setminus\{0,1,\infty\}}v_x(f) \Tr_{K(x)/K}([g_0(x), g_1(x),\boldsymbol{b}]_{K(x)})=0.
}
We write $\alpha_i, \beta_j$ for a root of $t^2+a^2-a+1=0$ and 
$t^5+a(1-a)(a^2-a+1)=0$, respectively.
Put $K'=K(\alpha_i,\beta_j)$.
By the assumption on the characteristic, $[K':K]$ is prime to $\ch(K)$, in particular $K'/K$ is separable.
Denote by $\pi: \Spec K'\to \Spec K$ the induced map.
For $x\in \P^1_K$ with $v_x(f)\neq 0$ we have $\pi^*\Tr_{K(x)/K}= \sum_\sigma \sigma$,
where $\sigma: K(x)\inj K'$ runs through all the $K$-embeddings.
Thus $\pi^*\eqref{prop:DR-to-T1}$ yields the following equality 
in $h_{0,\Nis}(\G_a^\# \te (\G_m^\#)^{\otimes n})(K')$  (we write $[-]$ instead of $[-]_{K'}$)
\begin{align*}
0 &=[ca, a,\boldsymbol{b}]
+[c(1-a), 1-a,\boldsymbol{b}]\\
&\quad
+[-c, -1,\boldsymbol{b}]+\sum_i [c\alpha_i, \alpha_i,\boldsymbol{b}]
-\sum_j [c\beta_j, \beta_j,\boldsymbol{b}]\\
&=[ca, a,\boldsymbol{b}]
+[c(1-a), 1-a,\boldsymbol{b}]
+\sum_i \left[\cfrac{c\alpha_i}2,\alpha_i^2,\boldsymbol{b}\right]
-\sum_j\left[\frac{c\beta_j}5, \beta_j^5,\boldsymbol{b}\right]\\
&=[ca, a,\boldsymbol{b}]
+[c(1-a), 1-a,\boldsymbol{b}]
+\left[\cfrac{c}2\cdot \sum_i \alpha_i , -a^2+a-1,\boldsymbol{b}\right]\\
&\quad
-\left[\frac{c}5 \cdot \sum_j \beta_j, a(1-a)(-a^2+a-1), \boldsymbol{b}\right]\\
&=[ca, a,\boldsymbol{b}]_{K'}+[c(1-a), 1-a,\boldsymbol{b}]_{K'}.
\end{align*}
Since $[K':K]$ is prime to $\ch(K)$
taking the norm yields the desired vanishing in $h_{0,\Nis}(\G_a^\# \te (\G_m^\#)^{\otimes n})(K)$.

(ii)
We know that the element of the form
\[
[b_1,\cdots, b_n]_K \qquad \text{with $b_i=b_j$ for some $i<j$}
\]
is 2-torsion in $h_0((\G_m^\#)^{\otimes n})(K)\cong K_n^M(K)$ (see Proposition \ref{prop:Milnor-K}).
Thus the element $[(a/2), b_1,\cdots, b_n]_{K}$ 
is also 2-torsion in $h_0(\G_a^\# \te (\G_m^\#)^{\otimes n})(K)$,
and we have
\[
[a, b_1, \cdots, b_r]=2\cdot [\frac{a}2, b_1,\cdots, b_r]_K=0.
\]
Hence we have a well-defined map $\theta$ as in the statement. 
The surjectivity of $\theta$ follows once we know that it is compatible with traces, i.e.,
it suffices to show
\eq{prop:DR-to-T2}{\Tr_{K'/K}\circ \theta_{K'}= \theta_K\circ \Tr_{K'/K},}
for a finite field extension $K'/K$. If $K'/K$ is separable,
this follows from the fact that $\Tr_{K'/K}$ satisfies a projection formula
on both sides of \eqref{prop:DR-to-T0} (cf. the proof of Proposition \ref{prop:map-to-Ga-te-Ga}).
If $K'=K[x]$ for some $x\in K'$ with $x^p=:y\in K$, where $p=\ch(K)$,
then we can write any element in $\Omega^1_{K'}$ as a sum of elements of the form
\[\beta_i=a x^i \dlog \boldsymbol{b}, \quad \gamma_i=ax^i\dlog x\dlog\boldsymbol{c}, \quad i=0,\ldots, p-1,\]
where $a\in K$ and $b_i, c_j\in K^\times$.  The equality \eqref{prop:DR-to-T2} holds on the elements $\beta_i$ 
again by the projection formula. 
We have $\Tr_{K'/K}(\gamma_0)=a \dlog y\dlog\boldsymbol{c}$, cf. \eqref{lem:transferP1}.
Thus on $\gamma_0$ the equality \eqref{prop:DR-to-T2} translates into
\[\Tr_{K'/K}([a, x, \boldsymbol{c}]_{K'})= [a, y, \boldsymbol{c}]_K,\]
which  holds by the projection formula. 
For $i\in [1,p-1]$, we can write $\gamma_i= \frac{1}{i} ad (x^i)\dlog\boldsymbol{c}$.
Thus $\Tr_{K'/K}(\gamma_i)=0$ and it remains to show
\eq{prop:DR-to-T3}{\Tr_{K'/K}([ax^i, x^i, \boldsymbol{c}]_{K'})=0\quad \text{in } h_0(\G_a^\#\te (\G_m^\#)^{\te n})(K).}
To this end define the functions
\[f= \frac{(t^p-y^i)^2}{t^{2p}+y^{2i}}, \quad g_0:=at, \quad g_1:=t,\quad g_j:=c_j,\quad j\ge 2.\]
Then $g_i^\#$ are as in (i) at the beginning of this proof and 
$f\equiv 1$ mod $(0)+3(\infty)$. Let $\e \in \bar{K}$ with $\e^2=-1$ and set $K_1:= K[\e]$ and 
$K_1':= K_1[x]= K_1\otimes_K K'$ and denote by $\pi: \Spec K_1\to \Spec K$ the induced map.
Then by (R2) we obtain in $h_0(\G_a^\#\te (\G_m^\#)^{\te n})(K_1)$
\begin{align*}
0 & = 2\Tr_{K_1'/K_1}([ax^i, x^i,\boldsymbol{c}]_{K_1'}) \\
     &\quad  - \Tr_{K_1'/K_1}([a\e x^i, \e x^i, \boldsymbol{x}]_{K_1'} + [-a\e x^i, - \e x^i, \boldsymbol{c}]_{K_1'}) \\
   &  = 2\cdot \pi^*\Tr_{K'/K}([ax^i, x^i,\boldsymbol{c}]_{K'}).
\end{align*}
Since $[K_1: K]$ divides 2 applying $\pi_*$ yields the wanted vanishing .
This completes the proof.
\end{proof}

\begin{thm}\label{thm:DR-arbCh}
Assume $\ch(k)\neq 2,3,5$. Then there are canonical isomorphisms in $\RSC_{\Nis}$
\[h_{0,\Nis}(\G_a^M/0\otimes (\G_m^M/1)^{\otimes n})\cong h_{0,\Nis}(\G_a^\#\te (\G_m^\#)^{\te n}) \cong \Omega^n.\]
\end{thm}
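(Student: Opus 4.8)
The first isomorphism is formal. Since $h_0^\bcube(\G_a^M/0)\cong \G_a^\#$ and $h_0^\bcube(\G_m^M/1)\cong \G_m^\#$, iterating Lemma \ref{lem:hcube-ten} together with the symmetry and associativity of $\te$ on $\MPST$ gives $h_0^\bcube(\G_a^M/0\te (\G_m^M/1)^{\te n})\cong h_0^\bcube(\G_a^\#\te (\G_m^\#)^{\te n})$; applying $\omega_!$ and Nisnevich sheafifying yields the first claimed isomorphism. So the task reduces to identifying $h_{0,\Nis}(\G_a^\#\te (\G_m^\#)^{\te n})$ with $\Omega^n$.

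The plan for the second isomorphism is to construct a morphism $\bar\theta\colon h_{0,\Nis}(\G_a^\#\te(\G_m^\#)^{\te n})\to\Omega^n$ in $\RSC_{\Nis}$ directly by ``reading off the differential form'', in the spirit of the proof of Theorem \ref{thm:Ga-te-Ga-arbCh}, and then to show it is inverse on function fields to the surjection $\theta$ of Proposition \ref{prop:DR-to-T}. (One cannot appeal to Theorem \ref{thm:ten-rep} here, since $\G_a^\#\neq\omega^\CI\G_a$.) Concretely, let $s$ be the coordinate on $\ol{\G_a^M}=\P^1$ away from $\infty$ and $t_1,\dots,t_n$ the coordinates on the $n$ copies of $\ol{\G_m^M}=\P^1$, and set $\eta:=s\,\dlog t_1\cdots\dlog t_n$, an absolute $n$-form on $\A^1\times\G_m^n$. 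For a prime correspondence $W$ into $\G_a^M\te(\G_m^M)^{\te n}$ put $\theta(W):=W^*\eta$. As $\eta$ restricts to $0$ on the locus where the $\G_a$-coordinate is $0$ and on each locus where some $\G_m$-coordinate equals $1$, this descends to a morphism $\omega_!\Z_\tr(\G_a^M/0\te(\G_m^M/1)^{\te n})\to\Omega^n$ in $\PST$. Since $\Omega^n\in\RSC_{\Nis}$ satisfies the injectivity property, to see that $\theta$ factors through $h_{0,\Nis}$ it suffices to check, for a field $K$ and $Z\in\MCor((\P^1_K,1),\G_a^M\te(\G_m^M)^{\te n})$, that $\theta(Z\circ i_0-Z\circ i_\infty)=0$. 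Writing $Z=\pi\circ f^t$ with $\pi\colon C\to\P^1\times(\P^1)^n$ a proper regular $K$-curve and $f\in K(C)^\times$ with $f\equiv 1$ modulo $D:=\pi^*\big(2\{\infty\}\times(\P^1)^n+\textstyle\sum_i(\P^1)\times\cdots\times(\{0\}+\{\infty\})\times\cdots\big)$, one gets $\theta(Z\circ i_0-Z\circ i_\infty)=\sum_{x\in C\setminus|D|}v_x(f)\,(\pi^*\eta)(x)$, which by \eqref{para:omega2} and the residue reciprocity law equals $-\sum_{x\in|D|}\Res_{C/K,x}(\pi^*\eta\cdot\dlog f)$.

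The heart of the argument, and the step I expect to be the main obstacle, is the pole-order computation showing $\pi^*\eta\cdot\dlog f\in\Omega^{n+1}_{\sO_{C,x}}$ for every $x\in|D|$, so that all these residues vanish. Fix a local parameter $\tau$ at $x$, write $v_x(\pi^*s)=-n_0$ and $k_i:=v_x(\pi^*t_i)$, so the multiplicity of $x$ in $D$ is $m=2n_0+\sum_i|k_i|\ge 1$. Expanding $\dlog(\pi^*t_i)=k_i\frac{d\tau}{\tau}+(\text{regular})$ one finds $\pi^*\eta$ is the sum of a ``regular part'' in $\tau^{-n_0}\Omega^n_{\sO_{C,x}}$ and a ``$\tfrac{d\tau}{\tau}$-part'' of the shape $\tau^{-n_0-1}\,d\tau\wedge(\text{regular }(n-1)\text{-form})$. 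On the other hand $f=1+\phi$ with $\phi\in\fm_x^{m}$; decomposing $d\phi$ into its $d\tau$-component (which lies in $\fm_x^{m-1}\,d\tau$) and its component along $\Omega^1_{\sO_K}$ (which lies in $\fm_x^{m}\,\Omega^1_{\sO_K}$), the $d\tau$-component is killed upon wedging with the $d\tau$ occurring in the $\tfrac{d\tau}{\tau}$-part of $\pi^*\eta$, while what survives always acquires the factor $\tau^{\,m-1-n_0}$, which is $\ge 0$ precisely because $x\in|D|$ (if $n_0\ge 1$ this is $\ge n_0-1\ge 0$; if $n_0=0$ then $m=\sum|k_i|\ge 1$). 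Carrying out this bookkeeping for both parts gives the required regularity, hence $\bar\theta$ is well defined.

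Finally, reduce the isomorphism claim to finitely generated fields $K/k$ by Lemma \ref{lem:ffc}. On such $K$ one checks directly that $\bar\theta_K\circ\theta_K=\id_{\Omega^n_K}$: on a generator $a\,\dlog b_1\cdots\dlog b_n$ the composite sends $a\,\dlog b_1\cdots\dlog b_n\mapsto[a,b_1,\dots,b_n]_K\mapsto a\,\dlog b_1\cdots\dlog b_n$ (with the bracket notation of Corollary \ref{cor:mod-genRSC}). Since $\theta_K$ is surjective by Proposition \ref{prop:DR-to-T} — this is where the hypothesis $\ch(k)\neq 2,3,5$ enters — it follows that $\theta_K$ and $\bar\theta_K$ are mutually inverse isomorphisms. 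Hence $\bar\theta$ is an isomorphism in $\RSC_{\Nis}$, and composing with the first isomorphism gives the theorem.
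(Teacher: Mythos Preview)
Your proposal is correct and follows essentially the same approach as the paper's own proof: construct the map to $\Omega^n$ by pulling back the form $s\,\dlog t_1\cdots\dlog t_n$, reduce the $\bcube$-invariance check to function fields via injectivity, write $Z=\pi\circ f^t$, and show the residues at points of $|D|$ vanish so that reciprocity for the residue symbol kills the obstruction; then conclude using Proposition~\ref{prop:DR-to-T} and Lemma~\ref{lem:ffc}. The paper is terser---it simply says the pole estimate goes ``as in loc.\ cit.'' (referring to the proof of Theorem~\ref{thm:Ga-te-Ga-arbCh})---whereas you spell out the bookkeeping; your estimate $m-1-n_0\ge 0$ is exactly the required inequality, and your observation that the $\tfrac{d\tau}{\tau}$-part of $\pi^*\eta$ is killed upon wedging with the $d\tau$-component of $d\phi$ is the key structural point that makes the naive pole count succeed.
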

\begin{proof}
The proof is similar to the proof of Theorem \ref{thm:Ga-te-Ga-arbCh}, so we will be brief.
For a prime correspondence $Z\in \Cor(X, \P^1_s\setminus\{\infty\}\times \prod_{i=1}^n (\P^1_{t_i}\setminus\{0,\infty\}))$
define 
\[\eta(Z):= Z^*(s\dlog t_1\cdots \dlog t_n)\in \Omega^n(X).\]
We can extend $\eta$ to a morphism in $\PST$
\[\eta: \frac{\uomega_!\Z_\tr(\G_a^M\te (\G_m^M)^{\te n})}{
  i_{0,0} (\uomega_!\Z_\tr((\G_m^M)^{\te n}))+\sum_{j} i_{1, j}(\uomega_!\Z_\tr(\G_a^M\te (\G_m^M)^{\te n-1}))}
\to \Omega^n,\]
where $i_{\e, j}$ denotes the obvious closed immersion which inserts $\e$ at the $j$th position.
To show that $\eta$ induces a well-defined map in $\RSC_{\Nis}$
\[\bar{\eta}: h_{0,\Nis}(\G_a^M/0\otimes (\G_m^M/1)^{\otimes n})\to \Omega^n \]
we can proceed as in the proof of Theorem \ref{thm:Ga-te-Ga-arbCh} to see that  it suffices to show the following:
Let $K/k$ be a function field, $\pi: C\to \P^1_k\times_k (\P^1_k)^n$ be a $k$-morphism from a regular $K$-curve and 
$f\in K(C)$ be a function with 
\[f\equiv 1 \text{ mod } \pi^* (2\cdot p_0^*\infty +\sum_{i=1}^n p_i^*(0+\infty))=:D,\]
where $p_i: \P^1\times (\P^1)^n\to \P^1$ is the projection to the $i$th factor; then we have to show
\eq{thm:DR-arbCh1}{0=\sum_{x\in C\setminus |D|} \Res_{C/K,x}(\pi^*(s\dlog t_1\cdots \dlog t_n)\dlog f).}
As in {\em loc. cit.} we see that $\Res_{C/K,x}(\pi^*(s\dlog t_1\cdots \dlog t_n)\dlog f)=0$, for $x\in |D|$.
Thus the vanishing \eqref{thm:DR-arbCh1} follows from the reciprocity law for the residual symbol.
Now it follows from Proposition \ref{prop:DR-to-T} that $\bar{\eta}$ is an isomorphism in the same way as 
Theorem \ref{thm:Ga-te-Ga-arbCh} follows from Proposition \ref{prop:map-to-Ga-te-Ga}.
\end{proof}

\begin{thm}\label{thm:DR}
Assume $\ch(k)=0$ and let $\bullet\in\{\te,\tte\}$. There are isomorphisms in $\RSC_{\Nis}$
\[h_{0,\Nis}(\G_a^\# \bullet (\G_m^\#)^{\bullet n})
\cong
h_{0,\Nis}(\wt{\G_a}\bullet (\wt{\G_m})^{\bullet n})
\cong h_{0,\Nis}(\wt{\G_a}\bullet \wt{\cK^M_n})
\cong\Omega^n.
\]
\end{thm}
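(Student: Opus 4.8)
The plan is to deduce all three isomorphisms at once by threading a chain of natural surjections against a single map produced by the universal property of Theorem \ref{thm:ten-rep}. Concretely, I would set up
\[h_{0,\Nis}(\G_a^\# \bullet (\G_m^\#)^{\bullet n}) \overset{\alpha}{\surj} h_{0,\Nis}(\wt{\G_a}\bullet (\wt{\G_m})^{\bullet n}) \overset{\beta}{\surj} h_{0,\Nis}(\wt{\G_a}\bullet \wt{\cK^M_n}) \overset{\theta}{\to} \Omega^n\]
and prove that the composite $\theta\beta\alpha$ is an isomorphism; since $\alpha,\beta$ are surjective and $\theta\beta\alpha$ is then injective, it follows formally that $\alpha,\beta,\theta$ are all isomorphisms, which is exactly the assertion (for both $\bullet\in\{\te,\tte\}$). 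Here $\alpha$ is the natural surjection from \eqref{subsec:omegaCI2} together with $\omega_!\G_*^\#=\G_*$ (see \eqref{eq:Gsharp-G}, and the proof of Theorem \ref{thm:Ga-te-Ga}), while $\beta$ is the surjection of Corollary \ref{cor:assoc} for the partition $0<1<n+1$, using $h_{0,\Nis}(\wt{\G_a})=\G_a$ and the identification $h_{0,\Nis}((\wt{\G_m})^{\bullet n})\cong\cK^M_n$ of \eqref{para:tenHI-Gm1}; both $\alpha$ and $\beta$ are compatible with the surjection of $\G_a\ten{\NST}\G_m^{\ten{\NST} n}$ onto their source and target.

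For $\theta$ I would apply Theorem \ref{thm:ten-rep} to the multilinear form $\phi_{X,Y}(a,\xi)=p_X^*(a)\cdot q_Y^*(\dlog\xi)$, where $\dlog\colon\cK^M_n\to\Omega^n$ is the standard morphism in $\RSC_{\Nis}$ and $p_X,q_Y$ are the projections. Properties \ref{L1} and \ref{L2} of Definition \ref{defn:Lin} are formal (for \ref{L2} one uses that $\dlog$ and the projections are morphisms of presheaves with transfers). For \ref{L3} I would invoke the explicit description \eqref{cor:DR01} of $c^{\Omega^n}$ together with $c^{\Omega^n}_L(\dlog\xi)\le c^{\cK^M_n}_L(\xi)\le 1$ (the last inequality by \eqref{lem:bcHI1}, since $\cK^M_n\in\HI_{\Nis}$): writing $r=c^{\G_a}_L(a)$, one has $a\in\frac{1}{t^{\max(r-1,0)}}\cO_L$ and $\dlog\xi\in\Omega^n_{\cO_L}(\log)$, hence $a\,\dlog\xi\in\frac{1}{t^{\max(r-1,0)}}\Omega^n_{\cO_L}(\log)$, which by \eqref{cor:DR01} yields $c^{\Omega^n}_L(\phi_L(a,\xi))\le\max\{c^{\G_a}_L(a),c^{\cK^M_n}_L(\xi)\}$. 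This is the $\tte$ form of \ref{L3}, and a fortiori the $\te$ form, so $\phi\in\Lin^{\bullet}_{\RSC}(\G_a,\cK^M_n;\Omega^n)$ and $\theta$ is obtained for both choices of $\bullet$.

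To see that $\theta\beta\alpha$ is an isomorphism it suffices, by Lemma \ref{lem:ffc}, to check bijectivity on every finitely generated $k$-field $K$. Using Corollary \ref{cor:mod-genRSC} and unwinding the constructions of $\alpha,\beta,\theta$, one finds that $\theta\beta\alpha$ is trace-compatible and sends the generator $[a,b_1,\dots,b_n]_K$ to $a\,\dlog b_1\cdots\dlog b_n\in\Omega^n_K$. For $\bullet=\te$, Proposition \ref{prop:DR-to-T} provides a surjection $\Omega^n_K\surj h_{0,\Nis}(\G_a^\#\te(\G_m^\#)^{\te n})(K)$ in the opposite direction whose composite with $\theta\beta\alpha$ is the identity of $\Omega^n_K$ (verified on the generators $a\,\dlog b_1\cdots\dlog b_n$), so both maps are bijective. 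For $\bullet=\tte$, I would precompose that surjection with the natural surjection $h_{0,\Nis}(\G_a^\#\te(\G_m^\#)^{\te n})\surj h_{0,\Nis}(\G_a^\#\tte(\G_m^\#)^{\tte n})$ coming from \eqref{eq1.12} (Lemma \ref{lem:comp-te-tte}) and run the same computation, using that the $\te$- and $\tte$-versions of $\alpha,\beta,\theta$ are intertwined by the transformation $\Psi$ of Lemma \ref{lem:comp-te-tte}. Then Lemma \ref{lem:ffc} promotes the function-field bijection to an isomorphism in $\RSC_{\Nis}$.

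The step I expect to be the main obstacle is the verification of \ref{L3} for $\phi$ in the $\tte$ case, i.e., that the naive differential-form pairing still satisfies the \emph{maximum} conductor bound. The corresponding estimate is \emph{false} for the plain multiplication $\G_a\times\Omega^n\to\Omega^n$, $(a,\omega)\mapsto a\omega$ (take $a$ and $\omega$ each of conductor $2$); what rescues the argument is that the second argument of $\phi$ ranges over $\cK^M_n$, so $\dlog\xi$ is always a logarithmic form of conductor at most $1$, and it is precisely this, combined with the shape of \eqref{cor:DR01}, that makes the bound go through. A secondary technical nuisance is the compatibility bookkeeping of the last paragraph, needed to transport Proposition \ref{prop:DR-to-T} (stated for $\te$) to the $\tte$ setting.
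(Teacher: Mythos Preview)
Your proposal is correct and follows essentially the same route as the paper: construct $\theta$ via Theorem \ref{thm:ten-rep} from the pairing $(a,\xi)\mapsto a\,\dlog\xi$ (with \ref{L3} checked using \eqref{cor:DR01} and $c_L^{\cK^M_n}\le 1$), chain it with the natural surjections $\alpha,\beta$, and then use Proposition \ref{prop:DR-to-T} and Lemma \ref{lem:ffc} to conclude. The only organizational difference is that the paper packages both cases $\bullet\in\{\te,\tte\}$ into a single commutative diagram (with a $\te$-row mapping to a $\tte$-row) so that the one check $\phi_1\circ\theta=\id$ on the $\te$-top node forces all arrows to be isomorphisms at once, whereas you treat the two cases sequentially and handle $\tte$ by precomposing with the $\te\to\tte$ surjection; these amount to the same argument.
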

\begin{proof}
Using \eqref{cor:DR01} it is direct to check that the collection of maps for $X,Y\in \Sm$
\[\phi_{X,Y}: \G_a(X)\times \cK^M_n(Y)\to \Omega^n(X\times Y), \quad (a,b)\mapsto p_X^*(a)\cdot p_Y^*\dlog(b)\]
define an element in $\Lin^{\bullet}_{\RSC}(\G_a, \cK^M_n\,;\,\Omega^n)$,
where $p_X : X \times Y \to X$ and $p_Y : X \times Y \to Y$ are projections. 
By Theorem \ref{thm:ten-rep}, $\{\phi_{X,Y}\}$ induces a morphism in $\RSC_{\Nis}$
\[\phi:  h_{0,\Nis}(\wt{\G_a}\bullet \wt{\cK^M_n}) \to \Omega^n.\]
It follows from  Corollary \ref{cor:assoc} and  the natural maps $\te\to \tte$ and $\G_*^\#\to \wt{\G_*}$ that we
can arrange the other reciprocity sheaves  in the statement in the following diagram:
\[
\xymatrix{
h_{0,\Nis}(\G_a^\# \te (\G_m^\#)^{\te n})
\ar[r]\ar[d]
&
h_{0,\Nis}(\G_a^\# \tte (\G_m^\#)^{\tte n})
\ar[d] &
\\
h_{0,\Nis}(\wt{\G_a}\te (\wt{\G_m})^{\te n})
\ar[r] \ar[d]
&
h_{0,\Nis}(\wt{\G_a}\tte (\wt{\G_m})^{\tte n})
\ar[d] &
\\
h_{0, \Nis}(\wt{\G_a}\te \wt{\cK^M_n}) 
\ar[r]
&
h_{0,\Nis}(\wt{\G_a}\tte \wt{\cK^M_n}) 
\ar[r]^-\phi
& \Omega^n. 
}
\]
By Lemma \ref{lem:ffc}, we only have to show that
these maps are isomorphisms on all function fields $K$ over $k$.
Since all maps are surjective by construction,
it suffices to show the injectivity of the map
\[\phi_1: h_{0,\Nis}(\G_a^\# \te (\G_m^\#)^{\te n})(K) \to \Omega_K^n.\]
But $\phi_1\circ\theta=\id_{\Omega^n}$, where $\theta$ is the surjective map from Proposition \ref{prop:DR-to-T}.
This completes the proof.
\end{proof}

\subsection{Twists of reciprocity sheaves} 
\begin{para}\label{para:twist}
Let $F\in \RSC_{\Nis}$. We define the twist $F\tw{n}$, for $n\ge 0$, as follows
\[F\tw{0}:=F,\quad F\tw{n}:=h_{0,\Nis}(\wt{F\tw{n-1}}\te\wt{\G_m}), \,n\ge 1.\]
Thus $F\tw{1}=h_{0,\Nis}(\wt{F}\te \wt{\G_m})$, 
$F\tw{2}=h_{0,\Nis}((h_{0,\Nis}(\wt{F}\te\wt{\G_m}))^{\widetilde{\,}} \, \te\wt{\G_m})$,
 etc.
(There is also a version with $\tte$ for which we don't introduce an extra notation.)
We obtain the following properties:
\begin{enumerate}[label= (\arabic*)]
\item\label{para:twist1} $\tw{n}: \RSC_{\Nis}\to \RSC_{\Nis}$ is a functor which preserves surjections;
\item\label{para:twist2} $\tw{n}\circ \tw{m}=\tw{n+m}: \RSC_{\Nis}\to \RSC_{\Nis}$;
\item\label{para:twist3}for $F\in \HI_{\Nis}$
 \[ F\tw{n}= F\ten{\HI_{\Nis}} (\G_m^{\ten{\HI_{\Nis}} n}); \quad 
\text{in particular}\quad \Z\tw{n}=\cH^n(\Z(n))\cong\cK^M_n,\]
see \ref{para:tenHI-Gm} for notation;
\item\label{para:twist4} for all $F\in \RSC_{\Nis}$ there are natural surjections 
\[F\ten{\NST} \cK^M_n\surj F\tw{n}\quad \text{in }\NST\]
and 
\eq{para:twist41}{h_{0,\Nis}(\wt{F}\te (\wt{\G_m})^{\te n})\surj F\tw{n} \quad \text{in }\RSC_{\Nis};}
\end{enumerate}
Indeed, the second statement of \ref{para:twist1} follows from Lemma \ref{lem-T-rexact};
the first statement of \ref{para:twist3} follows from  Theorem \ref{thm:tHI-tRSC}, 
for the second statement see \ref{para:tenHI-Gm};
finally \ref{para:twist4} follows from this and the Corollaries \ref{cor:assoc} and \ref{cor:assocII}.
\end{para}

\begin{cor}\label{cor:DR-twist}
Assume $\ch(k)=0$. Then the isomorphism from Theorem \ref{thm:DR} factors as isomorphisms
\[h_{0,\Nis}(\wt{\G_a}\te \wt{\cK^M_n})\xrightarrow[\eqref{para:twist41}]{\simeq} \G_a\tw{n}\xr{\simeq} \Omega^n.\]
\end{cor}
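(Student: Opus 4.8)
The plan is to argue by induction on $n$, producing at each stage an isomorphism $\iota_n\colon \G_a\tw n\xr{\simeq}\Omega^n$ together with the identity $\iota_n\circ\eqref{para:twist41}=\phi$, where $\phi\colon h_{0,\Nis}(\wt{\G_a}\te(\wt{\G_m})^{\te n})\to\Omega^n$ is the isomorphism of Theorem \ref{thm:DR} (by the diagram in that proof the source may be freely identified with $h_{0,\Nis}(\wt{\G_a}\te\wt{\cK^M_n})$, so this identity is exactly the asserted factorisation). Since $\eqref{para:twist41}$ is surjective by construction, once this identity is known it follows formally that both $\eqref{para:twist41}$ and $\iota_n$ are isomorphisms. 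For $n=0$ the statement is trivial; for $n=1$ the map $\eqref{para:twist41}$ is by definition the identity of $h_{0,\Nis}(\wt{\G_a}\te\wt{\G_m})=\G_a\tw1$ (as $\G_a\tw0=\G_a$ and $h_{0,\Nis}(\wt{\G_a})=\omega_!(\omega^\CI\G_a)_\Nis=\G_a$), so one takes $\iota_1=\phi$.

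For the inductive step I would first invoke Corollary \ref{cor:assoc}, grouping the first $n-1$ tensor factors and leaving the last $\wt{\G_m}$ apart; combined with the inductive hypothesis that $\eqref{para:twist41}\colon h_{0,\Nis}(\wt{\G_a}\te(\wt{\G_m})^{\te n-1})\xr{\simeq}\G_a\tw{n-1}$ is an isomorphism, this exhibits $\eqref{para:twist41}$ (for $n$) as the surjection
\[
h_{0,\Nis}(\wt{\G_a}\te(\wt{\G_m})^{\te n})\surj h_{0,\Nis}(\wt{\G_a\tw{n-1}}\te\wt{\G_m})=\G_a\tw n
\]
which, on a function field $K$, sends a generator $[a,b_1,\dots,b_n]_L$ (notation of Corollary \ref{cor:mod-genRSC}, $L/K$ finite) to $\big[[a,b_1,\dots,b_{n-1}]_L,\,b_n\big]_L$. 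Next, using $\iota_{n-1}\colon\G_a\tw{n-1}\xr{\simeq}\Omega^{n-1}$ I would define $\iota_n\colon\G_a\tw n=h_{0,\Nis}(\wt{\G_a\tw{n-1}}\te\wt{\G_m})\to\Omega^n$ by applying Theorem \ref{thm:ten-rep} to the collection
\[
\phi_{X,Y}\colon \G_a\tw{n-1}(X)\times\G_m(Y)\to\Omega^n(X\times Y),\qquad
(\alpha,b)\longmapsto p_X^*\iota_{n-1}(\alpha)\wedge p_Y^*\dlog b .
\]
Conditions \ref{L1} and \ref{L2} of Definition \ref{defn:Lin} are checked exactly as in the proof of Theorem \ref{thm:Ga-te-Ga} (projection formula, together with the fact that $\iota_{n-1}$, $d$ and $\dlog$ are morphisms of presheaves with transfers); for \ref{L3} one combines $c^{\Omega^1}_L(\dlog b)\le c^{\G_m}_L(b)$ (valid because $\dlog\colon\G_m\to\Omega^1$ is a morphism in $\RSC_\Nis$), the equality $c^{\Omega^{n-1}}_L(\iota_{n-1}(\alpha))=c^{\G_a\tw{n-1}}_L(\alpha)$ (conductors are preserved by isomorphisms), and the elementary bound $c^{\Omega^n}_L(\omega\wedge\eta)\le c^{\Omega^{n-1}}_L(\omega)+c^{\Omega^1}_L(\eta)$ for $\omega\in\Omega^{n-1}_L$, $\eta\in\Omega^1_L$, which one reads off from \eqref{cor:DR01} using that $\Omega^\bullet_{\cO_L}(\log)$ is closed under $\wedge$.

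Finally I would compare the two morphisms $\iota_n\circ\eqref{para:twist41}$ and $\phi$ from $h_{0,\Nis}(\wt{\G_a}\te(\wt{\G_m})^{\te n})$ to $\Omega^n$. By Lemma \ref{lem:ffc} it suffices to do this on a function field $K$, and by Corollary \ref{cor:mod-genRSC} it suffices to evaluate on the generators $\Tr_{L/K}([a,b_1,\dots,b_n]_L)$; since all maps in sight are morphisms of presheaves with transfers, hence commute with $\Tr$, it is enough to compare on $[a,b_1,\dots,b_n]_L$. By the proof of Theorem \ref{thm:DR} (via Proposition \ref{prop:DR-to-T}) one has $\phi([a,b_1,\dots,b_n]_L)=a\,\dlog b_1\cdots\dlog b_n$; on the other hand $\eqref{para:twist41}$ sends this element to $\big[[a,b_1,\dots,b_{n-1}]_L,b_n\big]_L$, and then, using $\iota_{n-1}([a,b_1,\dots,b_{n-1}]_L)=a\,\dlog b_1\cdots\dlog b_{n-1}$ (inductive hypothesis) and the definition of $\iota_n$, one gets $\iota_n\big([[a,b_1,\dots,b_{n-1}]_L,b_n]_L\big)=a\,\dlog b_1\cdots\dlog b_{n-1}\wedge\dlog b_n=a\,\dlog b_1\cdots\dlog b_n$. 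Hence $\iota_n\circ\eqref{para:twist41}=\phi$ is an isomorphism; as $\eqref{para:twist41}$ is surjective it is injective, so an isomorphism, and consequently $\iota_n$ (agreeing with $\phi$ followed by the inverse of $\eqref{para:twist41}$) is an isomorphism as well, completing the induction.

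The genuine content, and the step most likely to cause trouble, is the verification of the conductor inequality \ref{L3} needed to apply Theorem \ref{thm:ten-rep}: one must check carefully that wedging with $\dlog b$ raises the motivic conductor by at most $c^{\G_m}_L(b)$, which rests on the explicit description \eqref{cor:DR01} of the conductor of $\Omega^m$ — this is precisely where the hypothesis $\ch(k)=0$ enters in an essential way (cf. Remark \ref{rmk:Ga}). Everything else is formal bookkeeping with the various natural surjections and identifications supplied by Theorems \ref{thm:ten-rep} and \ref{thm:DR}, Corollaries \ref{cor:assoc} and \ref{cor:mod-genRSC}, and Lemma \ref{lem:ffc}.
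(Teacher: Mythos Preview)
Your proposal is correct and follows essentially the same approach as the paper: induction on $n$, defining the map $\G_a\tw{n}\to\Omega^n$ via Theorem \ref{thm:ten-rep} applied to $(\alpha,b)\mapsto p^*\iota_{n-1}(\alpha)\wedge q^*\dlog b$, with the conductor check resting on \eqref{cor:DR01}. The paper's proof is considerably terser (``it is direct to check''), while you spell out the verification of \ref{L3} and the comparison on function fields explicitly; but the underlying argument is the same.
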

\begin{proof}
It suffices to show that we have a factorization as in the statement. Assume it is proven for $n-1$.
Then the maps for $X,Y\in \Sm$
\[\G_a\tw{n-1}(X)\times \G_m(Y)= \Omega^{n-1}(X)\times \G_m(Y)\to \Omega^n(X\times Y),\]
\[
(\alpha, b)\mapsto p_1^*\alpha \cdot p_2^*\dlog b,
\]
form an element in $\Lin^\te_{\RSC}(\G_a\tw{n-1}, \G_m; \Omega^n)$ (use \eqref{cor:DR01}).
It is direct to check that the induced map $\phi: \G_a\tw{n}\to \Omega^n$ factors the isomorphism from
Theorem \ref{thm:DR} as stated.
\end{proof}

\begin{para}\label{para:DR}
We continue to assume $\ch(k)=0$ and explain how to reinterpret classical maps related
to the de Rham complex in terms of twists of $\G_a$:
\begin{enumerate}[label=(\arabic*)]
\item\label{para:DR1} 
We can define a morphism 
\[\partial:\G_a\to \G_a\tw{1}\quad \text{in }\RSC_{\Nis}\]
as follows:
Let $U=\Spec A$ be a smooth affine $k$-scheme.
For $a\in A$ and $\epsilon\in \{0,1\}$ define
\[\partial_\epsilon (a)= \text{image of } (a-\epsilon)\otimes (a-\epsilon) \in h_{0,\Nis}(\wt{\G_a}\te \wt{\G_m})(U_{a-\e}),\]
where $U_{a-\e}=\Spec A_{a-\e}$.
By Theorem \ref{thm:DR} we have $\partial_0(a)=\partial_1(a)$ when restricted to $U_{a}\cap U_{a-1}$ and hence
we obtain a well-defined element $\partial_U(a)\in h_{0,\Nis}(\wt{\G_a}\te\wt{\G_m})(U)= \G_a\tw{1}(U)$.
It follows from Theorem \ref{thm:DR} that $\partial_U : \G_a(U)\to \G_a\tw{1}(U)$ is a group homomorphism and that
$\partial_V=\partial_U$ on $U\cap V$. Hence these glue to give a morphism $\partial_X: \G_a(X)\to \G_a\tw{1}(X)$, for all 
$X\in \Sm$. Under the isomorphism $\G_a\tw{1}\cong\Omega^1$  this map coincides with the differential 
$d:\sO(X)\to \Omega^1(X)$, thus $\partial=\{\partial_X\}$ is a morphism of sheaves with transfers.
By Theorem \ref{thm:DR} the de Rham-complex is isomorphic to
\[\Omega^\bullet\cong 
\G_a\xr{\partial} \G_a\tw{1}\xr{\partial\tw{1}} \G_a\tw{2}\xr{\partial\tw{2}}\ldots \G_a\tw{n}
\xr{\partial\tw{n}}\G_a\tw{n+1}\to\ldots.\]
\item\label{para:DR2} The twists of the natural inclusion $\iota:\Z\inj \G_a$ give rise to the $\dlog$-map, i.e.,
\[ [\dlog :\cK^M_n\to \Omega^n]= [\iota\tw{n}: \Z\tw{n}\to \G_a\tw{n}].\]
\item\label{para:DR3} Consider the exact sequence in $\RSC_{\Nis}$
\[0\to \Z\to \G_a\to \G_a/\Z\to 0.\]
Since $\Z$ is proper we have a surjection 
\[\tau_!\widetilde{\G_a}(\sO_L,\fm_L^{-n})\surj \tau_!\widetilde{(\G_a/\Z)}(\sO_L,\fm_L^{-n}),\quad L\in \Phi, n\ge 0,\]
see  \cite[Lemma 4.31]{RS}.
Thus \ref{para:DR2} and Lemma \ref{lem-T-rexact}\ref{lem-T-rexact3} yield  (cf \cite[6.10]{RS})
\[[X\mapsto H^1(X_{\Zar}, \sO_X^\times\xr{\dlog}\Omega^1_{X/\Z})]= (\G_a/\Z)\tw{1}.\]
\end{enumerate}
\end{para}

\subsection{Some applications to zero-cycles with modulus}

\begin{cor}\label{cor:ch-prod-curves}
Let $M_i:=(C_i,D_i)$ be a proper modulus pair, 
where $C_i$ is a smooth (proper) absolutely integral curve over $k$,
for $i=1,2$.
Let $J_{M_i}$ be the generalized Jacobian for $M_i$.
Put $J_{M_i}^\#=h_0^\bcube(M_i)^0$ (see \eqref{eq:def-reduced-h}).
Assume that $C_i$ admits a degree one divisor.
Then for any $k$-field $K$ we have an isomorphism
\[
\CH_0((M_1\te M_2)_K)\cong \Z\oplus J_{M_1}(K)\oplus J_{M_2}(K)\oplus h_0(J_{M_1}^\#\te J_{M_2}^\#)(K).
\]
(See \ref{para:Chow-mod} for the notation.)
In particular, for $\G_a^M$ and $\G_m^M$ 
we have the following (see notations in \ref{not:GmM}).
\begin{enumerate}
\item
In $\ch(k)\neq 2,3,5$, we have
$$
 \CH_0((\G_a^M\te \G_m^M)_K)\cong \Z\oplus K \oplus K^\times \oplus \Omega_K^1.
$$
\item
In $\ch(k)\neq 2$, we have
(see \ref{para:P} for the notation $I_\Delta(k)$)
$$\CH_0((\G_a^M\te \G_a^M)_K)\cong \Z\oplus K^{\oplus 2}
\oplus (K\te_\Z K)/I_{\Delta}(K)^2.$$
\end{enumerate}
\end{cor}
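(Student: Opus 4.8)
The plan is to compute $\CH_0((M_1\te M_2)_K)$ by reducing it, via the monoidality of $h_0^\bcube$ and the splitting of $h_0^\bcube$ of a modulus curve, to the explicit computations of Theorems \ref{thm:Ga-te-Ga-arbCh} and \ref{thm:DR-arbCh}. First I would use \ref{para:Chow-mod} to identify $\CH_0((M_1\te M_2)_K)$ with $h_0(M_1\te M_2)(K)$. Since $\Z_\tr(M_1\te M_2)=\Z_\tr(M_1)\te\Z_\tr(M_2)$, two applications of Lemma \ref{lem:hcube-ten} (one per factor, using the symmetry of $\te$) give a natural isomorphism in $\RSC$
\[
h_0(M_1\te M_2)\;\cong\;\omega_!\,h_0^\bcube\bigl(h_0^\bcube(M_1)\te h_0^\bcube(M_2)\bigr),
\]
where on the right the two factors are viewed in $\MPST$ via $i^\bcube$.

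The next step is to split $h_0^\bcube(M_i)$ inside $\CI$. The structure morphism $M_i\to(\Spec k,\emptyset)$ induces a degree map $h_0^\bcube(M_i)\to h_0^\bcube(\Spec k,\emptyset)=\Z$, and a degree one divisor on $C_i$ disjoint from the modulus $D_i$ is a finite correspondence $(\Spec k,\emptyset)\to M_i$, hence, after applying $h_0^\bcube$ (a left adjoint) and Yoneda, a morphism $\Z\to h_0^\bcube(M_i)$ in $\CI$ which splits the degree map. Thus $h_0^\bcube(M_i)\cong\Z\oplus J_{M_i}^\#$ in $\CI$, with $J_{M_i}^\#=h_0^\bcube(M_i)^0$ as in \eqref{eq:def-reduced-h}. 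Substituting this into the displayed isomorphism, distributing $\te$ over the two direct sums, using that $\Z=h_0^\bcube(\Spec k,\emptyset)$ is the image under $h_0^\bcube$ of the $\te$-unit $\Z_\tr(\Spec k,\emptyset)$, so that $h_0^\bcube(\Z\te G)\cong h_0^\bcube(G)$ (again by Lemma \ref{lem:hcube-ten}), and invoking the additivity and exactness of $h_0^\bcube$ and $\omega_!$, one obtains in $\RSC$
\[
h_0(M_1\te M_2)\;\cong\;\Z\oplus\omega_\CI J_{M_1}^\#\oplus\omega_\CI J_{M_2}^\#\oplus h_0(J_{M_1}^\#\te J_{M_2}^\#).
\]
Here $\omega_\CI J_{M_i}^\#=h_0(M_i)^0$ by the remark after \eqref{eq:def-reduced-h}; evaluating at $K$ one has $\CH_0((M_i)_K)=\Pic((C_i)_K,(D_i)_K)$, whose degree zero part is $J_{M_i}(K)$ by the Rosenlicht--Serre theory of generalized Jacobians. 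Since sections over a field are unchanged by Nisnevich sheafification, it is immaterial whether $h_0$ or $h_{0,\Nis}$ is used; this yields the first displayed isomorphism of the corollary.

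For the two explicit cases, $\ol{\G_a^M}=\ol{\G_m^M}=\P^1$ admits a degree one divisor disjoint from the modulus; the generalized Jacobian of $\G_a^M=(\P^1,2(\infty))$ is $\G_a$ and that of $\G_m^M=(\P^1,(0)+(\infty))$ is $\G_m$, while by definition $J_{\G_a^M}^\#=\G_a^\#$ and $J_{\G_m^M}^\#=\G_m^\#$ (see \ref{not:GmM}); thus the Jacobian summands contribute $\G_a(K)=K$, resp.\ $\G_m(K)=K^\times$. In case (1) the remaining summand is $h_0(\G_a^\#\te\G_m^\#)(K)\cong\Omega^1_K$ by Theorem \ref{thm:DR-arbCh} with $n=1$, and in case (2) it is $h_0(\G_a^\#\te\G_a^\#)(K)\cong\cP^1(K)=(K\te_\Z K)/I_{\Delta_K}^2$ by Theorem \ref{thm:Ga-te-Ga-arbCh}; assembling the four summands gives the two displayed formulas.

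The main obstacle, beyond the bookkeeping of direct sums and repeated uses of Lemma \ref{lem:hcube-ten}, lies in the second step: one must verify that the degree one divisor really yields a splitting of the degree map inside $\CI$ (this reduces to the formal fact that a degree one class in $h_0^\bcube(M_i)(\Spec k,\emptyset)$ corepresents such a section, together with $h_0^\bcube\circ i^\bcube\cong\id_\CI$), and one needs the identification of $\omega_\CI J_{M_i}^\#=h_0(M_i)^0$ with the generalized Jacobian $J_{M_i}$, which rests on the relative-Picard description of the Chow group of zero-cycles of a modulus curve and on Serre's theory; for the curves occurring in (1)--(2) the latter is already contained in \cite[Theorem 1.1]{RY} (i.e.\ in \eqref{eq:Gsharp-G}).
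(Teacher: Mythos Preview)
Your proposal is correct and follows essentially the same route as the paper: split $h_0^\bcube(M_i)\cong\Z\oplus J_{M_i}^\#$ via a degree one divisor supported away from $|D_i|$ (the paper invokes the approximation lemma to move the given degree one divisor off the modulus), use Lemma~\ref{lem:hcube-ten} to pass $h_0^\bcube$ through the tensor, distribute, and identify the four summands via \cite[Theorem~1.1]{RY} and Theorems~\ref{thm:Ga-te-Ga-arbCh} and \ref{thm:DR-arbCh}. Your write-up is more explicit about the formal bookkeeping, but there is no substantive difference in strategy.
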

\begin{proof}
By the approximation lemma, 
we may find a degree one divisor on $C_i \setminus |D_i|$,
which gives rise to a decomposition
$
h_0^{\bcube}(M_i)\cong \Z\oplus J_{M_i}^\#.
$
By Lemma \ref{lem:hcube-ten} we obtain
\[
h_0(M_1\te M_2)\cong \Z\oplus \omega_!(J_{M_1}^\#)\oplus \omega_!(J_{M_2}^\#)\oplus 
\omega_!(J_{M_1}^\#\te J_{M_2}^\#).
\]
Taking the value at $\Spec(K)$, we have by \cite[Theorem 1.1]{RY}
\[
\CH_0((M_1\te M_2)_K)\cong \Z\oplus J_{M_1}(K)\oplus J_{M_2}(K)\oplus h_{0,\Nis}(J_{M_1}^\#\te J_{M_2}^\#)(K).
\]
Now the isomorphisms (1) and (2) follows from 
Theorem \ref{thm:DR-arbCh} and Theorem \ref{thm:Ga-te-Ga-arbCh}.
\end{proof}

\begin{remark}
Let us take $M_1=M_2=\G_a^M$ and suppose $k=\C$.
Binda and Krishna showed that there is a surjective homomorphism
$\CH_0(M_1\otimes M_2) \to \Z \oplus k \oplus k\oplus k$
with non-trivial kernel \cite[Theorem 10.10]{Binda-Krishna}.
Corollary \ref{cor:ch-prod-curves} shows that 
this kernel is isomorphic to $\Omega_{\mathbb{C}}^1$.

\end{remark}

\begin{prop}\label{prop:surj-hi-ch}
Let $M=(X,D)$ be a proper modulus pair
such that $X$ is of pure dimension $d$,
and let $n$ be a positive integer.
Then for any $k$-field $K$ there is a surjection
\[
h_0(h_0^\bcube(M)\te \wt{\G_m}^{\te n})(K)\lra \CH^{d+n}(M_K;n)_{\rm ssup}.
\]
Here $\CH^{d+n}(M_K;n)_{\rm ssup}$ is the higher Chow group with strong-sup-modulus
defined in \cite[Definition 2.6]{MR3846053}.
\end{prop}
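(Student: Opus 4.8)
The plan is to read off the left-hand side from Theorem \ref{thm:CI-te-fields}, applied with $F_1=h_0^\bcube(M)$, $F_2=\dots=F_{n+1}=\wt{\G_m}$ and $\bullet=\te$: then $h_0(h_0^\bcube(M)\te\wt{\G_m}^{\te n})(K)$ is the quotient of $\bigoplus_{L/K}h_0^\bcube(M)(L)\otimes_\Z(L^\times)^{\otimes n}$ by the relations $(R1)$ and $(R2)$ (note $\wt{\G_m}(L)=\G_m(L)=L^\times$, as $\Spec L$ carries no $\bcube$). By \ref{para:Chow-mod} we have $h_0^\bcube(M)(L)=\CH^d(M_L)$, so a generator $z\otimes u_1\otimes\dots\otimes u_n$ is a zero-cycle $z=\sum_j m_j[\zeta_j]$ on $\ol M_L$ supported off $|D_L|$ together with $u_1,\dots,u_n\in L^\times$. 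I would send it to the zero-cycle
\[ \Lambda(z\otimes u_1\otimes\dots\otimes u_n):=\sum_j m_j\,[\![\zeta_j;u_1,\dots,u_n]\!] \]
on $\ol M_K\times\square^n$, where $[\![\zeta;u_1,\dots,u_n]\!]$ is the push-forward along $\Spec\kappa(\zeta)\to\ol M_{\kappa(\zeta)}\to\ol M_K$ of the closed point with those coordinates (one reduces at once to $u_i\neq1$, a symbol with a $\G_m$-slot equal to $1$ being zero in the source). Since $\zeta_j\notin|D_L|$ and $u_i\notin\{0,1,\infty\}$, and since for a zero-dimensional cycle the strong-sup modulus condition of \cite[Definition~2.6]{MR3846053} just says that the image in $\ol M_K$ avoids $|D_K|$ while properness against the faces $\{t_i=0\},\{t_i=\infty\}$ is automatic, $\Lambda(z\otimes u_1\otimes\dots\otimes u_n)$ lies in $z^{d+n}(\ol M_K|D_K,n)_{\rm ssup}$.

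Surjectivity is then immediate already on cycles: any closed point $P$ of the group $z^{d+n}(\ol M_K|D_K,n)_{\rm ssup}$ has $L:=\kappa(P)$ finite over $K$, image $\zeta_P\in\ol M_K$ off $|D_K|$, and cube coordinates $t_i(P)\in L\setminus\{0,1,\infty\}$ (using $\square=\P^1\setminus\{1\}$ and properness against faces); viewing $\zeta_P$ as an $L$-rational point of $\ol M_L$, hence an element of $h_0^\bcube(M)(L)$, we get $\Lambda(\zeta_P\otimes t_1(P)\otimes\dots\otimes t_n(P))=[P]$. Thus $\Lambda$ surjects onto $z^{d+n}(\ol M_K|D_K,n)_{\rm ssup}$, a fortiori onto $\CH^{d+n}(M_K;n)_{\rm ssup}$, once it is shown to be well defined.

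For well-definedness I must check that $\Lambda$ descends through rational equivalence with modulus on each $\ol M_L$ and kills $(R1)$ and $(R2)$; all three are instances of the same \emph{reciprocity} computation. Given a regular projective curve $C/K$, effective divisors $D_1,\dots,D_{n+1}$ with $D=D_1+\dots+D_{n+1}$, a class $a_1\in\tau_!h_0^\bcube(M)(C,D_1)$ represented by a correspondence $\Gamma\in\MCor((C,D_1),M)$ (so $\Gamma\to C$ is finite and $M^\infty|_{\ol\Gamma^N}\le D_1|_{\ol\Gamma^N}$), functions $g_i:=a_{i+1}\in\tau_!\wt{\G_m}(C,D_{i+1})$, and $f\in K(C)^\times$ with $f\equiv1\bmod D$, write $\iota:\ol\Gamma^N\to C$ and $\gamma:\ol\Gamma^N\to\ol M_K$ for the two projections. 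The map $(\gamma,g_1\circ\iota,\dots,g_n\circ\iota,f\circ\iota)\colon\ol\Gamma^N\to\ol M_K\times(\P^1)^{n+1}$ yields a $1$-cycle $W$, which I claim lies in $z^{d+n}(\ol M_K|D_K,n+1)_{\rm ssup}$: the modulus condition holds because $f\equiv1\bmod D$ makes $(f\circ\iota)-1$ vanish along $\iota^*D\ge\iota^*D_1\ge\gamma^*M^\infty$, so the last coordinate alone supplies the required modulus at $1$. Computing $\partial W$, the faces attached to $g_1,\dots,g_n$ are supported over $|D_{i+1}|\subseteq|D|$, where $f\circ\iota\equiv1$, so these contributions map out of $\square$ and vanish; the face attached to $f$ is supported over $\{x\in C:v_x(f)\neq0\}\subseteq C\setminus|D|$ and, after unwinding ramification indices and residue degrees along $\ol\Gamma^N\to C$ (i.e.\ the identities $\div(f\circ\iota)=\iota^*\div(f)$ and $a_1(x)=i_x^*a_1$), equals $\pm\Lambda\big(\sum_x v_x(f)\,a_1(x)\otimes g_1(x)\otimes\dots\otimes g_n(x)\big)$. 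Taking $\Gamma$ constant (a point of $\ol M_L$) gives the factorization through rational equivalence with modulus and $(R2)$ in a $\G_m$-slot; taking $n$ of the coordinates constant gives $(R1)$ in a $\G_m$-slot; the general $(R2)$ is as above; and $(R1)$ in the $h_0^\bcube(M)$-slot is just functoriality of cycle push-forward, valid already on cycles.

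The main obstacle will be the bookkeeping in the preceding paragraph: checking that the admissibility inequality $M^\infty|_{\ol\Gamma^N}\le D_1|_{\ol\Gamma^N}$ for $\Gamma$ together with $f\equiv1\bmod D$ is precisely what makes $W$ satisfy the strong-sup modulus condition of \cite{MR3846053}, that all boundary faces except the $f$-face actually \emph{vanish} (not merely that they are admissible), and that the $f$-face computes $\Lambda$ of the $(R2)$-element exactly after accounting for ramification; and, more routinely, handling the degenerate configurations (some $u_i$ or $g_i$ a constant in $\{0,1,\infty\}$, $f$ constant, non-reduced fibres of $\Gamma\to C$) and the independence of $W$ of the chosen representative $\Gamma$ of $a_1\in h_0^\bcube(M)$ (changing it by $i_0^*\delta-i_\infty^*\delta$ changes $W$ by a boundary).
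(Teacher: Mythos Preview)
Your proposal is correct and follows essentially the same route as the paper: define the map by sending $z\otimes u_1\otimes\dots\otimes u_n$ to the obvious zero-cycle (the paper phrases this as the external product $\boxtimes:\CH_0(M_L)\otimes(L^\times)^{\otimes n}\to\CH^{d+n}(M_L;n)_{\rm ssup}$ from \cite{MR3723807} followed by push-forward to $K$), and then kill the (R2) relation by constructing the $1$-cycle $W$ via $(\gamma,g_1\circ\iota,\dots,g_n\circ\iota,f\circ\iota)$ on $\ol\Gamma^N$, verifying the modulus inequality $\gamma^*D_K\le\iota^*D_1\le(f\circ\iota)^*(1)$, and computing $\partial W$.

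Two minor remarks. First, the paper simplifies your $D_1,\dots,D_{n+1}$ to a single $E$ by observing that $\G_m\in\HI$ forces $c^{\G_m}_L\le1$, so for the $\G_m$-slots one may take $D_{i+1}=E_{\rm red}$; this is why the paper writes $f\equiv1\bmod E+n(E)_{\rm red}$. Second, your worry about well-definedness of $\Lambda$ on $\CH_0(M_L)$ (rational equivalence with modulus in the $h_0^\bcube(M)$-slot) is absorbed in the paper by citing the external product of \cite{MR3723807}, which is already defined on Chow groups; your alternative of treating it as another instance of the same reciprocity computation is equally valid.
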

\begin{proof}
We know the isomorphism
\[
K^\times\cong \CH^1(K;1).
\]
By Proposition \ref{prop:ext-prod} below, the external product
\[
\boxtimes : \CH_0(M_K)\te (K^\times)^{\te n}\lra \CH^{d+n}(M_K;n)_{\rm ssup}
\]
induces a surjection
\[
(h_0(M)\ten{\PST}\G_m\ten{\PST}\cdots\ten{\PST}\G_m)(K)\lra \CH^{d+n}(M_K;n)_{\rm ssup}.
\]
We now consider the element as in (R2) of Theorem \ref{thm:CI-te-fields}
\begin{equation}\label{rel-el}
\sum_{c\in C^\o}v_c(f)\alpha(c)\te g_1(c)\te\cdots\te g_n(c)
\end{equation}
where $(C,E)$ is a proper modulus pair with $C$ regular projective curve over $K$ and $\alpha\in h_0^\bcube(M_K)(C,E)$, $g_i\in \wt{\G_m}(C, E)$ $(i=1,\dots,n)$ and $f\in K(C)^{\times}$ such that $f\equiv 1\mod E+n(E)_\mathrm{red}$.
Our task is to show that the element \eqref{rel-el} goes zero in $\CH^{d+n}(M_K;n)_{\rm ssup}$ by the external product map.
We may assume that $\alpha$ is elementary, i.e.  it is given by one integral closed subscheme $Z$ in $C^\o\times X_K^\o$ where $X_K^\o=X_K\setminus |D_K|$.

Let $\ol{Z}^N$ be the normalization of the closure of $Z$ in $C\times X_K$ and let $p: \ol{Z}^N\to C$ and $q:\ol{Z}^N\to X_K$ be the projection.
Then we have a proper map
\[
\varphi:=q\times p^*f\times p^*g_1\times\cdots\times p^*g_n: \ol{Z}^N\lra X_K\times (\P_K^1)^{n+1}.
\]
Let $W$ be the restriction of $\varphi_*(\ol{Z}^N)$ to $X_K^\o\times (\P_K^1\setminus\{1\})^{n+1}$.
We will show that $W$ is in good position.
For a face $F$ of $\codim F=1$, it is enough to show that $W\not\subset X_K^\o\times F$.
If $W\subset X_K^\o\times F$, then by the construction of $W$, one of $f, g_i~(i=1,\dots,n)$ is a constant function on $C$ with value $0$ or $\infty$.
But this cannot happen since $f=1$ on $|E|$ and $g_i$ are non-zero regular functions on $C\setminus|E|$.
These conditions on $g_i~(i=1,\dots,n)$ and $f$ also yield that $W\cap X_K^\o\times F=\emptyset$ for $\codim F\geq 2$.
Thus $W$ is in good position.
By the conditions on $\alpha$, $g_i~(i=1,\dots,n)$ and $f$, we have
\mlnl{
\varphi^*(D_K\times (\P_K^1)^{n+1})=q^*(D_K)\leq p^*(E)\leq (p^*f)^*(\{1\})\\
= \varphi^*(X_K\times \{1\}\times(\P_K^1)^n).
}
Thus $W\in z^{d+n}(M_K;n+1)_\mathrm{ssup}$. The base change formula yields
\begin{align*}
\partial (W)
&=(\alpha\times g_1\times \cdots \times g_n)_*(\div(f))\\
&=\sum_{c\in C^\o}v_c(f)\mathrm{Tr}_{K(c)/K}[(\alpha(c),g_1(c),\dots,g_n(c))]
\end{align*}
which is the image of \eqref{rel-el} by the external product map.
This completes the proof.
\end{proof}

\begin{prop}[\text{\cite[Lemma 3.6, Proposition 3.7]{MR3723807}}]\label{prop:ext-prod}
Let $M=(X,D)$ be a modulus pair and let $Y$ be a smooth variety over $k$.
Denote a modulus pair $(X\times Y, D\times Y)$ by $M\te Y$.
There is an external product
\[
\boxtimes: \CH^r(M;n_1)_\mathrm{ssup}\te \CH^s(Y;n_2)\lra \CH^{r+s}(M\te Y;n_1+n_2)_\mathrm{ssup}
\]
compatible with flat pull-back and proper push-forward.
\end{prop}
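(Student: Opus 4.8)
The statement is \cite[Lemma 3.6, Proposition 3.7]{MR3723807}; the plan is to recall the construction on the level of cycle complexes and explain why it passes to homology. Write $z^r(M;n)_{\mathrm{ssup}}\subset z^r(X;n)$ for the subgroup of admissible cycles on $X\times\square^n$ (those meeting every face properly) that satisfy the strong-sup modulus condition of \cite[Definition 2.6]{MR3846053} relative to $D$, so that $\CH^r(M;n)_{\mathrm{ssup}}$ is its homology in the cube direction. First I would define the pairing on cycles: for integral $V\in z^r(M;n_1)_{\mathrm{ssup}}$ and integral $W\in z^s(Y;n_2)$, let $V\boxtimes W$ be the cycle associated to $V\times_k W\subset (X\times Y)\times\square^{n_1+n_2}$, after the evident reordering of the cube coordinates, and extend bilinearly. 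To put $V\boxtimes W$ into $z^{r+s}(M\te Y;n_1+n_2)_{\mathrm{ssup}}$ one must check two things: that it meets every face of $\square^{n_1+n_2}$ properly, and that it satisfies the strong-sup modulus condition.

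The proper-intersection point is a codimension count: every face of $\square^{n_1+n_2}$ is a product of a face of $\square^{n_1}$ with a face of $\square^{n_2}$, so since $V$ and $W$ meet all faces of their respective cubes properly, $V\times_k W$ meets the product face in the expected dimension. The modulus point is the crux, and I would argue as follows. The closure of $V\times_k W$ in $\ol{X}\times Y\times(\P^1)^{n_1+n_2}$ is $\ol{V}\times_k\ol{W}$, a product of integral closed subschemes; since $k$ is perfect, $\ol{V}^N\times_k\ol{W}^N$ is normal, hence the finite surjective map $\ol{V}^N\times_k\ol{W}^N\to\ol{V}\times_k\ol{W}$ factors through the normalization $(\ol{V}\times_k\ol{W})^N$. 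A finite surjective morphism reflects effectivity of Cartier divisors, so it suffices to verify the strong-sup inequality for $V\boxtimes W$ after the further pullback to $\ol{V}^N\times_k\ol{W}^N$; there the divisor contributed by $D\times Y$ is pulled back from the first factor, each face $\{y_i=1\}$ with $i\le n_1$ is pulled back from the first factor, and each face with $i>n_1$ involves only the second factor and does not meet the divisor coming from $D$ at all. Hence the required inequalities reduce to the strong-sup inequalities already satisfied by $V$ for the first $n_1$ cube coordinates, the remaining ones being trivial. The hard part is precisely this interaction between normalization and products of the closures; it is also where one sees, exactly as in the proof of Proposition~\ref{prop:surj-hi-ch}, that no modulus condition is imposed on the smooth factor $Y$ and that the strong-sup convention, rather than a naive one, is what makes the construction go through.

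The rest is formal. The face maps satisfy a Leibniz rule $\partial(V\boxtimes W)=(\partial V)\boxtimes W+(-1)^{n_1}\,V\boxtimes(\partial W)$, with $\partial V\in z^r(M;n_1-1)_{\mathrm{ssup}}$ and $\partial W\in z^s(Y;n_2-1)$ by the two checks above applied to codimension-one faces; therefore $\boxtimes$ induces the asserted map $\CH^r(M;n_1)_{\mathrm{ssup}}\te\CH^s(Y;n_2)\to\CH^{r+s}(M\te Y;n_1+n_2)_{\mathrm{ssup}}$. Finally I would record functoriality: $\times_k$ commutes with flat pull-back of cycles and is multiplicative under proper push-forward, applied to $\id_X\times f$ and $\id_X\times g$ for $f$ flat and $g$ proper, which yields compatibility with flat pull-back and proper push-forward in the $Y$-variable; the corresponding statements in the $M$-variable follow in the same way, using that pull-back and push-forward along morphisms of modulus pairs preserve the strong-sup condition. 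Apart from the modulus verification sketched above, all of this is bookkeeping already carried out in \cite{MR3723807}.
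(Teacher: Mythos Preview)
The paper does not give its own proof of this proposition: it is stated with an explicit citation to \cite[Lemma 3.6, Proposition 3.7]{MR3723807} and no proof block follows. Your proposal is therefore not competing with an argument in the paper but rather sketching the content of the cited reference, and as such it is a reasonable outline: the external product is defined by Cartesian product of cycles, proper intersection with faces follows from the product structure of faces, the strong-sup modulus condition is checked after pulling back to $\ol{V}^N\times_k\ol{W}^N$ (using that products of normal $k$-schemes are normal over a perfect field), and the Leibniz rule plus standard functoriality of $\times_k$ finish the job. This is essentially how the argument in \cite{MR3723807} proceeds, so there is nothing to correct.
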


Concerning a conjecture and a result \cite[Conjecture 2.8, Corollary 3.5]{KP} for a relation among modulus conditions, we give an example for higher 0-cycles with modulus.

\begin{cor}\label{Cor:hCH-GaM}
Suppose that $\ch(k)\neq 2,3,5$.
Let $n$ be a positive integer. 
Then for any $k$-field $K$ there are the following isomorphisms
\[
\CH^{n+1}(\P_K^1,2\infty;n)_{\rm ssup}\cong\CH^{n+1}(\P_K^1,2\infty;n)\cong K_n^M(K)\oplus \Omega_K^n.
\]
\end{cor}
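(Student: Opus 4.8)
The plan is to sandwich $\CH^{n+1}(\P^1_K,2\infty;n)_{\rm ssup}$ between a surjection from a tensor of modulus lifts of $\G_a$ and $\G_m$ and a section built from external products of cycles, and then transport the result to the other Chow group via the comparison of modulus conditions.

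First I would apply Proposition \ref{prop:surj-hi-ch} to the proper modulus pair $M=\G_a^M=(\P^1,2\infty)$, whose underlying variety is of pure dimension $d=1$, obtaining a surjection
\[
h_0(h_0^\bcube(\G_a^M)\te\wt{\G_m}^{\te n})(K)\;\twoheadrightarrow\;\CH^{n+1}(\P^1_K,2\infty;n)_{\rm ssup}.
\]
Exactly as in the proof of Corollary \ref{cor:ch-prod-curves}, the degree-one rational point $0\in\P^1\setminus|2\infty|$ splits $h_0^\bcube(\G_a^M)\cong\Z\oplus\G_a^\#$ in $\CI$; since $\Z$ is the $\te$-unit and both $\te$ and $h_0$ are additive and right exact, Lemma \ref{lem:hcube-ten} gives, after Nisnevich sheafification and evaluation at $K$,
\[
h_0(h_0^\bcube(\G_a^M)\te\wt{\G_m}^{\te n})(K)\cong h_{0,\Nis}(\wt{\G_m}^{\te n})(K)\ \oplus\ h_{0,\Nis}(\G_a^\#\te\wt{\G_m}^{\te n})(K).
\]
By Theorem \ref{thm:tHI-tRSC} together with \eqref{para:tenHI-Gm1} (equivalently Proposition \ref{prop:Milnor-K}) the first summand is $\cK^M_n(K)=K^M_n(K)$. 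For the second, Theorem \ref{thm:DR-arbCh} gives $h_{0,\Nis}(\G_a^\#\te(\G_m^\#)^{\te n})\cong\Omega^n$ (here the hypothesis $\ch(k)\neq2,3,5$ enters), and the natural map $\G_m^\#\to\wt{\G_m}$ induced by adjunction from \eqref{eq:Gsharp-G} produces a surjection $\Omega^n_K\cong h_{0,\Nis}(\G_a^\#\te(\G_m^\#)^{\te n})(K)\twoheadrightarrow h_{0,\Nis}(\G_a^\#\te\wt{\G_m}^{\te n})(K)$. Thus $\CH^{n+1}(\P^1_K,2\infty;n)_{\rm ssup}$ is a quotient of $K^M_n(K)\oplus\Omega^n_K$; unravelling Proposition \ref{prop:surj-hi-ch}, this surjection is realised by the external product of Proposition \ref{prop:ext-prod} (combined with $K^\times\cong\CH^1(K;1)$), carrying $\{b_1,\dots,b_n\}$ and $a\,\dlog b_1\cdots\dlog b_n$ to the external products of $1\in\Z$ and of $a\in\G_a^\#(K)$ in $h_0^\bcube(\G_a^M)(K)\cong\Z\oplus\G_a^\#(K)$.

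For the converse I would compute $\CH^{n+1}(\P^1_K,2\infty;n)_{\rm ssup}$ directly. Using the presentation of this group by cycles in good position satisfying the strong-sup modulus condition, every relation among the external products above is the boundary of such a cycle; feeding such a cycle into the base-change/boundary formula of Proposition \ref{prop:ext-prod} and into the Bloch--Esnault presentation of $\Omega^n$ by generators and relations \cite[Lemma 4.1]{BE03}, one identifies these boundaries with the relations (R1), (R2) of Theorem \ref{thm:CI-te-fields} (for $F_1=\G_a^\#$, $F_i=\G_m^\#$), which already vanish in $h_0(\G_a^\#\te\wt{\G_m}^{\te n})(K)$. Equivalently, one exhibits a residue/de Rham-symbol map $\CH^{n+1}(\P^1_K,2\infty;n)_{\rm ssup}\to K^M_n(K)\oplus\Omega^n_K$ splitting the surjection above. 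This gives $\CH^{n+1}(\P^1_K,2\infty;n)_{\rm ssup}\cong K^M_n(K)\oplus\Omega^n_K$ (and in particular $h_0(\G_a^\#\te\wt{\G_m}^{\te n})(K)\cong\Omega^n_K$). Finally, to compare with the Chow group with modulus $\CH^{n+1}(\P^1_K,2\infty;n)$ of \cite{Binda-Saito} (see \ref{para:Chow-mod}) I would invoke \cite[Corollary 3.5]{KP}: the inclusion of cycle complexes induces a natural map $\CH^{n+1}(\P^1_K,2\infty;n)_{\rm ssup}\to\CH^{n+1}(\P^1_K,2\infty;n)$, and rerunning the external-product computation for the weaker (sum) modulus shows the target is again a quotient of $K^M_n(K)\oplus\Omega^n_K$ through which the already-bijective surjection $K^M_n(K)\oplus\Omega^n_K\twoheadrightarrow\CH^{n+1}(\P^1_K,2\infty;n)_{\rm ssup}$ factors; hence all three groups coincide.

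The main obstacle is the second step: identifying the entire relation group of $\CH^{n+1}(\P^1_K,2\infty;n)_{\rm ssup}$ with (R1) and (R2), i.e. showing that the strong-sup modulus condition forces no relation among external products beyond those already present in $h_0(\G_a^\#\te\wt{\G_m}^{\te n})(K)$. This is precisely where the exact shape of the strong-sup condition and the hypothesis $\ch(k)\neq2,3,5$ are needed, and where the Bloch--Esnault computation of the absolute Kähler differentials does the real work; everything else (right exactness of $\te$ and $h_0$, the splitting off a rational point, the identifications via Theorems \ref{thm:tHI-tRSC} and \ref{thm:DR-arbCh}, and the appeal to \cite{KP}) is formal.
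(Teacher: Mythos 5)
Your first half---splitting off the rational point $0$, invoking Proposition \ref{prop:surj-hi-ch}, identifying the Milnor $K$-theory summand, and obtaining a surjection $K^M_n(K)\oplus\Omega^n_K\twoheadrightarrow\CH^{n+1}((\G_a^M)_K;n)_{\rm ssup}$ via Proposition \ref{prop:DR-to-T} and Theorem \ref{thm:DR-arbCh}---matches the paper's set-up, which it organizes into the three-row split diagram \eqref{diagram}.

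The gap is in your ``converse'' step, and you already flag it as the main obstacle. Your plan is to compute the relation group of $\CH^{n+1}((\G_a^M)_K;n)_{\rm ssup}$ directly and match it against (R1), (R2) and the generators-and-relations presentation of $\Omega^n$ from \cite[Lemma 4.1]{BE03}; that is not carried out, and it is not what the paper does. The paper instead produces a \emph{retraction} out of the bottom, a priori smallest, term of the chain: restricting along $\P^1_K\setminus\{0\}\hookrightarrow\P^1_K$ gives a surjection $\CH^{n+1}((\G_a^M)_K;n)\to\CH^{n+1}(\P^1_K\setminus\{0\},2\infty;n)$; the automorphism of $\P^1$ swapping $0$ and $\infty$ (and fixing $1$) identifies the target with the additive higher Chow group $\CH^{n+1}(\A^1_K,2[0];n)$; and Bloch--Esnault's theorem \cite{Bloch-Esnault} identifies this with $\Omega^n_K$, sending a $K$-point $(a,b_1,\dots,b_n)$ to $\tfrac1a\,\dlog b_1\cdots\dlog b_n$. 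The resulting composite $\Omega^n_K\to h_0(\G_a^\#\te\wt{\G_m}^{\te n})(K)\to\ker(\pi_*^{\rm ssup})\to\ker(\pi_*)\to\Omega^n_K$ is checked to be the identity on generators, so every surjection in the chain is a bijection, and all three Chow groups coincide with $K^M_n(K)\oplus\Omega^n_K$ at once. This is the crucial input your proposal is missing: without the additive Chow group isomorphism of Bloch--Esnault you have no map back from any of the cycle groups to $\Omega^n_K$, and no concrete route to the injectivity you need. Note also that the paper constructs its section on the \emph{sum}-modulus Chow group, not the ssup one, which makes the factorization through all three rows automatic; your suggestion to first establish the ssup case and then deduce the sum case inverts this and would require an independent argument for ssup.
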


\begin{proof}
We use the notation $(\G_{a}^M)_K=(\P_K^1,2\infty)$.
From Proposition \ref{prop:Milnor-K} and Proposition \ref{prop:surj-hi-ch}, we obtain the following commutative diagram with 
split rows
\begin{align}\label{diagram}
\xymatrix{
h_0(\G_a^\#\te \wt{\G_m}^{\te n})(K)\ar[r]\ar@{->>}[d]&h_0(\G_a^M\te \wt{\G_m}^{\te n})(K)\ar[r]^{\qquad \pi_*^M}\ar@{->>}[d]&\ar[d]^{\cong}h_0(\wt{\G_m}^{\te n})(K)\ar@/^1pc/[l]^{i_{0*}}\\
\ker(\pi_*^\mathrm{ssup})\ar[r]\ar@{->>}[d]&\CH^{n+1}((\G_{a}^M)_K;n)_{\rm ssup}\ar[r]^{\qquad \pi_*^\mathrm{ssup}}\ar@{->>}[d]&\ar@{=}[d]K_n^M(K) \ar@/^1pc/[l]^{i_{0*}}\\
\ker(\pi_*)\ar[r]&\CH^{n+1}((\G_{a}^M)_K;n)\ar[r]^{\qquad \pi_*}&K_n^M(K) \ar@/^1pc/[l]^{i_{0*}}
}
\end{align}
where the map $\pi_*^M$ is induced by the structure map $\pi:\G_a^M\to \Spec(k)$, and the maps $\pi_*^\mathrm{ssup}, \pi_*$ are the proper push-forward to $\CH^n(K;n)\simeq K_n^M(K)$ via $\pi$.
Since the map $\pi$ has a section given by the closed immersion $i_0:\Spec(k)\to \G_a^M$ into $0$, 
our task is to show that the left terms of \eqref{diagram} are isomorphic to $\Omega_k^n$.

We have a surjection
\[
\CH^{n+1}((\G_{a}^M)_K;n)\lra \CH^{n+1}(\P_K^1\setminus\{0\}, 2\infty;n)
\]
given by the pull-back along the open immersion $\P_K^1\setminus\{0\}\to \P_K^1.$
The unique automorphism of $\P_K^1$ which switches $0$ and $\infty$ and fixes $1$ induces an isomorphism
\[
\CH^{n+1}(\P_K^1\setminus\{0\}, 2\infty;n)\cong  \CH^{n+1}(\A_K^1, 2[0];n).
\]
The right hand side is the additive Chow group  and Bloch--Esnault \cite{Bloch-Esnault} 
 proved the following isomorphism
\[
\CH^{n+1}(\A_K^1, 2[0];n)\cong \Omega_K^n,
\]
which sends a $K$-rational point $(a,b_1,\dots, b_n)$ to $\frac1a \dlog b_1\cdots \dlog b_n$.
Thus we have a surjection
\[
\CH^{n+1}((\G_{a}^M)_K;n)\lra \Omega_K^n,
\]
which sends a $K$-rational point $(a,b_1,\dots, b_n)$ to $a \dlog b_1\cdots \dlog b_n$.

We now consider the following composite map 
\[
\Omega_K^n\lra h_0(\G_a^\#\te\wt{\G_m}^{\te n})(K)\lra \ker(\pi_*)\hookrightarrow \CH^{n+1}((\G_{a}^M)_K;n)\lra  \Omega_K^r,
\]
where the first map is the surjective map in Proposition \ref{prop:DR-to-T}.
The composite map is the identity of $\Omega_K^n$, which yields that the left terms of \eqref{diagram} are isomorphic to $\Omega_K^n$.
This completes the proof.
\end{proof}

\bibliographystyle{amsalpha}
%\bibliography{tensor}

\providecommand{\bysame}{\leavevmode\hbox to3em{\hrulefill}\thinspace}
\providecommand{\MR}{\relax\ifhmode\unskip\space\fi MR }
% \MRhref is called by the amsart/book/proc definition of \MR.
\providecommand{\MRhref}[2]{%
  \href{http://www.ams.org/mathscinet-getitem?mr=#1}{#2}
}
\providecommand{\href}[2]{#2}

\end{document}